\newtcolorbox{bluebox}[1][]%
{left=0mm, right=0mm, bottom=0mm, top=0mm, sharp corners, boxrule=.8pt, before skip=\topsep, after skip=\topsep, colback=cyan!5, colframe=cyan, coltitle=black, fonttitle=\bfseries, title=#1, breakable}
\newcommand{\R}{\mathbb{R}}
\newcommand{\C}{\mathbb{C}}
\newcommand{\N}{\mathbb{N}}
\newcommand{\mD}{\mathcal{D}}
\newcommand{\mK}{\mathcal{K}}
\newcommand{\mL}{\mathcal{L}}
\newcommand{\mM}{\mathcal{M}}
\newcommand{\mW}{\mathcal{W}}
\newcommand{\mU}{\mathcal{U}}
\newcommand{\rd}{\mathrm d}
\newcommand{\id}{I}
\newcommand{\I}{\mathrm{i}} 
\newcommand{\Hsym}{\mathcal H_{\mathrm{sym}} }
\newcommand{\wGamma}{\widetilde \Gamma}  
\newcommand{\wB}{\widetilde B}  
\newcommand{\wC}{\widetilde C}  
\newcommand{\scalar}[2]{\langle #1,\, #2 \rangle}  
\newcommand{\bscalar}[2]{\big\langle #1,\, #2 \big\rangle}  
\newcommand{\lrscalar}[2]{\left\langle #1,\, #2 \right\rangle}  
\DeclareMathOperator{\im}{Im}         
\DeclareMathOperator{\rk}{\mathrm rk}
\DeclareMathOperator{\range}{rg}
\DeclareMathOperator{\gen}{span}
\newcommand{\define}[1]{\emph{#1}}
\newcommand\aug{\fboxsep=-\fboxrule\!\!\!\fbox{\strut}\!\!\!}
\theoremstyle{plain}
\newtheorem{theorem}{Theorem}[section]
\newtheorem*{theorem*}{Theorem}
\newtheorem{lemma}[theorem]{Lemma}
\newtheorem*{lemma*}{Lemma}
\newtheorem{proposition}[theorem]{Proposition}
\newtheorem*{proposition*}{Proposition}
\newtheorem{corollary}[theorem]{Corollary}
\newtheorem*{corollary*}{Corollary}
\theoremstyle{definition}
\newtheorem{definition}[theorem]{Definition}
\newtheorem{remark}[theorem]{Remark}
\newtheorem*{remark*}{Remark}
\begin{document}

\title{Complete Non-Selfadjointness of Extensions of Symmetric Operators with Bounded Dissipative Perturbations}
\author{%
{Christoph Fischbacher}
\thanks{Department of Mathematics, Baylor University, Texas, USA, c\_fischbacher@baylor.edu},
{Andr\'es Felipe Pati\~no L\'opez}
\thanks{Departamento de Matem\'aticas, Universidad de los Andes, Bogot\'a, Colombia, af.patinol@uniandes.edu.co},
{Monika Winklmeier}%
\thanks{Departamento de Matem\'aticas, Universidad de los Andes, Bogot\'a, Colombia, mwinklme@uniandes.edu.co}
}
\maketitle

\begin{abstract}
   \noindent
   \begin{center}
   
    Using boundary triples, we develop an abstract framework to investigate the complete non-selfadjointness of the maximally dissipative extensions of dissipative operators of the form $S+iV$, where $S$  is symmetric with equal finite defect indices and $V$ is a bounded non-negative operator. Our key example is the dissipative Schr\"odinger operator $-\tfrac{d^2}{dx^2}+\I V$ on the interval.

   \end{center}

\end{abstract}
\bigskip

\bigskip\noindent
\textbf{Mathematics Subject Classification (2020).}\\
Primary 47B44; Secondary 47E05, 47A20, 47B28.
\smallskip

\noindent
\textbf{Keywords.}
Maximally dissipative operators, extensions of linear operators, boundary triples, complete non-selfadjointness, non-local point interactions\:.

\bigskip\noindent
{\bf Acknowledgements.}
CF was supported in part by the National Science Foundation under grant DMS--2510063. He also appreciates the hospitality of the Universidad de Los Andes, where this work was initiated.
AF was supported the Facultad de Ciencias of the University of the Andes (INV-2023-178-2986 and INV-2024-181-3013). He is grateful for the hospitality of Baylor University.
MW was supported by the Facultad de Ciencias of the University of the Andes (INV-2025-213-3442).


\section{Introduction} 

In this paper, we provide an abstract framework to analyze the complete non-selfadjointness of maximally dissipative extensions of dissipative operators of the form $S+\I V$, where $S$ is symmetric with equal defect indices and $V$ is a bounded non-negative operator.

Dissipative operators play an important role in the description of open quantum systems \cite{Phil59, MT22} with numerous applications in fields such as (magneto-)hydrodynamics, lasers, or nuclear scattering. Dissipative operators with no non-trivial dissipative extensions are called \emph{maximally dissipative} and are of fundamental mathematical importance as they generate strongly continuous semigroups of contractions \cite{LP61}. 

A maximally dissipative operator is completely non-selfadjoint (cnsa) if there exists no non-trivial reducing subspace on which the operator is selfadjoint (see Section \ref{sec:cnsa} for precise definitions). If a maximally dissipative operator $A$ is not completely non-selfadjoint, then there exists a unique non-trivial reducing subspace $H_{sa}(A)$ such that $A$ is selfadjoint in $H_{sa}(A)$ and completely non-selfadjoint in $H_{sa}(A)^\perp=:H_{cnsa}(A)$. Accordingly, one can decompose $A=A_{sa}\oplus A_{cnsa}$ into its selfadjoint part $A_{sa}$ and its cnsa part $A_{cnsa}$. This means for the construction of a selfadjoint dilation of a maximally dissipative operator $A$ \cite{CKS21, BMNW20, BMNW24, SzNFBK10} one only has to consider the cnsa part $A_{cnsa}$. Determining $H_{sa}(A)$ and $H_{cnsa}(A)$ can be a challenging problem and for each particular operator whose complete non-selfadjointness has been previously studied \cite{Pav75, Pav76, Pav77, Al91, KNR03, RT11}, new specific approaches and methods had to be employed.

The description of maximally dissipative extensions of a given dissipative operator is much richer than that of selfadjoint extensions of a given symmetric operator: while within von Neumann theory, finding all selfadjoint extensions of a symmetric operator corresponds -- via the Cayley transform -- to finding all unitary extensions of an isometry, the Cayley transform $C_A: \range(A+i)\rightarrow\range(A-i)$ of a given dissipative operator $A$ is a contraction ($\|C_A\|\leq 1$). Since there is a one-to-one correspondence between all maximally dissipative extensions of $A$ and all contractive extensions of $C_A$ to the entire Hilbert space $H$ \cite{Phil59}, the increased complexity of the extension problem for $A$ (compared to the symmetric to selfadjoint case) is due to the larger number of choices available when extending a contraction to the entire Hilbert space \cite{Crand69, AG82} compared to extending an isometry to a unitary operator.

To illustrate this on a concrete example, let $H=L^2(0,\infty)$ and consider the standard minimal Laplacian in $H$:
\begin{equation*}
   S:\quad\mD(S)=H^2_0(0,\infty)=\{f\in H^2(0,\infty)\: |\: f(0)=f'(0)=0\},\quad f\mapsto -f''\:,
\end{equation*}
whose adjoint $S^*$, the maximal realization of the Laplacian, is given by 
\begin{equation*}
   S^*:\quad\mD(S^*)=H^2(0,\infty),\quad f\mapsto -f''\:.
\end{equation*}
The minimal operator $S$ is symmetric, has defect indices both equal to one and \emph{all} of its selfadjoint extensions are thus described by a real one-parameter family $\{S_r\}_{r\in\mathbb{R}\cup\{\infty\}}$, given by
\begin{equation} \label{eq:salapl}
   S_r:\quad\mD(S_r)=\{f\in \mD(S^*)\: |\: f'(0)=rf(0)\}, \quad S_r=S^*|_{\mD(S_r)}\:,
\end{equation}
where we adapt the convention that $r=\infty$ corresponds to the Dirichlet realization of the Laplacian. 
Since $S$ is symmetric, i.e.\ $S\subseteq S^*$, it immediately follows that all of its selfadjoint extensions are also restrictions of $S^*$, i.e.\ $S\subseteq S_r=S^*_r\subseteq S^*$, and thus the action of any selfadjoint extension $S_r$ preserves the differential expression ``$-\tfrac{d^2}{dx^2}$".

On the side of Cayley transforms, the fact that there is only a real one-parameter family of selfadjoint extensions of $S$ corresponds to the fact that all unitary extensions of the Cayley transform $C_S$ of $S$ to the entire Hilbert space $H$ can be described in terms of unitary operators $U$ between the defect spaces, $U:\ker(S^*-i)\rightarrow\ker(S^*+i)$. 
Since in this concrete example, both defect spaces are just one-dimensional, there is only a single parameter $\psi\in[0,2\pi)$ describing all of these unitary maps $U_\psi:\ker(S^*-i)\rightarrow\ker(S^*+i)$ via $U_\psi n_+=e^{i\psi}n_-$, where $n_\pm$ are normalized vectors spanning $\ker(S^*\mp i)$, respectively. 

The problem of finding all maximally dissipative extensions of the symmetric operator still remains relatively easy. This has to do with the fact that all contractive extensions of the isometric Cayley transform $C_S$ of $S$ are still fully characterized by contractions that only map between $\ker(S^*-i)$ and $\ker(S^*+i)$ \cite[Lemma 3.1.1]{FDiss17}, see also \cite{Phil59}. 
Moreover, any maximally dissipative extension $\hat{S}$ of a symmetric operator $S$ has to be a restriction of its adjoint, i.e.\ $S\subseteq \hat{S}\subseteq S^*$ \cite{FDiss17}. 
In our particular example, all maximally dissipative extensions of the minimal Laplacian $S$ are still of the form $S_r$ as described in \eqref{eq:salapl}, however the parameter $r$ may now also have positive imaginary part, since
\begin{equation*}
   \im\langle f,S_rf\rangle=\im(r)|f(0)|^2,\quad f\in\mD(S_r)\:.
\end{equation*}

Now, if one tries to find all maximally dissipative extensions of operators of the form $A=S+\I V$, where $S$ is symmetric and $V$ is a bounded non-negative operator\footnote{There are more general results for unbounded $V$ as long as $\mD(S)\cap\mD(V)$ is dense, cf. \cite{FNWproper2016, nonproper2018}.}, there is a much larger number of possibilities to choose from. 
If we view $A=S+\I V$ as the minimal operator, the appropriate choice of maximal operator $A_{max}$ is given by $A_{max}=S^*+\I V$ with $\mD(A_{max})=\mD(S^*)$. 
While the domain of \emph{any} maximally dissipative extension $\widehat{A}$ of $A$ still has to satisfy $\mD(\widehat{A})\subseteq\mD(A_{max})$ \cite{CP68, nonproper2018}, it is not necessarily true anymore that $\widehat{A}$ is a restriction of $A_{max}$. 

To modify the previous example of the minimal Laplacian $S$ in $L^2(0,\infty)$, let us consider a more general dissipative Schr\"odinger operator defined on the half-line. 
This means we add a bounded dissipative perturbation of the form $\I V$, where $V$ is non-negative, e.g.\ a multiplication by an a.e.\ non-negative function $V\in L^\infty(0,\infty)$:
\begin{equation*}
   A:\quad\mD(A)=\mD(S)=H_0^2(0,\infty), \quad f\mapsto -f''+\I Vf\:.
\end{equation*}
Clearly, the operator $A$ is dissipative. 
Of course, by choosing extensions of the form $A_r=S_r+\I V$, where $S_r$ is given by \eqref{eq:salapl} with $\im(r)\geq 0$, we have found a class of maximally dissipative extensions that are also restrictions of the maximal operator: $A\subseteq A_r\subseteq A_{max}$ thus preserving the differential expression ``$-\tfrac{d^2}{dx^2}+\I V$". 
Extensions with this property are called \emph{proper}. Indeed, since $S_r$ for $\im(r)\geq 0$ is maximally dissipative to begin with, adding a bounded dissipative perturbation $\I V$ preserves this property. 

Going beyond proper extensions, in addition to the complex-valued boundary parameter $r$, it is possible to introduce another Hilbert space-valued parameter $k\in\range(V^{1/2})$, which -- together with $r$ -- can be used to describe \emph{all} maximally dissipative extensions of $A$:
\begin{proposition}[\protect{\cite[Example 6.6]{nonproper2018}}]
All maximally dissipative extensions of $A$ are of the form $A_{r,k}$, where 
\begin{equation*}
A_{r,k}:\quad \mD(A_{r,k})=\mD(S_r),\quad (A_{r,k}f)(x)=-f''(x)+\I V(x)f(x)+f(0)k(x)
\end{equation*}
and $r\in\mathbb{C}$, $k\in\range(V^{1/2})$ satisfy 
\begin{equation} \label{eq:disscond}
    \im(r)\geq\frac{1}{4}\|V^{-1/2}k\|^2\:.
\end{equation}
Here, $V^{-1/2}$ denotes the inverse of $V^{1/2}$ on $\range(V^{1/2})$. 
\end{proposition}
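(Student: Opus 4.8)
The plan is to prove the two inclusions separately: that every $A_{r,k}$ with $r\in\C$ and $k\in\range(V^{1/2})$ subject to \eqref{eq:disscond} is a maximally dissipative extension of $A$, and conversely that every maximally dissipative extension of $A$ is of this form (the Dirichlet realization $S_\infty+\I V$, which is also maximally dissipative, being the remaining ``$r=\infty$'' case, for which no correction term is admissible). The two tools are integration by parts on $(0,\infty)$ --- to reduce dissipativity to an inequality for the boundary data --- and, for the converse, the facts recalled above that any maximally dissipative extension $\widehat A$ of $A$ satisfies $\mD(\widehat A)\subseteq\mD(A_{max})=H^2(0,\infty)$ and that such extensions correspond to contractive extensions of $C_A$.

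\emph{Sufficiency.} The inclusion $A\subseteq A_{r,k}$ is immediate: $\mD(A)=H^2_0(0,\infty)\subseteq\mD(S_r)$, and on $H^2_0(0,\infty)$ both the condition $f'(0)=rf(0)$ and the term $f(0)k$ are void. For dissipativity I would integrate by parts to obtain, for $f\in\mD(S_r)$,
\[
   \im\scalar{f}{A_{r,k}f}=\im(r)\,|f(0)|^2+\norm{V^{1/2}f}^2+\im\bigl(f(0)\scalar{f}{k}\bigr),
\]
then write $k=V^{1/2}h$ with $h=V^{-1/2}k\in(\ker V)^\perp$, so that $\scalar{f}{k}=\scalar{V^{1/2}f}{h}$, and complete the square: with $g:=V^{1/2}f$ the last two terms equal $\norm{g-\tfrac{\I}{2}f(0)\,h}^2-\tfrac14|f(0)|^2\norm{h}^2$. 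As $f$ ranges over $\{f\in\mD(S_r):f(0)=c\}$ the vectors $V^{1/2}f$ form a dense subset of $(\ker V)^\perp$ (add elements of $C_c^\infty(0,\infty)$, whose $V^{1/2}$-images are dense in $(\ker V)^\perp$), and $\tfrac{\I}{2}c\,h\in(\ker V)^\perp$; hence the infimum of $\im\scalar{f}{A_{r,k}f}$ over that slice is $|c|^2\bigl(\im(r)-\tfrac14\norm{V^{-1/2}k}^2\bigr)$, so, optimizing over $c\in\C$, $A_{r,k}$ is dissipative exactly when \eqref{eq:disscond} holds. For maximality I would check that $A_{r,k}-\lambda$ is onto for one $\lambda$ with $\im\lambda<0$: rewriting $(A_{r,k}-\lambda)f=w$ in terms of the Dirichlet resolvent $R_\lambda=(S_\infty+\I V-\lambda)^{-1}$ and the decaying homogeneous solution $u_\lambda$ turns it into a single scalar equation for $f(0)$ that is solvable once $|\im\lambda|$ is large. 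Alternatively, one invokes the abstract criterion that a dissipative extension of $A$ whose Cayley transform extends $C_A$ contractively to all of $H$ is maximally dissipative.

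\emph{Necessity.} Let $\widehat A\supseteq A$ be maximally dissipative, so that $H^2_0(0,\infty)\subseteq\mD(\widehat A)\subseteq H^2(0,\infty)$. Using $H^2(0,\infty)/H^2_0(0,\infty)\cong\C^2$ via $f\mapsto(f(0),f'(0))$, the domain $\mD(\widehat A)$ is $H^2_0(0,\infty)$, a codimension-one subspace --- necessarily some $\mD(S_r)$ with $r\in\C\cup\{\infty\}$ --- or all of $H^2(0,\infty)$. The first possibility gives $\widehat A=A$, which is not maximally dissipative since $S$ has nontrivial defect indices; the last gives $\widehat A f=-f''+\I Vf+f(0)k_1+f'(0)k_2$ (because $\widehat A-A_{max}$ vanishes on $H^2_0(0,\infty)$, hence factors through $(f(0),f'(0))$), and the contribution $\im(\overline{f(0)}\,f'(0))$ of integration by parts to $\im\scalar{f}{\widehat A f}$ is unbounded below when $f(0),f'(0)$ vary independently --- contradiction. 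Hence $\mD(\widehat A)=\mD(S_r)$; on this domain $f\mapsto\widehat A f-A_{max}f$ vanishes on $H^2_0(0,\infty)$ and is a linear function of $f(0)$ alone (using $f'(0)=rf(0)$), hence of the form $f\mapsto f(0)k$, so $\widehat A=A_{r,k}$ --- except in the Dirichlet case $r=\infty$, where the correction $f\mapsto f'(0)k'$ satisfies $\im\scalar{f}{-f''}=0$, so the same square-completion forces $k'=0$ and $\widehat A=S_\infty+\I V$. Finally, dissipativity of $A_{r,k}$ already forces $k\in\range(V^{1/2})$: if not, $\phi\mapsto\scalar{\phi}{k}$ is unbounded relative to $\norm{V^{1/2}\phi}$, and then, perturbing a fixed $f$ with $f(0)=1$ by suitably phased $\phi\in C_c^\infty(0,\infty)$ of bounded $\norm{V^{1/2}\phi}$, one drives $\im\scalar{f}{A_{r,k}f}$ to $-\infty$; and then, re-running the \emph{Sufficiency} computation, it forces \eqref{eq:disscond}.

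\emph{Main obstacle.} The sufficiency direction is a fairly routine completion of the square, and surjectivity for maximality is elementary ODE theory. The work is in the \emph{Necessity} direction --- extracting from bare dissipativity the precise form of the correction term ($f(0)k$ with $k\in\range(V^{1/2})$) and, above all, the sharp constant $\tfrac14$. A more systematic route, in the spirit of this paper, is to pass to Cayley transforms: $C_{\widehat A}$ is a contractive extension of $C_A$ to all of $H$, and since $\im\scalar{f}{Af}=\norm{V^{1/2}f}^2$ the defect operator of $C_A$ is governed by $V$; unwinding the classification of such contractive extensions \cite[Lemma 3.1.1]{FDiss17} produces $r$ from the isometric part of the extension and $k\in\range(V^{1/2})$ from its genuinely contractive part, with the contraction bound turning into \eqref{eq:disscond}. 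Pinning down the constant $\tfrac14$ is in any case the delicate point; for a self-contained account one may instead quote \cite[Example 6.6]{nonproper2018}.
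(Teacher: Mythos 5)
Your argument is essentially correct, but it is worth noting that the paper does not prove this proposition at all: it is quoted verbatim from \cite[Example 6.6]{nonproper2018}, and within the paper's own logic it follows as a one-line specialization of the abstract machinery developed in Section~\ref{sec:dissipativeoperators} (Theorem~\ref{thm:ChristophMaxDiss} and Corollary~\ref{cor:domains} reduce everything to the condition $\im\scalar{w}{S^*w}\ge\frac14\|V^{-1/2}\mL w\|^2$ on the one-dimensional space $\mW$ spanned by a function with $u'(0)=ru(0)$, which is exactly \eqref{eq:disscond}). What you have written is instead a self-contained, hands-on proof of that special case: the completion of the square $\|g\|^2+\im(f(0)\scalar{g}{h})=\|g-\tfrac{\I}{2}f(0)h\|^2-\tfrac14|f(0)|^2\|h\|^2$ together with the density of $V^{1/2}(C_c^\infty)$ in $(\ker V)^{\perp}$ is precisely the mechanism that produces the constant $\tfrac14$ in the general theorem, and your argument that unboundedness of $\phi\mapsto\scalar{\phi}{k}$ relative to $\|V^{1/2}\phi\|$ forces $k\in\range(V^{1/2})$ is also the right one. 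This buys you independence from \cite{nonproper2018} at the cost of redoing, in coordinates, what Theorem~\ref{thm:ChristophMaxDiss} packages abstractly; both routes are legitimate.

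One step is stated too loosely to stand as written: when you exclude $\mD(\widehat A)=H^2(0,\infty)$ you argue that $\im\bigl(\overline{f(0)}f'(0)\bigr)$ is ``unbounded below, contradiction,'' but by itself that proves nothing, since the nonnegative term $\|V^{1/2}f\|^2$ and the two correction terms vary along with the boundary data and could a priori compensate. The repair is available from your own sufficiency computation: for fixed boundary data $(f(0),f'(0))=(c_0,c_1)$ the infimum of $\|V^{1/2}f\|^2+\im\scalar{f}{c_0k_1+c_1k_2}$ over the corresponding slice of $H^2(0,\infty)$ is $-\tfrac14\|V^{-1/2}(c_0k_1+c_1k_2)\|^2\le 0$ (or $-\infty$), so choosing $(c_0,c_1)=(1,-\I)$ makes the slice infimum of $\im\scalar{f}{\widehat A f}$ at most $-1$, contradicting dissipativity. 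With that sentence inserted, and with maximality in the sufficiency direction settled either by your surjectivity computation or simply by noting that any proper dissipative extension of $A_{r,k}$ would have domain $H^2(0,\infty)$, which you have just excluded, the proof is complete.
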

The additional term $``+f(0)k"$ is referred to as non-local point-interaction \cite{AN07} and leads to interesting new problems and phenomena -- even in the first-order case \cite{FPP-RZ25}. A second look at Condition \eqref{eq:disscond} leads to the following observations:
\begin{itemize}

   \item[(i)] In the case of selfadjoint boundary conditions, $\im(r)=0$, it necessarily follows that $k=0$ for an extension $A_{r,k}$ to be maximally dissipative.

   \item[(ii)] More generally, the case of proper extensions corresponds to the choice $k=0$. To ensure that $A_{r}=A_{r,k=0}$ is dissipative in this case, all that is required is the dissipativity of the boundary condition, i.e. $\im(r)\geq 0$, no matter what choice for $V\geq 0$ is made.

   \item[(iii)] If we consider the non-proper case $k\neq 0$, Condition \eqref{eq:disscond} reflects an interplay between the dissipative boundary condition, the dissipativity of the potential, and the non-local point interaction. For example, if $r$ with $\im(r)>0$ and $k\in\range(V^{1/2}), k\neq 0$ are fixed, one can always rescale the potential to be more dissipative, $V\mapsto \mu V$ with $\mu>0$ large enough such that \eqref{eq:disscond} is satisfied. Similarly, one can increase the dissipativity of the boundary condition by increasing the imaginary part of $r$ or decrease the contribution of the non-local point-interaction by decreasing the norm of $k$ to ensure \eqref{eq:disscond} is satisfied. 

\end{itemize}

The question whether the extensions $A_{r,k}$ of the dissipative Schr\"odinger operator on the half-line are cnsa was the main focus of the article \cite{FNWcnsa2024}, where necessary and sufficient conditions to answer this questions were found. In particular, if the inequality in \eqref{eq:disscond} is strict (the ``\emph{non-critical}" case), then $A_{r,k}$ has to be cnsa. Likewise, in the case of selfadjoint boundary conditions, $\im(r)=0, k=0$, if $V\neq 0$, the operator $A_{r,k}$ is cnsa. Hence, for there to be a non-trivial reducing selfadjoint subspace of $A_{r,k}$ it is necessary that $\im(r)=\|V^{-1/2}k\|^2/4$, which is referred to as the \emph{critical case}. In this case, if additional conditions are met \cite[Thm.\ 3.14]{FNWcnsa2024}, it is possible that $A_{r,k}$ has a one-dimensional reducing selfadjoint subspace. Moreover, given a fixed potential $V$, for any real $\lambda$ not in the essential spectrum of the minimal operator $A=S+\I V$, there exist unique $\tilde{r}\in\mathbb{C}^+$ and $\tilde{k}\in\range(V)\subseteq\range(V^{1/2})$ such that  $A_{\tilde{r},\tilde{k}}$ has a reducing selfadjoint subspace spanned by $V^{-1}\tilde{k}$. 

The methods used in \cite{FNWcnsa2024} did not rely too heavily in the explicit structure of the minimal operator $A$. In fact, the main part of the analysis only made use of the fact that (i) the defect index $\dim\ker(S^*-i)$ of $S$ is equal to one, (ii) the symmetric part of any maximally dissipative but not selfadjoint extension $S_r$ is equal to the minimal operator $S$, which has no non-trivial reducing selfadjoint subspaces, and (iii) the kernel of the dissipative perturbation $V$ is invariant under $S$, i.e.\ $Sh\in\ker(V)$ for any $h\in\mD(S)\cap\ker(V)$. Thus, a natural progression when studying the complete non-selfadjointness of dissipative differential operators is to consider the case of operators with higher defect indices such as dissipative Schr\"odinger operators on the interval or more generally, on metric graphs.  In \cite{KMN2025}, this was done for metric graphs with dissipative conditions at the vertices and bounded dissipative potentials. However, only proper extensions were considered, i.e.\ only extensions without non-local point-interactions.

While our results will be presented in the abstract setup with the help of boundary triples (cf.\ Sections \ref{sec:dissipativeoperators}, \ref{sec:cnsa}), the key example analyzed in this paper will be the minimal Laplacian $S$ on the interval with a dissipative bounded potential $\I V$. Due to the larger defect indices of $S$, the number of complex-valued and Hilbert space-valued parameters describing all maximally dissipative extensions of $S+iV$ increases significantly.
Moreover, it is not true anymore that the symmetric part of any maximally dissipative extension of $S$ is equal to $S$ itself and in particular, it may have non-trivial reducing subspaces in which it is selfadjoint, which further complicates the analysis.
\bigskip

This paper is organized as follows. 
In Section~\ref{sec:dissipativeoperators} we describe all maximally dissipative extensions of a symmetric operator $S$ with equal and finite defect indices $\eta$ with the help of boundary triples.
It turns out that the dissipativity of an $\eta$-dimensional restriction $T$ of $S^*$ is equivalent to the dissipativity of $BC^*$, where $B,C$ are the matrices used to define $T$.
If $B$ is invertible, this condition is equivalent to $\im( C^* B^{*-1}) \ge 0$.

These results are used to find criteria for the dissipativity of $\eta$-dimensional extensions $\widehat A$ of $A = S + \I V$ where $V\ge 0$ is a bounded linear operator. 
In contrast to the case when $V=0$, $\widehat A$ does not need to be a restriction of $S^*-\I V$.
However, it is always true that $\mD(\widehat A)$ coincides with the domain of a maximally dissipative extension $\widehat S$ of $S$, hence it is a subset of $\mD(S^*)$.
Under the assumption that the matrices $B, C$ describing $\widehat A$ are invertible, we give a condition for the dissipativity of $\widehat A$.
Instead of non-negativity of $\im C^*B^{*-1}$ we now need that it is larger than or equal to a matrix $M_{\mathtt K}$ involving the terms describing the ``deviation'' of $\widehat A$ from $S^*-\I V$.
In Section~\ref{sec:maxdiss} we apply our results to the minimal Laplacian $S$ on the interval $(0,1)$. 
We classify all maximally dissipative extensions of $S$ and of $A=S+\I V$ for a bounded non-negative operator $V$.

In Section~\ref{sec:cnsa} we are concerned with the existence of selfadjoint subspaces of maximally dissipative operators $\widehat A$. 
To this end, we identify the maximally symmetric extension $\widetilde S$ of $S$ whose domain is still contained in $\mD(\widehat A)$ and view $\widehat A$ as a maximally dissipative extension of $\widetilde S + \I V$, described by a new, lower dimensional, boundary triple $(\wGamma_0, \wGamma_1, \C^{\ell})$.
It turns out that now the corresponding boundary matrices $\wB$ and $\wC$ are invertible.
Under certain assumptions on $\ker V$ and the complete non-selfadjointness of $\widehat S$, we find upper bounds on the dimension of the selfadjoint subspace of $\widehat A$ in terms of $\rk( \im(C^* B^{*-1}) - \frac{1}{4}M_{\mathtt K})$.
In Section~\ref{sec:cnsa:Laplace} we apply our results on complete non-selfadjointness to the Laplacian on the interval.
If $V$ is a multiplication operator which satisfies the 
mild assumption that is does not vanish on some non-trivial open interval, then the complete non-selfadjointness of $\widetilde S$ is not necessary to conclude that $\dim(H_{sa}(\widehat A) \le \rk( \im(C^* B^{*-1}) - \frac{1}{4}M_{\mathtt K})$.

Our results can be applied to more general settings.
As a toy example we discuss briefly a star graph with a Kirchhoff at the central vertex.
\bigskip

{\bf Notation and conventions.}
All Hilbert spaces $H$ in this work are assumed to be complex.
We use the convention that the inner product on $H$ is linear in its second argument, and antilinear in its first argument, that is
$\scalar{c x}{y} = \overline c \scalar{x}{y}=\scalar {x}{\overline{c} y}$. 
For a linear operator $A$ in $\mathcal{H}$, let $\mD(A)$, $\range(A)$, and $\ker(A)$ its domain, range, and kernel, respectively.

We denote by $\C_\pm$ the upper and lower open half planes.
For a bounded operator $M$ we define its imaginary part $\im M := \frac{1}{2\I}(M-M^*)$.

If $Y$ is a subspace of $X$ we write $W\in X// Y$ to indicate that $W$ is a subspace of $X$ with $W\cap Y = \emptyset$.
An operator with tilde, e.g. $\widetilde S$, usually denotes an extension of another operator, while
an operator with hat, e.g. $\widehat S$, usually denotes a maximally dissipative extension of another operator.


\section{Maximally dissipative extensions} 
\label{sec:dissipativeoperators}

\subsection{Extensions of a symmetric operator} 
Let $H$ be a Hilbert space and let $S$ be a symmetric operator in $H$ with finite and equal defect indices
\begin{equation}
   \eta :=
    \eta_+(S) := \dim \ker(S^*-\I) 
    = \eta_-(S) := \dim \ker(S^*+\I) < \infty.
\end{equation}
Let $(\Gamma_0, \Gamma_1, \mK)$ be a boundary triple for $S^*$.
This means that $\dim(\mK) = \eta$, 
$(\Gamma_0,\Gamma_1): \mD(S^*)\to \mK\oplus\mK$ is surjective and that the so-called abstract Green's identity 
\begin{equation}
   \scalar{f}{S^* g} - \scalar{S^* f}{g}
   =
   \scalar{\Gamma_0 f}{\Gamma_1 g} - \scalar{\Gamma_1 f}{\Gamma_0 g},
   \qquad f,g\in\mD(S^*).
\end{equation}
holds.
Moreover, $\mD(S) = \ker(\Gamma_0, \Gamma_1)$.

Note that $\mD(S^*) = \mD(S) \dot + \mU$ where $\mU$ is a $2\eta$-dimensional subspace of $\mD(S^*)$ which is isomorphic to $\C^{2\eta}$ via the map 
\begin{equation*}
   u \mapsto
   \Gamma u := 
   \begin{pmatrix}
      \Gamma_0 u \\ \Gamma_1 u
   \end{pmatrix}.
\end{equation*}
Therefore, for any $0\le \eta' \le 2\eta$ there exists a bijection between the $\eta'$-dimensional subspaces $\mW$ of $\C^{2\eta}$ and the set of all operators $T$ such that $S\subseteq T \subseteq S^*$ and $\dim( \mD(T)/\mD(S) ) = \eta'$. 
For our work, the case $\eta'=\eta$ will be the most important.

Clearly every $\eta$-dimensional subspace of $\C^{2\eta}$ is the kernel of an operator of
a matrix of the form 
$(B|-C)$ where $B$ and $C$ are $\eta\times \eta$ matrices such that $\rk(B|-C) = \eta$.
Hence, $S\subseteq T \subseteq S^*$ is an $\eta$-dimensional restriction of $S^*$ if and only if it is of the form $T= T_{B,C}$ where 
\begin{equation}
   \label{eq:extensionsBT}
   \mD(T_{B,C}) = 
   \left\{ u \in \mD(S^*) : B\Gamma_0 u = C \Gamma_1 u \right\},
   \quad
   T_{B,C} f = S^*f
\end{equation}
with $\eta\times \eta$ matrices $B,C$ such that $\rk(B|-C) = \eta$.

\begin{theorem}
   \label{thm:AdjointBT}
   Let $T$ be as above.
   The adjoint operator $T^*$ is the restriction of $S^*$ to the domain
   \begin{align}
       \label{eq:AdjointBC}
       \mD(T^*) 
       &= 
       \left\{ g\in \mD(S^*) : 
       \Gamma_0 g = C^* \vec a
       ,\
       \Gamma_1 g = B^* \vec a
       \ \text{ for }\ \vec a \in\C^\eta
       \right\}.
   \end{align}
   If $g\in\mD(T^*)$, then 
   $\vec a = (CC^*+BB^*)^{-1}(C\Gamma_0 + B\Gamma_1)g $.
   \smallskip

   If $B,C$ are invertible, then 
   \begin{align}
       \label{eq:AdjointBCinvertible}
       \mD(T^*) 
       &= 
       \left\{ g\in \mD(S^*) : 
       B^{*-1} \Gamma_1 g = C^{*-1}\Gamma_0 g 
       \right\}.
   \end{align}
\end{theorem}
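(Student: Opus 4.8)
The plan is to characterize $\mD(T^*)$ directly from the definition of the adjoint together with the abstract Green's identity. By definition, $g \in \mD(T^*)$ if and only if the functional $u \mapsto \scalar{T u}{g} = \scalar{S^* u}{g}$ is bounded on $\mD(T) = \mD(T_{B,C})$, i.e. there is $h \in H$ with $\scalar{S^* u}{g} = \scalar{u}{h}$ for all $u \in \mD(T_{B,C})$. Since $\mD(S) \subseteq \mD(T_{B,C})$ and $S$ is symmetric with $S^{**} = S^*$ available, the existence of such an $h$ forces $g \in \mD(S^*)$ and $h = S^* g$; this is the standard first reduction. So the real content is: for $g \in \mD(S^*)$, we have $g \in \mD(T^*)$ if and only if $\scalar{S^* u}{g} - \scalar{u}{S^* g} = 0$ for all $u \in \mD(T_{B,C})$, which by Green's identity is equivalent to
\begin{equation*}
   \scalar{\Gamma_1 u}{\Gamma_0 g} - \scalar{\Gamma_0 u}{\Gamma_1 g} = 0
   \qquad \text{for all } u \in \mD(T_{B,C}).
\end{equation*}

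The next step is to translate this into linear algebra on $\C^\eta$. Since $(\Gamma_0,\Gamma_1)$ is surjective onto $\mK \oplus \mK \cong \C^\eta \oplus \C^\eta$, the set $\{(\Gamma_0 u, \Gamma_1 u) : u \in \mD(T_{B,C})\}$ is exactly $\{(x,y) \in \C^\eta \times \C^\eta : Bx = Cy\} = \ker(B\,|\,{-C})$, an $\eta$-dimensional subspace. Writing $\gamma_0 := \Gamma_0 g$, $\gamma_1 := \Gamma_1 g$, the condition becomes: $\scalar{y}{\gamma_0} - \scalar{x}{\gamma_1} = 0$ for all $(x,y) \in \ker(B\,|\,{-C})$. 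In other words, the vector $(-\gamma_1, \gamma_0) \in \C^{2\eta}$ (with the appropriate conjugation bookkeeping from the sesquilinear pairing) must be orthogonal to $\ker(B\,|\,{-C})$, hence it lies in the row space of $(B\,|\,{-C})$, i.e. $(-\overline{\gamma_1}, \overline{\gamma_0})^{\!\top}$ — or more cleanly, $(\gamma_0, \gamma_1)$ — is in the range of $\begin{pmatrix} C^* \\ B^* \end{pmatrix}$. This yields precisely $\Gamma_0 g = C^* \vec a$, $\Gamma_1 g = B^* \vec a$ for some $\vec a \in \C^\eta$, which is \eqref{eq:AdjointBC}. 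I expect the one genuinely fiddly point to be keeping the complex conjugates straight (the paper's convention is antilinear in the first slot), so that "orthogonal complement of a kernel equals range of the conjugate transpose" is applied with the right adjoints — this is where a sign or a bar could slip.

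For the formula for $\vec a$: given $g \in \mD(T^*)$, apply $C$ to $\Gamma_0 g = C^* \vec a$ and $B$ to $\Gamma_1 g = B^* \vec a$ and add, obtaining $(C\Gamma_0 + B\Gamma_1)g = (CC^* + BB^*)\vec a$; invertibility of $CC^* + BB^*$ follows from $\rk(B\,|\,{-C}) = \eta$, which forces $\begin{pmatrix} C^* \\ B^* \end{pmatrix}$ to have trivial kernel and hence $CC^* + BB^* = (C\,|\,B)\begin{pmatrix} C^* \\ B^* \end{pmatrix}$ to be positive definite. Inverting gives $\vec a = (CC^* + BB^*)^{-1}(C\Gamma_0 + B\Gamma_1)g$, and one checks this $\vec a$ is independent of choices since it is determined by $g$. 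Finally, for the last claim, if $B$ and $C$ are invertible then from $\Gamma_0 g = C^* \vec a$ and $\Gamma_1 g = B^* \vec a$ we eliminate $\vec a$: $\vec a = C^{*-1}\Gamma_0 g = B^{*-1}\Gamma_1 g$, so $\mD(T^*) \subseteq \{g : B^{*-1}\Gamma_1 g = C^{*-1}\Gamma_0 g\}$; conversely, given $g$ in that set, set $\vec a := C^{*-1}\Gamma_0 g$, which then also equals $B^{*-1}\Gamma_1 g$, exhibiting $g \in \mD(T^*)$ via \eqref{eq:AdjointBC}. This gives \eqref{eq:AdjointBCinvertible} and completes the proof.
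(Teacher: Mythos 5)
Your proposal is correct and follows essentially the same route as the paper: reduce to $g\in\mD(S^*)$ via $S\subseteq T\subseteq S^*$, apply the abstract Green's identity to turn the adjoint condition into the statement that $(\Gamma_0 g,\Gamma_1 g)$ annihilates $\ker(B|-C)$ and hence lies in the range of $\bigl(\begin{smallmatrix} C^* \\ B^* \end{smallmatrix}\bigr)$, and then read off the formula for $\vec a$ and the invertible case. The only cosmetic difference is that the paper phrases the annihilator step as the row vector $\bigl((\Gamma_1 g)^*\,|\,-(\Gamma_0 g)^*\bigr)$ being a linear combination of the rows of $(B|-C)$, which is the same conjugation bookkeeping you flag.
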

\begin{proof}
   By assumption, we have $S\subseteq T \subseteq S^*$, hence also $S\subseteq T^* \subseteq S^*$.
   Therefore, $g\in \mD(T^*)$ if and only if $g$ belongs to $\mD(S^*)$ and if for every $f\in\mD(T)$
   \begin{align*}
      0 &=  
      \scalar{T^*g}{f} - \scalar{g}{Tf}
      =  \scalar{S^* g}{f} - \scalar{g}{S^* f}
      =  \scalar{\Gamma_1 g}{\Gamma_0 f} - \scalar{\Gamma_0 g}{\Gamma_1 f}
      \\
      &=  \Big( (\Gamma_1 g)^* | -(\Gamma_0 g)^* \Big)
      \begin{pmatrix}
	 \Gamma_0 f \\ \Gamma_1 f
      \end{pmatrix}.
   \end{align*}
   Since this holds for every $f\in\mD(T)$, it follows that $g\in\mD(T^*)$ if and only if the (row) vector 
   $\big( (\Gamma_1 g)^*,\ -(\Gamma_0 g)^* \big)$ is a linear combination of the rows of the $\eta\times 2\eta$ matrix $(B| -C)$, that is, if and only if 
   $\big( (\Gamma_1 g)^* | -(\Gamma_0 g)^* \big)
   = \vec a^* (B|-C)$ for some $\vec a\in\C^\eta$.
   Writing this as column vectors, we obtain
   \begin{equation*}
      \begin{pmatrix}
	 \Gamma_1 g \\ \Gamma_0 g 
      \end{pmatrix}
      = 
      \begin{pmatrix}
	 B^* \\ C^*
      \end{pmatrix}
      \vec a
      ,
      \qquad\text{hence}\qquad
      \Gamma_1 g = B^* \vec a
      \ \text{ and }\
      \Gamma_0 g = C^* \vec a
   \end{equation*}
   which shows \eqref{eq:AdjointBC}.
   The rank condition on $(B|-C)$ ensures the invertibility of $CC^*+BB^*$, hence the formula for $\vec a$ follows.
   If $B$ and $C$ are invertible, then \eqref{eq:AdjointBCinvertible} follows easily from \eqref{eq:AdjointBC}.
\end{proof}

For later use, we also note the following.
\begin{remark}
\label{rem:GreenStuffTBC}
Assume that $B$ and $C$ are invertible.
Then the following is true.
   \begin{enumerate}[label={\upshape(\roman*)}]
   \item 
   For every $f,g\in\mD(T_{B,C})$ we have that
      \begin{align}
      \label{eq:GreenBCinvertible}
      \scalar{f}{T_{B,C}g} - \scalar{T_{B,C}f}{g}
      = \scalar{\Gamma_1 f}{ (C^*B^{*-1} - B^{-1}C) \Gamma_1 g}.
      \end{align}

      \item
      $\Gamma_1|_{\mD(T_{B,C})}$ and $\Gamma_0|_{\mD(T_{B,C})}$ are surjective.
   \end{enumerate}
\end{remark}
\begin{proof}
   \begin{enumerate}[label={\upshape(\roman*)}]
   \item 
   Let $f,g\in\mD(T_{B,C})$. Then
   \begin{align*}
      \scalar{f}{T_{B,C}g} - \scalar{T_{B,C}f}{g}
      &= \scalar{f}{S^* g} - \scalar{S^* f}{g}
      = \scalar{\Gamma_0 f}{\Gamma_1 g} - \scalar{\Gamma_1 f}{\Gamma_0 g}
      \\
      &= \scalar{B^{-1}C\Gamma_1 f}{\Gamma_1 g} - \scalar{\Gamma_1 f}{B^{-1}C\Gamma_1 g}
      \\
      &= \scalar{\Gamma_1 f}{C^*B^{*-1}\Gamma_1 g} - \scalar{\Gamma_1 f}{B^{-1}C\Gamma_1 g}
      \\
      &= \scalar{\Gamma_1 f}{ (C^*B^{*-1} - B^{-1}C) \Gamma_1 g}.
   \end{align*}
   
   \item
    We prove the claim for $\Gamma_1$. The proof for $\Gamma_0$ is similar.
    Let $u\in\C^\eta$. 
    Then the surjectivity of $(\Gamma_0,\Gamma_1)$ in $\mD(S^*)$ guarantees the existence of $f_0$ such that $(\Gamma_0, \Gamma_1)f_1 = (0, u)$, and then the existence of $f_1\in\mD(S^*)$ with 
    $(\Gamma_0, \Gamma_1)f_0 = ( B^{-1} C \Gamma_1 f_1, 0)$. 
    Set $f := f_0 + f_1$. 
    Then $f\in \mD(T_{B,C})$ because $B\Gamma_0 f = B\Gamma_0 f_0 = C\Gamma_1 f_1 = C\Gamma_1 f$ and $\Gamma_1 f = u$.
    
   \end{enumerate}
\end{proof}

The remark shows also that all selfadjoint extensions of $S$ are of the form \eqref{eq:extensionsBT} with $\im(BC^*) = 0$.
\smallskip

Instead of writing the domain of $T_{B,C}$ as a \emph{restriction} of the domain of $\mD(S^*)$ as in \eqref{eq:extensionsBT}, we can equivalently view it as an \emph{extension} of the domain of $S$
\begin{equation*}
   \mD(T_{B,C}) = \mD(S) \dot + \gen \{ u_1, \dots, u_\eta\}
\end{equation*}
where $u_1, \dots, u_\eta \in \mD(S^*)$ are such that $\gen\{ \Gamma u_1, \dots, \Gamma u_\eta\} = \ker(B|-C)$.
The latter implies in particular that the $u_1, \dots, u_\eta$ are linearly independent modulo $\mD(S)$.

\medskip

It should be noted that there is a one-to-one relation between the $\eta$-dimensional subspaces of $\C^{2\eta}$ and the $\eta$-dimensional extensions $T$ of $S$ with $S\subseteq T \subseteq S^*$.
However, the matrices $B,C$ in \eqref{eq:extensionsBT} are not unique.
In fact, $S_{B,C} = S_{B', C'}$ if and only if $\ker(B|-C) = \ker(B'|-C')$, which is the case if and only if there exists an invertible $\eta\times \eta$ matrix $E$ such that $B' = EB$ and $C' = EC$.

\subsection{Dissipative extensions of a symmetric operator} 
Recall that a densely defined linear operator $T$ on a Hilbert space is called \define{dissipative} if 
\begin{equation*}
   \im \scalar{x}{Tx} \ge 0,
   \qquad x\in\mD(T).
\end{equation*}
It is called \define{maximally dissipative} if it has no non-trivial dissipative extension.
Note that every symmetric operator is dissipative and every selfadjoint operator is maximally dissipative.
\smallskip

If $T$ is dissipative, then $T-z$ is injective for every $z\in\C_-$.
A dissipative operator $T$ is maximally dissipative if $T-z$ is surjective for one (hence for all) $z\in\C_-$.
If both $T$ and $-T^*$ are dissipative, then both are in fact maximally dissipative.
\smallskip

If the defect index $\eta_+(T) = \dim\ker(T^*-\I)$ is finite, then a dissipative extension $\widehat T$ of $T$ is maximally dissipative if and only if $\dim( \mD(\widehat T)/\mD(T) ) = \eta_+(T)$.
\smallskip

In this work we will be mostly concerned with operators of the form $A= S + \I V$, where $S$ is symmetric and $V$ a bounded non-negative operator. In this case, a dissipative extension $\widetilde A$ of $A$ is called a \define{proper dissipative extension} if 
$\widetilde A \subseteq S^*+\I V=:A_{max}$.
Otherwise it is called a \define{nonproper dissipative extension of} $A$.
We will see in Theorem~\ref{thm:ChristophMaxDiss} that not every dissipative extension of $A$ needs to be a restriction of $A_{max}$.
However, every dissipative extension of a symmetric operator ($V=0$) is a proper extension as the next proposition shows.

\begin{proposition}[\protect{\cite[Lemma 6.2]{nonproper2018}, \cite[Theorem 4.19]{FNWproper2016}}]
   \label{prop:DissExtSymmetric}
   Let $S$ be a symmetric and $V$ a bounded non-negative operator. 

   \begin{enumerate}[label={\upshape(\roman*)}]

      \item
      If $\widetilde S$ is a dissipative extension of $S$, then $\widetilde S \subseteq S^*$.

      \item 
      The operator $A:= S + \I V$ is dissipative, $A^* = S^* - \I V$ and every dissipative extension $\widehat A$  of $A$ satisfies $\mD(\widehat A)\subseteq \mD(S^*)$.

   \end{enumerate}
\end{proposition}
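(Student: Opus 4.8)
The plan is to prove the two parts of Proposition~\ref{prop:DissExtSymmetric} separately, deriving part (ii) from part (i) applied to a larger symmetric operator, or — more robustly — by a direct resolvent argument. For part (i), I would argue via the standard fact recorded just above the proposition: a densely defined operator whose negative adjoint is also dissipative is maximally dissipative, and a dissipative operator $T$ is maximally dissipative precisely when $T-z$ is surjective for one (equivalently all) $z\in\C_-$. Since $S$ is symmetric, $S-z$ is injective with closed range for $z\in\C_-$, but this is not yet surjectivity; instead I would use that $S^*$ is the maximal operator and that any dissipative $\widetilde S \supseteq S$ has $\widetilde S^* \subseteq S^*$. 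The cleanest route: suppose $\widetilde S$ is dissipative with $S \subseteq \widetilde S$; then for $f \in \mD(S)$ and $g \in \mD(\widetilde S)$ we have $\scalar{Sf}{g} = \scalar{f}{S^*g}$ only once we know $g \in \mD(S^*)$, which is exactly what we want to show. So instead I would take $g\in\mD(\widetilde S)$ and any $z\in\C_-$; dissipativity gives $\|(\widetilde S - z)g\| \ge |\im z|\,\|g\|$, and I would want to identify $(\widetilde S-z)^{-1}$ on its range with a restriction of a resolvent of a maximally dissipative extension. Concretely, extend $\widetilde S$ to a maximally dissipative $\widehat S$ (possible since defect indices are finite and equal, or by Zorn's lemma in general); then $\widehat S - z$ is bijective for $z\in\C_-$, and $(\widehat S - z)^{-1}$ is a bounded everywhere-defined contraction-type operator. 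For $f\in\mD(S)$, $\scalar{(S-z)f}{(\widehat S^* - \bar z)^{-1}h} = \scalar{f}{h}$ for all $h$; but $\widehat S^* \subseteq S^*$ (adjoint of an extension of $S$), and pushing this through shows $\mD(\widehat S) \subseteq \mD(S^*)$, hence in particular $\mD(\widetilde S)\subseteq\mD(S^*)$ and $\widetilde S \subseteq S^*$.

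For part (ii), the dissipativity of $A = S + \I V$ is immediate: $\im\scalar{x}{(S+\I V)x} = \im\scalar{x}{Sx} + \re\scalar{x}{Vx} = \scalar{x}{Vx} \ge 0$ since $S$ is symmetric and $V\ge 0$. The identity $A^* = S^* - \I V$ follows because $V$ is bounded and selfadjoint, so $(S+\I V)^* = S^* + (\I V)^* = S^* - \I V$ with $\mD(A^*) = \mD(S^*)$. The substantive claim is that every dissipative extension $\widehat A$ of $A$ has $\mD(\widehat A) \subseteq \mD(S^*)$. Here I would exploit the boundedness of $V$: if $\widehat A \supseteq A = S + \I V$ is dissipative, consider $\widehat A - \I V$. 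One checks it is an extension of $A - \I V = S$, and it is again dissipative because for $x\in\mD(\widehat A)$, $\im\scalar{x}{(\widehat A - \I V)x} = \im\scalar{x}{\widehat A x} - \re\scalar{x}{Vx} $ — which need \emph{not} be non-negative, so $\widehat A - \I V$ is not obviously dissipative. The fix is to pass instead to $\widehat A - 2\I V$ versus $\widehat A$, or better: note $-\widehat A^* + \I V$ considerations. A cleaner approach: apply part (i) not to $\widehat A$ directly but use that any dissipative extension, after subtracting the bounded operator $\I V$, extends $S$ and — crucially — its domain equals $\mD(\widehat A)$ because $V$ is bounded with $\mD(V) = H$. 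So set $\widetilde S := \widehat A - \I V$, an operator with $\mD(\widetilde S) = \mD(\widehat A) \supseteq \mD(S)$ and $\widetilde S \supseteq S$. It is generally not dissipative, but one can still invoke a version of part (i): the relevant statement only needs that $\widetilde S$ is an operator extension of $S$ with $\widetilde S \subseteq S^{**} = \bar S$? That is false in general too. The honest resolution is to cite \cite{nonproper2018, FNWproper2016} as the authors do, but to reconstruct it: use that $\widehat A$ dissipative implies $\widehat A - \I\mu$ is boundedly invertible for $\mu > \|V\|$ large (since $\im\scalar{x}{(\widehat A - \I\mu)x} = \im\scalar{x}{\widehat A x} - \mu\|x\|^2 \le \scalar{x}{Vx} - \mu\|x\|^2 < 0$), and then argue as in part (i) that $(\widehat A - \I\mu)^{-1}$ maps into $\mD(S^*)$ by testing against $(A^* + \I\bar\mu)^{-1} = (S^* - \I V + \I\bar\mu)^{-1}$.

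I expect the main obstacle to be exactly this transfer from the symmetric case to the perturbed case: the naive subtraction of $\I V$ destroys dissipativity, so one cannot simply reduce (ii) to (i). The key technical device that makes it work is the boundedness of $V$, which guarantees (a) $\mD(A^*) = \mD(S^*)$ with $A^* = S^* - \I V$, (b) $\mD(\widehat A - \I V) = \mD(\widehat A)$, and (c) that one can shift by a large imaginary multiple $\I\mu$ to regain surjectivity/invertibility. The cleanest writeup uses (a)–(c) together with the resolvent identity $\scalar{(A - z)f}{(A^* - \bar z)^{-1}h} = \scalar{f}{h}$ for $f\in\mD(A)$, extended by density and by the bounded invertibility of $\widehat A - \I\mu$, to conclude $(\widehat A - \I\mu)^{-1}h \in \mD(A^*{}^*)^{\perp\perp}$-type membership forcing $\mD(\widehat A)\subseteq \mD(S^*)$; alternatively just invoke Proposition~\ref{prop:DissExtSymmetric}'s cited sources \cite[Lemma 6.2]{nonproper2018} and \cite[Theorem 4.19]{FNWproper2016} directly, which is presumably what the paper does.
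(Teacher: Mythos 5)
First, note that the paper itself gives no proof of this proposition: it is imported verbatim from the cited references, so your closing guess (``presumably the paper just invokes the sources'') is exactly right. There is therefore no in-paper argument to compare against, and your proposal has to stand on its own --- and as written it does not, for two concrete reasons.

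For part (i), your resolvent argument is circular. You pass to a maximally dissipative extension $\widehat S\supseteq\widetilde S$ and want to test $g=(\widehat S-z)^{-1}h$ against $(\widehat S^*-\bar z)^{-1}$ applied to $(S-\bar z)f$; but to identify $(S-\bar z)f$ with $(\widehat S^*-\bar z)f$ you need $S\subseteq\widehat S^*$, which (for closed $\widehat S$) is equivalent to $\widehat S\subseteq S^*$ --- precisely the claim being proved. The relation $\widehat S^*\subseteq S^*$ that you do have controls $\mD(\widehat S^*)$, not $\mD(\widehat S)$, and the phrase ``pushing this through'' is where the entire content is missing. The standard proof (in \cite{FNWproper2016} and going back to Phillips) is instead a quadratic-form/polarization argument: for $f\in\mD(S)$, $g\in\mD(\widetilde S)$ and $\lambda\in\C$, expand $0\le\im\scalar{f+t\lambda g}{\widetilde S(f+t\lambda g)}$ in $t\in\R$, use $\im\scalar{f}{Sf}=0$, and let $t\to0$ to force $\im\bigl(\lambda\scalar{f}{\widetilde Sg}+\bar\lambda\scalar{g}{Sf}\bigr)=0$ for all $\lambda$, whence $\scalar{f}{\widetilde Sg}=\scalar{Sf}{g}$ and $g\in\mD(S^*)$.

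For part (ii), the elementary computations ($A$ dissipative, $A^*=S^*-\I V$ by boundedness of $V$) are fine, but the key step fails: the inequality $\im\scalar{x}{\widehat Ax}\le\scalar{x}{Vx}$ that you use to make $\I\mu-\widehat A$ dissipative for large $\mu$ holds on $\mD(A)=\mD(S)$ but is \emph{false} on $\mD(\widehat A)$ in general. Indeed, for the non-proper extensions of Theorem~\ref{thm:ChristophMaxDiss} (e.g.\ the half-line operators $A_{r,k}$ in the introduction) one has $\im\scalar{f}{\widehat Af}=\im(r)|f(0)|^2+\scalar{f}{Vf}+\im\bigl(f(0)\scalar{f}{k}\bigr)$, which exceeds $\scalar{f}{Vf}$ as soon as $\im(r)>0$ and $f(0)\neq0$; the whole point of the non-proper theory is that the extension subspace carries extra dissipation. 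The correct argument in \cite[Lemma 6.2]{nonproper2018} is again the quadratic expansion of $\im\scalar{f+t\lambda g}{\widehat A(f+t\lambda g)}\ge0$: now the constant term is $\scalar{f}{Vf}\ge0$ rather than $0$, so the discriminant condition yields $\bigl|\scalar{f}{\widehat Ag}-\scalar{Af}{g}\bigr|\le 2\|V^{1/2}f\|\,\bigl(\im\scalar{g}{\widehat Ag}\bigr)^{1/2}\le C_g\|f\|$, and since $V$ is bounded this shows $f\mapsto\scalar{Sf}{g}$ is bounded on $\mD(S)$, i.e.\ $g\in\mD(S^*)$. Your instinct that boundedness of $V$ is the essential device is correct, but it enters through this estimate, not through a shifted-resolvent inversion.
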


If $V$ is a symmetric non-negative operator on $H$, then it has a unique non-negative selfadjoint square root $V^{1/2}$ and $H$ is the direct orthogonal sum $H = \ker V^{1/2} \oplus \overline{\range V^{1/2}}$.
So we can define $V^{-1/2}: \range(V^{1/2}) \to \range(V^{1/2})\cap \mD(V^{1/2})$.

The next theorem describes all dissipative extensions of such operators.

\begin{theorem} [\protect{\cite[Theorem 6.3]{nonproper2018}}]
   \label{thm:ChristophMaxDiss}
   Let $S$ be a symmetric and $V$ a non-negative bounded operator and set $A = S + \I V$.
   An extension $\widetilde A$ of $A$ is dissipative if and only if there exists a subspace $\mW \subseteq \mD(S^*)// \mD(S)$ and a linear operator 
   \begin{equation*}
      \mL: \mW \to \range(V^{1/2})
   \end{equation*}
   such that 
   \begin{equation}
      \label{eq:basiccrit}
      \im \scalar{w}{S^*w} \ge \frac{1}{4}  \|V^{-1/2} \mL w \|^2,
      \qquad w\in \mW
   \end{equation}
   and $\widetilde A$ is of the form
   \begin{equation*}
      \widetilde A = A_{\mW, \mL}
      \qquad \text{with}\qquad
      \mD(A_{\mW, \mL}) = \mD(A) \dot + \mW,
   \end{equation*}
   and
   \begin{equation*}
      \widetilde A(f_0+w) 
      = A_{\mW, \mL}(f_0+w)
      = (S^* + \I V)(f_0+w) + \mL w
   \end{equation*}
   for all $f_0\in \mD(A)$ and $w\in \mW$.
\end{theorem}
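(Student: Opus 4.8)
The plan is to prove both directions by reducing the dissipativity of an extension $\widetilde A$ of $A = S + \I V$ to a pointwise inequality on the finite-dimensional ``added'' subspace $\mW$.

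\textbf{Structure of an arbitrary extension.}
First I would record that, by Proposition~\ref{prop:DissExtSymmetric}(ii), any dissipative extension $\widetilde A$ of $A$ satisfies $\mD(\widetilde A) \subseteq \mD(S^*)$, so we may write $\mD(\widetilde A) = \mD(A)\dot + \mW$ for some subspace $\mW$ of $\mD(S^*)$ which is linearly independent modulo $\mD(A) = \mD(S)$, i.e.\ $\mW \subseteq \mD(S^*)/\!/\mD(S)$. Conversely, for the ``if'' direction we simply take $\mW$ as given. On $\mD(A)$ the operator $\widetilde A$ must agree with $A = S^* + \I V$ (since it extends $A$ and $\mD(S)\subseteq\mD(S^*)$), so $\widetilde A$ is completely determined by its values on $\mW$. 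Because $\widetilde A f \in H$ for $f\in \mW$ and $(S^* + \I V) f \in H$ as well, the difference defines a linear map $\mL\colon \mW \to H$, $\mL w := \widetilde A w - (S^* + \I V) w$, and $\widetilde A(f_0 + w) = (S^*+\I V)(f_0+w) + \mL w$ for all $f_0\in\mD(A)$, $w\in\mW$. The content to be extracted from dissipativity is that $\mL$ actually maps into $\range(V^{1/2})$ and satisfies \eqref{eq:basiccrit}.

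\textbf{The key computation.}
For $f_0 + w \in \mD(\widetilde A)$, expand $\im\scalar{f_0+w}{\widetilde A(f_0+w)}$. Using $A^* = S^* - \I V$ (Proposition~\ref{prop:DissExtSymmetric}(ii)) and the abstract Green's identity, the ``symmetric'' terms $\scalar{\cdot}{S^*\cdot}$ produce boundary terms, the $\I V$-terms produce $\|V^{1/2}\cdot\|^2$ contributions, and the $\mL w$ term produces $\re\scalar{f_0+w}{\mL w}$. The crucial point is to choose $f_0$ cleverly: since $\mD(A)=\mD(S)$ is dense and $S$ is symmetric with $\mW$ finite-dimensional, one can, for fixed $w$, let $f_0$ range so that $f_0 + tw$ ($t$ small, or $f_0 = sg$ for suitable $g\in\mD(S)$) probes the cross terms. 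Writing $x = f_0 + w$ and using that $\widetilde A$ restricted to $\mD(A)$ is just $A$ which is dissipative, the inequality $\im\scalar{x}{\widetilde A x}\ge 0$ for all such $x$ forces, after completing the square in the $V^{1/2}$-pairing, that $\re\scalar{h}{\mL w}$ is controlled by $\|V^{1/2}h\|$ for all $h$ (whence $\mL w \perp \ker V^{1/2}$, i.e.\ $\mL w\in\overline{\range V^{1/2}}$, and in fact $\mL w \in \range(V^{1/2})$ with $\|V^{-1/2}\mL w\|<\infty$), and then that $\im\scalar{w}{S^* w} \ge \tfrac14\|V^{-1/2}\mL w\|^2$. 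The ``if'' direction is the reverse: given $\mW,\mL$ with \eqref{eq:basiccrit}, for $x = f_0 + w$ one computes $\im\scalar{x}{\widetilde A x} = \im\scalar{f_0+w}{S^*(f_0+w)} + \|V^{1/2}(f_0+w)\|^2 + \re\scalar{f_0+w}{\mL w}$; the first term is $\ge 0$ on $\mD(A)$-contributions and equals $\im\scalar{w}{S^*w}$ plus cross terms that combine with $\re\scalar{f_0+w}{\mL w}$ and $\|V^{1/2}(f_0+w)\|^2$ into a perfect square $\|V^{1/2}(f_0+w) + \tfrac12 V^{-1/2}\mL w\|^2 \ge 0$ plus the leftover $\im\scalar{w}{S^*w} - \tfrac14\|V^{-1/2}\mL w\|^2 \ge 0$.

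\textbf{Main obstacle.}
The delicate step is the ``only if'' direction: showing that dissipativity of $\widetilde A$ \emph{on the full domain} $\mD(A)\dot+\mW$ (not merely on $\mW$) is equivalent to the single inequality \eqref{eq:basiccrit} on $\mW$. One must verify that the cross terms involving $f_0\in\mD(A)$ do not impose extra constraints and, conversely, that they are exactly what is needed to absorb the indefinite term $\re\scalar{w}{\mL w}$ into a square — this is where the density of $\mD(S)$ and the identity $\overline{\range(V^{1/2})}^{\perp} = \ker V^{1/2}$ enter, and where one shows $\mL w$ must in fact lie in $\range(V^{1/2})$ rather than just its closure (otherwise $V^{-1/2}\mL w$ would be unbounded and \eqref{eq:basiccrit} would fail). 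Since this is cited as \cite[Theorem 6.3]{nonproper2018}, I would present the computation in the form above and refer to that reference for the finite-dimensionality/closedness technicalities.
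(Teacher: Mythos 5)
This theorem is imported verbatim from \cite[Theorem 6.3]{nonproper2018}; the paper gives no proof of its own, so your sketch can only be compared with the argument in that reference, which it does in essence reproduce: decompose $\mD(\widetilde A)=\mD(A)\dot+\mW$ (legitimate by Proposition~\ref{prop:DissExtSymmetric}), read off $\mL$ as the defect of $\widetilde A$ from $S^*+\I V$ on $\mW$, and reduce dissipativity to a completed square in the $V^{1/2}$-pairing. Two computational slips should be fixed before this counts as a proof. First, the contribution of $\mL$ to $\im\scalar{x}{\widetilde A x}$ is $\im\scalar{f_0+w}{\mL w}$, not $\re\scalar{f_0+w}{\mL w}$, so the square is $\bigl\|V^{1/2}(f_0+w)-\tfrac{\I}{2}V^{-1/2}\mL w\bigr\|^2$. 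Second, there are no $S^*$-cross terms to absorb: for $f_0\in\mD(S)$ and $w\in\mD(S^*)$ one has $\scalar{f_0}{S^*w}+\scalar{w}{Sf_0}=2\re\scalar{f_0}{S^*w}\in\R$, so $\im\scalar{f_0+w}{S^*(f_0+w)}=\im\scalar{w}{S^*w}$ exactly; your phrase ``equals $\im\scalar{w}{S^*w}$ plus cross terms that combine with $\re\scalar{f_0+w}{\mL w}$'' misdescribes the mechanism. The whole interplay with $f_0$ is thus confined to $\|V^{1/2}(f_0+w)\|^2+\im\scalar{f_0+w}{\mL w}\ge -\im\scalar{w}{S^*w}$, which by density of $w+\mD(S)$ extends to all $h\in H$ and, after scaling $h\mapsto th$, yields both $\mL w\perp\ker V^{1/2}$ and the bound $|\scalar{h}{\mL w}|\le 2(\im\scalar{w}{S^*w})^{1/2}\|V^{1/2}h\|$; Riesz representation on $\overline{\range V^{1/2}}$ then gives $\mL w\in\range(V^{1/2})$ together with \eqref{eq:basiccrit}, which is cleaner than the ``probe with $f_0+tw$'' heuristic you describe. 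Note also that the theorem does not assume finite defect indices, so you should not lean on finite-dimensionality of $\mW$.
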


It should be noted that the map $\mD(\widehat A)\to H,\ f_0+w \mapsto w$ is in general unbounded.

\begin{corollary}
   \label{cor:domains}
   Let $\mW\subseteq \mD(S^*)//\mD(S)$.
   Then $\mD(S)\dot + \mW$ is the domain of a dissipative extension of $A+\I V$ if and only if it is the domain of a dissipative restriction of $S^*$.

   In particular, a dissipative extension $A_{\mW,\mL}$ is maximally dissipative if and only if $S_{\mW}$ is a maximally dissipative extension of $S$. 
   If $\eta_+(S) < \infty$ this is the case if and only if 
   $\dim(\mD(A_{\mW,\mL})/\mD(A)) = \dim( \mD(S_{\mW})/\mD(S) ) = \eta_+(S)$.
\end{corollary}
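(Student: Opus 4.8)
The plan is to deduce Corollary~\ref{cor:domains} directly from Theorem~\ref{thm:ChristophMaxDiss} together with Proposition~\ref{prop:DissExtSymmetric}(i). First I would prove the equivalence of the two "if and only if" conditions about the domain $\mD(S)\dot+\mW$. For the forward direction, suppose $\mD(S)\dot+\mW$ is the domain of a dissipative extension of $A=S+\I V$. By Theorem~\ref{thm:ChristophMaxDiss}, there is a linear operator $\mL:\mW\to\range(V^{1/2})$ with $\im\scalar{w}{S^*w}\ge\frac14\|V^{-1/2}\mL w\|^2\ge 0$ for all $w\in\mW$; in particular $\im\scalar{w}{S^*w}\ge 0$ on $\mW$. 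Since also $\im\scalar{f_0}{S^*f_0}=\im\scalar{f_0}{Sf_0}\ge 0$ for $f_0\in\mD(S)$ (as $S$ is symmetric) and the cross terms involving $\mW$ vanish upon taking imaginary parts using Green's identity for $S^*$ — this is the one routine computation — the restriction $S_{\mW}:=S^*|_{\mD(S)\dot+\mW}$ is dissipative. Conversely, if $S_{\mW}$ is a dissipative restriction of $S^*$, then taking $\mL=0$ in Theorem~\ref{thm:ChristophMaxDiss} (condition~\eqref{eq:basiccrit} reduces to $\im\scalar{w}{S^*w}\ge0$, which holds since $S_{\mW}\subseteq S^*$ is dissipative) shows that $A_{\mW,0}=(S^*+\I V)|_{\mD(S)\dot+\mW}$ is a dissipative extension of $A$ with domain $\mD(S)\dot+\mW$.

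Next I would handle the maximality statement. Given any dissipative extension $A_{\mW,\mL}$ of $A$ with domain $\mD(A)\dot+\mW=\mD(S)\dot+\mW$ (using $\mD(A)=\mD(S)$), it is maximally dissipative iff $\dim(\mD(A_{\mW,\mL})/\mD(A))=\eta_+(A)$, provided $\eta_+(A)<\infty$; I would note that $\eta_+(A)=\eta_+(S)$ because $A^*=S^*-\I V$ by Proposition~\ref{prop:DissExtSymmetric}(ii) and $V$ is bounded, so $\ker(A^*-\I)$ and $\ker(S^*-\I)$ have the same dimension — more carefully, one should cite or reiterate that a bounded perturbation does not change finiteness of the defect index, and that the abstract fact quoted before Theorem~\ref{thm:AdjointBT}, namely "a dissipative extension $\widehat T$ of $T$ is maximally dissipative iff $\dim(\mD(\widehat T)/\mD(T))=\eta_+(T)$ when $\eta_+(T)<\infty$", applies both to $A$ and to $S$. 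Since $\dim(\mD(A_{\mW,\mL})/\mD(A))=\dim\mW=\dim(\mD(S_{\mW})/\mD(S))$, maximality of $A_{\mW,\mL}$ (equivalently $\dim\mW=\eta_+(A)=\eta_+(S)$) is equivalent to maximality of the corresponding dissipative restriction $S_{\mW}$ of $S$, which exists by the first part.

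The main obstacle I anticipate is not analytic but bookkeeping: one must be careful that the correspondence $\mW\mapsto\mD(S)\dot+\mW$ is well-defined and that "dissipative restriction of $S^*$ with domain $\mD(S)\dot+\mW$" is the right target — here Proposition~\ref{prop:DissExtSymmetric}(i) is what guarantees every dissipative extension of the symmetric operator $S$ is automatically a restriction of $S^*$, so the phrase "dissipative restriction of $S^*$" in the statement is really just "dissipative extension of $S$." I would also double-check the claim $\eta_+(A)=\eta_+(S)$: the cleanest argument is that $A=S+\I V$ with $V$ bounded, so $S\subseteq A$ gives $A^*\subseteq S^*$, and $A^*=S^*-\I V$ has the same domain as $S^*$; then $\dim(\mD(S^*)/\mD(S))=2\eta$ forces $\mD(A)=\mD(S)$ and $\eta_\pm(A)=\eta_\pm(S)=\eta$. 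With these points pinned down, the corollary follows formally, and no step requires more than the cited results plus Green's identity.
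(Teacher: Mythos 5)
Your proof of the first equivalence is correct and is essentially the paper's own argument: extract $\mL$ from Theorem~\ref{thm:ChristophMaxDiss}, observe that \eqref{eq:basiccrit} forces $\im\scalar{w}{S^*w}\ge 0$ on $\mW$ (so that $A_{\mW,0}$, and hence $S_\mW$, is dissipative), and for the converse take $\mL=0$; the cross-term computation you mention is exactly why the condition on $\mW$ alone suffices, and Proposition~\ref{prop:DissExtSymmetric} identifies dissipative extensions of $S$ with dissipative restrictions of $S^*$.

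The weak point is your justification of $\eta_+(A)=\eta_+(S)$ in the maximality part. The ``cleanest argument'' you offer fails at its first step: $S\not\subseteq A$, since $A=S+\I V$ has the same domain as $S$ but a different action wherever $Vf\neq 0$; hence ``$A^*\subseteq S^*$'' does not follow from an inclusion of operators (it is true only because one computes $A^*=S^*-\I V$ directly), and the assertion that $\dim(\mD(S^*)/\mD(S))=2\eta$ ``forces'' $\eta_\pm(A)=\eta_\pm(S)$ is a non sequitur: $\eta_+(A)=\dim\ker(S^*-\I V-\I)$ is not controlled by domain dimensions alone. The equality is true, but needs an argument. Either (a) use perturbation stability of the deficiency: for $t>\|V\|$ one has $\|(S+\I t)f\|\ge t\|f\|$, so $\range(S+\I V+\I t)$ and $\range(S+\I t)$ have the same codimension, and $\dim\ker(A^*-\lambda)$ is constant on $\C_+$ because $A-\overline{\lambda}$ is bounded below with closed range on the connected set $\C_-$ (note that invoking boundedness of $V$ directly at $\lambda=\I$, as you implicitly do, is not enough when $\|V\|\ge 1$); or (b), more in the spirit of the corollary itself, deduce it from the first part: both $A$ and $S$ admit maximally dissipative extensions, their dissipative extensions have exactly the same domains, and the maximal ones are precisely the $\eta_+(\cdot)$-dimensional ones, which forces $\eta_+(A)=\eta_+(S)$. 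The paper does not spell this out either (it refers to an external corollary), so with either repair your argument for the ``in particular'' statement is complete.
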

\begin{proof}
   If $A_{\mW,\mL}$ is dissipative, then so is $A_{\mW,0}$, and consequently also
   $S_{\mW} := S^*|_{\mD(A_{\mW, \mL})}$, by \eqref{eq:basiccrit}.
   Then clearly $S_{\mW}$ is a dissipative restriction of $S^*$.
   On the other hand, if $\widetilde S$ is a dissipative extension of $S$ with domain $\mD(\widetilde S ) = \mD(S)\dot + \mW$, then it is a restriction of $S^*$ by Proposition~\ref{prop:DissExtSymmetric}, and 
   $A_{\mW,0}$ with domain $\mD(\widetilde S)$ is a dissipative extension of $A=S+\I V$ (and a dissipative restriction of $S^*+\I V$).
   Cf. \cite[Corollary 5.9]{nonproper2018}.
\end{proof}

The corollary shows that in order to identify all possible domains of dissipative extensions of $A=S+ \I V$, it is enough to find all possible domains of dissipative extensions of $S$.
Let us assume that $S$ has equal and finite defect indices. 
Since all maximally dissipative extensions of $S$ are $\eta$-dimensional restrictions of $S^*$, they are necessarily of the form $T_{B,C}$ as in \eqref{eq:extensionsBT}.
It depends on the matrices $B,C$ whether the extension $T_{B,C}$ is in fact dissipative.

\begin{theorem}
    \label{thm:MaxDissCritBT}
    Let $S$ be a symmetric operator on a Hilbert space $H$ with $\eta := \eta_+(S) = \eta_-(S) < \infty$ and let $V\ge 0$ be a bounded operator on $H$.
    Let $(\Gamma_0, \Gamma_1, \mK)$ be a boundary triple for $S^*$ and, as in  \eqref{eq:extensionsBT}, let 
    \begin{equation}
       \label{eq:extensionsBTinvertible}
       \mD(T_{B,C}) = 
       \left\{ f \in \mD(S^*) : B\Gamma_0 f = C \Gamma_1 f \right\},
       \quad
       T_{B,C} f = S^*f.
    \end{equation}
    Then $T_{B,C}$ is maximally dissipative if and only if 
    $\rk(B|-C)=\eta$ and
    \begin{equation}
    \label{eq:DissCondBC}
        \im (B C^*) \ge 0.
    \end{equation}
    If $B$ is invertible, then this equivalent to 
    \begin{equation}
        \im (C^*B^{*-1}) \ge 0.
    \end{equation}
\end{theorem}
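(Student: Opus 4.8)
The plan is to reduce the statement to the criterion \eqref{eq:basiccrit} of Theorem~\ref{thm:ChristophMaxDiss} applied to the symmetric operator $S$ with $V=0$, combined with the dimension count from Corollary~\ref{cor:domains}. Since $V\ge 0$ only enters through the inequality \eqref{eq:basiccrit}, and for $T_{B,C}$ we are taking $\mL = 0$ (no nonlocal term), the extension $T_{B,C}$ viewed as $A_{\mW,0}$ with $\mW = \gen\{u_1,\dots,u_\eta\}$, $\gen\{\Gamma u_1,\dots,\Gamma u_\eta\} = \ker(B|{-}C)$, is dissipative if and only if $\im\scalar{w}{S^*w}\ge 0$ for all $w\in\mW$; by Corollary~\ref{cor:domains} it is then automatically maximally dissipative once $\dim\mW = \eta = \eta_+(S)$, which is exactly guaranteed by $\rk(B|{-}C)=\eta$. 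So the real content is: translate ``$\im\scalar{w}{S^*w}\ge 0$ for all $w\in\mW$'' into the matrix condition \eqref{eq:DissCondBC}.

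For this translation I would compute $\im\scalar{w}{S^*w}$ using the abstract Green's identity. For $w\in\mW\subseteq\mD(S^*)$ we have
\begin{equation*}
   2\I\,\im\scalar{w}{S^*w}
   = \scalar{w}{S^*w} - \scalar{S^*w}{w}
   = \scalar{\Gamma_0 w}{\Gamma_1 w} - \scalar{\Gamma_1 w}{\Gamma_0 w}
   = 2\I\,\im\scalar{\Gamma_0 w}{\Gamma_1 w}.
\end{equation*}
Now write $\Gamma w = \binom{\Gamma_0 w}{\Gamma_1 w}$; the condition $w\in\mD(T_{B,C})$ means $B\Gamma_0 w = C\Gamma_1 w$. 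The key algebraic step is to parametrize $\ker(B|{-}C)$. Since $\rk(B|{-}C)=\eta$, one can write $\binom{\Gamma_0 w}{\Gamma_1 w} = \binom{C^*}{B^*}\vec a$ for $\vec a$ ranging over $\C^\eta$ — this is exactly the parametrization of the kernel of $(B|{-}C)$ by the rows, and it is precisely formula \eqref{eq:AdjointBC} of Theorem~\ref{thm:AdjointBT} (indeed $\mW$ can be chosen so that $T_{B,C}$ restricted appropriately matches the description of $T^*$ there; more directly, $\binom{C^*}{B^*}\C^\eta = \ker(B|{-}C)$ because $(B|{-}C)\binom{C^*}{B^*} = BC^* - CB^* = 2\I\,\im(BC^*)$ need not vanish, so this needs care — see below). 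Substituting, $\im\scalar{\Gamma_0 w}{\Gamma_1 w} = \im\scalar{C^*\vec a}{B^*\vec a} = \im\scalar{\vec a}{CB^*\vec a} = \scalar{\vec a}{\im(CB^*)\vec a} = -\scalar{\vec a}{\im(BC^*)\vec a}$... so I must be careful with signs and with which of $C^*,B^*$ sits where; the correct identification will make $\im\scalar{w}{S^*w} = \scalar{\vec a}{(\text{something})\vec a}$ with the ``something'' being $\pm\im(BC^*)$ or $\pm\im(B C^*)$ up to the sign conventions fixed by Green's identity, and nonnegativity for all $\vec a\in\C^\eta$ is equivalent to $\im(BC^*)\ge 0$.

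The subtlety — and the step I expect to be the main obstacle — is the correct parametrization of $\ker(B|{-}C)$ as $\binom{C^*}{B^*}\C^\eta$: this identity is \emph{not} true for arbitrary $B,C$ of full rank (its validity is essentially equivalent to $\im(BC^*)=0$). The right statement is that $\ker(B|{-}C)$ is the image of $\binom{C^*}{B^*}$ only after the correct arrangement; what is actually true in general is that $(B|{-}C)$ and the ``dual'' matrix $(B^*|{-}C^*)^{\mathsf T}$ are related so that $\binom{x_0}{x_1}\in\ker(B|{-}C)$ iff it lies in the row space of the complementary block. I would instead argue more robustly: pick any $\vec a$-parametrization $\binom{\Gamma_0 w}{\Gamma_1 w} = K\vec a$ with $K$ an injective $2\eta\times\eta$ matrix whose columns span $\ker(B|{-}C)$, so that $\im\scalar{w}{S^*w} = \scalar{\vec a}{G\vec a}$ with $G = \im(K^* J K)$, $J = \binom{0\ I}{I\ 0}$; then show $G\ge 0 \iff \im(BC^*)\ge 0$ by exhibiting, when $B$ is invertible, the explicit kernel basis $K = \binom{B^{-1}C}{I}$ (legitimate since then $B\Gamma_0 w = C\Gamma_1 w \iff \Gamma_0 w = B^{-1}C\Gamma_1 w$), which yields $K^*JK = B^{-1}C + (B^{-1}C)^* \cdot(\dots)$ — more precisely $\scalar{w}{S^*w}-\scalar{S^*w}{w} = \scalar{\Gamma_1 w}{(C^*B^{*-1} - B^{-1}C)\Gamma_1 w}$ by Remark~\ref{rem:GreenStuffTBC}(i), and since $\Gamma_1|_{\mD(T_{B,C})}$ is surjective by Remark~\ref{rem:GreenStuffTBC}(ii), nonnegativity is equivalent to $\tfrac{1}{2\I}(C^*B^{*-1} - B^{-1}C) = \im(C^*B^{*-1})\ge 0$, giving the ``$B$ invertible'' case directly. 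For the general full-rank case I would then either reduce to the invertible case by a limiting/perturbation argument ($B\rightsquigarrow B+\varepsilon E$) or invoke the $E$-equivalence $B'=EB$, $C'=EC$ noted after Remark~\ref{rem:GreenStuffTBC} to choose representatives with $B$ invertible whenever $\rk(B|{-}C)=\eta$ permits, handling the genuinely non-invertible-$B$ case by symmetry (swapping the roles of $\Gamma_0$ and $\Gamma_1$, equivalently replacing the boundary triple by $(\Gamma_1,-\Gamma_0,\mK)$, which interchanges $B\leftrightarrow C$). Finally, the ``only if'' direction is immediate: if $T_{B,C}$ is dissipative then in particular $\im\scalar{w}{S^*w}\ge0$ on $\mW$, which by the computation above forces \eqref{eq:DissCondBC}; and $\rk(B|{-}C)=\eta$ is needed for $\dim\mW=\eta$, i.e.\ for maximality via Corollary~\ref{cor:domains}.
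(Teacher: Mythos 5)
Your argument in the case of invertible $B$ is correct and self-contained, and it is a genuinely different route from the paper's: Remark~\ref{rem:GreenStuffTBC} gives $2\I\,\im\scalar{w}{S^*w} = \scalar{\Gamma_1 w}{(C^*B^{*-1}-B^{-1}C)\Gamma_1 w}$ for $w\in\mD(T_{B,C})$, the surjectivity of $\Gamma_1|_{\mD(T_{B,C})}$ turns this into $\im(C^*B^{*-1})\ge 0$, and congruence by $B^*$ converts that into $\im(BC^*)\ge 0$; the maximality part via the dimension count is also fine. You also correctly diagnosed the trap: $\binom{C^*}{B^*}\C^\eta$ is \emph{not} $\ker(B|{-}C)$ in general, since $(B|{-}C)\binom{C^*}{B^*}=2\I\,\im(BC^*)$; that column space parametrizes $\mD(T_{B,C}^*)$, not $\mD(T_{B,C})$.

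The gap is the reduction of the general case $\rk(B|{-}C)=\eta$ to the invertible case, and none of your three proposed devices closes it. The $E$-equivalence $B\mapsto EB$, $C\mapsto EC$ with $E$ invertible preserves $\rk B$, so it can never produce an invertible representative when $B$ is singular. Swapping the boundary triple to $(\Gamma_1,-\Gamma_0,\mK)$ only interchanges the roles of $B$ and $C$, so it rescues the case where $C$ is invertible; but both can be singular simultaneously while $\rk(B|{-}C)=\eta$, e.g. $B=\bigl(\begin{smallmatrix}1&0\\0&0\end{smallmatrix}\bigr)$, $C=\bigl(\begin{smallmatrix}0&0\\0&1\end{smallmatrix}\bigr)$, which is exactly case \ref{item:caseIIIS} of Theorem~\ref{thm:MaxDisRestS} with $c_{11}=0$ (conditions $f(0)=0$, $f'(1)=0$) — so the uncovered case is not exotic. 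The perturbation $B\rightsquigarrow B+\varepsilon E$ is only a sketch: you would need invertible $B_\varepsilon\to B$ with $\im(B_\varepsilon C^*)\ge 0$ \emph{and} a convergence argument identifying $\lim_\varepsilon\ker(B_\varepsilon|{-}C)$ with $\ker(B|{-}C)$, and neither is automatic (for instance $\det(B+zC)$ can vanish identically even when $\rk(B|{-}C)=\eta$). What your direct route ultimately requires is the linear-algebra lemma that for any injective $K=\binom{K_0}{K_1}$ with $\range K=\ker(B|{-}C)$ one has $\im(K_0^*K_1)\ge 0\iff\im(BC^*)\ge 0$ — equivalently, that an $\eta$-dimensional subspace of $\C^{2\eta}$ is nonnegative for the Green form iff its form-orthocomplement is nonpositive — and that lemma is exactly what is missing. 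The paper sidesteps it entirely by evaluating the form on $\mD(T_{B,C}^*)$, where the parametrization $\Gamma_0 g=C^*\vec a$, $\Gamma_1 g=B^*\vec a$ of Theorem~\ref{thm:AdjointBT} is valid for \emph{arbitrary} $B,C$ of full joint rank and yields $\scalar{\vec a}{\im(BC^*)\vec a}$ on the nose, concluding via the equivalence between maximal dissipativity of $T$ and of $-T^*$.
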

\begin{proof}
   Recall that a dissipative extension $T$ of $S$ is maximally dissipative if and only if it is an $\eta$-dimensional extension of $S$.
   So let us assume that $\rk(B|-C)=\eta$.
   We have to show that the extension $T_{B,C}$ is dissipative if and only if \eqref{eq:DissCondBC} holds.
   To this end, we will show that \eqref{eq:DissCondBC} is equivalent the dissipativity of $-T_{B,C}^*$.
   Then $-T_{B,C}^*$ is maximally dissipative, and therefore $T_{B,C} = -(-T_{B,C}^*)^*$ is so, too.
   So it is sufficient to check when
   $\frac{1}{2\I} (\scalar{f}{-T_{B,C}^*f} - \scalar{-T_{B,C}^*f}{f}) \ge 0$ for all $f\in\mD(T^*)$.
   Let
   $\vec a \in \C^\eta$ such that 
   $\Gamma_0 f = C^* \vec a$, $\Gamma_1 f = B^* \vec a$.
   Then,
   \begin{align*}
      \frac{1}{2\I} \big( \scalar{f}{-T_{B,C}^*f} - \scalar{-T^*f}{f} \big)
      & = \frac{1}{2\I} \big( -\scalar{f}{S^*f} + \scalar{S^*f}{f} \big)
      = \frac{1}{2\I} \big( -\scalar{\Gamma_0 f}{\Gamma_1 f} + \scalar{\Gamma_1 f}{\Gamma_0 f} \big)
      \\
      &= \frac{1}{2\I} \scalar{\vec a}{ (BC^* - CB^*) \vec a}
      = \bscalar{\vec a}{ \textstyle \frac{1}{2\I} (BC^* - CB^*) \vec a}
      \\
      & = \bscalar{\vec a}{ \im(BC^*) \vec a}.
   \end{align*}
   This is true for every $\vec a\in\C^\eta$ if and only if $\im(BC^*)\ge 0$.
   If $B$ is invertible, we note that for all $\vec a\in\C^\eta$
   \begin{equation*}
      \scalar{\vec a}{(BC^*-CB^*) \vec a}
      = \scalar{B^* \vec a}{ ( C^*B^*{}^{-1} - B^{-1} C) B^*\vec a},
   \end{equation*}
   hence $\im (BC^*)\ge 0$ if and only if $\im (C^* B^*{}^{-1})\ge 0$.
\end{proof}

Let $S$ be symmetric with equal and finite defect indices $\eta_+(S) = \eta_-(S) = \eta$ and let $V\ge 0$ be a bounded operator on $H$.
Let $\widehat A$ be a maximally dissipative extension of $A = S+\I V$.
Then $\widehat S = S^*|_{\mD(\widehat A)}$ is a maximally dissipative extension of $S$, so we can write the domain of $\widehat A$ as
\begin{align}
    \label{eq:mUBC}
    \mD(\widehat A) &= \mD(S) \dot +\, \mU
    = \left\{ f\in \mD(S^*) : B\Gamma_0 f = C \Gamma_1 f\right\}
\end{align}
where $\mU$ is an $\eta$-dimensional subspace $\mU\subseteq \mD(S^*)//\mD(S)$ and $(B|-C)$ is a $\eta\times 2\eta$ matrix of rank $\eta$ with $\im(BC^*)\ge 0$
We also know that $\widehat A$ is of the form 
\begin{align}
    \widehat A f = (S^*+\I V) f + \mL P f
\end{align}
where $P$ is the projection of $\mD(\widehat A)$ onto $\mU$ along $\mD(S)$
and $\mL:\mU\to \range(V^{1/2})$.
Note also that $(\Gamma_0, \Gamma_1)|_\mU$ is injective because 
$(\Gamma_0 f, \Gamma_1 f) = (\Gamma_0 g, \Gamma_1 g)$ implies that $f-g\in\ker(\Gamma_0,\Gamma_1) = \mD(S)$.

Now we want to describe the adjoint of $\widehat A$ in terms of the matrices $B$ and $C$ under the assumption that $B$ is invertible.
The proof is analogous to that of Theorem~\ref{thm:AdjointBT}.

\begin{theorem}
    \label{thm:AdjointOfA}
    Let $\widehat A$ be a maximally dissipative extension of $A = S + \I V$ as in Theorem~\ref{thm:ChristophMaxDiss}.
    Then the domain of $\widehat A$ is of the form
    \begin{align}
        \mD(\widehat A) 
        = \{ f\in\mD(S^*) : B\Gamma_0 f = C\Gamma_1 f\}
        = \mD(S) \dot + \mU
    \end{align}
    where $\rk(B|-C)=\eta$ and 
    $\mU\subseteq \mD(S^*)//\mD(S)$ is of dimension $\eta$.
    Let $P$ be the projection of $\mD(\widehat A)$ onto $\mU$ along $\mD(S)$ and 
    let $\mL:\mU \to\range(V^{1/2})$ so that 
    \begin{align}
       \widehat A f = (S^* + \I V)f + \mL P f.
    \end{align}
    If $B$ is invertible, then the adjoint of $\widehat A$ is 
    \begin{align}
        \widehat A^*:
        \qquad\mD(\widehat A^*) =
        \left\{ g\in \mD(S^*) :
          \Gamma_0 g - C^*B^{*-1}\Gamma_1 g + 
          \begin{pmatrix} \scalar{k_1}{g} \\ \vdots \\ \scalar{k_\eta}{g}
        \end{pmatrix} = 0
        \right\},
        \qquad 
        \widehat A^* f = S^* f
    \end{align}
    where $k_1, \dots, k_\eta\in \range\mL$ as defined in~\eqref{eq:UK}.
\end{theorem}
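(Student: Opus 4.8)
The plan is to mimic the proof of Theorem~\ref{thm:AdjointBT}, but now keeping track of the extra term $\mathcal L P f$ that makes $\widehat A$ a nonproper extension. Since $A = S + \I V \subseteq \widehat A$ and $\widehat A$ is maximally dissipative, its adjoint satisfies $S - \I V = A^* \supseteq \widehat A^*$, and in particular $S \subseteq S^*|_{\mathcal D(\widehat A^*)} + \I V$; more usefully, since $\mathcal D(\widehat A) \supseteq \mathcal D(S)$ and $\widehat A$ acts as $S^* + \I V$ on $\mathcal D(S)$, we get $S + \I V \subseteq \widehat A$, hence $\widehat A^* \subseteq (S+\I V)^* = S^* - \I V$, so $\widehat A^* + \I V$ is a restriction of $S^*$. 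First I would fix $g \in \mathcal D(S^*)$ and compute, for arbitrary $f = f_0 + w \in \mathcal D(\widehat A)$ with $f_0 \in \mathcal D(A)$ and $w \in \mathcal U$,
\begin{equation*}
   \scalar{g}{\widehat A f} - \scalar{\widehat A^* g}{f}
   = \scalar{g}{(S^*+\I V)f} + \scalar{g}{\mathcal L P f} - \scalar{(S^* - \I V)g}{f}
   = \scalar{g}{S^* f} - \scalar{S^* g}{f} + \scalar{g}{\mathcal L P f},
\end{equation*}
where I have used $A^* g = S^* g - \I V g$ (Proposition~\ref{prop:DissExtSymmetric}(ii)) and that $V$ is selfadjoint and bounded, so the $\I V$ terms cancel. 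Then $g\in\mathcal D(\widehat A^*)$ iff this vanishes for all $f\in\mathcal D(\widehat A)$.

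Next I would apply the abstract Green's identity to the first difference, obtaining $\scalar{\Gamma_0 f}{\Gamma_1 g} - \scalar{\Gamma_1 f}{\Gamma_0 g}$, and rewrite $\scalar{g}{\mathcal L P f}$ in terms of boundary data. Here is the key point: picking a basis $w_1,\dots,w_\eta$ of $\mathcal U$ and letting $k_j := \mathcal L w_j \in \range\mathcal L$ (this is the definition in~\eqref{eq:UK}), and using that $P f = \sum_j c_j w_j$ for scalars $c_j$ determined \emph{linearly} by the boundary values $\Gamma f$ (since $(\Gamma_0,\Gamma_1)|_{\mathcal U}$ is injective and $\mathcal D(S) = \ker(\Gamma_0,\Gamma_1)$), one expresses $\sum_j \overline{c_j}\,\scalar{k_j}{g}$ as an inner product of $\Gamma f$ against a vector built from $\scalar{k_j}{g}$. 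Combining everything, the condition ``$g\in\mathcal D(\widehat A^*)$'' becomes: the row vector $\big((\Gamma_1 g)^* \mid -(\Gamma_0 g)^*\big)$ plus a correction row vector coming from the $k_j$-terms is a linear combination of the rows of $(B\mid -C)$, i.e.\ equals $\vec a^*(B\mid -C)$ for some $\vec a \in \C^\eta$. Using invertibility of $B$ to solve for $\vec a$ exactly as at the end of Theorem~\ref{thm:AdjointBT}, and rearranging, yields precisely
\begin{equation*}
   \Gamma_0 g - C^* B^{*-1}\Gamma_1 g + \begin{pmatrix}\scalar{k_1}{g}\\ \vdots \\ \scalar{k_\eta}{g}\end{pmatrix} = 0.
\end{equation*}

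The main obstacle is bookkeeping the passage from the pairing $\scalar{g}{\mathcal L P f}$ to a clean column vector $(\scalar{k_j}{g})_j$ matching the normalization implicit in~\eqref{eq:UK}. Concretely: $P f$ depends on $f$ through $\Gamma f = (\Gamma_0 f, \Gamma_1 f)$, which ranges over $\ker(B\mid -C)$; since $B$ is invertible one has $\Gamma_0 f = B^{-1}C\,\Gamma_1 f$, so $\Gamma f$ is parametrized by $\Gamma_1 f \in \C^\eta$ alone, and by Remark~\ref{rem:GreenStuffTBC}(ii) (applied with the roles of $\mathcal U$ here) $\Gamma_1|_{\mathcal D(T_{B,C})}$ is surjective, so this parametrization is onto $\C^\eta$. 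One must check that the basis $w_1,\dots,w_\eta$ of $\mathcal U$ can (and in~\eqref{eq:UK} should) be chosen so that $\Gamma_1 w_j = e_j$, the standard basis vectors; then $P f = \sum_j (\Gamma_1 f)_j\, w_j$, so $\scalar{g}{\mathcal L P f} = \sum_j (\Gamma_1 f)_j \overline{\scalar{g}{k_j}}$ — wait, the conjugation falls on the wrong side, so instead write $\scalar{g}{\mathcal L P f} = \overline{\big\langle \mathcal L P f, g\big\rangle}$ is not linear; rather directly $\scalar{g}{\mathcal L P f} = \sum_j \overline{(\Gamma_1 f)_j}\,\scalar{g}{k_j}= \big\langle \Gamma_1 f,\ (\scalar{g}{k_j})_j\big\rangle$ using antilinearity in the first slot — so the added term contributes $\overline{(\scalar{g}{k_j})_j}^{\,*}$ acting on $\Gamma_1 f$, and after taking conjugates to land in the row-vector formulation and absorbing it into $\vec a^*(B\mid -C)$, the $k_j$-column appears exactly as written (possibly up to replacing $k_j$ by a fixed scalar multiple, which is the content of the normalization fixed in~\eqref{eq:UK}). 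Once the indexing and the placement of complex conjugates are pinned down against that definition, the rest is the linear-algebra manipulation already carried out in Theorem~\ref{thm:AdjointBT}, now with the inhomogeneous term.
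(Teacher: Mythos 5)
Your proposal follows essentially the same route as the paper's proof: fix $g\in\mD(S^*)$, apply the abstract Green's identity, choose the basis $u_1,\dots,u_\eta$ of $\mU$ with $\Gamma_1 u_j = \vec e_j$ so that $\mL Pf = \sum_j(\Gamma_1 f)_j k_j$, and reduce membership in $\mD(\widehat A^*)$ to the vanishing of a boundary vector (the paper tests continuity of $f\mapsto\scalar{g}{\widehat Af}$ rather than pairing against the candidate adjoint $(S^*-\I V)g$, but the two criteria are equivalent here since $\widehat A^*\subseteq A^*=S^*-\I V$). The conjugate bookkeeping you flag resolves immediately under the paper's convention of linearity in the second slot: $\scalar{g}{\mL Pf}=\sum_j(\Gamma_1 f)_j\scalar{g}{k_j}=\bscalar{(\scalar{k_j}{g})_j}{\Gamma_1 f}$, which is exactly the column vector in the statement, with no rescaling of the $k_j$ needed.
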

It is possible to write down  an expression for $\mD(\widehat A^*)$ even if $B$ is not invertible. 
However, the expression is more tedious and it is not needed for the purposes of our present work.
\begin{proof}
    We know from \cite{FDiss17, CP68} that $\mD(\widehat A)\subseteq \mD(S^*)$.
    Let $g\in \mD(S^*)$.
    Then for every  $f\in\mD(\widehat A)$
   \begin{align*}
      \scalar{g}{\widehat Af}
      &= \scalar{g}{S^* f}
      + \scalar{g}{(\I V + \mL P) f}
      \\
      &= \scalar{S^* g}{f} 
      + \scalar{g}{\I V f}
      + \scalar{\Gamma_0 g}{\Gamma_1 f} - \scalar{\Gamma_1 g}{\Gamma_0 f}
      + \scalar{g}{\mL P f}.
   \end{align*}
   The first two terms are continuous in $f$, so let us consider only the last three terms.
   Since $B$ is invertible, we have that $\Gamma_1|_{\mU}$ is surjective, $\Gamma_1|_{\mD(S)} = 0$ and we can choose a basis $u_1,\, \dots,\, u_\eta$ of $\mU$ such that $\Gamma_1 u_j = \vec e_j\in\C^\eta$.
   Let us write
   \begin{equation}
   \label{eq:UK}
       \mathtt U :=
       \begin{pmatrix}
           u_1 \\ \vdots \\ u_\eta
       \end{pmatrix},
       \qquad
       \mathtt K :=
       \begin{pmatrix}
           k_1 \\ \vdots \\ k_\eta
       \end{pmatrix}
       := \mL \mathtt  U
       =
       \begin{pmatrix}
           \mL u_1 \\ \vdots \\ \mL u_\eta
       \end{pmatrix}.
   \end{equation}
   Then $Pf = (\Gamma_1 f)^t \mathtt U 
   = (\Gamma_1 f)_1 u_1 + \cdots + (\Gamma_1 f)_\eta u_\eta
   $
   and $\mL Pf = \mL (\Gamma_1 f)^t \mathtt U 
   = (\Gamma_1 f)^t ( \mL \mathtt U )$ 
   and we obtain
   \begin{align*}
      \scalar{\Gamma_0 g}{\Gamma_1 f} - \scalar{\Gamma_1 g}{\Gamma_0 f}
      + \scalar{g}{\mL P f}
      &= \scalar{\Gamma_0 g}{\Gamma_1 f} - \scalar{\Gamma_1 g}{B^{-1}C\Gamma_1 f}
      + \scalar{g}{(\Gamma_1 f)^t \mathtt K}
      \\
      &= \scalar{\Gamma_0 g}{\Gamma_1 f} - \scalar{C^*B^{*-1}\Gamma_1 g}{\Gamma_1 f}
      + \scalar{\mathtt K^t g}{\Gamma_1 f}
      \\
      &= \lrscalar{\Gamma_0 g - C^*B^{*-1}\Gamma_1 g + 
      \begin{pmatrix} \scalar{k_1}{g} \\ \vdots \\ \scalar{k_\eta}{g}
      \end{pmatrix}}{ \Gamma_1 f }.
   \end{align*}
   This is continuous in $f$ if and only if the first component of the inner product is $0$.
   Therefore $g\in\mD(\widehat A^*)$ if and only if 
   $\Gamma_0 g - C^*B^{*-1}\Gamma_1 g + 
   \begin{pmatrix} \scalar{k_1}{g} \\ \vdots \\ \scalar{k_\eta}{g}
   \end{pmatrix} = 0$.
\end{proof}

\begin{theorem}
    \label{thm:MaxDissCritABT}
    In the situation above, assume that the matrices $B,C$ in \eqref{eq:mUBC} are invertible.
    Then the dissipativity condition $\im \scalar{w}{S^*w} \ge \frac{1}{4}\| V^{-1/2}\mL w\|$ for all $w\in \mU$ from \eqref{eq:basiccrit} 
    is equivalent to
    \begin{equation}
        \label{eq:DissCritBCV}
        \im (C^*B^{*-1}) \ge 
        \frac{1}{4} \Big( \scalar{ V^{-1/2} \mL u_r }{ V^{-1/2} \mL u_s } \Big)_{r,s=1}^\eta
    \end{equation}
    where $u_j\in\mU$ such that $\Gamma_1 u_j = \vec e_j \in \C^\eta$ for $j=1,\, \dots, \eta$.
\end{theorem}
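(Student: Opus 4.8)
The plan is to turn the function-space inequality \eqref{eq:basiccrit} into the matrix inequality \eqref{eq:DissCritBCV} by using $\Gamma_1|_{\mU}$ as a system of coordinates on $\mU$. First I would check that $\Gamma_1|_{\mU}$ is in fact a bijection onto $\C^\eta$: since $B$ is invertible, Remark~\ref{rem:GreenStuffTBC}(ii) applies to $T_{B,C} = S^*|_{\mD(\widehat A)}$ and shows that $\Gamma_1$ is surjective on $\mD(T_{B,C}) = \mD(S)\dot+\mU$; as $\Gamma_1$ vanishes on $\mD(S)$, the restriction $\Gamma_1|_{\mU}:\mU\to\C^\eta$ is surjective, and being a surjection between two spaces of dimension $\eta$ it is a linear bijection. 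This legitimises the basis $u_1,\dots,u_\eta$ of $\mU$ with $\Gamma_1 u_j = \vec e_j$ (the same basis used in Theorem~\ref{thm:AdjointOfA}), lets us write every $w\in\mU$ uniquely as $w = \sum_j c_j u_j = (\Gamma_1 w)^t\mathtt U$ with $c := \Gamma_1 w\in\C^\eta$, and — crucially — means that ``for all $w\in\mU$'' in \eqref{eq:basiccrit} is the same as ``for all $c\in\C^\eta$''.

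Next I would evaluate the left-hand side of \eqref{eq:basiccrit}. Applying the Green-type identity \eqref{eq:GreenBCinvertible} of Remark~\ref{rem:GreenStuffTBC}(i) with $f=g=w$ (valid since $w\in\mU\subseteq\mD(T_{B,C})$ and $T_{B,C}w = S^*w$) gives $2\I\,\im\scalar{w}{S^*w} = \scalar{w}{S^*w} - \scalar{S^*w}{w} = \scalar{\Gamma_1 w}{(C^*B^{*-1}-B^{-1}C)\Gamma_1 w}$. Since $(C^*B^{*-1})^* = B^{-1}C$, we have $C^*B^{*-1}-B^{-1}C = 2\I\,\im(C^*B^{*-1})$, so $\im\scalar{w}{S^*w} = \scalar{c}{\im(C^*B^{*-1})\,c}$.

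Then I would expand the right-hand side. By linearity of $\mL$ and the notation $\mathtt K = \mL\mathtt U$ from \eqref{eq:UK}, $\mL w = \sum_j c_j\,\mL u_j = \sum_j c_j k_j$, which lies in $\range\mL\subseteq\range(V^{1/2})$, so $V^{-1/2}\mL w = \sum_j c_j\,V^{-1/2}\mL u_j$ is well defined. Expanding $\|V^{-1/2}\mL w\|^2 = \scalar{V^{-1/2}\mL w}{V^{-1/2}\mL w}$ with the convention that the inner product is antilinear in the first slot yields $\|V^{-1/2}\mL w\|^2 = \sum_{r,s}\overline{c_r}\,c_s\,\scalar{V^{-1/2}\mL u_r}{V^{-1/2}\mL u_s} = \scalar{c}{M_{\mathtt K}\,c}$, where $M_{\mathtt K} = \big(\scalar{V^{-1/2}\mL u_r}{V^{-1/2}\mL u_s}\big)_{r,s=1}^\eta$ is the Gram matrix on the right of \eqref{eq:DissCritBCV}. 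Combining the two computations, \eqref{eq:basiccrit} becomes $\scalar{c}{\big(\im(C^*B^{*-1})-\tfrac14 M_{\mathtt K}\big)c}\ge 0$ for all $c\in\C^\eta$, i.e.\ precisely \eqref{eq:DissCritBCV}. I would also note in passing that $M_{\mathtt K}$ is independent of the chosen basis $u_j$ with $\Gamma_1 u_j = \vec e_j$ — it is determined by $\widehat A$ — though this is not needed for the equivalence.

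There is no deep obstacle here; the argument is essentially bookkeeping once Remark~\ref{rem:GreenStuffTBC} is in hand. The two points that require care are (a) justifying that $\Gamma_1|_{\mU}$ is a bijection so the universal quantifier over $w\in\mU$ transfers cleanly to $\C^\eta$, and (b) tracking the antilinear-in-the-first-argument convention so that the Gram matrix $M_{\mathtt K}$ appears with the correct conjugation/transpose pattern; a careless computation here could produce $M_{\mathtt K}$ or its transpose/conjugate, so I would do that step explicitly.
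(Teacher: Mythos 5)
Your proposal is correct and follows essentially the same route as the paper: both reduce \eqref{eq:basiccrit} to the quadratic form $\scalar{\Gamma_1 w}{\bigl(\im(C^*B^{*-1})-\tfrac14 M_{\mathtt K}\bigr)\Gamma_1 w}\ge 0$ by combining the Green identity \eqref{eq:GreenBCinvertible} with the Gram-matrix expansion of $\|V^{-1/2}\mL w\|^2$ in the basis $u_1,\dots,u_\eta$ with $\Gamma_1 u_j=\vec e_j$. Your version is in fact slightly more careful than the paper's, since you explicitly verify that $\Gamma_1|_{\mU}$ is a bijection (so the quantifier over $w\in\mU$ transfers to all of $\C^\eta$), a point the paper leaves implicit.
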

\begin{proof}
    With the notation from Theorem~\ref{thm:AdjointOfA} we obtain for every $f\in\mD(\widetilde A)$
    \begin{equation*}
       \im \scalar{f}{S^* f} - \frac{1}{4} \|V^{-1/2} \mL P f\|^2
       =  \scalar{\Gamma_1 f}{ \im( C^*B^{*-1} ) \Gamma_0 f} -  \| \textstyle\frac{1}{2} V^{-1/2}\mL P f \|^2.
    \end{equation*}
    Now we note that 
    \begin{align*}
       \| \textstyle\frac{1}{2} V^{-1/2}\mL P f \|^2
       = \frac{1}{4} \| V^{-1/2}(\Gamma_1 f)^t \mathtt K \|^2
       = \frac{1}{4} \scalar{\Gamma_1 f}{ (\scalar{ V^{-1/2} k_r }{ V^{-1/2} k_s } )_{r,s=1}^\eta) \Gamma_1 f},
    \end{align*}
    so that 
    \begin{align*}
       \im \scalar{f}{\widehat A f} 
       = \scalar{\Gamma_1 f}{ \im( C^*B^{*-1} ) \Gamma_1 f} 
       - \frac{1}{4} \scalar{\Gamma_1 f}{ (\scalar{ V^{-1/2} k_r }{ V^{-1/2} k_s } )_{r,s=1}^\eta) \Gamma_1 f}
    \end{align*}
    which shows that $\widehat A$ is dissipative if and only if \eqref{eq:DissCritBCV} holds.
\end{proof}


\section{Maximally dissipative extensions of Schr\"odinger operators on a bounded interval} 
\label{sec:maxdiss}

In this section, our Hilbert space is $H = L_2(0,1)$.
We use the standard notation $H^1(0,1)$, etc. for the Sobolev spaces.
\smallskip

Let $V$ be a bounded non-negative linear operator on $H$.
The aim of this section is to classify all maximally dissipative extensions of the \emph{minimal Laplacian} on the interval $[0,1]$
\begin{align}
   \label{eq:MinS}
   S:\qquad &
   \begin{aligned}[t]
      \mD(S) &= H_0^2(0,1) = \{ f\in H^2(0,1) : f(0) = f(1) = f'(0) = f'(1) = 0\},
      \\
      Sf &= -f''
   \end{aligned}
   \intertext{and of}
   \label{eq:MinA}
   A:\qquad &
   \begin{aligned}[t]
      \mD(A) &= \mD(S),
      \\
      Af &= (S + \I V)f = -f'' + \I V
   \end{aligned}
\end{align}
 using \eqref{eq:extensionsBT} to describe the extensions.
Clearly, $S$ is symmetric with adjoint operator
\begin{alignat*}{4}
   S^* &:\qquad 
   \mD(S^*) &&= H^2(0,1),\quad 
   &Sf &= -f''
   \intertext{and}
   A^* &:\qquad 
   \mD(A^*) &&= \mD(S^*) = H^2(0,1), \quad
   &A^*f &= (S^* - \I V)f = -f'' - \I Vf
\end{alignat*}
and its defect indices are $\eta_+(S) = \eta_-(S) = 2$.
Therefore, every maximally dissipative extension $\widehat S$ is a two-dimensional extension of $S$ and at the same time a two-dimensional restriction of $S^*$.

Clearly, $(\Gamma_0, \Gamma_1, \C^2)$ is a boundary triple for $S^*$ with 
\begin{equation}
   \label{eq:BoundaryTripleSstar}
   \Gamma_0: \mD(S^*)\to\C^2,
   \quad
   \Gamma_0 u = 
   \begin{pmatrix}
      u(0) \\ u(1)
   \end{pmatrix},
   \qquad
   \Gamma_1: \mD(S^*)\to\C^2,
   \quad
   \Gamma_1 u = 
   \begin{pmatrix}
      u'(0) \\ -u'(1)
   \end{pmatrix}
\end{equation}
since integration by parts gives for all $f,g\in\mD(S^*)$
\begin{align*}
   \scalar{f}{S^*g} - \scalar{S^*f}{g} 
   & = \int_0^1 \overline{ f} (-g'') \,\rd t - \int_0^1 \overline{ -f'' } g \,\rd t 
   = ( -\overline f g' + \overline{f'} g )\Big|_0^1 
   \\
   & =  \overline{f(0)}g'(0) - \overline{f(1)}g'(1)
   + \overline{f'(1)} g(1) - \overline{f'(0)} g(0)
   \\
   &=
   \scalar{\Gamma_0 f}{\Gamma_1 g} - \scalar{\Gamma_1 f}{\Gamma_0 g}.
\end{align*}

Since all maximally dissipative extensions of $S$ are two-dimensional restrictions of $S^*$, they are necessarily of the form $T_{B,C}$ where $B, C$ are $2\times 2$ matrices such that $\rk(B|-C) =  2$.
It depends on the matrices $B,C$ if the extension $T_{B,C}$ is in fact dissipative.
Theorem~\ref{thm:MaxDissCritBT} leads to the following characterisation of all maximally dissipative extensions of $S$.
\begin{theorem}
   \label{thm:KurasovCriterion}
   All maximally dissipative extensions of $S$ in \eqref{eq:MinS} are of the form
   \begin{equation}
      \label{eq:ExtSW}
      S_{B,C}:\qquad
      \mD(S_{B,C}) = \left\{ u \in \mD(S^*) : B\Gamma_0 u = C \Gamma_1 u \right\},
      \quad
      S_{B,C}f = S^*f = -f''
   \end{equation}
   where $B,C$ are $2\times 2$ matrices such that $\rk(B|-C) = 2$ and $\im(BC^*)\ge 0$.
   Every operator of this form is a maximally dissipative extension of $S$.
\end{theorem}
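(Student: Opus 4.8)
The plan is to derive Theorem~\ref{thm:KurasovCriterion} as a direct specialization of the abstract characterization already established in Theorem~\ref{thm:MaxDissCritBT}, together with the general description of $\eta$-dimensional restrictions of $S^*$ worked out in Section~\ref{sec:dissipativeoperators}. First I would recall that $S$ in \eqref{eq:MinS} is symmetric with $\eta_+(S) = \eta_-(S) = \eta = 2$ (this is classical: $\mD(S^*) = H^2(0,1)$ is four-dimensional modulo $\mD(S) = H_0^2(0,1)$, and the deficiency subspaces $\ker(S^* \mp \I)$ are each spanned by two exponentials $\e^{\pm\sqrt{\mp\I}\,t}$ type solutions). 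Consequently, every maximally dissipative extension $\widehat S$ of $S$ is a dissipative extension of dimension $\eta_+(S) = 2$ over $\mD(S)$, hence simultaneously a two-dimensional restriction of $S^*$. By the discussion preceding Theorem~\ref{thm:AdjointBT}, every $2$-dimensional subspace of $\C^4$ is the kernel of a matrix $(B|-C)$ with $B,C$ being $2\times2$ matrices and $\rk(B|-C) = 2$, so $\widehat S$ must be of the form $S_{B,C} = T_{B,C}$ as in \eqref{eq:extensionsBT}, i.e. exactly \eqref{eq:ExtSW}.

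Next I would invoke that $(\Gamma_0,\Gamma_1,\C^2)$ as in \eqref{eq:BoundaryTripleSstar} is a genuine boundary triple for $S^*$: surjectivity of $(\Gamma_0,\Gamma_1): H^2(0,1) \to \C^2 \oplus \C^2$ is immediate since boundary values $u(0), u(1), u'(0), u'(1)$ can be prescribed freely, and $\mD(S) = \ker(\Gamma_0,\Gamma_1)$ by definition of $H_0^2(0,1)$; the abstract Green identity is the integration-by-parts computation already displayed in the excerpt. With this boundary triple in hand, Theorem~\ref{thm:MaxDissCritBT} applies verbatim with $\eta = 2$ and (for the characterization of maximally dissipative extensions of $S$ itself) with $V = 0$: the operator $T_{B,C}$ is maximally dissipative if and only if $\rk(B|-C) = 2$ and $\im(BC^*) \ge 0$. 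This gives both directions of the equivalence — any maximally dissipative extension of $S$ has this form with these matrix conditions, and conversely any $B, C$ satisfying $\rk(B|-C) = 2$ and $\im(BC^*) \ge 0$ yield, via Theorem~\ref{thm:MaxDissCritBT}, a maximally dissipative $T_{B,C}$.

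There is essentially no hard step here; the theorem is a corollary of the abstract machinery. The only points requiring a line of justification are (a) that the deficiency indices are indeed $(2,2)$ — standard ODE theory, since on a compact interval the maximal operator $-d^2/dx^2$ has all solutions of $(-u'' \mp \I u) = 0$ in $L^2(0,1)$, giving $\dim\ker(S^* \mp \I) = 2$ — and (b) that \eqref{eq:BoundaryTripleSstar} really is a boundary triple, which is the integration-by-parts identity reproduced just above the theorem statement. I would also remark that, as noted after Remark~\ref{rem:GreenStuffTBC}, the matrices $B, C$ are not unique: $S_{B,C} = S_{EB, EC}$ for any invertible $2\times2$ matrix $E$, so the parametrization is surjective onto maximally dissipative extensions but not injective. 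So the proof is: cite the deficiency index count, cite that \eqref{eq:BoundaryTripleSstar} is a boundary triple, then apply Theorem~\ref{thm:MaxDissCritBT} with $\eta = 2$ and $V = 0$.
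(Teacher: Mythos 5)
Your proposal is correct and follows essentially the same route as the paper: the authors also obtain Theorem~\ref{thm:KurasovCriterion} by noting that $\eta_\pm(S)=2$, that \eqref{eq:BoundaryTripleSstar} is a boundary triple (via the displayed integration-by-parts identity), and then applying Theorem~\ref{thm:MaxDissCritBT} with $\eta=2$. Your added remarks on the non-uniqueness of $(B,C)$ and the explicit deficiency-index count are consistent with the paper's surrounding discussion and do not change the argument.
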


The theorem above was already proved in \cite[Theorem 3.7]{KMN2025} for the more general situation of a quantum graph.

\subsection{Maximally dissipative extensions of $S$.}  
\label{subsec:MaxDissS}
In this section we want to classify all dissipative extensions of $S$ explicitly in terms of boundary conditions.
This is convenient, e.g., for calculating their adjoint operator.
\smallskip

Recall that for every $f,g\in \mD(S^*)$, integration by parts gives 
\begin{align*}
   \scalar{f}{S^*g} - \scalar{S^*f}{g} 
   = \scalar{\Gamma_0 f}{\Gamma_1 g} - \scalar{\Gamma_1 f}{\Gamma_0 g}
   =  \overline{f(0)}g'(0) - \overline{f(1)}g'(1)
+ \overline{f'(1)} g(1) - \overline{f'(0)} g(0).
\end{align*}
In particular,
\begin{equation*}
   \scalar{f}{S^*f} - \scalar{S^*f}{f} 
   = 2\I \im\big( \overline{f(0)}f'(0) - \overline{f(1)}f'(1) \big).
\end{equation*}

First let us consider all maximally dissipative operators between $S$ and $S^*$ as \emph{extensions} of $S$.

\begin{theorem}
   \label{thm:MaxDisExtS}
   An extension of $S$ is a maximally dissipative extension of $S$ if and only if it is of the form
   \begin{equation*}
      S_{\mW}:\qquad
      \mD( S_{\mW} ) = \mD(S) \dot + \mW,\quad
      Sf = S^*f = -f'',
   \end{equation*}
   where $\mW\subseteq \mD(S^*)//\mD(S)$ with $\dim \mW = 2$ and 
   \begin{equation}
      \label{eq:MaxDisExtS}
      \im
      \left(
      \begin{pmatrix}
	 u(0) & v(0) \\
	 u(1) & v(1)
      \end{pmatrix}^*
      \begin{pmatrix}
	 u'(0) &  v'(0) \\
	 -u'(1) & -v'(1) 
      \end{pmatrix}
      \right)
      \ge 0
   \end{equation}
   for one, and hence for every, basis $\{u,v\}$ of $\mW$.
   The extension is selfadjoint if and only if equality holds in \eqref{eq:MaxDisExtS} for one, and hence for every, basis of $\mW$.
\end{theorem}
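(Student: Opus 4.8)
The plan is to specialize the abstract result of Theorem~\ref{thm:MaxDissCritBT} (combined with Theorem~\ref{thm:KurasovCriterion} and the equivalence of descriptions of dissipative extensions via boundary matrices versus via subspaces $\mW$) to the concrete boundary triple \eqref{eq:BoundaryTripleSstar} for the minimal Laplacian on $(0,1)$. By Corollary~\ref{cor:domains} and the discussion preceding Theorem~\ref{thm:KurasovCriterion}, every maximally dissipative extension of $S$ is a two-dimensional extension of $S$, so it is of the form $S_\mW$ with $\mD(S_\mW) = \mD(S)\dot+\mW$ for some two-dimensional $\mW\subseteq\mD(S^*)//\mD(S)$. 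What must be verified is that dissipativity of $S_\mW$ is equivalent to the matrix inequality \eqref{eq:MaxDisExtS} for a (hence any) basis $\{u,v\}$ of $\mW$, and that selfadjointness corresponds to equality.

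First I would observe that since $S$ is symmetric and $S_\mW$ is an extension lying between $S$ and $S^*$, for $f = f_0 + w$ with $f_0\in\mD(S)$ and $w\in\mW$ one has $\im\scalar{f}{S^*f} = \im\scalar{w}{S^*w}$ (the cross terms vanish by the abstract Green's identity since $\Gamma_0 f_0 = \Gamma_1 f_0 = 0$, and $\im\scalar{f_0}{S^*f_0} = 0$ because $S$ is symmetric). Hence $S_\mW$ is dissipative if and only if $\im\scalar{w}{S^*w}\ge 0$ for all $w\in\mW$. Writing $w = \alpha u + \beta v$ in terms of a basis $\{u,v\}$ of $\mW$ and using the explicit integration-by-parts formula recalled just before the theorem,
\begin{equation*}
   \scalar{w}{S^*w} - \scalar{S^*w}{w} = 2\I\,\im\big(\overline{w(0)}w'(0) - \overline{w(1)}w'(1)\big),
\end{equation*}
a direct expansion shows that $\im\scalar{w}{S^*w}$ equals the Hermitian form on $(\alpha,\beta)\in\C^2$ whose matrix is precisely $\im\big(\Gamma_0 W^* \,\Gamma_1 W\big)$, where the columns of $\Gamma_0 W$ and $\Gamma_1 W$ are $\Gamma_0 u,\Gamma_0 v$ and $\Gamma_1 u,\Gamma_1 v$ respectively — that is, the matrix appearing in \eqref{eq:MaxDisExtS}. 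Nonnegativity of this Hermitian form for all $(\alpha,\beta)$ is exactly \eqref{eq:MaxDisExtS}, giving the dissipativity equivalence; since $S_\mW$ is automatically maximally dissipative once dissipative (being a two-dimensional extension of $S$, by Corollary~\ref{cor:domains} or the defect-index remark), this settles the first claim.

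For the selfadjointness statement, I would note that $S_\mW$ is selfadjoint if and only if it is symmetric, i.e. $\im\scalar{w}{S^*w} = 0$ for all $w\in\mW$, which for a Hermitian form is equivalent to the form being identically zero, i.e. equality in \eqref{eq:MaxDisExtS}; alternatively one invokes the remark after Remark~\ref{rem:GreenStuffTBC} that selfadjoint extensions are exactly those with $\im(BC^*) = 0$ and translates back. Finally, the phrase ``for one, and hence for every, basis'' follows because a change of basis of $\mW$ replaces the matrix $M$ in \eqref{eq:MaxDisExtS} by $E^* M E$ for an invertible $2\times 2$ matrix $E$, and congruence preserves both positive semidefiniteness and the property of being zero. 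The main obstacle — really just a bookkeeping point rather than a genuine difficulty — is keeping the sign conventions in $\Gamma_1 u = (u'(0), -u'(1))^t$ straight so that the Hermitian form extracted from the Green identity matches the matrix written in \eqref{eq:MaxDisExtS} exactly; once the correct identification $\scalar{\Gamma_0 w}{\Gamma_1 w}$ is made, the computation is routine.
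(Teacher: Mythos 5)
Your proposal is correct and follows essentially the same route as the paper's proof: reduce maximal dissipativity to the condition $\im\scalar{w}{S^*w}\ge 0$ on the two-dimensional space $\mW$, expand $w=\alpha u+\beta v$ in a basis, and identify the resulting Hermitian form with the matrix in \eqref{eq:MaxDisExtS}, with basis-independence following from congruence invariance of positive semidefiniteness. The only difference is that you spell out the vanishing of the cross terms with $\mD(S)$ and the congruence argument explicitly, which the paper leaves implicit.
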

\begin{proof} 
   Observe that $T$ is a maximally dissipative extension of $S$ if and only if $T$ is a two-dimensional extension of $S$ with $\mD(T) = \mD(S) \dot + \mW \subseteq \mD(S^*)$ and $\im\scalar{w}{S^*w} \ge 0$ for every $w\in\mW$.
   Therefore, there is a bijection between the set of all maximally dissipative extensions of $S$ and the set all subspaces
   \begin{equation}
      \label{eq:MaxDisExtW}
      \mW \subseteq \mD(S^*) // \mD(S)
      \quad\text{such that}\quad
      \dim(\mW) = 2
      \quad\text{and}\quad
      \im\scalar{w}{S^*w} \ge 0
      \text{ for all } w\in \mW.
   \end{equation}

   Choose a basis $\{u,v\}$ of $\mW$ and let $w\in \mW$.
   Then there exist $a,b\in\C$ such that $w = au + bv$ and we obtain
   \begin{align*}
      \im\scalar{w}{S^*w}
      & = 
      \im\scalar{\Gamma_0 w}{\Gamma_1 w}
      \\
      &= \im\left( |a|^2 \scalar{\Gamma_0 u}{\Gamma_1 u} + \overline a b \scalar{\Gamma_0 u}{\Gamma_1 v} + a \overline b \scalar{\Gamma_0 v}{\Gamma_1 u} + |b|^2\scalar{\Gamma_0 v}{\Gamma_1 v} \right)
      \\
      & = \im \lrscalar{
      \begin{pmatrix} a \\ b
      \end{pmatrix}}
      {
      \begin{pmatrix}
	 \scalar{\Gamma_0 u}{\Gamma_1 u}
	 & 
	 \scalar{\Gamma_0 u}{\Gamma_1 v}
	 \\
	 \scalar{\Gamma_0 v}{\Gamma_1 u}
	 & \scalar{\Gamma_0 v}{\Gamma_1 v}
      \end{pmatrix}
      \begin{pmatrix} a \\ b
      \end{pmatrix}}
      \\
      & = \im \lrscalar{
      \begin{pmatrix} a \\ b
      \end{pmatrix}}
      {
      \begin{pmatrix}
	 \overline{ u(0) } & \overline{ u(1) } \\
	 \overline{ v(0) } & \overline{ v(1) }
      \end{pmatrix}
      \begin{pmatrix}
	 u'(0) &  v'(0) \\
	 -u'(1) & -v'(1) 
      \end{pmatrix}
      \begin{pmatrix} a \\ b
      \end{pmatrix}}.
   \end{align*}
   Hence $\mW$ satisfies \eqref{eq:MaxDisExtW}, and therefore corresponds to a maximally dissipative extension of $S$, if and only if \eqref{eq:MaxDisExtS} holds for one (and hence for every) basis of $\mW$.
\end{proof}

Theorem~\ref{thm:MaxDisExtS} describes the maximally dissipative extensions of $S$ by \emph{extending} its domain. 
Alternatively, we can describe all maximally dissipative extensions of $S$ as two-dimensional \emph{restrictions} of $S^*$ using Theorem~\ref{thm:KurasovCriterion}.
Recall that $\mD(S^*) = H^2(0,1)$.

\begin{theorem}
   \label{thm:MaxDisRestS}
   Every maximally dissipative extension $\widehat S$ of $S$ belongs to exactly one of the cases listed below, and we can write $\mD(\widehat S) = \mD(S)\dot + \gen\{u,\, v\}$ with $u,v\in\mD(S^*)$ which are linearly independent over $\mD(S)$.

   \begin{enumerate}[label={\upshape(\roman*)}]
      \item 
      \label{item:caseIS}
      $\mD(\widehat S)= 
      \left\{ f\in H^2(0,1) \,:\,
      \begin{pmatrix}
	 f(0) \\ f(1)
      \end{pmatrix}
      = 
      \begin{pmatrix}
	 c_{11} & c_{12} \\
	 c_{21} & c_{22}
      \end{pmatrix}
      \begin{pmatrix}
	 f'(0) \\ -f'(1)
      \end{pmatrix}
      \right\}$
      such that 
      $\im 
      \begin{pmatrix}
	 c_{11} & c_{12} \\
	 c_{21} & c_{22}
      \end{pmatrix}^*
      \ge 0$.
      \smallskip

      Explicitly: 
      $\im c_{11} \le 0,\ \im c_{22} \le 0$ and 
      $\im(c_{11}) \im(c_{22}) \ge \frac{1}{4} |c_{12} - c_{21}|^2$.
      \smallskip

      The functions $u,v$ can be chosen such that
      \begin{equation*}
      ( u(0),\, u(1),\, u'(0),\, u'(1) ) = (c_{11},\, c_{21}, 1, 0)
      \quad \text{and} \quad
      ( v(0),\, v(1),\, v'(0),\, v'(1) ) = (-c_{12},\, -c_{22}, 0, 1).
      \end{equation*}

      \item
      \label{item:caseIIS}
      $\mD(\widehat S)= 
      \left\{ f\in H^2(0,1)  \,:\,
      \begin{aligned}
	 f(1) - \overline{c_{22}} f(0) &= \overline{c_{22}} c_{12} f'(1) ,\\
	 f'(0) - c_{22} f'(1) &= 0 \\
      \end{aligned}
      \right\}$,
      \quad
      $c_{22}\neq 0$, 
      $\im (c_{12}\overline{c_{22}}) \ge 0$.

      The functions $u,v$ can be chosen such that
      \begin{equation*}
      ( u(0),\, u(1),\, u'(0),\, u'(1) ) = (\overline{c_{22}}^{-1},\, 1,\, 0,\, 0)
      \quad \text{and} \quad
      ( v(0),\, v(1),\, v'(0),\, v'(1) ) = (-c_{12},\, 0,\, c_{22},\, 1).
      \end{equation*}

      \item
      \label{item:caseIIIS}
      $\mD(\widehat S)= 
      \left\{ f\in H^2(0,1) \,:\,
      \begin{aligned}
	 f(0)  &= c_{11} f'(0),\\
	 f'(1) &= 0
      \end{aligned}
      \right\}$,
      \quad
      $\im (c_{11}^*) \ge 0$.

      The functions $u,v$ can be chosen such that
      \begin{equation*}
      ( u(0),\, u(1),\, u'(0),\, u'(1) ) = (0,\, 1,\, 0,\, 0)
      \quad \text{and} \quad
      ( v(0),\, v(1),\, v'(0),\, v'(1) ) = (c_{11},\, 0,\, 1,\, 0).
      \end{equation*}

      \item
      \label{item:caseIVS}
      $\mD(\widehat S)= 
      \left\{ f\in H^2(0,1) \,:\,
      \begin{aligned}
	 f(1)  &= -c_{12} f'(1),\\
	 f'(0) &= 0
      \end{aligned}
      \right\}$,
      \quad
      $\im (c_{12}^*) \ge 0$.

      The functions $u,v$ can be chosen such that
      \begin{equation*}
      ( u(0),\, u(1),\, u'(0),\, u'(1) ) = (1,\, 0,\, 0,\, 0)
      \quad \text{and} \quad
      ( v(0),\, v(1),\, v'(0),\, v'(1) ) = (0,\, -c_{12},\, 0,\, 1).
      \end{equation*}

      \item
      \label{item:caseVS}
      $\mD(\widehat S)= 
      \left\{ f\in H^2(0,1) \,:\,
	 f'(0) = f'(1) = 0
      \right\}$.

      The functions $u,v$ can be chosen such that
      \begin{equation*}
      ( u(0),\, u(1),\, u'(0),\, u'(1) ) = (1,\, 0,\, 0,\, 0)
      \quad \text{and} \quad
      ( v(0),\, v(1),\, v'(0),\, v'(1) ) = (0,\, 1,\, 0,\, 0).
      \end{equation*}

   \end{enumerate}

   \noindent
   In all cases, $\widehat S f = S^*f = -f''$ for all $f\in \mD(\widehat S)$.
   Moreover, every operator of this form is a maximally dissipative extension.

\end{theorem}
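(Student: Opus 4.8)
The plan is to start from the abstract classification in Theorem~\ref{thm:KurasovCriterion}: every maximally dissipative extension of $S$ is $T_{B,C}$ for $2\times 2$ matrices $B,C$ with $\rk(B|-C)=2$ and $\im(BC^*)\ge0$, and the defining boundary condition reads $B\Gamma_0 f = C\Gamma_1 f$, i.e.
\[
   B
   \begin{pmatrix} f(0) \\ f(1) \end{pmatrix}
   =
   C
   \begin{pmatrix} f'(0) \\ -f'(1) \end{pmatrix}.
\]
Since $(B|-C)$ and $(EB|-EC)$ define the same extension for any invertible $E$, I would reduce $(B|-C)$ by left multiplication. The natural case split is by $\rk(C)$: if $C$ is invertible we may take $C = \id$ (replace $(B|-C)$ by $(C^{-1}B|-\id)$), which gives case~\ref{item:caseIS} with $c_{ij}$ the entries of $C^{-1}B$; here the dissipativity condition $\im(BC^*)\ge0$ becomes $\im((C^{-1}B)^*)\ge 0$ after the substitution, i.e. $\im(\text{that matrix})^*\ge 0$, and the explicit inequalities on $\im c_{11}, \im c_{22}$ and $|c_{12}-c_{21}|^2$ come from writing out positive semidefiniteness of a $2\times 2$ Hermitian matrix (diagonal entries $\ge0$ and determinant $\ge 0$). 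The vectors $u,v$ are read off as the two preimages under $\Gamma$ of a basis of $\ker(B|-C)$; with $C=\id$ one checks directly that the stated boundary data satisfy the boundary condition and are linearly independent mod $\mD(S)$.

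Next, if $C$ is not invertible but $C\neq 0$, then $\rk C = 1$, and by left multiplication (row reduction) I can bring $C$ to a form with a single nonzero row; matching this with the rank-$2$ condition on $(B|-C)$ forces the remaining structure and produces cases~\ref{item:caseIIS}--\ref{item:caseIVS}, the three sub-cases corresponding to which combination of $f'(0), -f'(1)$ survives and which boundary points are coupled. In each, one re-derives the dissipativity inequality by specializing $\im(BC^*)\ge 0$ (now a rank-$\le1$ Hermitian condition, so it reduces to the sign of a single real number, e.g. $\im(c_{12}\overline{c_{22}})\ge0$), and one exhibits $u,v$ by solving the (now partly Dirichlet, partly Robin-type) boundary relations. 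Finally, $C=0$ forces $\rk B = 2$, hence $B$ invertible, and the condition $B\Gamma_0 f = 0$ becomes $\Gamma_0 f = 0$ after normalizing; but wait — that is $f(0)=f(1)=0$, the Dirichlet case. I realize the listed case~\ref{item:caseVS} is instead $f'(0)=f'(1)=0$, the Neumann case, which corresponds to $B=0$, $C$ invertible (hence $C=\id$, $c_{ij}=0$ in case~\ref{item:caseIS}); so case~\ref{item:caseIS} with $C^{-1}B = 0$ already degenerates correctly, and one must be careful that the case split is really organized by which of $\Gamma_0|_{\mathcal D(\widehat S)}$, $\Gamma_1|_{\mathcal D(\widehat S)}$ are surjective. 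The cleaner bookkeeping is: reduce $(B|-C)$ to reduced row echelon form and enumerate the possible pivot patterns of the $2\times 4$ matrix; the five cases are exactly the five echelon types compatible with $\rk = 2$ and with $\im(BC^*)\ge0$ being satisfiable.

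For the converse direction in each case — that every operator of the stated form \emph{is} maximally dissipative — I would simply note that each displayed domain is of the form $\mD(S)\dot+\gen\{u,v\}$ with the exhibited $u,v$, that $(\Gamma u, \Gamma v)$ spans the kernel of the corresponding $(B|-C)$, and that the stated scalar inequalities are equivalent to $\im(BC^*)\ge0$; then Theorem~\ref{thm:KurasovCriterion} (equivalently Theorem~\ref{thm:MaxDissCritBT}) gives maximal dissipativity directly. The mutual exclusivity (``exactly one of the cases'') follows because the cases are distinguished by intrinsic data of $\mD(\widehat S)$, namely the dimensions of $\Gamma_0(\mD(\widehat S))$ and of $\Gamma_1(\mD(\widehat S))$ and, when one of these is one-dimensional, which coordinate subspace it is.

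The main obstacle I anticipate is purely organizational rather than deep: making the row-reduction case analysis genuinely exhaustive and non-overlapping, and checking in the degenerate cases~\ref{item:caseIIS}--\ref{item:caseVS} that the rewritten boundary conditions (which mix Dirichlet-type and Robin-type constraints and in case~\ref{item:caseIIS} involve $\overline{c_{22}}$ and $\overline{c_{22}}c_{12}$ rather than $c_{22}, c_{12}$ themselves) are exactly equivalent to $B\Gamma_0 f = C\Gamma_1 f$ for a correctly normalized representative $(B|-C)$. One has to track carefully how the residual freedom (left multiplication by invertible $E$) has been used up in each case, so that the parameters $c_{ij}$ appearing in the statement are the canonical ones and the dissipativity inequality is stated in terms of them correctly.
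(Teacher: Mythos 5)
Your overall strategy is the same as the paper's: invoke Theorem~\ref{thm:KurasovCriterion}, use the freedom of left multiplication by an invertible $E$ to bring $(B|-C)$ to reduced row echelon form, enumerate the six possible pivot patterns of a rank-$2$, $2\times 4$ matrix, and translate $\im(BC^*)\ge 0$ into scalar conditions in each pattern. That is exactly the paper's proof; the pattern with pivots in columns $2$ and $4$ is the one ruled out there because its $\im(BC^*)$ has eigenvalues $\pm\tfrac12$, leaving the five listed cases. Your remarks on the converse (via Theorem~\ref{thm:MaxDissCritBT}) and on mutual exclusivity are also in line with the paper.

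There is, however, a concrete error in your execution of case~\ref{item:caseIS} which, carried out as written, would not reproduce the statement. The boundary condition displayed in case~\ref{item:caseIS} expresses $\Gamma_0 f$ through $\Gamma_1 f$, so it is the case in which $B$ is invertible: one normalizes $B=\id$, the matrix $(c_{jk})$ is $B^{-1}C$, and the dissipativity condition $\im(BC^*)\ge0$ becomes $\im\bigl((B^{-1}C)^*\bigr)\ge0$, whence $\im c_{11}\le 0$ and $\im c_{22}\le 0$ (note the conjugate). Your normalization $C=\id$ with matrix $C^{-1}B$ parametrizes instead the condition $\Gamma_1 f = (C^{-1}B)\Gamma_0 f$, whose dissipativity condition is $\im(C^{-1}B)\ge 0$ (not $\im\bigl((C^{-1}B)^*\bigr)\ge0$, which has the opposite sign). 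The confusion propagates to the degenerate realizations: in the theorem the Dirichlet operator $f(0)=f(1)=0$ is the sub-case $c_{jk}=0$ of case~\ref{item:caseIS} (there $B=\id$, $C=0$), while the Neumann operator is the separate case~\ref{item:caseVS} (there $B=0$, $C=\id$); your proposal places them the other way around, and your parenthetical ``$c_{jk}=0$ in case~\ref{item:caseIS}'' describes the Dirichlet, not the Neumann, extension. The remedy is simply to swap the roles of $B$ and $C$ throughout your first case (equivalently, to let ``pivots in columns $1,2$'' mean $B$ invertible); with that correction your plan coincides with the paper's argument.
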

\begin{proof}
   By Theorem~\ref{thm:KurasovCriterion} we only need to find, up to left multiplication by an invertible matrix $E$, all $2\times 4$ matrices $(B|-C)$ of full rank such that $\im(BC^*)\ge 0$.
   Clearly, all such matrices belong to exactly one of the following cases (up to multiplication from the left by an invertible $2\times 2$ matrix $E$), which correspond exactly to the cases in the claim of the theorem:

   \begin{enumerate}[label={\upshape(\roman*)}]

      \item 
      $(B|-C) = 
      \begin{pmatrix}
	 1 & 0 &\aug & -c_{11} & -c_{12} \\
	 0 & 1 &\aug & -c_{21} & -c_{22}
      \end{pmatrix}
      $.
      The dissipativity condition is $\im C^*  = -\im C \ge 0$.

      \item 
      $(B|-C) = 
      \begin{pmatrix}
	 1 & b_{12} &\aug & 0 & -c_{12} \\
	 0 & 0      &\aug &-1 & -c_{22}
      \end{pmatrix}
      $.
      Therefore
      $BC^* 
      = \begin{pmatrix}
	 b_{12} \overline{c_{12}} & 1 + b_{12} \overline{ c_{22} }\\
	  0 & 0
      \end{pmatrix}
      $
      and the extension is dissipative if and only if 
      $\im (b_{12} \overline{ c_{12} }) \ge 0$ and $1+ b_{12}\overline{ c_{22} } = 0$.
      This is the case if and only if 
      $b_{12}\neq 0$, $\overline{c_{22}} = -1/b_{12}$ and 
      $\im (b_{12} \overline{ c_{12} }) \ge 0$.

      \item 
      $(B|-C) = 
      \begin{pmatrix}
	 1 & b_{12} &\aug & -c_{11} & 0 \\
	 0 & 0      &\aug & 0 & -1
      \end{pmatrix}
      $.
      Therefore
      $BC^* 
      = \begin{pmatrix}
	 \overline{c_{11}} & b_{12} \\
	  0 & 0
      \end{pmatrix}
      $
      and the extension is dissipative if and only if 
      $\im (\overline{ c_{11} }) \ge 0$ and $b_{12} = 0$.

      \item 
      $(B|-C) = 
      \begin{pmatrix}
	 0 & 1 &\aug &  0 & -c_{12} \\
	 0 & 0 &\aug & -1 & -c_{22}
      \end{pmatrix}
      $.
      Therefore
      $BC^* 
      = \begin{pmatrix}
	 \overline{c_{12}} & \overline{ c_{22} }\\
	  0 & 0
      \end{pmatrix}
      $
      and the extension is dissipative if and only if 
      $\im (\overline{ c_{12} }) \ge 0$ and $c_{22} = 0$.

      \item 
      $(B|-C) = 
      \begin{pmatrix}
	 0 & 0 &\aug & -1 & 0 \\
	 0 & 0 &\aug & 0 & -1
      \end{pmatrix}
      $.
      Therefore
      $BC^* = 0$ which clearly has non-negative imaginary part.

      \item 
      $(B|-C) = 
      \begin{pmatrix}
	 0 & 1 &\aug & -c_{11} & 0 \\
	 0 & 0 &\aug & 0 & -1
      \end{pmatrix}
      $.
      Therefore
      $BC^* 
      = \begin{pmatrix}
         0 & 1 \\
         0 & 0 
      \end{pmatrix}
      \begin{pmatrix}
          \overline{c_{11}} & 0 \\
          0 & 1
      \end{pmatrix}
      = \begin{pmatrix}
	 0 & 1 \\
	  0 & 0
      \end{pmatrix}
      $,
      hence
      $\im(BC^*) = \frac{1}{2\I}
      \begin{pmatrix} 0 & 1 \\ -1 & 0
      \end{pmatrix}
      $
      which has eigenvalues $\pm \frac{1}{2}$,
      therefore this pair of matrices does not lead to a dissipative extension of $S$.
      \qedhere

   \end{enumerate}
\end{proof}

\begin{remark*}
   Clearly, the classification in the preceding theorem is somewhat arbitrary.
   We could also have grouped the extensions e.g. according to \\
   $(B|-C) = 
   \begin{pmatrix}
      b_{11} & b_{12} &\aug & -1 & 0 \\
      b_{21} & b_{22} &\aug & 0 & -1 
   \end{pmatrix},\
   \begin{pmatrix}
      b_{11} & 1 &\aug & 0 & 0 \\
      b_{21} & 0 &\aug & -c_{21} & -1 
   \end{pmatrix},\
   \begin{pmatrix}
      0 & 1 &\aug & 0 & 0 \\
      b_{21} & b_{22} &\aug & -c_{21} & -1 
   \end{pmatrix},\
   $ etc.
\end{remark*}

\subsection{Maximally dissipative extensions of $A=S+\I V$.}  
\label{subsec:MaxDissA}

In this section we describe all maximally dissipative extensions of $A$.
Recall that they do not need to be restrictions of $S^* + \I V$.
First, we describe all maximally dissipative extensions in analogy to Theorem~\ref{thm:MaxDisExtS}.

\begin{theorem}
   \label{thm:MaxDisExtA}
   An extension of $A$ is a maximally dissipative extension of $A$ if and only if it is of the form
   \begin{equation*}
      A_{\mW, \mL}:\qquad
      \mD( A_{\mW,\mL} ) = \mD(S) \dot + \mW,\quad
      Sf = S^*f = -f'',
   \end{equation*}
   where $\mW\subseteq \mD(S^*)//\mD(S)$ with $\dim \mW = 2$ and 
   \begin{equation}
      \label{eq:MaxDisExtA}
      \im
      \left(
      \begin{pmatrix}
	 u(0) & v(0) \\
	 u(1) & v(1)
      \end{pmatrix}^*
      \begin{pmatrix}
	 u'(0) &  v'(0) \\
	 -u'(1) & -v'(1) 
      \end{pmatrix}
      \right)
      \ge 
      \frac{1}{4} 
      \begin{pmatrix} 
	 \| V^{-1/2}\mL u\|^2  & \scalar{V^{-1/2}\mL u}{V^{-1/2}\mL v}
	 \\
	 \scalar{V^{-1/2}\mL v}{V^{-1/2}\mL u} & \| V^{-1/2}\mL v\|^2 
      \end{pmatrix}
   \end{equation}
   for one, and hence for every, basis of $\mW$.
\end{theorem}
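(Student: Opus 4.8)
The plan is to assemble three ingredients that are already available: the abstract description of the dissipative extensions of $A=S+\I V$ from Theorem~\ref{thm:ChristophMaxDiss}, the maximality criterion of Corollary~\ref{cor:domains}, and the explicit form of the boundary form of $S^*$ on $(0,1)$ that was worked out in the proof of Theorem~\ref{thm:MaxDisExtS}.

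First I would invoke Theorem~\ref{thm:ChristophMaxDiss}: an extension $\widetilde A$ of $A$ is dissipative if and only if $\widetilde A=A_{\mW,\mL}$ for some subspace $\mW\subseteq\mD(S^*)//\mD(S)$ and some linear map $\mL\colon\mW\to\range(V^{1/2})$ satisfying
\begin{equation*}
  \im\scalar{w}{S^*w}\ge\tfrac14\,\|V^{-1/2}\mL w\|^2,\qquad w\in\mW,
\end{equation*}
with $A_{\mW,\mL}(f_0+w)=(S^*+\I V)(f_0+w)+\mL w$. By Corollary~\ref{cor:domains}, $A_{\mW,\mL}$ is maximally dissipative precisely when $S_\mW:=S^*|_{\mD(A_{\mW,\mL})}$ is a maximally dissipative extension of $S$, and since $\eta_+(S)=2$ this happens precisely when $\dim\mW=2$. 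Hence it only remains to rewrite the condition \eqref{eq:basiccrit}, demanded for all $w\in\mW$, as the single matrix inequality \eqref{eq:MaxDisExtA} evaluated on a basis of $\mW$.

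To do so I would fix a basis $\{u,v\}$ of the two-dimensional space $\mW$, write an arbitrary $w\in\mW$ as $w=au+bv$ and set $\vec z=(a,b)^t\in\C^2$. For the left-hand side, exactly as in the proof of Theorem~\ref{thm:MaxDisExtS} one gets $\im\scalar{w}{S^*w}=\im\scalar{\Gamma_0 w}{\Gamma_1 w}=\scalar{\vec z}{(\im M)\vec z}$, where $M$ is the product of the two $2\times2$ matrices appearing on the left of \eqref{eq:MaxDisExtA} (before the imaginary part is taken), i.e.\ $M_{ij}=\scalar{\Gamma_0 e_i}{\Gamma_1 e_j}$ with $e_1=u$, $e_2=v$; here one uses the elementary identity $\im\scalar{\vec z}{M\vec z}=\scalar{\vec z}{(\im M)\vec z}$. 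For the right-hand side, linearity of $\mL$ and of $V^{-1/2}$ on $\range(V^{1/2})$ gives $V^{-1/2}\mL w=a\,V^{-1/2}\mL u+b\,V^{-1/2}\mL v$, so that $\|V^{-1/2}\mL w\|^2=\scalar{\vec z}{G\vec z}$, where $G$ is the Hermitian positive semidefinite Gram matrix displayed on the right of \eqref{eq:MaxDisExtA}. Consequently \eqref{eq:basiccrit} holds for all $w\in\mW$ if and only if $\scalar{\vec z}{\bigl(\im M-\tfrac14 G\bigr)\vec z}\ge0$ for all $\vec z\in\C^2$, which is exactly \eqref{eq:MaxDisExtA}.

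For the ``one, and hence for every, basis'' statement I would observe that replacing $\{u,v\}$ by another basis $\{u',v'\}$ of $\mW$, related to it by an invertible matrix $E\in\C^{2\times2}$, transforms both matrices by the same congruence, $M\mapsto E^*ME$ and $G\mapsto E^*GE$, since $\im\scalar{w}{S^*w}$ and $\|V^{-1/2}\mL w\|^2$ are the quadratic forms attached to the coordinate vector of $w$, and that vector transforms linearly under $E$; congruence by an invertible matrix preserves positive semidefiniteness, so \eqref{eq:MaxDisExtA} holds for one basis if and only if it holds for all. I do not expect a genuine obstacle: the only points that require a little care are the identity $\im\scalar{\vec z}{M\vec z}=\scalar{\vec z}{(\im M)\vec z}$ and the fact that $\mL$ is attached to the two-dimensional space $\mW$ rather than to a particular basis, which is exactly what makes both the Gram-matrix reformulation and the basis-independence go through.
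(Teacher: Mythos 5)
Your proposal is correct and follows essentially the same route as the paper's proof: reduce to the equivalence of \eqref{eq:basiccrit} and \eqref{eq:MaxDisExtA} via Theorem~\ref{thm:ChristophMaxDiss} and Corollary~\ref{cor:domains}, then expand both sides as quadratic forms in the coordinates of $w=au+bv$, reusing the computation of $\im\scalar{w}{S^*w}$ from the proof of Theorem~\ref{thm:MaxDisExtS} and identifying the right-hand side as a Gram matrix. The only addition is your explicit congruence argument for basis-independence, which the paper leaves implicit; that is a harmless (and correct) bonus.
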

\begin{proof} 
   By Theorem~\ref{thm:ChristophMaxDiss} and Corollary~\ref{cor:domains}, the maximally dissipative extensions of $A$ are exactly the operators of the form 
   \begin{equation*}
      A_{\mW,\mL}:\qquad
      \begin{aligned}[t]
	 \mD( A_{\mW,\mL} ) &= \mD(A) \dot + \mW = \mD(S) \dot + \mW, 
	 \\\
	 A_{\mW,\mL} (f_S+w) &= (S^* + \I V)(f_S+w) + \mL w,
	 \quad f_S\in\mD(S), w\in\mW.
      \end{aligned}
   \end{equation*}
   where $\mW\subseteq \mD(S^*)//\mD(S)$ has dimension 2 and $\mL:\mW\to \range(V^{1/2})$ such that \eqref{eq:basiccrit} holds.
   So we only need to show that \eqref{eq:MaxDisExtA} and \eqref{eq:basiccrit} are equivalent.
   Choose a basis $u,v$ for $\mW$ and let $w = au + bv$ an arbitrary eleven in $\mW$.

   In the proof of Theorem~\ref{thm:MaxDisExtS} we already found an expression for $\im\scalar{w}{S^* w}$.
   Similarly we find
   \begin{align*}
      \frac{1}{4} \| V^{-1/2}\mL w \|^2 
      &=  \frac{1}{4} \bscalar{V^{-1/2}\mL (au + bv)}{V^{-1/2}\mL (au + bv)}
      \\
      &=  \frac{1}{4} \left( 
      |a|^2 \| V^{-1/2}\mL u\|^2 
      + \overline a b  \scalar{V^{-1/2}\mL u}{V^{-1/2}\mL v}
      + a \overline b  \scalar{V^{-1/2}\mL v}{V^{-1/2}\mL u}
      + |b|^2 \| V^{-1/2}\mL v\|^2 
      \right)
      \\
      &=  \frac{1}{4} \lrscalar{
      \begin{pmatrix} a \\ b
      \end{pmatrix}}
      {
      \begin{pmatrix} 
	 \| V^{-1/2}\mL u\|^2  & \scalar{V^{-1/2}\mL u}{V^{-1/2}\mL v}
	 \\
	 \scalar{V^{-1/2}\mL v}{V^{-1/2}\mL u} & \| V^{-1/2}\mL v\|^2 
      \end{pmatrix}
      \begin{pmatrix} a \\ b
      \end{pmatrix}}.
   \end{align*}
   Therefore $\im\scalar{w}{S^*w} - \frac{1}{4} \| V^{-1/2}\mL w \|^2 \ge 0$ for all $w$ is equivalent to \eqref{eq:MaxDisExtA}.
\end{proof}

Next we employ Theorem~\ref{thm:MaxDisRestS} together with the dissipativity criterion \eqref{eq:basiccrit} to find an explicit description of all maximally dissipative extensions of $A$.

\begin{theorem}
   \label{thm:MaxDisRestA}
   Every maximally dissipative extension $\widehat A$ of $A$ belongs to exactly one of the cases listed below.
   Moreover, every operator in these cases is a maximally dissipative extension of $A$.

   \begin{enumerate}[label={\upshape(\roman*)}]
      \item 
      $\begin{aligned}[t]
	 \mD(\widehat A)
	 &= 
	 \left\{ f\in H^2(0,1) \,:\,
	 \begin{pmatrix}
	    f(0) \\ f(1)
	 \end{pmatrix}
	 = 
	 \begin{pmatrix}
	    c_{11} & c_{12} \\
	    c_{21} & c_{22}
	 \end{pmatrix}
	 \begin{pmatrix}
	    f'(0) \\ -f'(1)
	 \end{pmatrix}
	 \right\},
	 \\
	 \widehat Af &= (S^* + \I V)f + f'(0)k_1 - f'(1)k_2
      \end{aligned} $
      \smallskip

      with $k_1, k_2\in\range(V^{1/2})$ and $c_{jk}\in \C$ such that 
      \smallskip

      $\im \begin{pmatrix}
	 c_{11} & c_{12} \\
	 c_{21} & c_{22}
      \end{pmatrix}^*
      \ge 
      \frac{1}{4} 
      \begin{pmatrix}
	 \|V^{-1/2} k_1 \|^2 & \scalar{V^{-1/2}k_1}{V^{-1/2}k_2} \\
	 \scalar{V^{-1/2}k_2}{V^{-1/2}k_1} & \|V^{-1/2} k_2 \|^2
      \end{pmatrix} $.

      \item
      \label{item:caseIIA}
      $\begin{aligned}[t]
	 \mD(\widehat A)
	 &= \left\{ f\in H^2(0,1)  \,:\,
	 \begin{aligned}
	 f(1) - \overline{c_{22}} f(0) &= \overline{c_{22}} c_{12} f'(1) ,\\
	 f'(0) - c_{22} f'(1) &= 0 \\
	 \end{aligned}
	 \right\},
	 \quad
	 \widehat Af &= (S^* + \I V)f + f(1)k_1
      \end{aligned} $
      \medskip

      with $k_1\in\range(V^{1/2})$ and $c_{12},\, c_{22}\in \C$ such that 
      $c_{22}\neq 0$, 
      $\im (c_{12} \overline{c_{22}}) \ge \frac{1}{4} \| V^{-1/2}k_1\|^2$.

      \item
      $\begin{aligned}[t]
	 \mD(\widehat A)
	 &= 
	 \left\{ f\in H^2(0,1) \,:\,
	 \begin{aligned}
	    f(0)  &= c_{11} f'(0),\\
	    f'(1) &= 0
	 \end{aligned}
	 \right\},
	 \quad
	 \widehat Af &= (S^* + \I V)f + f'(0)k_1
      \end{aligned} $
      \medskip

      with $k_1\in\range(V^{1/2})$ and $c_{11}$ such that 
      $\im (c_{11}^*) \ge \frac{1}{4} \|V^{-1/2} k_1 \|^2$.

      \item
      $ \begin{aligned}[t]
	 \mD(\widehat A)
	 &= 
	 \left\{ f\in H^2(0,1) \,:\,
	 \begin{aligned}
	    f(1)  &= -c_{12} f'(1),\\
	    f'(0) &= 0
	 \end{aligned}
	 \right\},
	 \\
	 \widehat Af &= (S^* + \I V)f + f(1)k_1
      \end{aligned} $
      \smallskip

      with $k_1\in\range(V^{1/2})$ and $c_{12}\in \C$ such that 
      $\im (\overline{c_{12}}) \ge \frac{1}{4} \| V^{-1/2}k_1\|^2$.

      \item
      $ \begin{aligned}[t]
	 \mD(\widehat A)
	 &=
	 \left\{ f\in H^2(0,1) \,:\,
	 f'(0) = f'(1) = 0
	 \right\},
	 \quad
	 \widehat Af &= (S^* + \I V)f.
      \end{aligned} $

   \end{enumerate}

\end{theorem}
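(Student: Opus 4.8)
The plan is to bootstrap from the two theorems that already do the heavy lifting. By Corollary~\ref{cor:domains}, the domain of every maximally dissipative extension $\widehat A$ of $A=S+\I V$ coincides with the domain of some maximally dissipative extension $\widehat S$ of $S$, so $\mD(\widehat A)$ lies in exactly one of the five cases of Theorem~\ref{thm:MaxDisRestS}, and we may take the very basis $\{u,v\}$ of the two-dimensional complement $\mW$ (with $\mD(\widehat A)=\mD(S)\dot+\mW$) exhibited there. By Theorem~\ref{thm:ChristophMaxDiss}, $\widehat A=A_{\mW,\mL}$ for some linear $\mL:\mW\to\range(V^{1/2})$ subject to the dissipativity inequality \eqref{eq:basiccrit}, and by Theorem~\ref{thm:MaxDisExtA} this inequality is equivalent to the Gram-matrix inequality \eqref{eq:MaxDisExtA} written out for this basis. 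So what remains is, case by case, to (a) identify the left-hand side of \eqref{eq:MaxDisExtA}, (b) parametrize $\mL$, and (c) rewrite the additive term $\mL Pf$ (with $P$ the projection of $\mD(\widehat A)$ onto $\mW$ along $\mD(S)$) as a boundary functional of $f$.

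For step (a) I would note that the left-hand side of \eqref{eq:MaxDisExtA} depends only on the boundary values of $u,v$ and was already evaluated inside the proof of Theorem~\ref{thm:MaxDisRestS}; up to a harmless conjugation by $\operatorname{diag}(1,-1)$ it equals $\im(C^*)$ in case~(i), $\im(c_{12}\overline{c_{22}})$ in case~(ii), $\im(c_{11}^*)$ in case~(iii), $\im(\overline{c_{12}})$ in case~(iv), and $0$ in case~(v). For step (b), write $\mL$ through $k_1':=\mL u$, $k_2':=\mL v\in\range(V^{1/2})$. The key simplification in cases (ii)--(v) is that the basis vectors there have vanishing normal derivatives ($u'(0)=u'(1)=0$, and also $v'(0)=v'(1)=0$ in case~(v)), hence $\im\scalar{u}{S^*u}=0$; feeding $w=u$ into \eqref{eq:basiccrit} forces $\|V^{-1/2}\mL u\|^2\le 0$, so $\mL u=0$, and we are left with one parameter $k_1:=\mL v$ in cases (ii)--(iv) and with $\mL\equiv 0$ in case~(v). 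For step (c), in each case $\Gamma_1$ restricted to $\mW$ has an explicitly known matrix in the basis $\{u,v\}$, so $Pf$ is determined by $\Gamma_1 f=(f'(0),-f'(1))$ together with the boundary relations defining $\mD(\widehat A)$; substituting these relations converts the coefficients of $Pf$ into the appropriate boundary functional of $f$ (e.g. in case~(iii) one gets $Pf=f(1)u+f'(0)v$, so $\mL Pf=f'(0)k_1$). Plugging everything back, \eqref{eq:MaxDisExtA} collapses to exactly the scalar (or, in case~(i), the $2\times2$) inequality in the statement. The converse inclusion -- that every operator listed is maximally dissipative -- follows by running the computation backwards: the domain is two-dimensional over $\mD(S)$, so by Corollary~\ref{cor:domains} it suffices to verify \eqref{eq:basiccrit}, which is precisely the displayed condition.

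I expect the obstacles to be organizational rather than conceptual. In case~(i) the matrix $B$ is invertible, so one may alternatively apply Theorems~\ref{thm:AdjointOfA} and~\ref{thm:MaxDissCritABT} directly; but in cases (ii)--(v) the matrix $B$ is singular, so those streamlined criteria are unavailable and one must argue through \eqref{eq:basiccrit}/\eqref{eq:MaxDisExtA} by hand -- in particular one has to track which single boundary functional survives after $\mL u=0$ is imposed and fix a normalization for $k_1$. The main place where errors could creep in is the sign and normalization bookkeeping coming from the convention $\Gamma_1 u=(u'(0),-u'(1))$ and from trading $f(0),f(1),f'(0),f'(1)$ against each other via the boundary conditions; each individual case is nonetheless a short finite computation. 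Finally, mutual exclusivity and exhaustiveness of the five cases are inherited verbatim from Theorem~\ref{thm:MaxDisRestS}.
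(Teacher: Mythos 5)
Your proposal follows essentially the same route as the paper's proof: reduce to the five domain cases of Theorem~\ref{thm:MaxDisRestS} via Corollary~\ref{cor:domains}, parametrize $\mL$ by $k_1=\mL u$, $k_2=\mL v$ in the bases given there, and observe that in cases (ii)--(v) the vanishing diagonal entry of the left-hand side of \eqref{eq:MaxDisExtA} forces $\mL u=0$ (the paper reads this off the $(1,1)$ entry of the matrix inequality, you feed $w=u$ into \eqref{eq:basiccrit}; these are the same computation). The remaining work is exactly the boundary-functional bookkeeping you describe, so the proposal is correct and matches the paper's argument.
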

\begin{proof}
   By Corollary~\ref{cor:domains} we know that the domain of a maximally dissipative extension of $A$ coincides with the domain of a maximally dissipative extension $\widehat S$ of $S$. 
   Therefore, by Theorem~\ref{thm:MaxDisExtA}, given a domain as in Theorem~\ref{thm:MaxDisRestS}, we only need to identify all linear maps $\mL:\mW\to \range(V^{1/2})$ for which the dissipativity condition \eqref{eq:MaxDisExtA} holds.
   \smallskip

   The cases in the following list correspond to the cases in Theorem~\ref{thm:MaxDisRestS}, and the bases $u,v$ for $\mW$ are those given there.

   \begin{enumerate}[label={\upshape(\roman*)}]

      \item 
      Let $f\in \mD(\widehat A)$.
      Then clearly $f = f_S + f'(0)u - f'(1)v$ for some $f_S$ in $\mD(S)$ and
      $\widehat Af = (S^*+\I V)f + f'(0)\mL u + f'(1) \mL v$.
      The conditions on $k_1:= \mL u$, $k_2:= \mL v$ and the $c_{jk}$ follow directly from the dissipativity condition \eqref{eq:MaxDisExtA}.

      \item 
      Let $f\in \mD(\widehat A)$.
      Then $f = f_S + f(1)u + f'(1)v$ for some $f_S$ in $\mD(S)$ and
      $\widehat Af = (S^*+\I V)f + f(1)\mL u + f'(1) \mL v$.
      With $k_1:= \mL u$ and $k_2:= \mL v$, the dissipativity condition \eqref{eq:MaxDisExtA} gives
      \begin{equation*}
	 \im
	 \begin{pmatrix}
	    0 & 0 \\
	    0 & -\overline{c_{12}} c_{22} 
	 \end{pmatrix}
	 - \frac{1}{4} 
	 \begin{pmatrix}
	    \|V^{-1/2} k_1\|^2  & \scalar{V^{-1/2}k_1}{V^{-1/2}k_2} \\
	    \scalar{V^{-1/2}k_2}{V^{-1/2}k_1} & \|V^{-1/2} k_2\|^2 
	 \end{pmatrix}
	 \ge 0.
      \end{equation*}
      This holds if and only if 
      $\im (c_{12} \overline{c_{22}}) - \frac{1}{4} \| V^{-1/2}k_2\|^2 \ge 0$ and 
      $\| V^{-1/2}k_1\| = 0$.
      Since $V^{-1/2}$ is injective, the latter is true if and only if $k_1=0$\footnote{Observe that $u\in\Hsym(\widehat S)$, see Section~\ref{subsec:5:2}}.

      \item 
      Let $f\in \mD(\widehat A)$.
      Then $f = f_S + f(1)u + f'(0)v$ for some $f_S$ in $\mD(S)$ and
      $\widehat Af = (S^*+\I V)f + f(1)\mL u + f'(1) \mL v$.
      With $k_1:= \mL u$ and $k_2:= \mL v$, the dissipativity condition \eqref{eq:MaxDisExtA} gives
      \begin{equation*}
	 \im
	 \begin{pmatrix}
	    0 & 0 \\
	    0 & \overline{c_{11}}
	 \end{pmatrix}
	 - \frac{1}{4} 
	 \begin{pmatrix}
	    \|V^{-1/2} k_1\|^2  & \scalar{V^{-1/2}k_1}{V^{-1/2}k_2} \\
	    \scalar{V^{-1/2}k_2}{V^{-1/2}k_1} & \|V^{-1/2} k_2\|^2 
	 \end{pmatrix}
	 \ge 0.
      \end{equation*}
      This holds if and only if 
      $\im (\overline{c_{11}}) - \frac{1}{4} \| V^{-1/2}k_2\|^2 \ge 0$ and 
      $\| V^{-1/2}k_1\| = 0$.
      Since $V^{-1/2}$ is injective, the latter is true if and only if $k_1=0$.

      \item 
      Let $f\in \mD(\widehat A)$.
      Then $f = f_S + f(0)u + f'(1)v$ for some $f_S$ in $\mD(S)$ and
      $\widehat Af = (S^*+\I V)f + f(0)\mL u + f'(1) \mL v$.
      With $k_1:= \mL u$ and $k_2:= \mL v$, the dissipativity condition \eqref{eq:MaxDisExtA} gives
      \begin{equation*}
	 \im
	 \begin{pmatrix}
	    0 & 0 \\
	    0 & \overline{c_{12}}
	 \end{pmatrix}
	 - \frac{1}{4} 
	 \begin{pmatrix}
	    \|V^{-1/2} k_1\|^2  & \scalar{V^{-1/2}k_1}{V^{-1/2}k_2} \\
	    \scalar{V^{-1/2}k_2}{V^{-1/2}k_1} & \|V^{-1/2} k_2\|^2 
	 \end{pmatrix}
	 \ge 0.
      \end{equation*}
      This holds if and only if 
      $\im (\overline{c_{12}}) - \frac{1}{4} \| V^{-1/2}k_2\|^2 \ge 0$ and 
      $\| V^{-1/2}k_1\| = 0$.
      Since $V^{-1/2}$ is injective, the latter is true if and only if $k_1=0$.

      \item 
      Let $f\in \mD(\widehat A)$.
      Then $f = f_S + f'(0)u + f'(1)v$ for some $f_S$ in $\mD(S)$ and
      $\widehat Af = (S^*+\I V)f + f'(0)\mL u + f'(1) \mL v$.
      With $k_1:= \mL u$ and $k_2:= \mL v$, the dissipativity condition \eqref{eq:MaxDisExtA} gives
      \begin{equation*}
	 - \frac{1}{4} 
	 \begin{pmatrix}
	    \|V^{-1/2} k_1\|^2  & \scalar{V^{-1/2}k_1}{V^{-1/2}k_2} \\
	    \scalar{V^{-1/2}k_2}{V^{-1/2}k_1} & \|V^{-1/2} k_2\|^2 0
	 \end{pmatrix}
	 \ge 0
      \end{equation*}
      which holds if and only if 
      $k_1=k_2=0$.
      \qedhere

   \end{enumerate}
\end{proof}

\begin{remark}
   It should be noted that the range of $\mL$ can be two-dimensional only if both $B$ and $C$ are invertible.
   Moreover, if the domain of an extension $\widehat A$ coincides with the domain of a selfadjoint extension of $S$, then $\mL = 0$.
\end{remark}



\section{Complete non-selfadjointness} 
\label{sec:cnsa}

Let $T$ be a densely defined linear operator on a Hilbert space $H$. 
If $\mM$ is a subspace of $H$, we write $T_{\mM}$ for the restriction of $T$ to $\mM\cap \mD(T)$.
A closed subspace $\mM$ of $H$ is an \define{invariant subspace of } $T$ if $T(\mM\cap \mD(T)) \subseteq \mM$.
A subspace $\mM$ is called a \define{reducing subspace of} $T$ if 
\begin{align*}
   \mD(T) = ( \mD(T)\cap \mM) ) \oplus ( \mD(T)\cap \mM^\perp )
\end{align*}
and both $\mM$ and $\mM^\perp$ are $T$-invariant.
In that case,
$\overline{\mD(T)\cap\mM} = \mM$ and $\overline{\mD(T)\cap\mM^\perp} = \mM^\perp$ and
we denote by $T_\mM$ the restriction of $T$ to $\mM$. 

If $\mM$ is a reducing subspace for $T$, then it is also a reducing subspace for $(T-\lambda)^{-1}$ for every $\lambda\in\rho(T)$ and for $T^*$ and 
$(T_{\mM})^* = (T^*)_{\mM}$.
\smallskip

A closed subspace $\mM$ is called a \define{selfadjoint subspace of} $T$ if it is a reducing subspace of $T$ and $T_\mM$ is selfadjoint.
The operator $T$ is called \define{completely non-selfadjoint} (cnsa) if its only selfadjoint subspace it the trivial space $\{ 0 \}$.
If $T$ is a symmetric cnsa operator, then it is also called a \define{simple operator}, see  e.g. \cite{AkhiezerGlazman}.
\smallskip

In the case of a maximally dissipative operator, Langer's Zerspaltungssatz allows us to decide if it is cnsa.
\begin{theorem}[Langer decomposition \cite{LangerZerspaltung, Naboko1980}]
   If $T$ is a maximally dissipative operator with $\mD(T)\subseteq \mD(T^*)$, then there exists a unique decomposition
   \begin{equation*}
      H = H_{sa}(T) \oplus H_{cnsa}(T)
   \end{equation*}
   where $H_{sa}(T)$ and $H_{cnsa}(T)$ are closed and mutually perpendicular reducing subspaces of $T$ and 
   $T_{sa} := T_{H_{sa}}$ is selfadjoint and $T_{H_{cnsa}}$ is cnsa.
\end{theorem}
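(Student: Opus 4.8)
The statement to prove is Langer's Zerspaltungssatz as quoted: for a maximally dissipative $T$ with $\mD(T)\subseteq\mD(T^*)$, there is a unique orthogonal decomposition $H = H_{sa}(T)\oplus H_{cnsa}(T)$ into reducing subspaces on which $T$ is selfadjoint, respectively cnsa. Since the excerpt itself attributes this to \cite{LangerZerspaltung, Naboko1980}, I expect the ``proof'' here to be essentially a pointer to the literature together with at most a short remark. If a self-contained argument is wanted, the plan is as follows.

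First I would define the candidate reducing subspace. Set $\mM := \overline{\gen}\{ (T-\lambda)^{-1}x - (T^*-\lambda)^{-1}x : x\in H,\ \lambda\in\C_-\}$, or equivalently work with the imaginary part: since $\mD(T)\subseteq\mD(T^*)$, the operator $\im T = \tfrac{1}{2\I}(T - T^*)$ is defined on $\mD(T)$ and is $\ge 0$ by dissipativity; let $H_{cnsa}(T)$ be the smallest closed subspace containing $\range(\im T)$ that is invariant under all resolvents $(T-\lambda)^{-1}$, $\lambda\in\C_-$ (these exist and are bounded because $T$ is maximally dissipative), and put $H_{sa}(T):=H_{cnsa}(T)^\perp$. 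The first key step is to check that $H_{sa}(T)$ reduces $T$: for $x\in H_{sa}(T)$ one shows $(T-\lambda)^{-1}x\in H_{sa}(T)$ using that $H_{cnsa}(T)$ is resolvent-invariant and the resolvent identity, and then that $T^*$ leaves $H_{sa}(T)$ invariant as well, so by the remark in the excerpt $(T_{H_{sa}})^* = (T^*)_{H_{sa}}$. The second key step is that on $H_{sa}(T)$ we have $T - T^* = 0$ on the relevant domain, because $\range(\im T)\subseteq H_{cnsa}(T)\perp H_{sa}(T)$ forces $\scalar{x}{(\im T)x}=0$ for $x\in\mD(T)\cap H_{sa}(T)$, and a non-negative symmetric form vanishing on a dense set of a reducing subspace makes $\im T$ vanish there; combined with the reduction this yields $T_{H_{sa}} = (T_{H_{sa}})^*$, i.e. selfadjointness.

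Third, I would verify that $T$ restricted to $H_{cnsa}(T)$ is cnsa: if $\mathcal N\subseteq H_{cnsa}(T)$ were a non-trivial selfadjoint reducing subspace of $T_{H_{cnsa}}$, then $\mathcal N$ would also reduce $T$ in $H$, $T_{\mathcal N}$ would be selfadjoint, hence $\im T$ vanishes on $\mD(T)\cap\mathcal N$, so $\mathcal N\perp\range(\im T)$; moreover $\mathcal N$ is resolvent-invariant, so $\mathcal N^\perp\supseteq$ (the smallest resolvent-invariant closed subspace containing $\range(\im T)$) $= H_{cnsa}(T)$, forcing $\mathcal N\subseteq H_{cnsa}(T)\cap H_{cnsa}(T)^\perp=\{0\}$. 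Finally, uniqueness: if $H = H_1\oplus H_2$ is any decomposition into reducing subspaces with $T_{H_1}$ selfadjoint and $T_{H_2}$ cnsa, then $\range(\im T)\subseteq H_2$ (since $\im T$ vanishes on the $H_1$-part) and $H_2$ is resolvent-invariant, so $H_{cnsa}(T)\subseteq H_2$; applying the cnsa property of $T_{H_2}$ to the reducing subspace $H_2\cap H_{sa}(T)$ shows it is trivial, whence $H_2\subseteq H_{cnsa}(T)$ and the two decompositions coincide.

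\textbf{Main obstacle.} The delicate point is the functional-analytic bookkeeping in the first two steps: showing that $H_{sa}(T)$ genuinely \emph{reduces} $T$ in the strong sense that $\mD(T) = (\mD(T)\cap H_{sa}(T))\oplus(\mD(T)\cap H_{cnsa}(T))$ — not merely that both subspaces are invariant — and then upgrading ``$\im T$ vanishes on a form-dense subset'' to ``$T_{H_{sa}}$ is selfadjoint'' rather than just symmetric. This is where maximal dissipativity (surjectivity of $T-\lambda$ for $\lambda\in\C_-$) is essential, so that the restriction $T_{H_{sa}}$ also has $T_{H_{sa}}-\lambda$ surjective on $H_{sa}(T)$ and a symmetric operator with this property is automatically selfadjoint. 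In practice, for this paper it is cleanest simply to cite \cite{LangerZerspaltung} (see also \cite{Naboko1980}) and omit the proof.
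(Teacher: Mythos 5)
The paper does not actually prove this statement: it is imported from the literature and the citation to \cite{LangerZerspaltung, Naboko1980} is the entire ``proof''. So your closing recommendation --- cite and omit --- is precisely what the paper does, and there is no in-paper argument to compare your sketch against.

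Your self-contained sketch follows the standard construction, but as written it has one concrete gap, and it sits exactly where you locate the ``main obstacle''. You define $H_{cnsa}(T)$ as the smallest closed subspace containing $\range(\im T)$ that is invariant under the resolvents $(T-\lambda)^{-1}$, $\lambda\in\C_-$, \emph{only}. Invariance of a closed subspace $\mM$ under a bounded operator $R$ yields invariance of $\mM^\perp$ under $R^*$; hence with your definition $H_{sa}(T)=H_{cnsa}(T)^\perp$ is immediately invariant under $(T^*-\overline{\lambda})^{-1}$ for $\lambda\in\C_-$, but \emph{not} under $(T-\lambda)^{-1}$ itself --- which is exactly what your first key step claims to verify ``using the resolvent identity''. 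The ordinary resolvent identity only relates $(T-\lambda)^{-1}$ and $(T-\mu)^{-1}$; passing between resolvents of $T$ and of $T^*$ requires the second identity
$(T^*-\mu)^{-1}-(T-\lambda)^{-1}=(T^*-\mu)^{-1}\big(2\I\,\im T+(\mu-\lambda)\big)(T-\lambda)^{-1}$
for $\lambda\in\C_-$, $\mu\in\C_+$, and using it to deduce the missing invariance is circular, since $(T^*-\mu)^{-1}$ reappears on the right. The clean fix is either (a) to define $H_{cnsa}(T)$ from the outset as the smallest closed subspace containing $\overline{\range(\im T)}$ that is invariant under \emph{both} families $(T-\lambda)^{-1}$, $\lambda\in\C_-$, and $(T^*-\mu)^{-1}$, $\mu\in\C_+$ --- then $H_{cnsa}$ and its orthocomplement reduce $T$ essentially by construction --- or (b) to prove as a separate lemma that the two one-sided resolvent hulls of $\range(\im T)$ coincide. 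Relatedly, your first candidate formula uses $(T^*-\lambda)^{-1}$ at points $\lambda\in\C_-$, which need not exist: for maximally dissipative $T$ one has $\C_-\subseteq\rho(T)$ but only $\C_+\subseteq\rho(T^*)$. The remaining steps --- Cauchy--Schwarz for the non-negative form $\scalar{\cdot}{(\im T)\,\cdot}$ to get $Tx=T^*x$ on $\mD(T)\cap H_{sa}(T)$, the vanishing of both defect indices of the symmetric restriction to conclude selfadjointness, and the minimality/uniqueness argument --- are sound once the reduction property is actually in place.
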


The space $H_{sa}$ is maximal in the sense that it contains every selfadjoint subspace of $T$.

In the case of a symmetric operator, we have the following criterion for complete non-selfadjointness.

\begin{theorem}[\protect{\cite{Krein1949}, \cite[Corollary 3.4.5.]{BehrndtHassideSnoo}}]
   Let $S$ be a closed symmetric operator on a Hilbert space $H$.
   Then 
   \begin{equation}
      \label{eq:HsaBHdS}
      H_{sa} 
      = \bigcap_{\lambda\in\C\setminus\R} \range (S-\lambda)
      = \Big( \gen_{\lambda\in\C\setminus\R}\{ \ker (S^*-\lambda) \Big)^\perp.
   \end{equation}

   In particular, $S$ is completely non-selfadjoint if and only if
   \begin{equation*}
      \overline{\gen}_{ \lambda\in\C\setminus\R}\{ \ker (S^*-\lambda) \} = H.
   \end{equation*}
\end{theorem}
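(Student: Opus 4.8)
The plan is to show that $H_{sa} = H_0^\perp$, where $H_0 := \overline{\gen}\{\ker(S^*-\lambda) : \lambda\in\C\setminus\R\}$, and to dispatch first the equality $H_0^\perp = \bigcap_{\lambda\in\C\setminus\R}\range(S-\lambda)$, which is routine: for $\lambda\notin\R$ one has $\norm{(S-\lambda)x}\ge|\im\lambda|\,\norm{x}$, so closedness of $S$ makes $\range(S-\lambda)$ closed and hence $\range(S-\lambda) = \ker((S-\lambda)^*)^\perp = \ker(S^*-\overline\lambda)^\perp$; intersecting over $\lambda\in\C\setminus\R$ (equivalently over $\overline\lambda\in\C\setminus\R$) gives $\bigcap_\lambda\range(S-\lambda) = \bigl(\gen\{\ker(S^*-\mu):\mu\notin\R\}\bigr)^\perp = H_0^\perp$.

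For the inclusion $H_{sa}\subseteq H_0^\perp$ it suffices to show that every reducing subspace $\mathcal{M}$ of $S$ with $S_{\mathcal M}$ selfadjoint satisfies $\mathcal M\perp\ker(S^*-\lambda)$ for all $\lambda\notin\R$. Such an $\mathcal M$ also reduces $S^*$, and $(S^*)_{\mathcal M} = (S_{\mathcal M})^*$ is selfadjoint and hence has no non-real eigenvalues; decomposing $\phi\in\ker(S^*-\lambda)$ as $\phi'+\phi''$ with $\phi'\in\mathcal M\cap\mD(S^*)$ and $\phi''\in\mathcal M^\perp\cap\mD(S^*)$ and applying $S^*$ componentwise forces $S^*\phi' = \lambda\phi'$, so $\phi' = 0$. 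Thus $H_0\subseteq\mathcal M^\perp$, i.e. $\mathcal M\subseteq H_0^\perp$, and in particular $H_{sa}\subseteq H_0^\perp$.

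The substantial part is to prove that $H_0^\perp$ is itself a reducing subspace of $S$ on which $S$ acts selfadjointly; this simultaneously re-identifies $H_0^\perp$ as the maximal such subspace, hence as $H_{sa}$. Assume $S$ has a selfadjoint extension $A$ in $H$, which is the case here since $\eta_+(S) = \eta_-(S)<\infty$. From the standard identity $\ker(S^*-z) = \bigl(I + (z-z_0)(A-z)^{-1}\bigr)\ker(S^*-z_0)$ for $z,z_0$ in the same open half-plane — together with the fact that $\ker(S^*-z_0)$ itself lies in $H_0$ — one obtains that $H_0$ is the closed span of the $A$-cyclic subspaces $\overline{\gen}\{(A-z)^{-1}\phi : z\notin\R\}$ as $\phi$ runs over a basis of $\ker(S^*-\I)$ and a basis of $\ker(S^*+\I)$. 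Each such cyclic subspace reduces the selfadjoint operator $A$, hence so does $H_0$, hence $H_0^\perp$ reduces $A$ and $A_{H_0^\perp}$ is selfadjoint. To pass to $S$, observe that $\mD(S) = (A-\I)^{-1}\range(S-\I) = (A-\I)^{-1}\bigl(\ker(S^*+\I)^\perp\bigr)$ because $A-\I\colon\mD(A)\to H$ is a bijection mapping $\mD(S)$ onto $\range(S-\I)$; since $\ker(S^*+\I)\subseteq H_0$ and $H_0,H_0^\perp$ reduce $A-\I$, a short computation with orthogonal projections shows $P_{H_0}\mD(S)\subseteq\mD(S)$, $P_{H_0^\perp}\mD(S)\subseteq\mD(S)$ and $\mD(S)\cap H_0^\perp = \mD(A)\cap H_0^\perp$. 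Hence $H_0^\perp$ reduces $S$ and $S_{H_0^\perp} = A_{H_0^\perp}$ is selfadjoint, so $H_0^\perp\subseteq H_{sa}$; combined with the previous paragraph this gives $H_{sa} = H_0^\perp$, and the ``in particular'' statement is the case $H_{sa}=\{0\}$.

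For a general closed symmetric $S$ (unequal defect indices) one either replaces $A$ by a selfadjoint extension in a larger Hilbert space, keeping track of the $H$-components, or avoids selfadjoint extensions altogether by using the analyticity of $z\mapsto\ker(S^*-z)$ on each half-plane to show directly that $(S-z)^{-1}$ leaves $H_0^\perp$ invariant, so that $\{(S-z)^{-1}|_{H_0^\perp}\}_{z\notin\R}$ is the resolvent family of a selfadjoint operator contained in $S$ with domain dense in $H_0^\perp$. I expect the delicate point to be exactly the passage from ``$H_0^\perp$ reduces $A$'' to ``$H_0^\perp$ reduces $S$'' — the compatibility of $P_{H_0}$ with $\mD(S)$, which is not closed under the resolvents of $A$ — and, in the general-index case, the bookkeeping required when the selfadjoint extension lives in a strictly larger space.
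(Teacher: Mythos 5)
The paper does not prove this theorem: it is quoted from \cite{Krein1949} and \cite[Corollary 3.4.5]{BehrndtHassideSnoo}, so there is no internal proof to compare against. Judged on its own, your argument is essentially correct in the setting in which the paper actually uses the result. Step one (closed range of $S-\lambda$ from the lower bound $\norm{(S-\lambda)x}\ge|\im\lambda|\norm{x}$ and closedness of $S$, then $\range(S-\lambda)=\ker(S^*-\overline\lambda)^\perp$) and step two (a reducing subspace $\mM$ with $S_\mM$ selfadjoint also reduces $S^*$ with $(S^*)_\mM=(S_\mM)^*$, which has no non-real eigenvalues, so $\ker(S^*-\lambda)\perp\mM$) are both sound. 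The substantial third step --- $H_0$ is the closed span of $A$-reducing cyclic subspaces generated by the defect vectors at $\pm\I$ via $\ker(S^*-z)=(A-z_0)(A-z)^{-1}\ker(S^*-z_0)$, then $\mD(S)=(A-\I)^{-1}\range(S-\I)$ together with $\ker(S^*+\I)\subseteq H_0$ forces $\mD(S)\cap H_0^\perp=\mD(A)\cap H_0^\perp$ and the compatibility of $P_{H_0}$ with $\mD(S)$ --- also checks out.

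The one genuine caveat is scope: the theorem is stated for an arbitrary closed symmetric $S$, while your main argument presupposes a selfadjoint extension $A$ inside $H$, i.e.\ equal defect indices, and the general case is only sketched. This is harmless for the paper, where $\eta_+(S)=\eta_-(S)<\infty$ throughout, but it does mean your proof as written is of a weaker statement. The route taken in \cite[Section 3.4]{BehrndtHassideSnoo} (and implicit in your second sketch) avoids the extension entirely: for $g\in H_0^\perp$ and $f=(S-z)^{-1}g$ one computes $\scalar{g}{\phi}=(\mu-\overline z)\scalar{f}{\phi}$ for $\phi\in\ker(S^*-\mu)$, so $f\perp\ker(S^*-\mu)$ for all $\mu\neq\overline z$, and Theorem~\ref{thm:BHdS} upgrades this to $f\in H_0^\perp$; the restricted resolvents then form the resolvent family of a selfadjoint operator in $H_0^\perp$ contained in $S$. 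That argument is both shorter and fully general, so if you want the theorem in the stated generality you should promote your sketch (b) to the main proof rather than the extension-based one.
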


The next theorem shows that it is not necessary to take the intersection over all $\lambda\in\C\setminus\R$ on the right hand side of \eqref{eq:HsaBHdS}.

\begin{theorem}[\protect{\cite[Proposition 1.6.11.]{BehrndtHassideSnoo}}]
   \label{thm:BHdS}
   Let $S$ be a closed symmetric operator on a Hilbert space $H$ and let $\Omega^\pm$ be subsets of $\C^\pm$ with an accumulation point in $\C^+$ and $\C^-$ respectively. 
   Then 
   \begin{equation}
      \label{eq:accumulation}
      \begin{aligned}
	 \overline{\gen}_{\lambda\in\C^+}\{ \ker (S^*-\lambda) \}
	 &=
	 \overline{\gen}_{\lambda\in\Omega^+}\{ \ker (S^*-\lambda) \},
	 \\
	 \overline{\gen}_{\lambda\in\C^-}\{ \ker (S^*-\lambda) \}
	 &=
	 \overline{\gen}_{\lambda\in\Omega^-}\{ \ker (S^*-\lambda) \},
      \end{aligned}
   \end{equation}
\end{theorem}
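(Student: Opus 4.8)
The plan is to exploit the fact that, over the connected open set $\C^+$ (and likewise over $\C^-$), the defect subspaces $N_\lambda := \ker(S^*-\lambda)$ form a holomorphic family, so that for a fixed $g\in H$ the orthogonality condition ``$g\perp N_\lambda$'' amounts to the simultaneous vanishing of finitely many holomorphic functions of $\lambda$, to which the identity theorem applies. It is enough to prove the first identity in \eqref{eq:accumulation}; the second one follows by the identical argument with $\C^+$ replaced by $\C^-$.

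First I would fix a selfadjoint extension $A$ of $S$ (which exists since $\eta_+(S)=\eta_-(S)<\infty$; if one prefers not to assume equal defect numbers, one may take a selfadjoint extension in a larger Hilbert space instead). Since $A=A^*$ we have $\C^+\subseteq\rho(A)$ and $\lambda\mapsto(A-\lambda)^{-1}$ is holomorphic there. Fixing $\lambda_0\in\C^+$ and a basis $f_1,\dots,f_\eta$ of $N_{\lambda_0}$ (here $\eta:=\dim N_{\lambda_0}$), I would verify by a short computation — using $S^*|_{\mD(A)}=A$ together with $A(A-\lambda)^{-1}=I+\lambda(A-\lambda)^{-1}$ — that
\[
   e_j(\lambda):=f_j+(\lambda-\lambda_0)(A-\lambda)^{-1}f_j\in N_\lambda
   \qquad (\lambda\in\C^+,\ j=1,\dots,\eta),
\]
and that the operator $g\mapsto g+(\lambda-\lambda_0)(A-\lambda)^{-1}g$ is a bijection of $H$ whose inverse is $g\mapsto g+(\lambda_0-\lambda)(A-\lambda_0)^{-1}g$ (a one-line check with the resolvent identity). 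In particular it restricts to a bijection $N_{\lambda_0}\to N_\lambda$, so $e_1(\lambda),\dots,e_\eta(\lambda)$ is a basis of $N_\lambda$ for every $\lambda\in\C^+$, and each $\lambda\mapsto e_j(\lambda)$ is an $H$-valued holomorphic function on $\C^+$. (Alternatively one may simply cite that the defect family is holomorphic, e.g. \cite{BehrndtHassideSnoo}.)

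Now fix $g\in H$ and set $\varphi_j(\lambda):=\scalar{g}{e_j(\lambda)}$, which is holomorphic on $\C^+$ by the previous step. For $\lambda\in\C^+$ we have $g\perp N_\lambda$ if and only if $\varphi_1(\lambda)=\dots=\varphi_\eta(\lambda)=0$, so
\[
   Z:=\{\lambda\in\C^+:g\perp N_\lambda\}=\bigcap_{j=1}^{\eta}Z_j,
   \qquad Z_j:=\{\lambda\in\C^+:\varphi_j(\lambda)=0\}.
\]
By the identity theorem on the connected open set $\C^+$, each $Z_j$ is either all of $\C^+$ or has no accumulation point in $\C^+$. Consequently, if $Z$ has an accumulation point $\lambda_*\in\C^+$, then $\lambda_*$ is also an accumulation point of each $Z_j\supseteq Z$, which forces $Z_j=\C^+$ for every $j$, hence $Z=\C^+$; that is, $g\perp N_\lambda$ for all $\lambda\in\Omega^+$ (with $\Omega^+\subseteq Z$ having an accumulation point in $\C^+$) already implies $g\perp N_\lambda$ for all $\lambda\in\C^+$.

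Finally I would conclude: the inclusion $\overline{\gen}_{\lambda\in\Omega^+}\{N_\lambda\}\subseteq\overline{\gen}_{\lambda\in\C^+}\{N_\lambda\}$ is immediate, and the previous paragraph shows that any $g$ orthogonal to the left-hand space is orthogonal to the right-hand one; hence the two closed spans have the same orthogonal complement and therefore coincide, proving the first identity in \eqref{eq:accumulation}, and the $\C^-$ version is proved the same way. The step that needs the most care is the construction of the holomorphic frame $\{e_j(\lambda)\}$, i.e.\ checking that $e_j(\lambda)\in N_\lambda$ and that these vectors span $N_\lambda$; everything afterwards is a routine application of the identity theorem. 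If one wishes to avoid choosing a frame, one can instead observe that $\lambda\mapsto g+(\overline\lambda-\overline{\lambda_0})(A-\overline\lambda)^{-1}g$ is an $H$-valued anti-holomorphic function on $\C^+$ whose orthogonal projection onto the finite-dimensional space $N_{\lambda_0}$ vanishes precisely when $g\perp N_\lambda$, and invoke the identity theorem for vector-valued holomorphic maps.
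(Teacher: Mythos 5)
The paper does not prove this statement; it is quoted verbatim from \cite{BehrndtHassideSnoo} with no argument supplied, so there is nothing internal to compare against. Your proof is correct and is essentially the standard one: the computation $S^*\bigl(f_j+(\lambda-\lambda_0)(A-\lambda)^{-1}f_j\bigr)=\lambda\bigl(f_j+(\lambda-\lambda_0)(A-\lambda)^{-1}f_j\bigr)$ checks out, the map $I+(\lambda-\lambda_0)(A-\lambda)^{-1}$ is indeed inverted by $I+(\lambda_0-\lambda)(A-\lambda_0)^{-1}$ via the resolvent identity, so it carries $\ker(S^*-\lambda_0)$ bijectively onto $\ker(S^*-\lambda)$, and the identity theorem applied to the functions $\lambda\mapsto\scalar{g}{e_j(\lambda)}$ gives equality of the orthogonal complements, hence of the closed spans. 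Two small remarks. First, your reduction to ``finitely many holomorphic functions'' silently assumes $\dim\ker(S^*-\lambda_0)<\infty$; for the general statement you should instead apply the identity theorem to $\lambda\mapsto\scalar{g}{f+(\lambda-\lambda_0)(A-\lambda)^{-1}f}$ for each fixed $f\in\ker(S^*-\lambda_0)$ separately, which costs nothing. Second, your fallback for unequal defect indices (a selfadjoint extension in a larger space $\widetilde H$) does not work as stated: $(A-\lambda)^{-1}f_j$ then lives in $\widetilde H$, not in $H$, and is no longer related to $\ker(S^*-\lambda)$ with $S^*$ computed in $H$. The clean fix is to treat each half-plane separately and replace $A$ by an extension $T$ of $S$ inside $H$ with $S\subseteq T\subseteq S^*$ and $\C^\mp\subseteq\rho(T)$ (a maximally dissipative extension of $S$ for $\C^-$, and the negative of a maximally dissipative extension of $-S$ for $\C^+$); these always exist. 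Neither issue matters for the way the theorem is used in this paper, where the defect indices are equal and finite.
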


Moreover, if $T=S+\I V$ with a selfadjoint operator $S$ and a bounded operator $V\ge 0$, then we have the following formula for $H_{cnsa}(T)$:
\begin{equation*}
   H_{cnsa}(T) 
   = \overline{\gen}_{\lambda \in \C\setminus\R} \{ (S-\lambda)^{-1}(\range V) \}
   = \overline{\gen}_{\lambda \notin \sigma(T-\lambda)\cup\R } \{ (T-\lambda)^{-1}(\range V) \}.
\end{equation*}
\medskip

Let us return to maximally dissipative operators. 

\begin{proposition}[\protect{\cite[Proposition 2.6]{KMN2025}}]
   \label{prop:KMNRealPointSpec}
   Let $\widehat A$ be a maximally dissipative operator.
   If $\sigma(\widehat A)\cap \R \subseteq \sigma_p(\widehat A)$, then
   \begin{equation*}
      H_{sa}(\widehat A) = \overline{\gen}\{ \ker (\widehat A - \lambda) : \lambda\in\R\}.
   \end{equation*}
   In particular, if $\sigma(\widehat A)\cap \R = \emptyset$, then $H_{sa}(\widehat A) = \{0\}$ and $\widehat A$ is cnsa.
\end{proposition}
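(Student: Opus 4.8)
The plan is to prove the two inclusions in $H_{sa}(\widehat A) = \overline{\gen}\{\ker(\widehat A-\lambda):\lambda\in\R\}$ separately. The engine of the whole argument is a lemma valid for \emph{every} maximally dissipative $\widehat A$: if $\lambda\in\R$ and $\widehat A f=\lambda f$, then $f\in\mD(\widehat A^*)$ and $\widehat A^* f=\lambda f$. I would prove this with the Cayley transform $C:=(\widehat A-\I)(\widehat A+\I)^{-1}=I-2\I(\widehat A+\I)^{-1}$, which is well defined and bounded because $-\I\in\rho(\widehat A)$, and which is a contraction since $\|Cx\|^2-\|x\|^2=-4\im\scalar{y}{\widehat A y}\le 0$ for $y:=(\widehat A+\I)^{-1}x$. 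If $\widehat A f=\lambda f$ with $\lambda\in\R$, then $Cf=\tfrac{\lambda-\I}{\lambda+\I}f$ is an eigenvector of $C$ with eigenvalue of modulus $1$; for a contraction one always has $Cf=\mu f$, $|\mu|=1\ \Rightarrow\ C^*f=\overline\mu f$ (because $\|C^*f-\overline\mu f\|^2=\|C^*f\|^2-\|f\|^2\le 0$), and since $C^*=I+2\I(\widehat A^*-\I)^{-1}$, unwinding this identity yields $(\widehat A^*-\I)^{-1}f=\tfrac{1}{\lambda-\I}f$, i.e. $f\in\mD(\widehat A^*)$ and $\widehat A^*f=\lambda f$.

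With this lemma the inclusion ``$\supseteq$'' is easy. For $\lambda\in\R$ and $f\in\ker(\widehat A-\lambda)\setminus\{0\}$ we get $\scalar{f}{\widehat A g}=\scalar{\widehat A^*f}{g}=\lambda\scalar{f}{g}$ for all $g\in\mD(\widehat A)$, so both $\C f$ and $(\C f)^\perp$ are $\widehat A$-invariant, and since $f\in\mD(\widehat A)$ the domain splits as $\mD(\widehat A)=(\mD(\widehat A)\cap\C f)\oplus(\mD(\widehat A)\cap(\C f)^\perp)$. Hence $\C f$ is a reducing subspace and $\widehat A|_{\C f}=\lambda I$ is selfadjoint, so $\C f\subseteq H_{sa}(\widehat A)$; as $H_{sa}(\widehat A)$ is closed, it contains the closed linear span of all these eigenspaces. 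Note also that the lemma forces eigenvectors of $\widehat A$ belonging to distinct real eigenvalues to be orthogonal, so in a separable Hilbert space $\sigma_p(\widehat A)\cap\R$ is countable.

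For the inclusion ``$\subseteq$'' put $\widehat A_{sa}:=\widehat A|_{H_{sa}(\widehat A)}$; this is selfadjoint, so $\sigma(\widehat A_{sa})\subseteq\R$, and $\sigma(\widehat A_{sa})\subseteq\sigma(\widehat A)$ because $H_{sa}(\widehat A)$ reduces $\widehat A$. By hypothesis $\sigma(\widehat A_{sa})\subseteq\sigma(\widehat A)\cap\R\subseteq\sigma_p(\widehat A)\cap\R$, which we just saw is countable. The eigenvectors of $\widehat A_{sa}$ are exactly the eigenvectors of $\widehat A$ with real eigenvalue, and by ``$\supseteq$'' they all lie in $\mathcal N:=\overline{\gen}\{\ker(\widehat A-\lambda):\lambda\in\R\}\subseteq H_{sa}(\widehat A)$. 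If $\mathcal N\neq H_{sa}(\widehat A)$, then $\mathcal M:=H_{sa}(\widehat A)\ominus\mathcal N$ is a nonzero reducing subspace of $\widehat A_{sa}$ on which the (selfadjoint) restriction has empty point spectrum but spectrum contained in the countable set $\sigma(\widehat A)\cap\R$; this is impossible, since a selfadjoint operator on a nonzero space with countable spectrum must have an eigenvalue (its spectral measure charges some point). Hence $\mathcal N=H_{sa}(\widehat A)$. The ``in particular'' is then immediate: if $\sigma(\widehat A)\cap\R=\emptyset$ there are no real eigenvalues, so $\mathcal N=\{0\}$ and $\widehat A$ is cnsa.

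The step I expect to be the main obstacle is the very last one, namely passing from ``$\sigma(\widehat A_{sa})=\sigma_p(\widehat A_{sa})$'' to ``$\widehat A_{sa}$ is diagonalised by its eigenvectors'': this genuinely uses separability of $H$ (without it the statement is false, e.g. for $\widehat A$ a direct sum of a multiplication operator with a diagonal operator whose point spectrum is dense in the same interval), so I would state separability explicitly or invoke it as a standing assumption, remarking that it holds in all examples of the paper. Two further points to watch: one must track domains carefully when transporting eigenvector data through the (unbounded) Cayley transform, and one should recall that the statement tacitly assumes $\mD(\widehat A)\subseteq\mD(\widehat A^*)$, which is what makes $H_{sa}(\widehat A)$ meaningful via Langer's theorem.
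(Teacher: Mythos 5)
The paper does not prove this proposition at all; it is imported verbatim from \cite[Proposition 2.6]{KMN2025}, so there is no in-paper argument to compare against. Your blind proof is, as far as I can check, correct and self-contained. The key lemma (a real eigenvector of a maximally dissipative operator is automatically an eigenvector of the adjoint with the same eigenvalue) is proved cleanly via the Cayley transform, the computation $\|Cx\|^2-\|x\|^2=-4\im\scalar{y}{\widehat A y}$ and the identities $C=I-2\I(\widehat A+\I)^{-1}$, $C^*=I+2\I(\widehat A^*-\I)^{-1}$ all check out, and the two inclusions follow as you describe: ``$\supseteq$'' from the maximality of $H_{sa}$ applied to the one-dimensional reducing subspaces $\C f$, and ``$\subseteq$'' from $\sigma(\widehat A_{sa})\subseteq\sigma(\widehat A)\cap\R\subseteq\sigma_p(\widehat A)\cap\R$ together with the fact that a nonzero selfadjoint operator with countable spectrum has an eigenvalue.

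Your closing caveat is not pedantry: the countability of $\sigma_p(\widehat A)\cap\R$ (which you correctly derive from the pairwise orthogonality of real eigenspaces) genuinely requires $H$ to be separable, and without separability the statement fails --- e.g.\ take $\widehat A$ selfadjoint equal to multiplication by $x$ on $L_2(0,1)$ direct-summed with the diagonal operator on $\ell^2([0,1])$ with eigenvalue $t$ in coordinate $t$; then $\sigma(\widehat A)\cap\R=\sigma_p(\widehat A)=[0,1]$ but the closed span of the real eigenspaces misses the $L_2$ summand. Since all Hilbert spaces in this paper (and in \cite{KMN2025}) are separable, this costs nothing here, but if you wanted your proof to stand alone you should indeed state separability as a hypothesis. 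The only cosmetic suggestion is to spell out that $\mathcal N$ reduces $\widehat A_{sa}$ (each eigenspace is the range of a spectral projection of $\widehat A_{sa}$, and the join of these projections commutes with $\widehat A_{sa}$), which you currently use implicitly when forming $\mathcal M=H_{sa}(\widehat A)\ominus\mathcal N$.
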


\begin{proposition}[\protect{\cite[Proposition 3.4.8]{BehrndtHassideSnoo}}]
\label{prop:BHdS348}
   Let $S$ be a closed symmetric operator and assume that it admits a selfadjoint extension $\widehat S$ with $\sigma(\widehat S) = \sigma_p(\widehat S)$.
   Then $S$ is cnsa if and only if $\sigma_p(S) = \emptyset$.
\end{proposition}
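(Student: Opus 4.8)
The plan is to deduce this from the Kreĭn-type characterization of $H_{sa}$ for symmetric operators, i.e.\ from \eqref{eq:HsaBHdS}, combined with the hypothesis that $S$ has a selfadjoint extension $\widehat S$ all of whose spectrum is point spectrum. One direction is trivial: if $\sigma_p(S)\neq\emptyset$, say $Su=\mu u$ with $\mu\in\R$ and $u\neq 0$, then $\gen\{u\}$ is a one-dimensional selfadjoint subspace of $S$ (since $S$ symmetric, any eigenvalue is real and $S$ acts as $\mu\cdot I$ on $\gen\{u\}$, which is readily checked to be reducing), so $S$ is not cnsa. The substance is the converse: assuming $\sigma_p(S)=\emptyset$, show $H_{sa}=\{0\}$, equivalently $\bigcap_{\lambda\in\C\setminus\R}\range(S-\lambda) = \{0\}$.

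First I would use the selfadjoint extension $\widehat S$ to get a handle on the intersection of ranges. Since $S\subseteq\widehat S$, for any $\lambda\in\C\setminus\R$ we have $\range(S-\lambda)\subseteq\range(\widehat S-\lambda) = H$ trivially, so that inclusion alone gives nothing; instead I would work on the resolvent side. For $\lambda\in\rho(\widehat S) = \C\setminus\R$ (note $\sigma(\widehat S)\subseteq\R$), let $R_\lambda := (\widehat S-\lambda)^{-1}$. A vector $g$ lies in $\range(S-\lambda)$ iff $R_\lambda g \in \mD(S)$ and $(S-\lambda)R_\lambda g = g$; the point is that $\mD(S)$ has finite (or at least ``thin'') codefect inside $\mD(\widehat S)$, and $R_\lambda g\in\mD(\widehat S)$ always. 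The cleaner route: by \eqref{eq:HsaBHdS}, $H_{sa}(S)^\perp = \overline{\gen}_{\lambda\in\C\setminus\R}\ker(S^*-\lambda)$, and since $H_{sa}(S)$ is a reducing subspace on which $S$ is selfadjoint, the restriction $S_{H_{sa}}$ is a selfadjoint operator whose resolvent is the restriction of any $(\widehat S_0 - \lambda)^{-1}$ for a selfadjoint extension $\widehat S_0$ of $S$ that leaves $H_{sa}$ invariant. The key observation is that $H_{sa}(S)$ is invariant under every selfadjoint extension of $S$, in particular under $\widehat S$, and $\widehat S|_{H_{sa}(S)} = S|_{H_{sa}(S)} = S_{sa}$ is selfadjoint. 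Then $\sigma(S_{sa})\subseteq\sigma(\widehat S) = \sigma_p(\widehat S)$, and a selfadjoint operator all of whose spectrum is eigenvalues, restricted to a reducing subspace, still has all its spectrum being eigenvalues — in fact $\sigma(S_{sa})$ is nonempty if $H_{sa}(S)\neq\{0\}$, so $S_{sa}$ has an eigenvalue $\mu\in\R$ with eigenvector $u\in H_{sa}(S)\subseteq\mD(S)$. But $S_{sa}u=\mu u$ means $Su=\mu u$, contradicting $\sigma_p(S)=\emptyset$. Hence $H_{sa}(S)=\{0\}$.

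The two facts I would need to justify carefully are: (a) $H_{sa}(S)$ is invariant under $\widehat S$ with $\widehat S|_{H_{sa}(S)} = S_{sa}$ — this follows because $H_{sa}(S) = \big(\overline{\gen}_\lambda\ker(S^*-\lambda)\big)^\perp$ from \eqref{eq:HsaBHdS}, every $\ker(S^*-\lambda)$ is $R_\mu$-invariant-orthogonal for suitable $\mu$, and more directly because $S_{sa} = S|_{H_{sa}}$ is itself selfadjoint (so maximally symmetric) and is a restriction of $S^*|_{H_{sa}} = (S_{H_{sa}})^* $, forcing $\widehat S$ to restrict to it; alternatively invoke that $S = S_{sa}\oplus S_{cnsa}$ and any selfadjoint extension must respect the reducing decomposition on the selfadjoint summand; and (b) $\sigma(S_{sa})\subseteq\sigma(\widehat S)$, which holds because for $\lambda\notin\sigma(\widehat S)$ the resolvent $(\widehat S-\lambda)^{-1}$ maps $H_{sa}(S)$ into $\mD(S)\cap H_{sa}(S)$ and restricts to $(S_{sa}-\lambda)^{-1}$. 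The main obstacle — and the only place real care is needed — is step (a): pinning down that the abstract Kreĭn subspace $H_{sa}(S)$ is genuinely reduced by the \emph{given} selfadjoint extension $\widehat S$ (not just by some selfadjoint extension), so that the spectral containment $\sigma(S_{sa})\subseteq\sigma(\widehat S)=\sigma_p(\widehat S)$ is available. Once that is in hand, the contradiction with $\sigma_p(S)=\emptyset$ is immediate, since a nonzero selfadjoint operator with pure point spectrum certainly has an eigenvalue, and that eigenvalue is then an eigenvalue of $S$.
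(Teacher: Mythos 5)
The paper does not prove this proposition --- it is quoted from \cite[Proposition 3.4.8]{BehrndtHassideSnoo} --- so there is no in-paper argument to compare against; I will therefore assess your proof on its own terms. Your argument is essentially the standard one and is correct in outline. The easy direction is fine: an eigenvector $u$ of the symmetric operator $S$ (necessarily with real eigenvalue) spans a one-dimensional selfadjoint subspace, so $S$ is not cnsa. For the converse, both facts you isolate do hold, and (a) has a clean justification already implicit in your remarks: by \eqref{eq:HsaBHdS} every defect space $\ker(S^*-\lambda)$, $\lambda\in\C\setminus\R$, lies in $H_{cnsa}(S)=H_{sa}(S)^\perp$, so \emph{every} operator $T$ with $S\subseteq T\subseteq S^*$ --- in particular the given $\widehat S$ --- is reduced by $H_{sa}(S)$ and restricts there to $S_{sa}$; fact (b) then follows because the spectrum of an orthogonal summand is contained in the spectrum of the direct sum.

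The one step you should tighten is the last one. The assertion that the restriction of a pure-point selfadjoint operator to a reducing subspace ``still has all its spectrum being eigenvalues'' is not literally true: the point spectrum of the restriction need not be closed (take $\sigma(\widehat S)=\{0\}\cup\{1/n : n\in\N\}$, all eigenvalues, and let $M$ be the span of the eigenvectors for the eigenvalues $1/n$; then $0\in\sigma(\widehat S|_M)\setminus\sigma_p(\widehat S|_M)$). What you actually need --- and what is true --- is only that a \emph{nonzero} reducing subspace of such an operator contains an eigenvector. Indeed, $\sigma(\widehat S)=\sigma_p(\widehat S)$ forces the spectral measure $E$ of $\widehat S$ to be purely atomic (for separable $H$ the point spectrum is countable and equals the closed support of $E$), so $\id=\sum_{\lambda}E(\{\lambda\})$ and hence
$H_{sa}(S)=\overline{\gen}_{\lambda}\bigl( H_{sa}(S)\cap\ker(\widehat S-\lambda) \bigr)$,
since the projection onto the reducing subspace $H_{sa}(S)$ commutes with each $E(\{\lambda\})$. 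If $H_{sa}(S)\neq\{0\}$, some intersection is nontrivial, producing $0\neq u\in\mD(\widehat S)\cap H_{sa}(S)=\mD(S)\cap H_{sa}(S)$ with $Su=\widehat S u=\lambda u$, contradicting $\sigma_p(S)=\emptyset$. With that repair the proof is complete.
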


The symmetric space of a maximally dissipative operator is the largest subspace of $H$ where it coincides with its adjoint.

\begin{definition}
   \label{def:symmetricspace}
   Let $\widehat A$ be a maximally dissipative operator. 
   Then its \define{symmetric subspace} is 
   \begin{equation*}
      \Hsym(\widehat A) := \ker(\widehat A - \widehat A^*).
   \end{equation*}
\end{definition}

It was shown in \cite[Proposition~1.1]{Kuzhel1996} that
\begin{equation*}
   \Hsym(\widehat A) = \{ f\in\mD(\widehat A) : \scalar{\widehat Af}{g} = \scalar{f}{\widehat Ag}\ \text{ for all }\ g\in\mD(\widehat A) \}.
\end{equation*}

\begin{remark}
   \begin{enumerate}[label={\upshape(\roman*)}]

      \item
      If $S$ is symmetric and closed, then $\Hsym(S) = \Hsym(S^*) = \mD(S)$.

      \item 
      For every maximally dissipative operator $\widehat A$ we have that 
      \begin{equation*}
	 H_{sa} \cap \mD(\widehat A) \subseteq \Hsym(\widehat A).
      \end{equation*}

   \end{enumerate}
\end{remark}

Let $A= S + \I V$ where $S$ is a closed symmetric operator and $V$ is a bounded non-negative operator.
Then for every maximally dissipative extension $\widehat A$ of $A$ we have that $\mD(\widehat A)\subseteq \mD(S^*)$ and consequently also $\mD(\widehat A^*)\subseteq \mD(S^*)$.

\begin{proposition}
   Let $A = S + \I V$ where $S$ is a closed symmetric operator and $V$ is a bounded non-negative operator.
   Then 
   \begin{enumerate}[label={\upshape(\roman*)}]

      \item\label{item:HsymMaxDis}
      $\mD(S)  \cap \ker V \subset \Hsym(\widetilde A)$ 
      for any dissipative extension $\widetilde A$ of $A$,

      \item\label{item:HsymA}
      $\mD(S) \cap \ker V = \Hsym(A) \subseteq \mD(S)$.

   \end{enumerate}
\end{proposition}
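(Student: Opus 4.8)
The plan is to prove part~\ref{item:HsymA} first, directly from the definition of $\Hsym$ and the identity $A^* = S^* - \I V$ supplied by Proposition~\ref{prop:DissExtSymmetric}, and then to deduce part~\ref{item:HsymMaxDis} from the explicit description of dissipative extensions in Theorem~\ref{thm:ChristophMaxDiss}.

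For~\ref{item:HsymA}: any element of $\Hsym(A) = \ker(A - A^*)$ lies in $\mD(A)\cap\mD(A^*) = \mD(S)\cap\mD(S^*) = \mD(S)$, which already yields $\Hsym(A)\subseteq\mD(S)$. For $f\in\mD(S)$ we have $Af = Sf + \I Vf$ and, by Proposition~\ref{prop:DissExtSymmetric}, $A^*f = S^*f - \I Vf = Sf - \I Vf$; hence $Af = A^*f$ holds if and only if $2\I Vf = 0$, i.e. $f\in\ker V$. Therefore $\Hsym(A) = \mD(S)\cap\ker V$.

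For~\ref{item:HsymMaxDis}: fix a dissipative extension $\widetilde A$ of $A$ and $h\in\mD(S)\cap\ker V$. By Theorem~\ref{thm:ChristophMaxDiss} we may write $\mD(\widetilde A) = \mD(S)\dot + \mW$ with $\mW\subseteq\mD(S^*)//\mD(S)$ and $\widetilde A(f_0 + w) = (S^*+\I V)(f_0+w) + \mL w$ for a linear map $\mL\colon\mW\to\range(V^{1/2})$. Since $h = h + 0$ in this splitting, $\widetilde A h = S^*h + \I Vh = Sh$, using $h\in\mD(S)$ and $Vh = 0$. It suffices to verify that $\scalar{h}{\widetilde A g} = \scalar{\widetilde A h}{g}$ for every $g = f_0 + w\in\mD(\widetilde A)$, since this says precisely that $h\in\mD(\widetilde A^*)$ with $\widetilde A^* h = \widetilde A h$, i.e. $h\in\ker(\widetilde A - \widetilde A^*) = \Hsym(\widetilde A)$. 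Expanding, $\scalar{h}{\widetilde A g} = \scalar{h}{S^*g} + \I\scalar{h}{Vg} + \scalar{h}{\mL w}$. The second summand vanishes because $V$ is selfadjoint and $Vh = 0$; the third vanishes because $\mL w\in\range(V^{1/2})\subseteq\overline{\range V^{1/2}} = (\ker V^{1/2})^\perp$ while $h\in\ker V = \ker V^{1/2}$; and $\scalar{h}{S^*g} = \scalar{Sh}{g} = \scalar{\widetilde A h}{g}$ since $h\in\mD(S)$, $g\in\mD(S^*)$, by the defining property of $S^*$.

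The argument is essentially a direct computation, so there is no genuine obstacle; the only points worth isolating are the orthogonality $\range(V^{1/2})\perp\ker V$ — which reduces to $\ker V = \ker V^{1/2}$, visible from $\|V^{1/2}x\|^2 = \scalar{x}{Vx}$ — and the observation that everything goes through for an arbitrary dissipative extension, without needing maximality of $\widetilde A$ or any boundedness of the map $f_0 + w\mapsto w$.
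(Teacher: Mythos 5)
Your proof is correct, and for part~\ref{item:HsymMaxDis} it takes a slightly different --- and in fact more careful --- route than the paper. For~\ref{item:HsymA} your computation coincides with the paper's: membership in $\mD(A)\cap\mD(A^*)=\mD(S)$ together with $A-A^*=2\I V$ on $\mD(S)$. (The paper proves~\ref{item:HsymMaxDis} first and uses it for one inclusion of~\ref{item:HsymA}; you prove~\ref{item:HsymA} self-containedly, which changes nothing of substance.) For~\ref{item:HsymMaxDis} the paper argues in one line: from $A\subseteq\widetilde A$ it deduces $\widetilde A^*\subseteq S^*-\I V$ and then writes $\widetilde A^*f=(S^*-\I V)f=Sf$ for $f\in\mD(S)\cap\ker V$ --- which tacitly uses that such $f$ already lies in $\mD(\widetilde A^*)$, a fact the inclusion $\widetilde A^*\subseteq S^*-\I V$ does not by itself supply. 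You instead invoke the explicit representation $\widetilde A(f_0+w)=(S^*+\I V)(f_0+w)+\mL w$ from Theorem~\ref{thm:ChristophMaxDiss} and verify $\scalar{h}{\widetilde A g}=\scalar{\widetilde A h}{g}$ for all $g\in\mD(\widetilde A)$ directly, using $\range(V^{1/2})\perp\ker V$ to kill the non-local term $\scalar{h}{\mL w}$; this simultaneously establishes $h\in\mD(\widetilde A^*)$ and $\widetilde A^*h=\widetilde A h$, exactly the characterization of $\Hsym$ quoted from Kuzhel just before the proposition. The cost is a dependence on the structure theorem (the paper's argument only needs $\widetilde A^*\subseteq A^*=S^*-\I V$); the benefit is that the domain issue is settled explicitly rather than left implicit. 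Your closing observations --- that $\ker V=\ker V^{1/2}$ via $\|V^{1/2}x\|^2=\scalar{x}{Vx}$, and that neither maximality of $\widetilde A$ nor boundedness of the map $f_0+w\mapsto w$ is used --- are both accurate.
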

\begin{proof}
   For the proof of \ref{item:HsymMaxDis} note that $S+\I V \subseteq \widetilde A$ implies $\widetilde A^* \subseteq S^* -\I V$.
   Let $f\in \mD(S) \cap \ker V$.
   Then $\widetilde A f = (S+\I V) f = Sf$ and $\widetilde A^* f = (S^* - \I V) f = S^*f = Sf$.
   Therefore $f\in\ker(\widetilde A - \widetilde A^*) = \Hsym(\widetilde A)$.

   For the claim \ref{item:HsymA}, take $f\in \Hsym(A)$.
   Then $f\in\mD(A)\cap\mD(A^*) = \mD(S)\cap\mD(S^*) = \mD(S)$ and
   \begin{equation*}
      0 = (A-A^*)f = (S + \I V) f - (S^* -\I V) f = 2\I Vf,
   \end{equation*}
   which proves that $\mD(S) \cap \ker V \supseteq \Hsym(A)$.
   The other inclusion follows from \ref{item:HsymMaxDis}.
\end{proof}

The next proposition is very useful for determining if a maximally dissipative operator is cnsa.

\begin{proposition}
   Let $A = S + \I V$ where $S$ is a closed symmetric operator and $V$ is a bounded non-negative operator.
   Let $\widehat A$ be a maximally dissipative extension of $A$ with 
   $\mD(S)  \cap \ker V = \Hsym(\widehat A)$.
   Then 
   \begin{equation*}
      H_{sa}(\widehat A) \perp H_{cnsa}(S).
   \end{equation*}

   In particular, if $S$ is cnsa and $\mD(S)  \cap \ker V = \Hsym(\widehat A)$, then $\widehat A$ is cnsa.
\end{proposition}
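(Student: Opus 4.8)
The plan is to prove the equivalent statement $H_{sa}(\widehat A) \subseteq H_{cnsa}(S)^\perp$, i.e. that $H_{sa}(\widehat A)$ is orthogonal to every defect subspace $\ker(S^*-\lambda)$ with $\lambda\in\C\setminus\R$. Indeed, by Krein's theorem recalled above, $H_{sa}(S) = \big(\gen_{\lambda\in\C\setminus\R}\{\ker(S^*-\lambda)\}\big)^\perp$, so $H_{cnsa}(S) = \overline{\gen}_{\lambda\in\C\setminus\R}\{\ker(S^*-\lambda)\}$, and establishing orthogonality to each defect space suffices.

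Write $M := H_{sa}(\widehat A)$ and let $\widehat A_M$ be the restriction of $\widehat A$ to $M$; by the Langer decomposition $M$ reduces $\widehat A$ and $\widehat A_M$ is selfadjoint on the Hilbert space $M$. The first and conceptually decisive step is to observe that $\widehat A$ acts as the \emph{symmetric} operator $S$ on $M$: if $\phi\in\mD(\widehat A)\cap M$, then by the Remark above $\phi\in H_{sa}(\widehat A)\cap\mD(\widehat A)\subseteq\Hsym(\widehat A)$, which by hypothesis equals $\mD(S)\cap\ker V$; hence $\phi\in\mD(A)$, $V\phi=0$, and therefore $\widehat A\phi = A\phi = S\phi + \I V\phi = S\phi$. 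In particular $\widehat A_M$ is a selfadjoint restriction of $S$ living in $M$.

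Now fix $\lambda\in\C\setminus\R$ and $\psi\in\ker(S^*-\lambda)$. For every $\phi\in\mD(\widehat A_M)=\mD(\widehat A)\cap M$ I would compute, using $\widehat A_M\phi=S\phi$, the convention that $\scalar{\cdot}{\cdot}$ is antilinear in the first argument, the definition of the adjoint (valid since $\phi\in\mD(S)$, $\psi\in\mD(S^*)$), and $S^*\psi=\lambda\psi$:
\[
   \scalar{(\widehat A_M - \overline\lambda)\phi}{\psi}
   = \scalar{S\phi}{\psi} - \lambda\scalar{\phi}{\psi}
   = \scalar{\phi}{S^*\psi} - \lambda\scalar{\phi}{\psi}
   = \scalar{\phi}{\lambda\psi} - \lambda\scalar{\phi}{\psi}
   = 0 .
\]
Since $\widehat A_M$ is selfadjoint and $\overline\lambda\notin\R$, the map $\widehat A_M-\overline\lambda$ sends $\mD(\widehat A_M)$ \emph{onto} $M$; writing an arbitrary $m\in M$ as $m=(\widehat A_M-\overline\lambda)\phi$ we conclude $\scalar{m}{\psi}=0$, i.e. $\psi\perp M$. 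As $\lambda\in\C\setminus\R$ and $\psi\in\ker(S^*-\lambda)$ were arbitrary, $M\perp\overline{\gen}_{\lambda\in\C\setminus\R}\{\ker(S^*-\lambda)\}=H_{cnsa}(S)$, which is the assertion. The final claim is then immediate: if $S$ is cnsa, then $H_{cnsa}(S)=H$, so $H_{sa}(\widehat A)\perp H$, forcing $H_{sa}(\widehat A)=\{0\}$.

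The only genuine obstacle is the second step: recognizing that the hypothesis $\mD(S)\cap\ker V=\Hsym(\widehat A)$, combined with the general inclusion $H_{sa}(\widehat A)\cap\mD(\widehat A)\subseteq\Hsym(\widehat A)$, forces $V$ to annihilate $\mD(\widehat A)\cap H_{sa}(\widehat A)$, so that on its selfadjoint part $\widehat A$ degenerates to the symmetric operator $S$ and Krein's criterion for $S$ becomes directly applicable. Everything after that is a one-line orthogonality identity together with the surjectivity of $\widehat A_M-\overline\lambda$ coming from selfadjointness.
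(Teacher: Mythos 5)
Your proof is correct and follows essentially the same route as the paper: both arguments exploit the hypothesis $\Hsym(\widehat A)=\mD(S)\cap\ker V$ to see that $\widehat A$ acts as $S$ on $H_{sa}(\widehat A)\cap\mD(\widehat A)$, and then pair against $\ker(S^*-\lambda)$ to get orthogonality to $H_{cnsa}(S)$. The only (harmless) difference is that you obtain the needed surjectivity from the selfadjointness of $\widehat A_{M}$ within $M=H_{sa}(\widehat A)$, which handles both half-planes at once, whereas the paper writes elements of $M$ as $(\widehat A-\overline\lambda)(\widehat A-\overline\lambda)^{-1}f$ resp.\ $(\widehat A^*-\overline\lambda)(\widehat A^*-\overline\lambda)^{-1}f$ and therefore splits into the cases $\im\lambda>0$ and $\im\lambda<0$.
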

\begin{proof}
   Let $f\in H_{sa}(\widehat A)$. Then, since $H_{sa}(\widehat A)$ reduces $\widehat A$ and $\widehat A^*$, we have that
   $(\widehat A - \lambda)^{-1}f\in H_{sa}(\widehat A)$ for all $\lambda\in\rho(\widehat A)$ and 
   $(\widehat A^* - \lambda)^{-1}f\in H_{sa}(\widehat A)$ for all $\lambda\in\rho(\widehat A^*)$. In particular, $(\widehat A - \lambda)^{-1}f, (\widehat A^* - \lambda)^{-1}f \in\Hsym(\widehat{A})=\mD(S)\cap\ker(V)$ and thus, $\widehat{A}(\widehat A - \lambda)^{-1}f=\widehat{A}^*(\widehat A - \lambda)^{-1}f=S(\widehat A - \lambda)^{-1}f$\:.
   \smallskip

   \noindent
   Now, let $\lambda\in\C\setminus\R$ and let $\eta\in\ker(S^*-\lambda)$.
   If $\im\lambda > 0$, then $\overline\lambda\in\rho(\widehat A)$ and 
   \begin{equation*}
      \scalar{\eta}{f}
      = \scalar{\eta}{ (\widehat A-\overline\lambda) (\widehat A-\overline\lambda)^{-1} f }
      = \scalar{\eta}{ (S-\overline\lambda) (\widehat A-\overline\lambda)^{-1} f }
      = \scalar{(\widehat S^* - \lambda) \eta}{ (\widehat A-\lambda)^{-1} f }
      = 0.
   \end{equation*}

   \noindent
   If on the other hand $\im\lambda < 0$, then $\overline\lambda\in\rho(\widehat A^*)$ and 
   \begin{equation*}
      \scalar{\eta}{f}
      = \scalar{\eta}{ (\widehat A^* - \overline\lambda) (\widehat A^* - \overline\lambda)^{-1} f }
      = \scalar{\eta}{ (S-\overline\lambda) (\widehat A^* -\lambda)^{-1} f }
      = \scalar{(\widehat S^* - \lambda) \eta}{ (\widehat A^*-\overline\lambda)^{-1} f }
      = 0.
   \end{equation*}

   \noindent
   Hence $f\perp \overline{\gen}_{\lambda\in\C\setminus\R} \{ \ker S^* - \lambda \} = H_{cnsa}(S^*)$.
\end{proof}


\subsection{The symmetric subspace and boundary triples} 
\begin{theorem}
   \label{thm:SymSpaceBT}
   Let $T=T_{B,C}$ be an extension of a symmetric operator $S$ as in \eqref{eq:extensionsBT}.
   Then the symmetric space of $T$ is 
   \begin{align}
	 \label{eq:SymSpaceBT}
	 \begin{aligned}
        \Hsym(T) &= \mD(T)\cap\mD(T^*) 
	      \\
	    &= \left\{ f\in H^2(0,1) : 
	    \Gamma_0 g = C^* \vec a,\
	    \Gamma_1 g = B^* \vec a
	    \ \text{ for }\
	    \vec a \in\ker( \im(BC^*) )
	    \right\}.
	 \end{aligned}
    \end{align}
    In particular, if $\rk( \im(BC^*)) = \ell$, then $\Hsym(T)$ is an $\ell$-dimensional restriction of both $\mD(T)$ and $\mD(T^*)$ and an $(\eta-\ell)$-dimensional extension of $\mD(S)$, i.e.
    $\dim(\mD(T)/\Hsym) = \dim(\mD(T^*)/\Hsym) = \ell$ and 
    $\dim(\Hsym/\mD(S)) = \eta-\ell$.
\end{theorem}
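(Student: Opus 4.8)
The plan is to read off $\Hsym(T)$ by feeding the description of $\mD(T^*)$ from Theorem~\ref{thm:AdjointBT} into the defining boundary condition of $\mD(T)$ from \eqref{eq:extensionsBT}, and then to extract the dimension count from the rank hypothesis $\rk(B|-C)=\eta$.

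First I would record the identity $\Hsym(T)=\mD(T)\cap\mD(T^*)$. Since $S\subseteq T\subseteq S^*$, the argument in the proof of Theorem~\ref{thm:AdjointBT} gives $S\subseteq T^*\subseteq S^*$, so $T$ and $T^*$ both act as $S^*$ on their domains. Hence for $f\in\mD(T)\cap\mD(T^*)$ we get $(T-T^*)f=S^*f-S^*f=0$, while conversely $\ker(T-T^*)\subseteq\mD(T)\cap\mD(T^*)$ by construction; with Definition~\ref{def:symmetricspace} this is the first equality in \eqref{eq:SymSpaceBT}.

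Next I would combine the two domain descriptions. By Theorem~\ref{thm:AdjointBT}, $g\in\mD(T^*)$ if and only if there is a (unique) $\vec a\in\C^\eta$ with $\Gamma_0 g=C^*\vec a$ and $\Gamma_1 g=B^*\vec a$. For such a $g$, the membership $g\in\mD(T)$ reads $B\Gamma_0 g=C\Gamma_1 g$, i.e. $BC^*\vec a=CB^*\vec a$, i.e. $(BC^*-CB^*)\vec a=0$; since $\im(BC^*)=\frac{1}{2\I}(BC^*-CB^*)$, this says precisely $\vec a\in\ker(\im(BC^*))$. This yields the set description in \eqref{eq:SymSpaceBT}. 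For the dimension statement set $\ell:=\rk(\im(BC^*))$, so $\dim\ker(\im(BC^*))=\eta-\ell$. The hypothesis $\rk(B|-C)=\eta$ is equivalent to injectivity of $\vec a\mapsto(C^*\vec a,B^*\vec a)$, and $(\Gamma_0,\Gamma_1)$ is injective modulo $\mD(S)=\ker(\Gamma_0,\Gamma_1)$; together with surjectivity of $(\Gamma_0,\Gamma_1)$ on $\mD(S^*)$, the assignment $\vec a\mapsto g+\mD(S)$ is then a linear isomorphism from $\ker(\im(BC^*))$ onto $\Hsym(T)/\mD(S)$, so $\dim(\Hsym(T)/\mD(S))=\eta-\ell$. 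Finally, $\mD(S)\subseteq\Hsym(T)$ (any $f\in\mD(S)$ has $\Gamma_0 f=\Gamma_1 f=0$ and $0\in\ker(\im(BC^*))$), and $\dim(\mD(T)/\mD(S))=\dim(\mD(T^*)/\mD(S))=\eta$; subtracting gives $\dim(\mD(T)/\Hsym(T))=\dim(\mD(T^*)/\Hsym(T))=\ell$.

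I do not expect a genuine obstacle: once Theorem~\ref{thm:AdjointBT} is in hand the computation is linear algebra. The only point needing care is the bookkeeping in the dimension count — verifying that $\vec a$ is truly determined by $g$ (it is, via $\vec a=(CC^*+BB^*)^{-1}(C\Gamma_0+B\Gamma_1)g$ from Theorem~\ref{thm:AdjointBT}) so that $\vec a\mapsto g+\mD(S)$ is well defined, and that the rank condition on $(B|-C)$ really forces injectivity of $\vec a\mapsto(C^*\vec a,B^*\vec a)$, which is what makes that map an isomorphism rather than merely a surjection.
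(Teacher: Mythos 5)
Your proof is correct and follows essentially the same route as the paper: both rest on the parametrization of $\mD(T^*)$ from Theorem~\ref{thm:AdjointBT} and reduce membership in $\Hsym(T)$ to $\vec a\in\ker(BC^*-CB^*)=\ker(\im(BC^*))$, the only difference being that you impose the boundary condition $B\Gamma_0 g=C\Gamma_1 g$ directly where the paper tests the Green's form against all of $\mD(T^*)$. Your explicit dimension count (which the paper states without proof) is also sound, including the key observation that $\rk(B|-C)=\eta$ forces injectivity of $\vec a\mapsto(C^*\vec a,B^*\vec a)$, so that $\vec a\mapsto g+\mD(S)$ is an isomorphism onto $\Hsym(T)/\mD(S)$.
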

\begin{proof}
   Note that $\Hsym(T) = \mD(T)\cap \mD(T^*)$ consists of all $f\in \mD(T^*)$ such that 
   $\scalar{T^*g}{f} = \scalar{g}{T^*f}$ for all $g\in\mD(T^*)$.
   By Theorem~\ref{thm:AdjointBT} there exist
   $\vec a,\, \vec b \in \C^\eta$ such that 
   $\Gamma_0 f = C^* \vec a$, $\Gamma_1 f = B^* \vec a$
   and $\Gamma_0 g = C^* \vec b$, $\Gamma_1 g = B^* \vec b$.
   This shows that
   \begin{align*}
      0 &=  
      \scalar{T^*g}{f} - \scalar{g}{Tf}
      =  \scalar{S^* g}{f} - \scalar{g}{S^* f}
      =  \scalar{\Gamma_1 g}{\Gamma_0 f} - \scalar{\Gamma_0 g}{\Gamma_1 f}
      \\
      &=  \scalar{B^* \vec b}{C^* \vec a} - \scalar{C^*\vec b}{B^* \vec a}
      \\
      &= \scalar{\vec b}{ (BC^* - CB^*) \vec a}.
   \end{align*}
   Since this must hold for all $\vec b\in\C^\eta$, it follows that 
   $\vec a\in\ker(BC^*-CB^*) = \ker \im(BC^*)$.
\end{proof}

\subsection{Maximally dissipative extensions and complete non-selfadjointness} 

Let us assume that $\im(BC^*) \ge 0$ and that $\rk( \im(BC^*)) = \ell$ and let $\widehat S$ be the maximally dissipative extension of $S$ associated to $B, C$ according to Theorem~\ref{thm:MaxDisRestS}.
Theorem~\ref{thm:SymSpaceBT} shows that $\Hsym(\widehat S)$ is an $\ell$-dimensional restriction of both $\mD(\widehat S)$ and $\mD(\widehat S^*)$.
Recall that the symmetric subspace of $\widehat S$ is
$\Hsym(\widehat S) = \ker(\widehat S - \widehat S^*)$.

\begin{definition}
    \label{def:symmetricpart}
    We call the operator 
   \begin{align*}
      \widetilde S:\qquad \mD(\widetilde S) = \Hsym(\widehat S),\quad \widetilde S f = S^*
   \end{align*}
   the \define{symmetric part of} $S$.
\end{definition}

\begin{lemma}
   \label{lem:domainHsym}
   The domain of the largest symmetric restriction of $\widehat S$ is $\Hsym(\widehat S)$.
   The symmetric part $\widetilde S$ of $\widehat S$
   is symmetric and 
   $\widetilde S \subseteq \widehat S \subseteq \widetilde S^*$ and 
   $\widetilde S \subseteq \widehat S^* \subseteq \widetilde S^*$.
   Their domains are
   \begin{align*}
      \mD(\widehat S) &= 
      \left\{ f\in \mD(S^*) : \
      B\Gamma_0 f - C\Gamma_1 f = 0
      \right\},
      \\[1ex]
      \mD(\widehat S^*) &= 
      \left\{ f\in \mD(S^*) : \
      \Gamma_0 f = C^*\vec a,\ \Gamma_1 f = B^*\vec a,\ 
      \vec a\in\C^\eta
      \right\},
      \\[1ex]
      \mD(\widetilde S) &= 
      \left\{ f\in \mD(S^*) : \
      \Gamma_0 f = C^*\vec a,\ \Gamma_1 f = B^*\vec a,\ 
      \vec a \in\ker(\im(BC^*))
      \right\},
      \\[1ex]
      \mD(\widetilde S^*) &= 
      \left\{ f\in \mD(S^*) : \ B\Gamma_0 f - C\Gamma_1 f \in\range(\im (BC^*) ) \right\}
      \\
      &= \left\{ f\in \mD(S^*) : \ (BC^* - CB^*)(B\Gamma_0 f - C\Gamma_1 f) = 0  \right\}.
   \end{align*}
\end{lemma}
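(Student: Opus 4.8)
\emph{Proof strategy.}
The plan is to notice first that three of the four domain descriptions are not new. Indeed, $\mD(\widehat S)$ is the defining relation of $T_{B,C}$ in \eqref{eq:extensionsBT} (with $\widehat S=T_{B,C}$), the formula for $\mD(\widehat S^*)$ is Theorem~\ref{thm:AdjointBT} applied to $T=\widehat S$, and $\mD(\widetilde S)=\Hsym(\widehat S)$ was already computed in Theorem~\ref{thm:SymSpaceBT}. Hence the substantive points to establish are that $\widetilde S$ is symmetric and is the \emph{largest} symmetric restriction of $\widehat S$, the two inclusion chains, and the formula for $\mD(\widetilde S^*)$.

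For symmetry and maximality I would work with the Hermitian sesquilinear form $b(f,g):=\tfrac{1}{2\I}\big(\scalar{f}{\widehat S g}-\scalar{\widehat S f}{g}\big)$ on $\mD(\widehat S)$. Since $\widehat S$ is dissipative, $b(f,f)=\im\scalar{f}{\widehat S f}\ge 0$, so $b$ is non-negative, and the Cauchy--Schwarz inequality for non-negative forms shows that its null set $N:=\{f\in\mD(\widehat S):b(f,f)=0\}$ is a linear subspace, equal to $\{f\in\mD(\widehat S):b(f,g)=0\ \text{for all}\ g\in\mD(\widehat S)\}$; by \cite[Proposition~1.1]{Kuzhel1996} the latter is exactly $\Hsym(\widehat S)=\mD(\widetilde S)$. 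Therefore $\widehat S$ restricted to $N$ is symmetric (and densely defined, since $\mD(S)\subseteq N$), which is the operator $\widetilde S$. Conversely, any symmetric operator $R$ with $R\subseteq\widehat S$ satisfies $\scalar{\widehat S f}{f}=\scalar{Rf}{f}\in\R$ for every $f\in\mD(R)$, hence $b(f,f)=0$ and $f\in N=\mD(\widetilde S)$, so $\mD(R)\subseteq\mD(\widetilde S)$. I expect this maximality step to be the one genuine obstacle: for a general operator the symmetric subspace need not contain the domain of every symmetric restriction, and it is precisely the dissipativity of $\widehat S$ --- making $b$ non-negative --- that makes the argument go through.

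The inclusion chains are then short. First, $\widetilde S\subseteq\widehat S$ because $\mD(\widetilde S)\subseteq\mD(\widehat S)$ and both operators act as $S^*$; and comparing the descriptions of $\mD(\widetilde S)$ and $\mD(\widehat S^*)$ --- which differ only by the extra requirement $\vec a\in\ker(\im(BC^*))$ in the former --- gives $\widetilde S\subseteq\widehat S^*$. Taking adjoints of these two inclusions, and using that the maximally dissipative operator $\widehat S$ is closed so that $\widehat S^{**}=\widehat S$, then yields $\widehat S^*\subseteq\widetilde S^*$ and $\widehat S\subseteq\widetilde S^*$, i.e.\ the chains $\widetilde S\subseteq\widehat S\subseteq\widetilde S^*$ and $\widetilde S\subseteq\widehat S^*\subseteq\widetilde S^*$.

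Finally, for $\mD(\widetilde S^*)$: since $\mD(S)\subseteq\mD(\widetilde S)\subseteq\mD(S^*)$ one has $S\subseteq\widetilde S\subseteq S^*$, hence $S\subseteq\widetilde S^*\subseteq S^*$, so $\widetilde S^*$ acts as $S^*$ and $g\in\mD(\widetilde S^*)$ if and only if $g\in\mD(S^*)$ and $\scalar{S^*f}{g}=\scalar{f}{S^*g}$ for all $f\in\mD(\widetilde S)$. I would substitute the abstract Green's identity together with the parametrisation $\Gamma_0 f=C^*\vec a$, $\Gamma_1 f=B^*\vec a$, $\vec a\in\ker(\im(BC^*))$, of the elements of $\mD(\widetilde S)$; the identity then reduces to $\scalar{\vec a}{\,B\Gamma_0 g-C\Gamma_1 g\,}=0$ for every $\vec a\in\ker(\im(BC^*))$, i.e.\ $B\Gamma_0 g-C\Gamma_1 g\perp\ker(\im(BC^*))$. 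As $\im(BC^*)$ is self-adjoint on the finite-dimensional space $\C^\eta$, this orthogonality is equivalent to $B\Gamma_0 g-C\Gamma_1 g\in\range(\im(BC^*))=\range(BC^*-CB^*)$, which is the description of $\mD(\widetilde S^*)$ in the statement. This last part is routine manipulation of Green's identity once $\Hsym(\widehat S)$ has been identified.
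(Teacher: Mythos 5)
Your proposal is correct and, for the substantive computation, follows the same route as the paper: the formulas for $\mD(\widehat S)$, $\mD(\widehat S^*)$ and $\mD(\widetilde S)$ are read off from \eqref{eq:extensionsBT}, Theorem~\ref{thm:AdjointBT} and Theorem~\ref{thm:SymSpaceBT}, and $\mD(\widetilde S^*)$ is obtained by exactly the paper's Green's-identity computation reducing to $\scalar{\vec a}{(B\Gamma_0-C\Gamma_1)g}=0$ for all $\vec a\in\ker(\im(BC^*))$. The one place you go beyond the paper is the maximality claim: the paper's proof only observes that $\widehat S$ and $\widehat S^*$ coincide on $\mD(\widehat S)\cap\mD(\widehat S^*)$ and leaves the ``largest symmetric restriction'' assertion implicit, whereas your Cauchy--Schwarz argument for the non-negative form $b$ (valid precisely because $\widehat S$ is dissipative, combined with the Kuzhel characterisation of $\Hsym$) actually proves it; this is a worthwhile addition rather than a deviation.
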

\begin{proof}
   Since both $\widehat S$ and $\widehat S^*$ are restrictions of $S^*$, so they coincide in the intersection of their domains.
   The formulas for the domains of $\widehat S^*$ and $\widetilde S$ follow from 
   Theorem~\ref{thm:AdjointBT} and Theorem~\ref{thm:SymSpaceBT}.

   Recall that $\widetilde S^*$ is a restriction of $S^*$.
   So in order to determine $\mD(\widetilde S^*)$, we take $f\in\mD(\widetilde S)$ and $g\in\mD(S^*)$.
   Let $\vec a$ such that $\Gamma_0 f = C^*\vec a$ and $\Gamma_1 f = B^*\vec a$.
   We obtain that 
   \begin{align*}
      \scalar{\widehat S f}{g} - \scalar{f}{S^* g}
      &= \scalar{S^* f}{g} - \scalar{f}{S^* g}
      = \scalar{\Gamma_1 f}{\Gamma_0 g} - \scalar{\Gamma_0 f}{\Gamma_1 g}
      \\
      &= \scalar{B^*\vec a}{\Gamma_0 g} - \scalar{C^* \vec a}{\Gamma_1 g}
      = \scalar{\vec a}{(B\Gamma_0 - C\Gamma_1) g}.
   \end{align*}
   Since this must be true for all $\vec a\in\ker(\im (BC^*))$, it follows that 
   $(B\Gamma_0 - C\Gamma_1) g 
   \in ( \ker(\im (BC^*)) )^\perp 
   = \range(\im (BC^*))$.
\end{proof}

Since $\widetilde S$ is symmetric with $\dim( \mD(\widetilde S^*)/\mD(\widetilde S) ) = 2\ell$ 
we can choose a boundary triple  $(\wGamma_0,\, \wGamma_1, \C^{\ell})$  for $\widetilde S^*$.
Then every $\ell$-dimensional extension $T$ of $\widetilde S$ with $\widetilde S \subseteq T\subseteq \widetilde S^*$ is of the form
\begin{equation*}
   \widetilde S_{\widetilde B, \widetilde C}: \qquad
   \mD(\widetilde S_\alpha) =
   \left\{ f\in \mD(S^*) : \widetilde B \wGamma_0 f = \widetilde C \wGamma_1 \right\},
   \quad \widetilde S_{\widetilde B, \widetilde C} f = S^* f 
\end{equation*}
for $\wB, \wC \in M(\ell\times \ell)$.
The extension is selfadjoint if and only if $\im(\widetilde B \widetilde C^*) = 0$ and 
it is dissipative if if and only if $\im(\widetilde B \widetilde C^*) \ge 0$.

Note that the pair $\widetilde B, \widetilde C$ which leads to $\widehat S$ is invertible, so we may assume that $B=\id$ and that $\widetilde C$ is invertible with $\im \widetilde C > 0$.
(If either $\widetilde B$ or $\widetilde C$ was not invertible, then $\im(BC^*)$ is not invertible.
But then there would be a symmetric operator $S'$ with $\widetilde S \subsetneq S' \subseteq \widehat S$.)

In conclusion, by Theorem~\ref{thm:AdjointBT} we can write the domains of $\widetilde S$, $\widehat S$ and $\widehat S^*$ as
\begin{align*}
   \mD(\widetilde S)
   &= \{ f\in \mD(\widetilde S^*) : \wGamma_0 f = \wGamma_1 f = 0 \},
   \\
   \mD(\widehat S)
   &= \{ f\in \mD(\widetilde S^*) : B\wGamma_0 f = C \wGamma_1 f \}
   = \mD(\widetilde S) \dot + \mU,
   \\
   \mD(\widehat S^*)
   &= \{ f\in \mD(\widetilde S^*) : B^{*-1}\wGamma_0 f = C^{*-1} \wGamma_1 f \}
   \\
   &= \{ f\in \mD(\widetilde S^*) : \wGamma_0 f = C^* B^*{}^{-1} \wGamma_1 f \}
\end{align*}
where $\mU = \gen\{ u_1,\,\dots, \, u_\ell \} \subseteq \mD(\widetilde S^*)//\mD(\widetilde S^*)$ with $\dim\mU = \ell$
and the $u_j$ can be chosen such that $\wGamma_1 u_j = \vec e_j\in\C^\ell$.

\begin{proposition}
   \label{thm:widehatAadjoint}
   The operator $\widehat A^*$ is given by
   \begin{align*}
      \widehat A^*:\qquad
      \widehat A^*f = (S^* - \I V)f,
      \qquad
      \begin{aligned}[t]
	 \mD(\widehat A^*) 
	 &= \left\{ f\in \mD(\widetilde S^*) : 
	 \wGamma_0 f =  \wC^* \wB^{*-1}\wGamma_1 f - 
	 \begin{pmatrix}
	    \scalar{k_1}{g} \\ \vdots \\ \scalar{k_\ell}{g} 
	 \end{pmatrix}
	 \right\}.
      \end{aligned}
   \end{align*}

   \noindent
   The symmetric subspace of $\widehat A$ is
   \begin{align}
      \label{eq:ASymSpaceBT}
      \Hsym(\widehat A)
      &= \left\{ f\in \mD(\widehat A) : 
      \Big( \wC^* \wB^{*-1} - \frac{1}{4} M_{\mathtt K} \Big) \wGamma_1 f = 0,\
      2\I V f = (\wGamma_1 f)^t \mathtt K
      \right\}
   \end{align}
   where
   \begin{equation}
      \label{eq:MmathttK}
      M_{\mathtt K} =  \left(
      \lrscalar{V^{-1/2}k_r}{V^{-1/2}k_s}
      \right)_{r,s=1}^\ell
      \in M(\ell\times \ell).
   \end{equation}
   Moreover,
   \begin{align}
      \label{eq:ASymspaceVBT}
      \Hsym(\widehat A) \cap \ker V
      = \Hsym(\widehat S) \cap \ker V
      = \mD(\widetilde S) \cap \ker V.
   \end{align}
\end{proposition}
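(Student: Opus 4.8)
The plan is to derive each of the three assertions by combining the description of $\mD(\widehat A^*)$ with the formula for $\mD(\widehat A)$ in the lower-dimensional boundary triple $(\wGamma_0, \wGamma_1, \C^\ell)$, together with the dissipativity criterion from Theorem~\ref{thm:MaxDissCritABT}. First I would establish the formula for $\widehat A^*$: since $\widehat A = (S^* + \I V) + \mL P$ and $\widehat A^*$ is a restriction of $(S^*-\I V)$ with $\mD(\widehat A^*)\subseteq \mD(\widetilde S^*)$, one repeats the computation in the proof of Theorem~\ref{thm:AdjointOfA}, but now carried out inside the reduced boundary triple where $\wB, \wC$ are \emph{invertible} (this invertibility is exactly what was arranged in the discussion preceding the proposition). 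The key identity to reproduce is
\begin{align*}
   \scalar{\wGamma_0 g}{\wGamma_1 f} - \scalar{\wGamma_1 g}{\wGamma_0 f} + \scalar{g}{\mL P f}
   = \lrscalar{ \wGamma_0 g - \wC^*\wB^{*-1}\wGamma_1 g + \mathtt K^t g }{ \wGamma_1 f },
\end{align*}
using $Pf = (\wGamma_1 f)^t \mathtt U$ and $\mL Pf = (\wGamma_1 f)^t \mathtt K$, exactly as before but with $\Gamma$ replaced by $\wGamma$ and $\eta$ by $\ell$; continuity in $f$ then forces the bracket to vanish, giving $\mD(\widehat A^*)$ as claimed.

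Next I would compute $\Hsym(\widehat A) = \ker(\widehat A - \widehat A^*) = \mD(\widehat A)\cap\mD(\widehat A^*)$, where on the intersection we also need $(\widehat A - \widehat A^*)f = 2\I Vf = (\wGamma_1 f)^t \mathtt K$ to vanish as an operator identity is not required — rather, $f\in\Hsym(\widehat A)$ iff $f\in\mD(\widehat A)$ and $\widehat A f = \widehat A^* f$, which unwinds to $2\I Vf + (\wGamma_1 f)^t\mathtt K = 0$... wait: $\widehat A f - \widehat A^* f = (S^*+\I V)f + (\wGamma_1 f)^t\mathtt K - (S^*-\I V)f = 2\I V f + (\wGamma_1 f)^t \mathtt K$, so actually the condition is $2\I Vf = -(\wGamma_1 f)^t\mathtt K$; I would double-check the sign convention against \eqref{eq:UK} and the statement, adjusting accordingly. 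For such $f\in\mD(\widehat A)$ we have $\wGamma_0 f = C^* B^{*-1}\wGamma_1 f$ (no, $\wGamma_0 f$ satisfies $B\wGamma_0 f = C\wGamma_1 f$), and for $f\in\mD(\widehat A^*)$ we have $\wGamma_0 f = \wC^*\wB^{*-1}\wGamma_1 f - \mathtt K^t f$. Subtracting the two descriptions of $\wGamma_0 f$ and using $B^{-1}C\wGamma_1 f = \wGamma_0 f$ yields $(\wC^*\wB^{*-1} - \wB^{-1}\wC)\wGamma_1 f = \mathtt K^t f$; pairing $\mathtt K^t f$ with $V^{-1/2}$-gymnastics and using $2\I Vf = \pm(\wGamma_1 f)^t\mathtt K$ converts the right side into $\frac{1}{4} M_{\mathtt K}\wGamma_1 f$, while $\im(\wC^*\wB^{*-1}) = \frac{1}{2\I}(\wC^*\wB^{*-1} - \wB^{-1}\wC)$, giving the condition $(\wC^*\wB^{*-1} - \frac14 M_{\mathtt K})\wGamma_1 f = 0$ as stated. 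Here I would be careful that the scalar combination $\mathtt K^t f$ really equals $\frac12 M_{\mathtt K}\wGamma_1 f$ on $\Hsym$: this uses $Vf = \frac{\I}{2}(\wGamma_1 f)^t\mathtt K$ together with $k_j\in\range(V^{1/2})$ so that $\scalar{k_r}{f} = \scalar{V^{-1/2}k_r}{V^{1/2}f}$ and $V^{1/2}f = V^{-1/2}Vf$, producing precisely the Gram-matrix entries $\scalar{V^{-1/2}k_r}{V^{-1/2}k_s}$.

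Finally, for \eqref{eq:ASymspaceVBT} I would argue the three spaces coincide by double inclusion. If $f\in\Hsym(\widehat A)\cap\ker V$, then the second defining condition $2\I Vf = \mp(\wGamma_1 f)^t\mathtt K$ reduces to $(\wGamma_1 f)^t\mathtt K = 0$, i.e. $\mL(\wGamma_1 f)^t\mathtt U = 0$; since then $M_{\mathtt K}\wGamma_1 f$-related terms drop out, the first condition becomes $\wC^*\wB^{*-1}\wGamma_1 f = 0$, and as $\im\wC^*\wB^{*-1} > 0$ (arranged to be strictly positive on $\C^\ell$, because the reduced triple was chosen so that $\im\wC$ is \emph{positive definite}), this forces $\wGamma_1 f = 0$, hence also $\wGamma_0 f = 0$ via the boundary condition, so $f\in\mD(\widetilde S)$; together with $f\in\ker V$ this gives $f\in\mD(\widetilde S)\cap\ker V$. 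The reverse inclusions are immediate: $\mD(\widetilde S)\cap\ker V \subseteq \Hsym(\widehat S)\cap\ker V$ since $\mD(\widetilde S) = \Hsym(\widehat S)$ by Definition~\ref{def:symmetricpart}, and $\Hsym(\widehat S)\cap\ker V\subseteq \Hsym(\widehat A)\cap\ker V$ follows from Proposition~\ref{item:HsymMaxDis} (any $f\in\mD(S^*)\cap\ker V$ that is in $\mD(\widehat A)$ and is symmetric for $\widehat S$ is symmetric for $\widehat A$ because the $\I V$ and $\mL P$ perturbations vanish on it — indeed $\mL Pf$ involves $(\wGamma_1 f)^t\mathtt K$ which one checks is zero when $f\in\mD(\widetilde S)$ since then $\wGamma_1 f = 0$). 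The main obstacle I anticipate is purely bookkeeping: keeping the sign conventions consistent between $\widehat A f - \widehat A^* f = 2\I Vf + (\wGamma_1 f)^t\mathtt K$ and the displayed condition $2\I Vf = (\wGamma_1 f)^t\mathtt K$, and correctly tracking the factor $\frac14$ versus $\frac12$ when passing from the linear term $\mathtt K^t f$ to the quadratic form $M_{\mathtt K}$ — this is the same subtlety already present in Theorems~\ref{thm:MaxDissCritABT} and \ref{thm:AdjointOfA}, so I would simply mirror those computations verbatim in the reduced triple.
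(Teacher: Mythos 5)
Your proposal follows essentially the same route as the paper: derive $\mD(\widehat A^*)$ by rerunning the computation of Theorem~\ref{thm:AdjointOfA} in the reduced triple $(\wGamma_0,\wGamma_1,\C^\ell)$, intersect the two domain descriptions, use $\widehat Af=\widehat A^*f$ to relate $Vf$ to $(\wGamma_1 f)^t\mathtt K$, and convert the vector $\big(\scalar{k_j}{f}\big)_j$ into $\pm\frac{1}{2\I}M_{\mathtt K}\wGamma_1 f$ via the $V^{\pm 1/2}$ manipulation. The sign discrepancy you flag is real — the operator identity gives $2\I Vf=-(\wGamma_1 f)^t\mathtt K$ while the statement displays a plus sign (and \eqref{eq:ASymSpaceBT} is missing an $\im$ in front of $\wC^*\wB^{*-1}$); tracking the signs carefully as you propose still lands on $\big(\im(\wC^*\wB^{*-1})-\frac14 M_{\mathtt K}\big)\wGamma_1 f=0$. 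The one genuine divergence is in \eqref{eq:ASymspaceVBT}: to get $\wGamma_1 f=0$ from $f\in\ker V$, the paper argues that $(\wGamma_1 f)^t\mathtt K=0$ forces $\wGamma_1 f=0$ because the $k_j$ are linearly independent, whereas you use that $\scalar{k_j}{f}=0$ together with \eqref{eq:55} gives $\im(\wC^*\wB^{*-1})\wGamma_1 f=0$ and then invoke strict positive definiteness of $\im(\wC^*\wB^{*-1})$ (which the construction of the reduced triple does guarantee, since $\widetilde S$ is the maximal symmetric restriction of $\widehat S$). Your variant is actually slightly more robust, since linear independence of $k_1,\dots,k_\ell=\mL u_1,\dots,\mL u_\ell$ is not obvious when $\mL$ fails to be injective; both arguments are otherwise equally short.
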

\begin{proof}

   The claim about the domain of $\widehat A$ follows from Theorem~\ref{thm:AdjointOfA} with $\widetilde S$ instead of $S$.
   
   Let $f\in\Hsym(\widehat A)$.
   The condition $f\in\mD(\widehat A)\cap\mD(\widehat A^*)$ is equivalent to
   \begin{equation}
   \label{eq:55}
      \wGamma_0 f = B^{-1}C\wGamma_1 f
      \qquad\text{and}\qquad
      2\I \im(\wC^* \wB^{*-1})\wGamma_1 f
      =
      \begin{pmatrix}
          \scalar{k_1}{f} \\ \vdots \\ \scalar{k_\ell}{f} 
      \end{pmatrix}.
   \end{equation}
   Next, $\widehat Af = \widehat A^*f$ gives that 
   $0 = 2\I Vf + \mL P f$.
   Therefore
   \begin{align*}
      \begin{pmatrix}
          \scalar{k_1}{f} \\ \vdots \\ \scalar{k_\ell}{f} 
      \end{pmatrix}
      &=
      \begin{pmatrix}
          \scalar{V^{-1/2}k_1}{V^{1/2}f} \\ \vdots \\ \scalar{V^{-1/2}k_\ell}{V^{1/2}f} 
      \end{pmatrix}
      =
      \frac{1}{2\I}
      \begin{pmatrix}
          \scalar{V^{-1/2}k_1}{V^{-1/2}\mL P f} \\ \vdots \\ \scalar{V^{-1/2}k_\ell}{V^{-1/2}\mL P f} 
      \end{pmatrix}
      \\[2ex]
      &=
      \frac{1}{2\I}
      \begin{pmatrix}
          \scalar{V^{-1/2}k_1}{V^{-1/2}(\wGamma_1 f)^t\mathtt K} \\ \vdots \\ \scalar{V^{-1/2}k_\ell}{V^{-1/2}(\wGamma_1 f)^t\mathtt K} 
      \end{pmatrix}
      =
      \frac{1}{2\I}
      \begin{pmatrix}
          \scalar{V^{-1/2}k_1}{(\wGamma_1 f)^tV^{-1/2}\mathtt K} \\ \vdots \\ \scalar{V^{-1/2}k_\ell}{(\wGamma_1 f)^tV^{-1/2}\mathtt K} 
      \end{pmatrix}
      \\[2ex]
      &=
      \frac{1}{2\I}
      \left(
      \lrscalar{V^{-1/2}k_r}{V^{-1/2}k_s}
      \right)_{r,s=1}^\ell
      \wGamma_1 f.
   \end{align*}
   Inserting this into \eqref{eq:55}, we obtain
   \begin{equation}
      0 =  2\I \im(\wC^* \wB^{*-1})\wGamma_1 f
      -
      \begin{pmatrix}
          \scalar{k_1}{f} \\ \vdots \\ \scalar{k_\ell}{f} 
      \end{pmatrix}
      = 2\I \left(
         \im(\wC^* \wB^{*-1})
         -
         \frac{1}{4}
         \left(
         \lrscalar{V^{-1/2}k_r}{V^{-1/2}k_s}
         \right)_{r,s=1}^\ell
         \right)
         \wGamma_1 f.
   \end{equation}
   This shows \eqref{eq:ASymSpaceBT}.
   
   For \eqref{eq:ASymspaceVBT} we use that $f\in \ker V$ implies $\scalar{k_j}{f} = 0$ since $k_j\in\range V^{1/2} \subseteq (\ker V)^\perp$.
   Moreover $0 = (\wGamma_1 f)^t\mathtt K$ implies that $\wGamma_1 f = 0$ since $k_1,\,\dots,\, k_\ell$ are linearly independent.
   Therefore
   \begin{align*}
      \Hsym(\widehat A) \cap \ker V
      &= \left\{ f\in \mD(\widehat A) : 
      \wGamma_1 f = 0,\
      Vf = 0
      \right\}
      \\
      &= \left\{ f\in \mD(\widetilde S^*) : 
      \wB\wGamma_0 f = \wC\wGamma_1 f,\ 
      \wGamma_1 f = 0,\
      Vf = 0
      \right\}
      \\
      &= \left\{ f\in \mD(\widetilde S^*) : 
      \wGamma_0 f = \wGamma_1 f = 0,\
      Vf = 0
      \right\}
      \\
      &= \mD(\widetilde S) \cap \ker V
      = \Hsym(\widehat S) \cap \ker V.
      \qedhere
   \end{align*}
\end{proof}
\smallskip

Recall that we always have $\mD(\widetilde S)\cap \ker V \subseteq \Hsym(\widehat A)$.
In the next lemma, we investigate how much the symmetric subspace of $\widehat A$ can differ from $\mD(\widetilde S)\cap \ker V$.
\begin{lemma}
   \label{lem:W}
   There exists a subspace $W\subseteq \Hsym(\widehat A)$ with 
   $W\cap \mD(\widetilde S) = W \cap \ker V = \{0\}$
   and $\dim W \le \nu := \ell - \rk( \im (C^* B^{*-1}) - \frac{1}{4} M_{\mathtt K})$ such that 
   \begin{align}
      \Hsym(\widehat A)
      = \big( \mD(\widetilde S) \cap \ker V \big) \dot + W.
   \end{align}
   In particular, $W=\{0\}$ if $\wC^* \wB^{*-1} = \frac{1}{4} M_{\mathtt K}$.
\end{lemma}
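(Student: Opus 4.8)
The plan is to linearise $\Hsym(\widehat A)$ by means of the boundary map $\wGamma_1$ and then to invoke the rank--nullity theorem. Set $N := \im(C^* B^{*-1}) - \tfrac14 M_{\mathtt K}$; by Theorem~\ref{thm:MaxDissCritABT} one has $N\ge 0$, and $\rk N = \ell - \nu$ by the definition of $\nu$. I would introduce the linear map
\begin{equation*}
   \Phi : \Hsym(\widehat A) \to \C^{\ell}, \qquad \Phi f := \wGamma_1 f ,
\end{equation*}
which is well defined because $\Hsym(\widehat A)\subseteq \mD(\widehat A)\subseteq \mD(\widetilde S^*)$, and then determine $\ker\Phi$ and bound $\dim\range\Phi$.

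First I would show that $\ker\Phi = \mD(\widetilde S)\cap\ker V$. If $f\in\Hsym(\widehat A)$ with $\wGamma_1 f = 0$, then the relation $2\I Vf = (\wGamma_1 f)^t\mathtt K$ from Proposition~\ref{thm:widehatAadjoint} gives $Vf = 0$, while $f\in\mD(\widehat A)$ together with the invertibility of $B$ forces $\wGamma_0 f = B^{-1}C\,\wGamma_1 f = 0$; hence $f\in\ker(\wGamma_0,\wGamma_1) = \mD(\widetilde S)$. The reverse inclusion is exactly \eqref{eq:ASymspaceVBT}. Next I would bound $\range\Phi$: by the description of $\Hsym(\widehat A)$ in Proposition~\ref{thm:widehatAadjoint}, every $f\in\Hsym(\widehat A)$ satisfies $N\wGamma_1 f = 0$, so $\range\Phi\subseteq\ker N$ and therefore $\dim\range\Phi\le\dim\ker N = \ell-\rk N = \nu$. (If one uses instead the literal form $(C^* B^{*-1} - \tfrac14 M_{\mathtt K})\wGamma_1 f = 0$ of the defining relation, one first notes that $\ker(C^* B^{*-1} - \tfrac14 M_{\mathtt K})\subseteq\ker N$: pairing $(C^* B^{*-1} - \tfrac14 M_{\mathtt K})\vec x = 0$ with $\vec x$ makes $\scalar{\vec x}{C^* B^{*-1}\vec x}$ real, hence $\scalar{\vec x}{N\vec x} = -\tfrac14\scalar{\vec x}{M_{\mathtt K}\vec x}\le 0$, and with $N\ge 0$, $M_{\mathtt K}\ge 0$ this forces $N\vec x = 0$.)

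To finish, I would pick $f_1,\dots,f_d\in\Hsym(\widehat A)$ whose images $\wGamma_1 f_1,\dots,\wGamma_1 f_d$ form a basis of $\range\Phi$ (so $d = \dim\range\Phi\le\nu$) and set $W := \gen\{f_1,\dots,f_d\}$. Then $\wGamma_1|_W$ is injective, whence $W\cap\ker\Phi = \{0\}$, and every $f\in\Hsym(\widehat A)$ differs from some $w\in W$ by an element of $\ker\Phi$, so $\Hsym(\widehat A) = \ker\Phi\dot+ W = \big(\mD(\widetilde S)\cap\ker V\big)\dot+ W$ with $\dim W\le\nu$. The two intersection conditions follow at once: $w\in W\cap\mD(\widetilde S)$ gives $\wGamma_1 w = 0$, so $w\in W\cap\ker\Phi = \{0\}$; and $w\in W\cap\ker V$ gives $w\in\Hsym(\widehat A)\cap\ker V = \mD(\widetilde S)\cap\ker V = \ker\Phi$, so again $w = 0$. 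For the final assertion, if $\wC^* \wB^{*-1} = \tfrac14 M_{\mathtt K}$ then this matrix is Hermitian, hence $\im(\wC^* \wB^{*-1}) = 0$ and $N = -\tfrac14 M_{\mathtt K}\le 0$; combined with $N\ge 0$ this forces $M_{\mathtt K} = 0$, so $\mathtt K = 0$, and then $2\I Vf = (\wGamma_1 f)^t\mathtt K = 0$ for every $f\in\Hsym(\widehat A)$, so $\Hsym(\widehat A) = \Hsym(\widehat A)\cap\ker V = \mD(\widetilde S)\cap\ker V$ and $W = \{0\}$.

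I expect the only genuinely delicate point to be the computation of $\ker\Phi$ --- specifically, showing that $\wGamma_1 f = 0$ forces \emph{both} $Vf = 0$ and $f\in\mD(\widetilde S)$ --- since it combines both defining relations of $\Hsym(\widehat A)$ with the invertibility of $B$. Everything after that is a routine application of rank--nullity; in particular no closedness or compactness issue arises, because the quotient $\Hsym(\widehat A)/\ker\Phi\cong\range\Phi$ is finite-dimensional regardless of the dimension of $\Hsym(\widehat A)$ itself.
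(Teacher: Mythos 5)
Your proof is correct and follows essentially the same route as the paper's: both read off from Proposition~\ref{thm:widehatAadjoint} that the part of $\Hsym(\widehat A)$ beyond $\mD(\widetilde S)\cap\ker V$ is controlled by the values of $\wGamma_1$, which must lie in the kernel of $\im(\wC^*\wB^{*-1})-\frac14 M_{\mathtt K}$, and then conclude by a dimension count. Your write-up via the map $\Phi=\wGamma_1$ and rank--nullity is just a more explicit version of the paper's one-line argument, and your parenthetical reconciling the two forms of the defining relation correctly handles what appears to be a missing $\im$ in \eqref{eq:ASymSpaceBT}.
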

\begin{proof}
   By \eqref{eq:ASymSpaceBT} and \eqref{eq:ASymspaceVBT} we have that 
   \begin{align*}
      \Hsym(\widehat A)
      &= \big( \mD(\widetilde S) \cap \ker V \big)
      \cup
      \left\{ f\in \mD(\widehat A) : 
      \wGamma_1 f \in \ker \Big( \wC^* \wB^{*-1} - \frac{1}{4} M_{\mathtt K} \Big),\
      2\I V f = (\wGamma_1 f)^t \mathtt K \neq 0
      \right\}
      \\
      &= \big( \mD(\widetilde S) \cap \ker V \big)
      \dot + W
   \end{align*}
   where $W\subset \Hsym(\widehat A)$ has dimension at most $\dim\ker( (C^*B^{*-1}) - \frac{1}{4} M_{\mathtt K}) = \nu$.
\end{proof}

In Theorem~\ref{thm:dimleq1} we will see that the dimension of $W$ gives an upper bound for the dimension of the selfadjoint subspace if $\widetilde S$ is completely non-selfadjoint.
\medskip

In the \define{critical case} 
$\rk(\im (C^*B^{*-1}) - \frac{1}{4}M_{\mathtt K}) = 0$,
the dissipativity arising purely from the boundary condition (i.e., from the subspace $\mU$) balances exactly the dissipative effect resulting from the extra action of $\widetilde A$ on $\mU$.
If 
$0 < \rk(\im (C^*B^{*-1}) - \frac{1}{4}M_{\mathtt K}) < \ell$, the effects are balanced only on some subspace of $\mU$, while in the \define{non-critical case} $\rk(\im (C^*B^{*-1}) - \frac{1}{4}M_{\mathtt K}) = \ell$
the dissipativity from the boundary conditions (i.e., from the subspace $\mU$), is stronger than the effect resulting from the extra action of $\widetilde A$ on $\mU$ and  $H_{sa}(\widehat A) = H_{sa}(\widetilde S)$  as the next proposition shows.

\begin{proposition}
   \label{prop:keineAhnung}
   \begin{enumerate}[label={\upshape(\roman*)}]

      \item
      If $G\subseteq \Hsym(\widehat A)\cap \ker V$ is a selfadjoint subspace for $\widehat A$, then $G\subseteq H_{sa}(\widetilde S)$.

      \item
      If $\rk( \im(\wC^* \wB^{*-1}) - \frac{1}{4} M_{\mathtt K} ) = \ell$, 
      then 
      $H_{sa}(\widehat A)\subseteq H_{sa}(\widetilde S)$.

   \end{enumerate}
\end{proposition}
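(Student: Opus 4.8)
The plan is to show that a selfadjoint subspace $G$ of $\widehat A$ contained in $\ker V$ is automatically a selfadjoint subspace of the symmetric part $\widetilde S$, and then to invoke the maximality of $H_{sa}(\widetilde S)$ among selfadjoint subspaces. (The latter is elementary: if $G$ is a selfadjoint subspace of $\widetilde S$ then $G\subseteq\range(\widetilde S-\lambda)$ for every $\lambda\notin\R$, whence $G\subseteq\bigcap_{\lambda\notin\R}\range(\widetilde S-\lambda)=H_{sa}(\widetilde S)$ by \eqref{eq:HsaBHdS}; recall $\widetilde S$ has equal defect indices $\ell$.) The computational input is that $\widehat A$ acts like $\widetilde S$ on $\mD(\widetilde S)\cap\ker V$: by Theorem~\ref{thm:AdjointOfA} applied with $\widetilde S$ in place of $S$, $\widehat A f=(S^*+\I V)f+\mL Pf$ where $P$ projects $\mD(\widehat A)=\mD(\widetilde S)\dot + \mU$ onto $\mU$ along $\mD(\widetilde S)$; hence $Pf=0$ for $f\in\mD(\widetilde S)$, so $\widehat A f=(S^*+\I V)f$, which equals $S^*f=\widetilde S f$ whenever $f\in\ker V$.

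For part (i), let $G\subseteq\Hsym(\widehat A)\cap\ker V$ be a selfadjoint subspace of $\widehat A$. By \eqref{eq:ASymspaceVBT} we have $\Hsym(\widehat A)\cap\ker V=\mD(\widetilde S)\cap\ker V$, so $G\subseteq\mD(\widetilde S)$, $G\subseteq\ker V$, and $\widehat A|_G=\widetilde S|_G$ by the previous paragraph. I then verify that $G$ reduces $\widetilde S$: writing $Q$ for the orthogonal projection onto $G$, for $f\in\mD(\widetilde S)\subseteq\mD(\widehat A)$ the fact that $G$ reduces $\widehat A$ gives $Qf,(I-Q)f\in\mD(\widehat A)$, and since $Qf\in G\subseteq\mD(\widetilde S)$ also $(I-Q)f=f-Qf\in\mD(\widetilde S)$; invariance of $G$ under $\widetilde S$ follows from $\widetilde S f=\widehat A f\in G$ for $f\in\mD(\widetilde S)\cap G$, and invariance of $G^\perp$ follows because $G\subseteq\ker V$ forces $\range V\subseteq G^\perp$, so for $f\in\mD(\widetilde S)\cap G^\perp$ one gets $\widetilde S f=S^*f=\widehat A f-\I Vf\in G^\perp$. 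Since $\widetilde S|_G=\widehat A|_G$ is selfadjoint, $G$ is a selfadjoint subspace of $\widetilde S$, and maximality yields $G\subseteq H_{sa}(\widetilde S)$.

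For part (ii), put $G:=H_{sa}(\widehat A)$. For any selfadjoint subspace $G$ of $\widehat A$ one has $(\widehat A|_G)^*=\widehat A^*|_G=\widehat A|_G$, and selfadjointness of $\widehat A|_G$ gives $\mD(\widehat A)\cap G=\mD(\widehat A^*)\cap G$; hence $\widehat A f=\widehat A^*f$ for every $f\in\mD(\widehat A)\cap G$, i.e.\ $\mD(\widehat A)\cap G\subseteq\Hsym(\widehat A)$. In the non-critical case $\rk(\im(\wC^*\wB^{*-1})-\frac{1}{4}M_{\mathtt K})=\ell$, Lemma~\ref{lem:W} gives $\nu=0$, hence $W=\{0\}$ and $\Hsym(\widehat A)=\mD(\widetilde S)\cap\ker V$; therefore $\mD(\widehat A)\cap G\subseteq\mD(\widetilde S)\cap\ker V$, and since $\mD(\widehat A)\cap G$ is dense in the closed subspace $G$ while $\ker V$ is closed we also obtain $G\subseteq\ker V$. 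Now the argument of part (i) goes through, the one change being that here $\mD(\widetilde S)\cap G=\mD(\widehat A)\cap G$ (the inclusion $\subseteq$ is trivial from $\mD(\widetilde S)\subseteq\mD(\widehat A)$, and $\supseteq$ is the containment just established); one concludes that $G$ reduces $\widetilde S$, that $\widetilde S|_G=\widehat A|_G$ is selfadjoint, and therefore $H_{sa}(\widehat A)=G\subseteq H_{sa}(\widetilde S)$.

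The content is entirely an assembly of earlier results, so no step is genuinely hard; the point that requires care — and the reason (ii) is not just a one-line corollary of (i) — is that a selfadjoint subspace is closed whereas $\Hsym(\widehat A)$ need not be, so one must work with the dense subspace $\mD(\widehat A)\cap G$ and pass to closures only afterwards. A related detail one should not overlook is the $\widetilde S$-invariance of $G^\perp$, which rests on the simple observation that $G\subseteq\ker V$ implies $\range V\subseteq G^\perp$.
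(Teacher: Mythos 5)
Your proof is correct and follows essentially the same route as the paper: for (i) one shows that $G$ reduces $\widetilde S$ and that $\widetilde S|_G=\widehat A|_G$ is selfadjoint (using that $\widehat A$ and $\widetilde S$ coincide on $\mD(\widetilde S)\cap\ker V$ and that $\range V$ and the $k_j$ lie in $G^\perp$), then invokes maximality of $H_{sa}(\widetilde S)$; for (ii) the full-rank hypothesis forces $\Hsym(\widehat A)\subseteq\ker V$ and one reduces to (i). Your extra care in (ii) about passing from the dense subspace $\mD(\widehat A)\cap G$ to the closed subspace $G\subseteq\ker V$ is a small refinement of a point the paper's proof states more tersely, but it is not a different argument.
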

\begin{proof}
   \begin{enumerate}[label={\upshape(\roman*)}]
      \item
      Let $G\subseteq \Hsym(\widehat A)\cap \ker V$ be a selfadjoint subspace of $\widehat A$.
      The operators $\widehat A$ and $\widetilde S$ coincide on $\Hsym(\widehat A)\cap \ker V$, so it follows that $G$ is $\widetilde S$-invariant and
      $\mD(\widetilde S) \subseteq \mD(\widehat A) = (G \cap \mD(\widehat A)) \oplus (G^\perp \cap \mD(\widehat A))$
      implies $\mD(\widetilde S) = (G \cap \mD(\widetilde S)) \oplus (G^\perp \cap \mD(\widetilde S))$.

      It remains to show that $\widetilde Sf \in G^\perp$ for every $f\in G^\perp \cap \mD(\widetilde S)$.
      If $f\in G^\perp \cap \mD(\widetilde S)$,
      then $\widetilde Sf = \widehat A f - \I V f - (\wGamma_1 f)^t \mathtt{K} \in G^\perp$,
      because $\widehat A f \in G^\perp$ since $G$ is reducing a subspace for $\widehat A$, and $\I V f, k_1,\dots, k_\ell \in (\ker V)^\perp \subseteq \Hsym(\widehat A)^\perp\subseteq G^\perp$.

      \item
      By hypothesis, $\ker( \im(\wC^* \wB^{*-1}) - \frac{1}{4} M_{\mathtt K} ) = \{0\}$, therefore $\Hsym(\widehat A) \cap \ker V = \Hsym(\widehat A)$.
      Since every selfadjoint subspace of $\widehat A$ is contained in $\Hsym(\widehat A)$, it follows that $H_{sa}(\widehat A) \subseteq H_{sa}(\widetilde S)$.
      \qedhere
      
   \end{enumerate}

\end{proof}

\begin{proposition}
   \label{prop:resolventf}
   Assume that 
   $\widetilde S h\in \ker V$ for all $h\in \mD(\widetilde S)\cap \ker V$.
   If $f\in H_{sa}(\widehat A)$ is such that 
   $(\widehat A - \lambda)^{-1}f\in \mD(\widetilde S)\cap \ker V$ 
   for some $\lambda\in\rho(\widehat A)$, 
   then $f\in H_{sa}(\widetilde S)$.
\end{proposition}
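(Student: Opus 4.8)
The plan is to reduce the statement, via Krein's characterization of $H_{sa}(\widetilde S)$ as $\bigcap_{\mu\in\C\setminus\R}\range(\widetilde S-\mu)$, to a question about where the resolvent of $\widehat A$ sends vectors of $\mD(\widetilde S)\cap\ker V$. Put $g:=(\widehat A-\lambda)^{-1}f$; by hypothesis $g\in\mD(\widetilde S)\cap\ker V$, and since $H_{sa}(\widehat A)$ reduces the resolvent of $\widehat A$, also $g\in H_{sa}(\widehat A)$. First I would record that on $\mD(\widetilde S)\cap\ker V$ the operators $\widehat A$, $\widehat A^*$ and $\widetilde S$ all coincide: the term $\mL P$ vanishes on $\mD(\widetilde S)$ and the term $\pm\I V$ vanishes on $\ker V$ (this is the computation underlying \eqref{eq:ASymspaceVBT}). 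Hence $f=(\widehat A-\lambda)g=\widetilde S g-\lambda g$, and since $H_{sa}(\widetilde S)$ is a reducing subspace of $\widetilde S$, it suffices to show $g\in H_{sa}(\widetilde S)$: then $\widetilde S g\in H_{sa}(\widetilde S)$ (because $g\in H_{sa}(\widetilde S)\cap\mD(\widetilde S)$) and so $f\in H_{sa}(\widetilde S)$.

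To prove $g\in H_{sa}(\widetilde S)$ I would use the Krein criterion quoted above, so that it is enough, for every $\mu\in\C\setminus\R$, to solve $(\widetilde S-\mu)h=g$ with $h\in\mD(\widetilde S)$. The candidate is $h:=(\widehat A-\mu)^{-1}g$ if $\mu\in\C_-$ and $h:=(\widehat A^*-\mu)^{-1}g$ if $\mu\in\C_+$ (these resolvents exist because $\widehat A$ and $-\widehat A^*$ are maximally dissipative, hence $\C_-\subseteq\rho(\widehat A)$ and $\C_+\subseteq\rho(\widehat A^*)$). Since $H_{sa}(\widehat A)$ reduces $\widehat A$ and $\widehat A^*$ and $g\in H_{sa}(\widehat A)$, in both cases $h\in H_{sa}(\widehat A)\cap\mD(\widehat A)=\mD(\widehat A_{sa})$, and therefore $h\in\Hsym(\widehat A)$ by the remark that $H_{sa}(\widehat A)\cap\mD(\widehat A)\subseteq\Hsym(\widehat A)$. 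Granted that moreover $h\in\mD(\widetilde S)\cap\ker V$, the coincidence from the first paragraph yields $(\widetilde S-\mu)h=(\widehat A-\mu)h=g$ (respectively $(\widehat A^*-\mu)h=g$), which closes the argument.

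The main obstacle is the claim $h\in\mD(\widetilde S)\cap\ker V$, i.e.\ that the resolvent does not move $g$ out of $\ker V$. To attack it I would use the decomposition $\Hsym(\widehat A)=(\mD(\widetilde S)\cap\ker V)\dot+ W$ of Lemma~\ref{lem:W}: since $W\cap\ker V=\{0\}$ and $\mD(\widetilde S)\cap\ker V\subseteq\ker V$, an element of $\Hsym(\widehat A)$ lies in $\ker V$ precisely when its $W$-component is zero, and in that case it already lies in $\mD(\widetilde S)\cap\ker V$. So it remains to show that the $W$-component $w$ of $h$ vanishes. Writing $h=n+w$ with $n\in\mD(\widetilde S)\cap\ker V$, and using $\widehat A n=\widetilde S n$, $\widehat A w=\widehat A^* w=(S^*-\I V)w$ (valid since $w\in\Hsym(\widehat A)$), and $(\widehat A-\mu)h=g$, one obtains $(\widehat A^*-\mu)w=g+\mu n-\widetilde S n$; the right-hand side lies in $\ker V$ because $g,n\in\ker V$ and $\widetilde S n\in\ker V$ by the standing hypothesis $\widetilde S(\mD(\widetilde S)\cap\ker V)\subseteq\ker V$ --- this is the one place where that hypothesis is used. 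Thus $V(\widehat A^*-\mu)w=0$. The genuinely delicate point is to deduce $w=0$; here I would bring in that $h\in H_{sa}(\widehat A)$ together with the finite-dimensionality of $W$, e.g.\ by showing that $\ker V\cap H_{sa}(\widehat A)$ is invariant --- hence, since $\widehat A_{sa}$ is self-adjoint and $\mD(\widehat A_{sa})\cap\ker V=\mD(\widetilde S)\cap\ker V\cap H_{sa}(\widehat A)$, reducing --- for $\widehat A_{sa}$, so that the entire cyclic subspace $G$ generated by $g$ under $\widehat A_{sa}$ stays inside $\ker V$; then $G$ is a self-adjoint subspace of $\widetilde S$ by an argument as in Proposition~\ref{prop:keineAhnung}, giving $g\in G\subseteq H_{sa}(\widetilde S)$. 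Establishing that the resolvent orbit of $g$ (equivalently the reducing subspace $G$) never leaves $\ker V$ is the crux of the proof.
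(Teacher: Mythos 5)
Your setup matches the paper's: you pass to $g=(\widehat A-\lambda)^{-1}f$, use that $H_{sa}(\widehat A)$ reduces the resolvents so that $(\widehat A-\mu)^{-1}g\in\Hsym(\widehat A)=(\mD(\widetilde S)\cap\ker V)\dot+W$, decompose $h=n+w$, and correctly derive (using the standing hypothesis) that $(\widehat A-\mu)w-\mu$-shifted expression lies in $\ker V$, i.e.\ $V\big(\widetilde S^*w+\I Vw+\mL w-\mu w\big)=0$. Up to this point you are reproducing the paper's argument. But the step you yourself flag as "the genuinely delicate point" --- deducing $w=0$ --- is exactly the step you do not carry out, and the alternative you sketch (show that $\ker V\cap H_{sa}(\widehat A)$ is invariant so that the cyclic subspace generated by $g$ stays in $\ker V$) is essentially a restatement of the claim to be proved: the whole difficulty is precisely that a priori the resolvent orbit of $g$ could acquire a nonzero $W$-component, and no mechanism is offered to rule this out for a fixed $\mu$.

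The paper's resolution is different in two ways, and both are needed. First, it does \emph{not} prove $w=0$ for every $\mu$; it only proves it for $|\mu|$ large. Pairing the relation $\widetilde S^*w+\I Vw+\mL w-\mu w\in\ker V$ against $Vw$ gives
\begin{equation*}
   0=\big|\scalar{Vw}{(\widetilde S^*+\mL)w}+\I\|Vw\|^2-\mu\scalar{Vw}{w}\big|
   \ \ge\ |\mu|\,\|V^{1/2}w\|^2-\|Vw\|\big(\|\widetilde S^*w\|+\|\mL w\|\big)-\|Vw\|^2 ,
\end{equation*}
and since $W$ is finite dimensional with $W\cap\ker V=\{0\}$ there are constants $c>0$, $c'<\infty$ with $\|V^{1/2}w\|^2\ge c\|w\|^2$ and the remaining terms bounded by $c'\|w\|^2$; hence $0\ge(|\mu|c-c')\|w\|^2$ forces $w=0$ once $|\mu|>c'/c$. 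Second, because the conclusion only holds for $|\mu|$ large, one cannot use the plain Krein intersection over all of $\C\setminus\R$; one needs Theorem~\ref{thm:BHdS}, which says that orthogonality of $f$ to $\ker(\widetilde S^*-\mu)$ for $\mu$ ranging over sets with accumulation points in $\C^+$ and $\C^-$ already places $f$ in $H_{cnsa}(\widetilde S)^\perp=H_{sa}(\widetilde S)$. Your proposal is missing both the coercivity estimate (the only place where the finite dimensionality of $W$ and $W\cap\ker V=\{0\}$ are exploited quantitatively) and the appeal to Theorem~\ref{thm:BHdS}; as written it does not close.
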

\begin{proof}
   We will show that  $f\perp \ker(\widetilde S^* - \mu)$ for every $\mu\in\C\setminus\R$ with $|\mu|$ large enough, say $|\mu| > C$ for some $C$.
   Then, Theorem~\ref{thm:BHdS} shows that 
   $f \in \bigcap_{\mu\in \C\setminus\R, |\mu|> C} 
   \ker(\widetilde S^* - \mu) = H_{cnsa}(\widetilde S)^\perp = H_{sa}(\widetilde S)$.

   We set $g:= (\widehat A - \lambda)^{-1} f$.
   For every $\mu\in\C_-\setminus\{\lambda\}$ we have that $\mu\in\rho(\widehat A)$ and the resolvent identity gives
   \begin{align*}
      (\lambda - \mu) (\widehat A - \mu)^{-1} g
      &= 
      (\mu - \lambda) (\widehat A - \mu)^{-1} (\widehat A - \lambda)^{-1} g
      =
      (\widehat A - \lambda)^{-1} f - (\widehat A - \mu)^{-1} f
      \\
      &= g - (\widehat A - \mu)^{-1} f.
   \end{align*}

   Hence 
   $(\widehat A - \mu)^{-1} g 
   \subseteq H_{sa}(\widehat A)\cap \mD(\widehat A)
   \subseteq \Hsym(\widehat A)
   = \big(\mD(\widetilde S)\cap\ker V \big) \dot + W
   $
   where $W$ has dimension at most $\ell$, see 
   Lemma~\ref{lem:W}.
   Therefore we can write 
   $(\widehat A - \mu)^{-1} g = h + w$ with $h\in\mD(\widetilde S)\cap \ker V$ and $w\in W$.
   It follows that
   \begin{align*}
      g = (\widehat A - \mu)(h+w)
      = \widetilde S h - \mu h + \widetilde S^* w + \I V w + \mL w - \mu w.
   \end{align*}
   By assumption, $g, h, \widetilde S h \in \ker V$, hence it follows that 
   $\widetilde S^* w + \I V w + \mL w - \mu w \in \ker V$.
   Therefore
   \begin{align*}
      0 &= | \scalar{Vw}{ \widetilde S^* w + \I V w + \mL w - \mu w } |
      = | \scalar{Vw}{ (\widetilde S^* + \mL ) w } + \I \scalar{Vw}{ V w } - \mu \scalar{Vw}{ w } |
      \\
      &\ge |\mu| \|V^{1/2}w\|^2 
      - \| Vw\| \| \widetilde S^* w \|  - \| Vw\| \| \mL w \| - \|Vw\|^2.
   \end{align*}
   Since $\dim W$ is finite and $W\cap\ker V = \{0\}$, there exists $c > 0$ such that $\|V^{1/2} w \|^2 \ge c \|w\|^2$ for all $w\in W$.
   Moreover, since $\dim W<\infty$, there exists $c' < \infty$ such that 
   $\| Vw\| \| \widetilde S^* w \| + \| Vw\| \| \mL w \| + \|Vw\|^2 \le c' \|w\|^2$ for all $w\in W$.
   Therefore
   \begin{align*}
      0 \ge (|\mu| c - c') \|w\|^2,
   \end{align*}
   so we conclude that $w=0$ if $|\mu| > c'/c =: C$ and consequently $g\in \mD(\widetilde S)\cap\ker V$.
   Therefore, for $|\mu| > C$ and $\eta_\mu \in \ker(\widetilde S^* - \overline{\mu})$, we find that 
   \begin{align*}
      \scalar{\eta_\mu}{f}
      = \scalar{\eta_\mu}{(\widehat A - \overline \mu)(\widehat A - \mu)^{-1}f}
      = \scalar{\eta_\mu}{(\widetilde S - \overline \mu)(\widehat A - \mu)^{-1}f}
      = \scalar{(\widetilde S^* - \mu)\eta_\mu}{(\widehat A - \mu)^{-1}f}
      = 0.
   \end{align*}
   The proof for $\mu\in \C_+$ is similar, using that $\mu\in\rho(\widehat A^*)$.
\end{proof}

If in addition to the hypotheses in Proposition~\ref{prop:resolventf} we assume that $\widetilde S$ is cnsa, then we can conclude that the selfadjoint subspace of $\widehat A$ is at most $\ell$-dimensional.
(It cannot have a subspace contained in $\mD(\widetilde S)\cap\ker V$ because that would lead to a selfadjoint subspace of $\widetilde S$.)

\begin{theorem}
   \label{thm:dimleq1}
   Assume that $\widetilde S h\in \ker V$ for all $h\in \mD(\widetilde S)\cap \ker V$ 
   and assume that $\widetilde S$ is cnsa.
   Let $W$ as in Lemma~\ref{lem:W}.
   \begin{enumerate}[label={\upshape(\roman*)}]

      \item If $W = \{0 \}$, then $H_{sa}(\widehat A) = \{0\}$.
      \item If $\dim W = \mu$, then $\dim( H_{sa}(\widehat A) ) \le \mu $.

   \end{enumerate}
\end{theorem}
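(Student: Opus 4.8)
The plan is to derive the theorem as a short consequence of Proposition~\ref{prop:resolventf} and Lemma~\ref{lem:W}: the first tells us that an $f\in H_{sa}(\widehat A)$ whose resolvent is trapped inside $\mD(\widetilde S)\cap\ker V$ must lie in $H_{sa}(\widetilde S)$, which is $\{0\}$ once $\widetilde S$ is cnsa, and the second lets us measure the ``escape'' from $\mD(\widetilde S)\cap\ker V$ by a finite-dimensional quantity. Concretely, I would fix $\lambda\in\C_-$, so $\lambda\in\rho(\widehat A)$, recall the algebraic direct decomposition $\Hsym(\widehat A)=(\mD(\widetilde S)\cap\ker V)\dot + W$ from Lemma~\ref{lem:W} with $W\cap(\mD(\widetilde S)\cap\ker V)=\{0\}$ and $\dim W\le\mu$, and let $Q\colon\Hsym(\widehat A)\to W$ be the projection onto $W$ along $\mD(\widetilde S)\cap\ker V$ (a purely algebraic linear map, which is all that is needed). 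The object to study is then the linear map $\Phi\colon H_{sa}(\widehat A)\to W$, $\Phi f:=Q\big((\widehat A-\lambda)^{-1}f\big)$.

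Two facts then finish it. First, $\Phi$ is well defined: since $H_{sa}(\widehat A)$ reduces $\widehat A$ it also reduces the resolvent, so $(\widehat A-\lambda)^{-1}f\in H_{sa}(\widehat A)\cap\mD(\widehat A)\subseteq\Hsym(\widehat A)$, which is the domain of $Q$. Second, $\Phi$ is injective: if $\Phi f=0$, then $(\widehat A-\lambda)^{-1}f$ has vanishing $W$-component, i.e.\ $(\widehat A-\lambda)^{-1}f\in\mD(\widetilde S)\cap\ker V$; the standing hypothesis $\widetilde S h\in\ker V$ for $h\in\mD(\widetilde S)\cap\ker V$ allows an application of Proposition~\ref{prop:resolventf}, which gives $f\in H_{sa}(\widetilde S)=\{0\}$ because $\widetilde S$ is cnsa. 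An injective linear map into the finite-dimensional space $W$ forces $\dim H_{sa}(\widehat A)\le\dim W$; this is $\{0\}$ when $W=\{0\}$, giving (i), and at most $\mu$ when $\dim W=\mu$, giving (ii).

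I do not anticipate a real obstacle in this theorem, since the analytic heart of the matter — absorbing, for a spectral parameter of sufficiently large modulus, the component of $(\widehat A-z)^{-1}f$ that is not in $\mD(\widetilde S)\cap\ker V$, using $\dim W<\infty$ and $W\cap\ker V=\{0\}$ — was already carried out in Proposition~\ref{prop:resolventf}; the present argument merely repackages that kernel-triviality statement as a dimension bound through the projection $Q$. The only points deserving a line of care are that the decomposition of $\Hsym(\widehat A)$ in Lemma~\ref{lem:W} is a direct sum, so $Q$ genuinely makes sense, and that both the inclusion $H_{sa}(\widehat A)\cap\mD(\widehat A)\subseteq\Hsym(\widehat A)$ and the reducibility of the resolvent by a reducing subspace are the facts already recorded at the start of Section~\ref{sec:cnsa}.
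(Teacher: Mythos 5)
Your proof is correct and follows essentially the same route as the paper: both rest on the resolvent mapping $H_{sa}(\widehat A)$ into $\Hsym(\widehat A)=(\mD(\widetilde S)\cap\ker V)\dot+W$ and on Proposition~\ref{prop:resolventf} killing any element whose resolvent lands in $\mD(\widetilde S)\cap\ker V$. The only difference is cosmetic: you phrase the dimension count as injectivity of the explicit map $\Phi=Q\circ(\widehat A-\lambda)^{-1}$ into $W$, whereas the paper takes $\mu+1$ vectors and extracts a nontrivial linear combination with vanishing $W$-component --- the same argument, which your packaging states slightly more cleanly.
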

\begin{proof}
   If $f\in H_{sa}(\widehat A)$, then $(\widehat A - \lambda)^{-1}f \in H_{sa}(\widehat A)\subseteq \Hsym(\widehat A)$ for every $\lambda\in\C_-$.
   If $W = \{0\}$, then $\Hsym(\widehat A) = \mD(\widetilde S)\cap \ker V$, hence the first claim follows from Proposition~\ref{prop:keineAhnung}.

   Now assume $\dim W = \mu$.
   Let $f_1,\, \dots,\, f_{\mu+1}\in H_{sa}(\widehat A) \subseteq \big( \mD(\widetilde S) \cap \ker V \big) \dot + W$ and let $\lambda\in\rho (\widehat A)$.
   Then $(\widehat A - \lambda)^{-1}f_1,\, \dots,\, (\widehat A - \lambda)^{-1}f_\mu$ belong to
   $\big( \mD(\widetilde S) \cap \ker V \big) \dot + W$.
   Since $\dim W = \mu$, there exists a linear combination 
   $h = c_1 f _1 + \dots + c_\mu f_\mu$ such that 
   $(\widehat A - \lambda)^{-1}h \in \mD(\widetilde S) \cap \ker V$.
   Proposition~\ref{prop:resolventf} shows that $h\in H_{sa}(\widetilde S) = \{0\}$.
   Therefore $h=0$ and $f_1, \dots, f_{\mu+1}$  are linearly dependent.
\end{proof}

The next theorem gives a criterion for the existence of a non-trivial selfadjoint subspace of $\widetilde A$ if $\widetilde S$ is cnsa.
Recall that a necessary condition is that $\dim W \ge 1$.

\begin{theorem}
   \label{thm:existenceHsa}
   Assume that $\widetilde S h\in \ker V$ for all $h\in \mD(\widetilde S)\cap \ker V$
   and that $\widetilde S$ is cnsa.
   Let $W$ as in Lemma~\ref{lem:W}.
   Then $H_{sa}(\widehat A) = \gen\{ \phi : \phi \text{ satisfies \ref{item:a},   \ref{item:b}, \ref{item:c} below} \}$ where
   \begin{enumerate}[label={\upshape(\alph*)}]
      \item\label{item:a}  $\phi\in\mD(\widehat A)\setminus \mD(\widetilde S )$,
      \item\label{item:b}  $\widehat A \phi \in \gen\{\phi\}$,
      \item\label{item:c}  $V\phi = \frac{1}{2\I} (\wGamma_1 \phi)^t \mathtt K$.
   \end{enumerate}

\end{theorem}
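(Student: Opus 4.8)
The plan is to prove the two inclusions separately. For $H_{sa}(\widehat A)\subseteq\gen\{\phi:\text{\ref{item:a}--\ref{item:c}}\}$ I would argue as follows. Under the standing hypotheses Theorem~\ref{thm:dimleq1} applies, so $H_{sa}(\widehat A)$ is finite dimensional; hence $\widehat A_{sa}:=\widehat A|_{H_{sa}(\widehat A)}$ is a selfadjoint operator on a finite dimensional space and therefore diagonalizable, and I may pick an orthonormal basis $\phi_1,\dots,\phi_m$ of $H_{sa}(\widehat A)$ with $\widehat A\phi_j=\lambda_j\phi_j$, $\lambda_j\in\R$. Each $\phi_j$ satisfies \ref{item:b}. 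Since $\phi_j\in H_{sa}(\widehat A)\cap\mD(\widehat A)\subseteq\Hsym(\widehat A)$, formula \eqref{eq:ASymSpaceBT} gives \ref{item:c}. For \ref{item:a} I argue by contradiction: if $\phi_j\in\mD(\widetilde S)$ then $\wGamma_1\phi_j=0$, so \ref{item:c} forces $V\phi_j=0$, i.e.\ $\phi_j\in\mD(\widetilde S)\cap\ker V\subseteq\Hsym(\widehat A)\cap\ker V$; but $\gen\{\phi_j\}$, being a one dimensional eigenspace of the selfadjoint operator $\widehat A_{sa}$, is a reducing subspace of $\widehat A$ on which $\widehat A$ acts as multiplication by $\lambda_j$, hence a selfadjoint subspace of $\widehat A$ contained in $\Hsym(\widehat A)\cap\ker V$; by Proposition~\ref{prop:keineAhnung}\,(i) it would then lie in $H_{sa}(\widetilde S)$, which is $\{0\}$ because $\widetilde S$ is cnsa --- contradicting $\phi_j\neq0$. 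Hence each $\phi_j$ satisfies \ref{item:a}, and $H_{sa}(\widehat A)=\gen\{\phi_1,\dots,\phi_m\}$ is contained in the claimed span.

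For the reverse inclusion I fix $\phi$ satisfying \ref{item:a}--\ref{item:c}, say $\widehat A\phi=\lambda\phi$, and the first goal is to show $\phi\in\Hsym(\widehat A)$. Writing $\widehat A\phi=(S^*+\I V)\phi+\mL P\phi$ and using \ref{item:c} to express $\mL P\phi=(\wGamma_1\phi)^t\mathtt K$ in terms of $V\phi$, one rewrites $\widehat A\phi$ as $(S^*-\I V)\phi$, so by \ref{item:b} also $(S^*-\I V)\phi=\lambda\phi$. By the description \eqref{eq:ASymSpaceBT}, condition \ref{item:c} is precisely one of the two relations defining $\Hsym(\widehat A)$, and it remains to establish the other, namely $\wGamma_1\phi\in\ker\bigl(\im(\wC^*\wB^{*-1})-\tfrac14 M_{\mathtt K}\bigr)$. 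To this end I would use the identity $\im\scalar{\phi}{\widehat A\phi}=\scalar{\wGamma_1\phi}{\bigl(\im(\wC^*\wB^{*-1})-\tfrac14 M_{\mathtt K}\bigr)\wGamma_1\phi}$ from the proof of Theorem~\ref{thm:MaxDissCritABT}, together with $\im\scalar{\phi}{\widehat A\phi}=\im(\lambda)\|\phi\|^2$ coming from \ref{item:b}, to show that $\im\lambda=0$; since $\im(\wC^*\wB^{*-1})-\tfrac14 M_{\mathtt K}\ge0$ by Theorem~\ref{thm:MaxDissCritABT}, the vanishing of this quadratic form at $\wGamma_1\phi$ then forces $\bigl(\im(\wC^*\wB^{*-1})-\tfrac14 M_{\mathtt K}\bigr)\wGamma_1\phi=0$, and \eqref{eq:ASymSpaceBT} yields $\phi\in\Hsym(\widehat A)$.

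Once $\phi\in\Hsym(\widehat A)$ is known, the rest is routine. By the characterization of the symmetric subspace recalled after Definition~\ref{def:symmetricspace}, $\scalar{\widehat A\phi}{g}=\scalar{\phi}{\widehat A g}$ for all $g\in\mD(\widehat A)$; taking $g=\phi$ gives $\lambda\in\R$. Moreover $\phi\in\mD(\widehat A^*)$ with $\widehat A^*\phi=\widehat A\phi=\lambda\phi$, so for $g\in\mD(\widehat A)$ with $g\perp\phi$ one gets $\scalar{\widehat Ag}{\phi}=\scalar{g}{\widehat A^*\phi}=\overline\lambda\scalar{g}{\phi}=0$. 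Together with \ref{item:b} and $\phi\in\mD(\widehat A)$ this shows that $\gen\{\phi\}$ reduces $\widehat A$ and that $\widehat A$ restricted to it is multiplication by the real number $\lambda$, so $\gen\{\phi\}$ is a selfadjoint subspace of $\widehat A$ and hence $\phi\in H_{sa}(\widehat A)$ by maximality of $H_{sa}$. Consequently $\gen\{\phi:\text{\ref{item:a}--\ref{item:c}}\}\subseteq H_{sa}(\widehat A)$, completing the argument.

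I expect the main obstacle to be the step in the second paragraph where one deduces $\phi\in\Hsym(\widehat A)$ from \ref{item:a}--\ref{item:c}: condition \ref{item:c} only delivers one of the two defining relations of $\Hsym(\widehat A)$, and the other one --- equivalently, the reality of the eigenvalue $\lambda$, or that $\wGamma_1\phi$ lies in the kernel of $\im(\wC^*\wB^{*-1})-\tfrac14 M_{\mathtt K}$ --- has to be extracted from the eigenvalue equation \ref{item:b}. This is where the structural assumptions that $\widetilde S$ is cnsa and that $\widetilde S$ maps $\mD(\widetilde S)\cap\ker V$ into $\ker V$ must enter, presumably via a resolvent argument along the lines of Proposition~\ref{prop:resolventf}.
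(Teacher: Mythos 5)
Your forward inclusion is correct and is essentially the paper's own argument: the paper likewise uses the finite dimensionality of $H_{sa}(\widehat A)$ (from Theorem~\ref{thm:dimleq1}), diagonalizes $\widehat A|_{H_{sa}(\widehat A)}$, reads off \ref{item:b} and \ref{item:c} from membership in $\Hsym(\widehat A)$ via \eqref{eq:ASymSpaceBT}, and excludes $\phi_j\in\mD(\widetilde S)$ exactly as you do, by noting that this would place $\gen\{\phi_j\}$ inside $H_{sa}(\widetilde S)=\{0\}$. The last paragraph of your reverse inclusion --- that a vector of $\Hsym(\widehat A)$ satisfying \ref{item:b} spans a reducing subspace on which $\widehat A$ acts as a real scalar, hence lies in $H_{sa}(\widehat A)$ --- is also sound.

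The gap is exactly where you suspect it: the deduction $\phi\in\Hsym(\widehat A)$. The two identities you invoke combine to give only
$\im(\lambda)\|\phi\|^2=\bscalar{\wGamma_1\phi}{\bigl(\im(\wC^*\wB^{*-1})-\tfrac14 M_{\mathtt K}\bigr)\wGamma_1\phi}\ge 0$,
i.e.\ $\im\lambda\ge 0$; nothing in \ref{item:a}--\ref{item:c} forces the right-hand side to vanish unless $\wGamma_1\phi$ already lies in $\ker\bigl(\im(\wC^*\wB^{*-1})-\tfrac14 M_{\mathtt K}\bigr)$, which is precisely what you are trying to prove. Equivalently, \ref{item:a}--\ref{item:c} yield $\phi\in\mD(\widehat A^*)$ (and hence reality of $\lambda$) only \emph{after} one knows $\wGamma_1\phi$ is in that kernel. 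The argument does close in the critical case $\im(\wC^*\wB^{*-1})=\tfrac14 M_{\mathtt K}$, where the quadratic form vanishes identically --- that is the setting of Theorem~\ref{thm:existenceHsaCaseI} --- but not in general, and the resolvent argument of Proposition~\ref{prop:resolventf} that you gesture at does not obviously apply to $\phi$ itself, since a priori $(\widehat A-\lambda)^{-1}$ applied to $\phi$ need not land in $\mD(\widetilde S)\cap\ker V$. For what it is worth, the paper's own proof simply asserts the reverse inclusion (``if $\phi$ satisfies \ref{item:a}, \ref{item:b} and \ref{item:c}, then it belongs to $H_{sa}(\widehat A)$'') with no justification, so you have not overlooked an argument that is actually present there; but the step genuinely needs to be supplied (or the hypotheses strengthened, e.g.\ by requiring the eigenvalue in \ref{item:b} to be real), and your proposal does not supply it.
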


\begin{proof}
   We first recall that $H_{sa}(\widehat A)$ is at most $\ell$-dimensional, 
   so $H_{sa}(\widehat A)= \gen\{ \psi_1, \dots, \psi_p \} \subseteq \Hsym(\widehat A)$ 
   where $\psi_1, \dots, \psi_p$ is a basis of eigenvectors or $\widehat A|_{H_{sa}(\widehat A)}$.
   Clearly, each $\psi_j$ must satisfy \ref{item:b} and \ref{item:c}, and, if $\psi_j \neq 0$ it must also satisfy \ref{item:a}, otherwise $\psi\in H_{sa}(\widetilde S) = \{0\}$.

   On the other hand, if $\phi$ satisfies \ref{item:a}, \ref{item:b} and \ref{item:c}, then it belongs to $H_{sa}(\widehat A)$, hence $\phi\in H_{sa}(\widehat A)$ for any such $\phi$.
\end{proof}



\section{Complete non-selfadjointness of the Laplacian on the interval $[0,1]$} 
\label{sec:cnsa:Laplace}

Recall the $S$ is the minimal Laplacian on the interval $(0,1)$ as defined in \eqref{eq:MinS}.
Let $V$ be a bounded non-negative operator on $L_2(0,1)$ and set $A=S+\I V$.
Then all maximally dissipative extensions can be written as in Theorem~\ref{thm:MaxDisRestA}.
Note that the domain of such an extension coincides with exactly one domain of a maximally dissipative extension of $S$.
All possible such domains are listed in Theorem~\ref{thm:MaxDisRestS}.
\smallskip

\begin{lemma}
   \label{lem:onlyEigValues}
   Let $V\ge 0$ be a bounded operator.
   \begin{enumerate}[label={\upshape(\roman*)}]

      \item Any maximally dissipative extension of the minimal Laplacian $S$ has only discrete spectrum.

      \item Any maximally dissipative extension of $A = S + \I V$ has only discrete spectrum.
      
   \end{enumerate}
\end{lemma}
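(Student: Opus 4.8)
The plan is to treat both statements uniformly by showing that any maximally dissipative extension $\widehat A$ of $A = S+\I V$ (including $V=0$, which gives part (i)) has compact resolvent; since a dissipative operator has $\C_-\subseteq\rho(\widehat A)$ as soon as it is maximally dissipative, compactness of the resolvent at one point forces the spectrum to be discrete. First I would fix a reference point, say $\lambda = -\I$ or any $\lambda\in\C_-$, so that $(\widehat A - \lambda)^{-1}$ is a bounded everywhere-defined operator on $L_2(0,1)$. The key structural input is Corollary~\ref{cor:domains} together with Proposition~\ref{prop:DissExtSymmetric}: $\mD(\widehat A)\subseteq\mD(S^*) = H^2(0,1)$, and moreover $\mD(\widehat A) = \mD(S)\dot + \mU$ with $\mD(S) = H^2_0(0,1)$ and $\dim\mU = 2$. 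So $\range((\widehat A - \lambda)^{-1}) = \mD(\widehat A)\subseteq H^2(0,1)$.

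The main step is to show that $(\widehat A - \lambda)^{-1}\colon L_2(0,1)\to L_2(0,1)$ is compact. I would argue that this operator maps $L_2(0,1)$ \emph{boundedly} into $H^2(0,1)$: indeed, by the closed graph theorem the graph norm of $\widehat A$ is equivalent on $\mD(\widehat A)$ to $\|f\|_{L_2} + \|f''\|_{L_2}$ (since $\widehat A f = S^*f + \mathcal L P f = -f'' + \I V f + \mathcal L Pf$, $V$ is bounded, and $P$ composed with $\mathcal L$ is a finite-rank — hence bounded — map on the two-dimensional complement $\mU$), and this in turn is equivalent to the full $H^2(0,1)$-norm. Hence there is $c>0$ with $\|(\widehat A-\lambda)^{-1}g\|_{H^2} \le c\|g\|_{L_2}$ for all $g\in L_2(0,1)$. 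Since $(0,1)$ is a bounded interval, the embedding $H^2(0,1)\hookrightarrow L_2(0,1)$ is compact by Rellich--Kondrachov. Composing, $(\widehat A-\lambda)^{-1}$ is compact as an operator on $L_2(0,1)$, so its spectrum consists of $0$ together with a sequence of eigenvalues of finite multiplicity accumulating only at $0$; by the spectral mapping theorem for resolvents, $\sigma(\widehat A)$ consists of isolated eigenvalues of finite multiplicity, i.e.\ it is discrete. Part (i) is the special case $V=0$; there $\widehat A = \widehat S$ and the argument is literally the same since $\mathcal L = 0$ is allowed.

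The step I expect to require the most care is the claim that the map $f_S + w \mapsto \mathcal L w$ (equivalently $\mathcal L P$) is bounded on $\mD(\widehat A)$ with respect to the $H^2$-norm, because the paper explicitly warns (right after Theorem~\ref{thm:ChristophMaxDiss}) that $\mD(\widehat A)\to H,\ f_0+w\mapsto w$ is in general \emph{unbounded}. The resolution is that this unboundedness is with respect to the $L_2$-norm on the domain, whereas $\mathcal L P$ is bounded with respect to the $H^2$-norm: the projection $P$ onto the \emph{finite-dimensional} subspace $\mU$ along $\mD(S) = H^2_0(0,1)$ is continuous on $H^2(0,1)$ (two linearly independent boundary functionals among $f(0),f(1),f'(0),f'(1)$ detect $\mU$, and these are $H^2$-continuous by the trace theorem), so $\|Pf\|_{H^2}\le c'\|f\|_{H^2}$ and $\mathcal L$, being linear on the $2$-dimensional space $\mathcal U$, is automatically bounded there. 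Thus $\|\widehat A f\|_{L_2} = \|-f'' + \I Vf + \mathcal L Pf\|_{L_2} \ge \|f''\|_{L_2} - \|V\|\,\|f\|_{L_2} - c''\|f\|_{H^2}$, and combined with a standard interpolation estimate $\|f'\|_{L_2}\le \varepsilon\|f''\|_{L_2} + C_\varepsilon\|f\|_{L_2}$ this yields $\|f\|_{H^2}\le c\,(\|\widehat A f\|_{L_2} + \|f\|_{L_2})$, which is exactly the graph-norm equivalence needed above. Everything else is routine.
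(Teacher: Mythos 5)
Your proposal is correct in substance but takes a genuinely different route from the paper. The paper argues by resolvent comparison: the Dirichlet Laplacian $S_D$ is a selfadjoint extension of $S$ with purely discrete spectrum, hence compact resolvent; since any maximally dissipative extension $\widehat S$ of $S$ and $S_D$ are both two-dimensional extensions of $S$, their resolvents differ by a finite-rank operator, so $\widehat S$ also has compact resolvent; finally, the second resolvent identity $(\widehat S + \I)^{-1} - (\widehat A + \I)^{-1} = \I (\widehat A + \I)^{-1} V (\widehat S + \I)^{-1}$ transfers compactness to $\widehat A$. You instead prove compactness directly, by showing $(\widehat A - \lambda)^{-1}$ maps $L_2(0,1)$ boundedly into $H^2(0,1)$ and invoking Rellich--Kondrachov. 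Your approach is self-contained (it does not need a reference extension whose spectrum is already known) and makes the regularity of the resolvent explicit; the paper's approach is softer, avoids Sobolev embeddings entirely, and adapts immediately to situations (e.g.\ quantum graphs) where one selfadjoint extension with compact resolvent is already at hand. One detail in your write-up needs repair: the displayed inequality $\|\widehat A f\|_{L_2} \ge \|f''\|_{L_2} - \|V\|\,\|f\|_{L_2} - c''\|f\|_{H^2}$ does not close the argument when $c''\ge 1$, since the term $c''\|f\|_{H^2}$ contains $c''\|f''\|_{L_2}$. You should either use the sharper trace estimate $|f'(0)|+|f'(1)|+|f(0)|+|f(1)| \le \varepsilon\|f''\|_{L_2} + C_\varepsilon\|f\|_{L_2}$ (valid on $(0,1)$ by Agmon/Gagliardo--Nirenberg), which makes the finite-rank term $\mL P$ infinitesimally $S^*$-bounded, or simply rely on the soft closed-graph argument you already mention: $\widehat A$ is closed, $\mD(\widehat A)\subseteq H^2(0,1)$, and the inclusion of $(\mD(\widehat A),\|\cdot\|_{\widehat A})$ into $H^2(0,1)$ is a closed, everywhere-defined map between Banach spaces, hence bounded --- this one inequality $\|f\|_{H^2}\le c\,\|f\|_{\widehat A}$ is all that the compactness argument requires.
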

\begin{proof}
   Note that the Dirichlet Laplacian $S_D := \widehat S_{\id, 0}$ is a selfadjoint extension of $S$ which has only point spectrum, therefore its resolvent is compact.
   Any maximally dissipative extension $\widehat S$ of $S$ coincides with $S_D$ on $\mD(S)$, hence also 
   $(S_D + \I )^{-1}$ and  $(\widehat S + \I )^{-1}$ coincide on $\range(S + \I)$.
   Since both $S_D$ and $\widehat S$ are two-dimensional extensions of $S$, their resolvent difference is a bounded finite rank (at most two-dimensional) operator, hence it is compact.
   Therefore, $\widehat S$ has compact inverse, so its spectrum is point spectrum only.

   If $\widehat A$ is a maximally dissipative extension of $A$, then $\widehat S = \widehat A - \I V$ is a maximally dissipative extension of $S$.
   Since 
   $(\widehat S + \I)^{-1} - (\widehat A + \I)^{-1} 
   = (\widehat A + \I)^{-1}(\widehat A - \widehat S)(\widehat S + \I)^{-1}
   = \I (\widehat A + \I)^{-1} V (\widehat S + \I)^{-1}$
   is compact, it follows that $\widetilde A$ has compact resolvent, hence $\widetilde A$ has only point spectrum.
\end{proof}
The lemma above together with Proposition~\ref{prop:KMNRealPointSpec} leads to the following proposition about the selfadjoint subspaces of the maximally dissipative realisations of the Laplacian with a complex potential.

\begin{proposition}
   \label{prop:CritForCNSA}
   Let $V\ge 0$ be a bounded operator.
   A maximally dissipative extension of $A = \widehat S + \I V$ is cnsa if and only if it  has only point spectrum.
   Its (possibly trivial) maximal selfadjoint subspace is
   \begin{equation*}
      H_{sa}(\widehat A) = \overline{\gen}\{ \ker(\widehat A - \lambda) : \lambda\in\R\}.
   \end{equation*}
\end{proposition}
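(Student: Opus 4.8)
The plan is to combine Lemma~\ref{lem:onlyEigValues} with Proposition~\ref{prop:KMNRealPointSpec} in a direct way. First I would observe that by Lemma~\ref{lem:onlyEigValues}(ii), any maximally dissipative extension $\widehat A$ of $A=\widehat S + \I V$ has only discrete spectrum, meaning $\sigma(\widehat A) = \sigma_p(\widehat A)$. In particular, $\sigma(\widehat A)\cap\R \subseteq \sigma_p(\widehat A)$, so the hypothesis of Proposition~\ref{prop:KMNRealPointSpec} is satisfied. Applying that proposition immediately yields
\begin{equation*}
   H_{sa}(\widehat A) = \overline{\gen}\{ \ker(\widehat A - \lambda) : \lambda\in\R\},
\end{equation*}
which is the displayed formula in the claim.

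Next I would address the equivalence ``cnsa if and only if only point spectrum.'' One direction is trivial: we have just argued that \emph{every} maximally dissipative extension of $A$ has only point spectrum (Lemma~\ref{lem:onlyEigValues}), so the statement ``$\widehat A$ is cnsa if and only if it has only point spectrum'' reduces to ``$\widehat A$ is cnsa,'' and one should read the proposition as asserting that the condition on the right is automatically met here. More carefully: since $\widehat A$ always has only point spectrum, the ``only if'' direction is vacuous, and for the ``if'' direction I would use the second part of Proposition~\ref{prop:KMNRealPointSpec}: if additionally $\sigma(\widehat A)\cap\R = \emptyset$ then $H_{sa}(\widehat A) = \{0\}$ and $\widehat A$ is cnsa. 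But in the present setting the relevant content is really the formula for $H_{sa}(\widehat A)$: $\widehat A$ is cnsa precisely when this span is trivial, i.e.\ when $\widehat A$ has no real eigenvalues. So the cleanest way to phrase the proof is: $\widehat A$ is cnsa iff $H_{sa}(\widehat A)=\{0\}$ iff (by the formula) $\widehat A$ has no real eigenvalue iff (since all spectrum is point spectrum) $\sigma(\widehat A)\cap\R=\emptyset$.

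I do not anticipate a genuine obstacle here, since the two ingredients have already been established. The one point requiring a little care is the precise reading of the statement ``cnsa if and only if it has only point spectrum'': taken literally together with Lemma~\ref{lem:onlyEigValues} this would force every such $\widehat A$ to be cnsa, which is false (e.g.\ a selfadjoint extension with $V=0$). The intended meaning must be ``$\widehat A$ is cnsa if and only if $\sigma(\widehat A)\cap\R = \emptyset$,'' with the appeal to point spectrum being the mechanism (via Proposition~\ref{prop:KMNRealPointSpec}) that makes real spectral points detectable as eigenvectors spanning $H_{sa}$. In writing the proof I would therefore state the formula for $H_{sa}(\widehat A)$ first as the substantive conclusion, deduce it from Lemma~\ref{lem:onlyEigValues} and Proposition~\ref{prop:KMNRealPointSpec}, and then note that cnsa-ness is equivalent to the absence of real eigenvalues, hence (given that all spectrum is discrete) to $\sigma(\widehat A)\cap\R = \emptyset$.
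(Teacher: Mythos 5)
Your proof is correct and matches the paper's intended argument exactly: the paper states this proposition as an immediate consequence of Lemma~\ref{lem:onlyEigValues} and Proposition~\ref{prop:KMNRealPointSpec} without giving a separate proof, which is precisely the combination you carry out. Your further observation that the phrase ``has only point spectrum'' cannot be meant literally (since by Lemma~\ref{lem:onlyEigValues} \emph{every} maximally dissipative extension here has only point spectrum) and that the equivalence must be read as ``cnsa if and only if $\sigma(\widehat A)\cap\R=\emptyset$'' is also correct and identifies a genuine slip in the wording of the statement.
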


The following facts are well-known.
\begin{lemma}
   \label{lem:EVinEveryExt}
   If $S$ is a closed symmetric operator and $\lambda\in\R$ an eigenvalue for every selfadjoint extension $\widehat S$, then $\lambda$ is an eigenvalue of $S$.
   In particular $\ker(S-\lambda)\subseteq H_{sa}(S)$ and $S$ is not cnsa. 
\end{lemma}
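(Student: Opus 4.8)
The plan is to prove the first assertion by contraposition. First I would reduce to $\lambda=0$: replacing $S$ by $S-\lambda$ changes neither the symmetry of the operator nor the family of its selfadjoint extensions, and it only shifts every spectrum by $-\lambda$. I may also assume that $S$ has equal deficiency indices $\eta:=\eta_\pm(S)$, since otherwise $S$ has no selfadjoint extension and the statement is vacuous. So, assuming $0\notin\sigma_p(S)$, i.e.\ $\ker S=\{0\}$, I want to exhibit a selfadjoint extension $\widehat S$ of $S$ with $0\notin\sigma_p(\widehat S)$. Every selfadjoint extension satisfies $S\subseteq\widehat S\subseteq S^*$, hence $\widehat Sf=S^*f$ on $\mD(\widehat S)$ and so $\ker\widehat S=\mD(\widehat S)\cap\ker S^*$; it therefore suffices to find a selfadjoint extension whose domain meets $\ker S^*$ only in $\{0\}$.

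The key observation is that $\ker S^*$ is neutral for the boundary form: for $f,g\in\ker S^*$ one has $\scalar{f}{S^*g}-\scalar{S^*f}{g}=0$, hence $\scalar{\Gamma_0f}{\Gamma_1g}=\scalar{\Gamma_1f}{\Gamma_0g}$ for any boundary triple $(\Gamma_0,\Gamma_1,\mK)$ of $S^*$. Since moreover $\ker S^*\cap\mD(S)=\ker S=\{0\}$, the map $\Gamma=(\Gamma_0,\Gamma_1)$ is injective on $\ker S^*$ and carries it onto a subspace $\mathcal N\subseteq\mK\oplus\mK\cong\C^{2\eta}$ which is isotropic for the nondegenerate skew-Hermitian boundary form on $\C^{2\eta}$. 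That form has signature $(\eta,\eta)$, so $\dim\mathcal N\le\eta$.

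Next I would use the standard linear algebra of this Hermitian-symplectic form: every isotropic subspace can be enlarged to a maximal isotropic (Lagrangian) subspace $L_0\supseteq\mathcal N$ of dimension $\eta$, and every Lagrangian admits a Lagrangian complement, i.e.\ there is a Lagrangian $L_1$ with $L_0\oplus L_1=\C^{2\eta}$. Writing $L_1=\ker(B|-C)$ for $\eta\times\eta$ matrices $B,C$ with $\rk(B|-C)=\eta$, the fact that $L_1$ is Lagrangian amounts to $\im(BC^*)=0$, so by the classification of selfadjoint extensions in the boundary-triple formalism (the remark following Remark~\ref{rem:GreenStuffTBC}, cf.\ Theorem~\ref{thm:MaxDissCritBT}) the operator $\widehat S:=T_{B,C}$ of \eqref{eq:extensionsBT} is a selfadjoint extension of $S$. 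Its domain has $\Gamma$-image $L_1$, whence $\Gamma\bigl(\mD(\widehat S)\cap\ker S^*\bigr)\subseteq L_1\cap\mathcal N\subseteq L_1\cap L_0=\{0\}$; injectivity of $\Gamma$ on $\ker S^*$ then gives $\ker\widehat S=\mD(\widehat S)\cap\ker S^*=\{0\}$. This contradicts the assumption that $0$ is an eigenvalue of \emph{every} selfadjoint extension, so $0\in\sigma_p(S)$, and undoing the translation, $\lambda\in\sigma_p(S)$.

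For the ``in particular'' part, take $f\in\ker(S-\lambda)$ and $g\in\ker(S^*-\mu)$ with $\mu\in\C\setminus\R$; since $f\in\mD(S)$ and $g\in\mD(S^*)$, the defining property of the adjoint gives $\scalar{Sf}{g}=\scalar{f}{S^*g}$, that is $\lambda\scalar{f}{g}=\mu\scalar{f}{g}$, hence $\scalar{f}{g}=0$. Thus $\ker(S-\lambda)\perp\gen_{\mu\in\C\setminus\R}\{\ker(S^*-\mu)\}$, and by Krein's description of $H_{sa}$ in \eqref{eq:HsaBHdS} this yields $\ker(S-\lambda)\subseteq H_{sa}(S)$; since $\lambda\in\sigma_p(S)$ this subspace is nonzero, so $S$ is not cnsa. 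I expect the only genuinely non-routine point to be the symplectic linear algebra in the third paragraph---extending an isotropic subspace to a Lagrangian and producing a Lagrangian complement---but this is classical for nondegenerate skew-Hermitian forms of signature $(\eta,\eta)$; alternatively the required selfadjoint extension can be constructed directly from the von Neumann decomposition $\mD(S^*)=\mD(S)\dot+\ker(S^*-\I)\dot+\ker(S^*+\I)$.
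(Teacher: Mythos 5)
Your argument is correct. Note that the paper itself offers no proof of this lemma --- it is introduced with ``The following facts are well-known'' --- so there is nothing to compare against; what you have written is a complete, self-contained proof that moreover fits naturally into the paper's boundary-triple framework (neutral subspaces of the boundary form $\scalar{\Gamma_0 f}{\Gamma_1 g}-\scalar{\Gamma_1 f}{\Gamma_0 g}$, selfadjoint extensions as Lagrangian planes $\ker(B|-C)$ with $\im(BC^*)=0$). The chain $\ker\widehat S=\mD(\widehat S)\cap\ker S^*$, neutrality and $\Gamma$-injectivity of $\ker S^*$ under the contrapositive hypothesis $\ker S=\{0\}$, extension of the isotropic image to a Lagrangian $L_0$, and choice of a Lagrangian complement $L_1$ is sound, and the ``in particular'' part via \eqref{eq:HsaBHdS} is routine and correct.

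Two minor caveats, neither of which is a gap in the context of this paper. First, your parenthetical that the statement is ``vacuous'' when the defect indices are unequal is logically backwards: with no selfadjoint extensions the hypothesis is vacuously satisfied, so the implication would actually demand the conclusion; the lemma is of course meant to be read under the paper's standing assumption $\eta_+(S)=\eta_-(S)<\infty$. Second, your symplectic linear algebra (extension of an isotropic subspace to a Lagrangian, existence of a Lagrangian complement, the matrix description of $L_1$) genuinely uses $\eta<\infty$; since the lemma is only applied to restrictions of the Laplacian on an interval, where $\eta\le 2$, this is harmless, but the lemma as stated for a general ``closed symmetric operator'' would require the von Neumann-decomposition variant you sketch at the end.
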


\begin{lemma}
   \label{lem:doubleEV}
   If $S$ is a closed symmetric operator with defect indices $(1,1)$ and there exists $\lambda\in\R$ and a selfadjoint extension $\widehat S$ of $S$ such that such that $\lambda$ is an eigenvalue of $\widehat S$ of multiplicity larger or equal $2$, then $\lambda$ is an eigenvalue of $S$.
   In particular $\ker(S-\lambda)\subseteq H_{sa}(S)$ and $S$ is not cnsa. 
\end{lemma}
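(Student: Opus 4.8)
The plan is to exploit that a closed symmetric operator with defect indices $(1,1)$ has only \emph{one}-dimensional selfadjoint extensions, so a two-dimensional eigenspace of such an extension must already ``come from'' $S$ itself.

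First I would fix a selfadjoint extension $\widehat S$ of $S$ and a real $\lambda$ with $\dim\ker(\widehat S-\lambda)\ge 2$. Since $S$ has defect indices $(1,1)$, we have $\dim(\mD(S^*)/\mD(S)) = 2$ and $\widehat S$ is a one-dimensional restriction of $S^*$ containing $S$, so we may write $\mD(\widehat S) = \mD(S) \dot + \gen\{u\}$ with $u\in\mD(S^*)\setminus\mD(S)$; moreover $\widehat S f = Sf$ for every $f\in\mD(S)$ because $S\subseteq\widehat S$.

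Next, choose linearly independent $\psi_1,\psi_2\in\ker(\widehat S-\lambda)$ and decompose $\psi_j = f_j + a_j u$ with $f_j\in\mD(S)$, $a_j\in\C$. If $a_1=a_2=0$, then $\psi_1\in\mD(S)$ and $S\psi_1=\widehat S\psi_1=\lambda\psi_1$, so $\psi_1\in\ker(S-\lambda)\setminus\{0\}$. Otherwise we may assume $a_1\neq 0$ (swapping $\psi_1,\psi_2$ if necessary) and set $\psi := a_1\psi_2 - a_2\psi_1 = a_1 f_2 - a_2 f_1$; by construction the $u$-components cancel, so $\psi\in\mD(S)$. It is nonzero, since $\psi = 0$ would force $\psi_2 = (a_2/a_1)\psi_1$, contradicting linear independence, and it is a linear combination of $\lambda$-eigenvectors of $\widehat S$, hence $S\psi = \widehat S\psi = \lambda\psi$. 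In either case $\ker(S-\lambda)\neq\{0\}$, i.e.\ $\lambda\in\sigma_p(S)$.

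For the ``in particular'' statement I would invoke the characterisation $H_{sa}(S) = \big(\overline{\gen}_{\mu\in\C\setminus\R}\{\ker(S^*-\mu)\}\big)^\perp$ recalled above (Kre\u\i n): for $\psi\in\ker(S-\lambda)$ with $\lambda\in\R$, any $\mu\in\C\setminus\R$ and $\eta\in\ker(S^*-\mu)$ one computes $0 = \scalar{(S-\lambda)\psi}{\eta} = \scalar{\psi}{S^*\eta} - \overline{\lambda}\scalar{\psi}{\eta} = (\mu-\lambda)\scalar{\psi}{\eta}$ (using $S^*\eta=\mu\eta$ and $\lambda\in\R$), whence $\scalar{\psi}{\eta} = 0$ and $\psi\in H_{sa}(S)$. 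Thus $\{0\}\neq\ker(S-\lambda)\subseteq H_{sa}(S)$, so $S$ is not cnsa. The argument is entirely routine; the only point requiring attention is the case split on whether the coefficients $a_1,a_2$ of $u$ vanish, which is exactly what guarantees that the vector extracted into $\mD(S)$ is simultaneously nonzero and in the domain of $S$.
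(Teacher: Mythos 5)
Your proof is correct. The paper states Lemma~\ref{lem:doubleEV} without proof (as a ``well-known fact''), and your argument is exactly the expected one: since a selfadjoint extension of an operator with defect indices $(1,1)$ is only a one-dimensional extension, a suitable linear combination of two independent $\lambda$-eigenvectors lands in $\mD(S)$ and is nonzero, and the orthogonality computation against $\ker(S^*-\mu)$ correctly places $\ker(S-\lambda)$ inside $H_{sa}(S)$ via the Kre\u{\i}n characterisation quoted in the paper.
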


We recall that all the eigenvalues of the Laplacian with periodic or antiperiodic boundary conditions have multiplicity $2$.
More precisely:
\begin{lemma}
   \label{lem:antiperiodic}
   Let $S_{p}$ and $S_{ap}$ be defined by $S_{p/ap} f = S^*f = -f''$ with domains
   \begin{align*}
      \mD(S_{p}) &= \left\{ f\in H^2(0,1) : f(0) = f(1),\, f'(0) = f'(1) \right\}
      \\
      \mD(S_{ap}) &= \left\{ f\in H^2(0,1) : f(0) = -f(1),\, f'(0) = -f'(1) \right\}.
   \end{align*}
   Then
   \begin{itemize}

      \item
      $\sigma(S_{p}) = \{ (2n\pi)^2 : n\in \N_0 \}$. 
      The eigenvalue $0$ is simple and has eigenfunction $f_0(x) = 1$.
      All other eigenvalues have multiplicity $2$ with eigenfunctions $f_n(x) = \cos(2n\pi x)$ and $g_n(x) = \sin(2n\pi x)$, $n\ge 1$.

      \item
      $\sigma(S_{ap}) = \{ ((2n+1)\pi)^2 : n\in \N_0 \}$. 
      All eigenvalues have multiplicity $2$ with eigenfunctions $f_n(x) = \cos( (2n+1)\pi x)$ and $g_n(x) = \sin((2n+1)\pi x)$.
   \end{itemize}

   In particular, no one-dimensional symmetric restriction of $S_{p}$ o $S_{ap}$ can be cnsa.

\end{lemma}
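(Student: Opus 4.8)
The plan is to verify that $S_{p}$ and $S_{ap}$ are selfadjoint extensions of $S$, then to solve the eigenvalue equation $-f''=\lambda f$ explicitly under the periodic and antiperiodic boundary conditions, and finally to read off the assertion about one-dimensional symmetric restrictions from Lemma~\ref{lem:doubleEV}.

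First I would check selfadjointness. For $f,g$ in either of the two domains, a single integration by parts shows that the boundary form $\overline{f(0)}g'(0)-\overline{f(1)}g'(1)+\overline{f'(1)}g(1)-\overline{f'(0)}g(0)$ vanishes once the boundary conditions are substituted, so $S_{p}$ and $S_{ap}$ are symmetric; being two-dimensional extensions of $S$, which has defect indices $(2,2)$, they are therefore selfadjoint. (Equivalently, one writes each boundary condition in the form $B\Gamma_0 f=C\Gamma_1 f$ and verifies $\rk(B|-C)=2$ and $\im(BC^*)=0$, invoking Theorem~\ref{thm:MaxDissCritBT} and the remark after Remark~\ref{rem:GreenStuffTBC}.) A further integration by parts gives $\scalar{f}{-f''}=\|f'\|^2\ge 0$ on both domains, so $\sigma(S_{p}),\sigma(S_{ap})\subseteq[0,\infty)$, and since these are finite-dimensional extensions of $S$ their resolvents are compact (as in the proof of Lemma~\ref{lem:onlyEigValues}), so the spectra are purely discrete.

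Next I would compute the spectra. Writing $\lambda=k^2$ with $k\ge 0$, the general solution of $-f''=k^2 f$ for $k>0$ is $f(x)=a\cos(kx)+b\sin(kx)$, and imposing $f(0)=\varepsilon f(1)$ and $f'(0)=\varepsilon f'(1)$ with $\varepsilon=+1$ (periodic) or $\varepsilon=-1$ (antiperiodic) produces a homogeneous $2\times 2$ linear system in $(a,b)$ whose determinant equals $2(1-\varepsilon\cos k)$. Hence $k^2$ is an eigenvalue precisely when $\varepsilon\cos k=1$, i.e.\ $k\in\{2\pi,4\pi,6\pi,\dots\}$ for $\varepsilon=+1$ and $k\in\{\pi,3\pi,5\pi,\dots\}$ for $\varepsilon=-1$; moreover, when the determinant vanishes one has $\sin k=0$ and both equations of the system collapse to $0=0$, so the entire two-dimensional solution space consists of eigenfunctions, giving multiplicity $2$. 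The case $\lambda=0$ is handled separately: the solutions are $f(x)=a+bx$, the periodic conditions force $b=0$ and leave $a$ free (a simple eigenvalue with eigenfunction the constant $1$), while the antiperiodic conditions force $a=b=0$, so $0\notin\sigma(S_{ap})$. Together with the non-negativity and discreteness established above, this is exactly the list in the statement.

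For the final assertion I would take a closed symmetric operator $\widetilde S_0$ with $\widetilde S_0\subsetneq S_{p}$ and $\dim(\mD(S_{p})/\mD(\widetilde S_0))=1$. Since $S_{p}$ is a selfadjoint extension of $\widetilde S_0$ and every selfadjoint extension of a symmetric operator with defect indices $(d,d)$ is a $d$-dimensional extension, $\widetilde S_0$ must have defect indices $(1,1)$. Because $(2\pi)^2$ is an eigenvalue of $S_{p}$ of multiplicity $2$, Lemma~\ref{lem:doubleEV} applies and yields $\{0\}\neq\ker(\widetilde S_0-(2\pi)^2)\subseteq H_{sa}(\widetilde S_0)$, so $\widetilde S_0$ is not cnsa; the argument for $S_{ap}$ is identical, using the multiplicity-$2$ eigenvalue $\pi^2$. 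The only steps requiring a little care are the determinant computation — in particular the separate treatment of $k=0$ and the collapse of the system when $\varepsilon\cos k=1$ — and the identification of the defect indices of $\widetilde S_0$; everything else is routine.
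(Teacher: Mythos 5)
Your proof is correct; the paper states this lemma as well-known and offers no proof of its own, and your argument --- symmetry of $S_{p}$ and $S_{ap}$ via the vanishing boundary form (hence selfadjointness as two-dimensional extensions of $S$), non-negativity and discreteness of the spectrum, the determinant $2(1-\varepsilon\cos k)$ with the separate treatment of $\lambda=0$ and the collapse of the system when $\varepsilon\cos k=1$, and Lemma~\ref{lem:doubleEV} for the final claim --- is precisely the standard computation being left implicit. The only point worth recording explicitly is that a one-dimensional symmetric restriction given by a boundary condition is closed in the graph norm, so the hypothesis of Lemma~\ref{lem:doubleEV} (a \emph{closed} symmetric operator with defect indices $(1,1)$) is indeed met.
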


Note that the domains of $S_{p}$ and $S_{ap}$ correspond to case \ref{item:caseIIS} in Theorem~\ref{thm:MaxDisRestS} with $c_{12} = 0$ and $c_{22} = 1$ (for the periodic case) and $c_{22} = -1$ (for the antiperiodic case).
\medskip

Recall the boundary triple $(\Gamma_0, \Gamma_1, \C^2)$ from \eqref{eq:BoundaryTripleSstar}.
Then, by Theorem~\ref{thm:KurasovCriterion}, every maximally dissipative extension of the minimal Laplacian $S$ is of the form 

\begin{equation*}
   \mD(S_{B,C}) = \{ f\in H^2(0,1) : B\Gamma_0 f = C\Gamma_1 f \}
   = \left\{ f\in H^2(0,1) : 
   (B | -C) \begin{pmatrix} \Gamma_0 f \\ \Gamma_1 f
   \end{pmatrix}
   = \begin{pmatrix} 0  \\ 0
   \end{pmatrix}
   \right\}
\end{equation*}
with $2\times 2$ matrices $B, C$ such that $\rk(B|-C) = 2$ and $\im(BC^*)\ge 0$.
Its adjoint is, see Theorem~\ref{thm:AdjointBT},
\begin{align}
   \label{OLD:eq:AdjointBC}
   \mD(S_{B,C}^*) 
   &= 
   \left\{ g\in H^2(0,1) : 
   \Gamma_0 g = B^* \begin{pmatrix} a \\ b
   \end{pmatrix},\
   \Gamma_1 g = C^* \begin{pmatrix} a \\ b
   \end{pmatrix}
   \ \text{ for }\
   \begin{pmatrix} a \\ b
   \end{pmatrix} \in\C^2
   \right\}
\end{align}
and its symmetric subspace is, see Theorem~\ref{thm:SymSpaceBT},
\begin{align}
   \label{eq:SymSpaceBC}
    \Hsym(S_{B,C)}
    = \left\{ f\in H^2(0,1) : 
    \Gamma_0 g = B^* \begin{pmatrix} a \\ b
    \end{pmatrix},\
    \Gamma_1 g = C^* \begin{pmatrix} a \\ b
    \end{pmatrix}
    \ \text{ for }\
    \begin{pmatrix} a \\ b
    \end{pmatrix} \in\ker( \im(BC^*) )
    \right\}.
\end{align}
We set
\begin{equation}
   \widetilde S_{B,C}:
   \qquad \mD(\widetilde S_{B,C}) = \Hsym(S_{B,C},
   \quad 
   \widetilde S_{B,C}f = S^*f.
\end{equation}
\bigskip

A concrete condition for the hypothesis $\widetilde S(\mD(\widetilde S)\cap\ker V)\subseteq \ker V$ in Proposition~\ref{prop:resolventf} to hold is the following.

 \begin{lemma}[\protect{\cite[Lemma~3.11]{FNWcnsa2024}}]
   \label{lem:FNWLemma311}
   Let $V$ be a multiplication operator by a non-trivial non-negative function $V(\cdot)\in L_\infty(0,1)$.
   Then $g^{(n)}\in\ker V$ for every $g\in H^n(0,1)\cap\ker V$ for $n\ge 1$.
\end{lemma}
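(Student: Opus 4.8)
The plan is to prove the statement by induction on $n$, reducing the general case to the base case $n=1$, which says that $g'\in\ker V$ whenever $g\in H^1(0,1)\cap\ker V$. Since $V$ acts as multiplication by the function $V(\cdot)$, the hypothesis $g\in\ker V$ means $V(x)g(x)=0$ for a.e.\ $x\in(0,1)$. The key observation is that $\ker V = \{h\in L_2(0,1) : h = 0 \text{ a.e.\ on } \{V\neq 0\}\}$, so membership in $\ker V$ is equivalent to vanishing a.e.\ on the set $\Omega := \{x\in(0,1) : V(x)\neq 0\}$. Thus the statement reduces to the purely real-analytic fact: if $g\in H^1(0,1)$ vanishes a.e.\ on a measurable set $\Omega$, then $g'$ also vanishes a.e.\ on $\Omega$.

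First I would establish this base case. For a Sobolev function $g\in H^1(0,1)\subseteq W^{1,1}(0,1)$, $g$ has an absolutely continuous representative and $g'$ (the weak derivative) coincides a.e.\ with the classical pointwise derivative. The relevant tool is the standard fact that the derivative of an absolutely continuous function vanishes almost everywhere on any level set $\{g = c\}$; applied with $c=0$ this gives $g'=0$ a.e.\ on $\{g=0\}\supseteq\Omega$ (up to null sets). One clean way to see this: let $E = \{x : g(x)=0,\ g'(x)\text{ exists}\}$; at every point $x\in E$ that is also a point of density $1$ of $\{g=0\}$, the difference quotients $(g(x+h)-g(x))/h = g(x+h)/h$ can be made arbitrarily small along a sequence $h_k\to 0$ with $x+h_k\in\{g=0\}$, forcing $g'(x)=0$; since a.e.\ point of $\{g=0\}$ is a density point, $g'=0$ a.e.\ on $\{g=0\}$. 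Alternatively, one can invoke the identity $\int_A g' \,dx = 0$ for $A\subseteq\{g=0\}$ together with a Lebesgue differentiation argument, or simply cite the well-known result (e.g.\ on derivatives of monotone/AC functions and truncations, $\nabla \min(g,0) = \chi_{\{g<0\}}\nabla g$, etc.). Hence $g'\in\ker V$.

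For the inductive step, suppose the claim holds for $n$ and let $g\in H^{n+1}(0,1)\cap\ker V$. Then $g\in H^n(0,1)\cap\ker V$, so by the induction hypothesis $g^{(j)}\in\ker V$ for $1\le j\le n$; in particular $g^{(n)}\in H^1(0,1)\cap\ker V$, and applying the base case to $g^{(n)}$ gives $g^{(n+1)} = (g^{(n)})'\in\ker V$. This completes the induction.

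The main obstacle is genuinely only the base case $n=1$: the statement that a weakly differentiable function has vanishing derivative a.e.\ on its zero set. This is classical but deserves a careful one-line justification rather than being asserted, since the whole induction rests on it; everything else is bookkeeping. I would present the base case via the density-point argument above (or a citation to a standard real-analysis reference), state the induction explicitly, and note at the outset that $\ker V = \{h : h=0 \text{ a.e.\ on } \supp$-type set of $V\}$ so that ``$h\in\ker V$'' is simply ``$h$ vanishes a.e.\ where $V\neq 0$'', which is what makes the reduction to the differentiation statement work.
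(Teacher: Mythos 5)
Your argument is correct. The paper itself gives no proof of this lemma --- it is quoted verbatim from \cite[Lemma~3.11]{FNWcnsa2024} --- so there is nothing internal to compare against; your reduction of $\ker V$ to ``vanishing a.e.\ on $\{V\neq 0\}$'', the Lebesgue-density argument showing that the derivative of an $H^1$ (hence absolutely continuous) function vanishes a.e.\ on its zero set, and the induction on $n$ together constitute a complete and standard proof, in the same spirit as the cited source. The only points deserving explicit mention in a write-up are that one works with the absolutely continuous representative of $g$ (so that $\{g=0\}\supseteq\Omega$ holds only up to a null set, which is harmless for the a.e.\ conclusion) and that the difference quotients along the sequence $h_k\to 0$ converge to $g'(x)$ precisely because the full derivative exists at $x$.
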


\begin{theorem}
   \label{thm:EV}
   Let $V$ be a multiplication operator by a non-trivial non-negative function $V(\cdot)\in L_\infty(0,1)$ and assume that 
   $\dim W=p$ where $W$ is the space from Lemma~\ref{lem:W}.
   If 
   \begin{equation*}
      E_V := \left\{ x\in [0,1] : V(x) > 0 \right\}
   \end{equation*}
   has an interior point, 
   then $\widehat A$ has at most $p$ real eigenvalues counted with multiplicities.
\end{theorem}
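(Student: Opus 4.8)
The plan is to rule out that $\widehat A$ has $p+1$ linearly independent eigenfunctions with real eigenvalues. By Lemma~\ref{lem:onlyEigValues} and Proposition~\ref{prop:CritForCNSA}, the spectrum of $\widehat A$ is discrete, $H_{sa}(\widehat A)$ is the closed linear span of the eigenfunctions of $\widehat A$ with real eigenvalues, and $\widehat A|_{H_{sa}(\widehat A)}$ is selfadjoint; consequently a generalised eigenvector at a real eigenvalue would already lie in $H_{sa}(\widehat A)$, so there are no Jordan chains at real eigenvalues and the total multiplicity of the real eigenvalues equals $\dim H_{sa}(\widehat A)$. So assume, for a contradiction, that $\phi_1,\dots,\phi_{p+1}\in\mD(\widehat A)$ are linearly independent with $\widehat A\phi_j=\lambda_j\phi_j$, $\lambda_j\in\R$.

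The first step is to put each $\phi_j$ into the normal form provided by Lemma~\ref{lem:W}. Since $\widehat A-\widehat A^*$ is skew-symmetric on $\mD(\widehat A)$ and $\frac{1}{2\I}(\widehat A-\widehat A^*)\ge 0$ there (because $\widehat A$ is dissipative), the Cauchy--Schwarz inequality for the non-negative form $(f,g)\mapsto\frac{1}{2\I}\scalar{f}{(\widehat A-\widehat A^*)g}$ together with $\im\scalar{\phi_j}{\widehat A\phi_j}=\im(\lambda_j\|\phi_j\|^2)=0$ gives $(\widehat A-\widehat A^*)\phi_j=0$, i.e.\ $\phi_j\in\Hsym(\widehat A)$. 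By Proposition~\ref{thm:widehatAadjoint} and Lemma~\ref{lem:W} we may therefore write $\phi_j=g_j+w_j$ with $g_j\in\mD(\widetilde S)\cap\ker V$ and $w_j\in W$, where $\dim W=p$. Hence $w_1,\dots,w_{p+1}$ are linearly dependent, say $\sum_j c_j w_j=0$ with $(c_j)\ne 0$, and then $f:=\sum_j c_j\phi_j=\sum_j c_j g_j$ is a nonzero element of $\mD(\widetilde S)\cap\ker V$. Since $V$ is multiplication by a function which is a.e.\ strictly positive on a nonempty open interval $(a,b)\subseteq E_V$, and $f\in\ker V$, we get $f\equiv 0$ a.e.\ on $(a,b)$.

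The second step uses the eigenfunction structure of $f$. Grouping the $\phi_j$ by the value of their eigenvalue, write $f=\sum_{k=1}^{K}\eta_k$ with $\eta_k:=\sum_{j:\lambda_j=\mu_k}c_j\phi_j\in\ker(\widehat A-\mu_k)$ for pairwise distinct reals $\mu_1,\dots,\mu_K$; not all $\eta_k$ are zero since $f\ne 0$ and the $\phi_j$ are linearly independent. Each $\eta_k$ lies in $\Hsym(\widehat A)\subseteq\mD(\widehat A^*)$, and $\widehat A^*=S^*-\I V$ on $\mD(\widehat A^*)$, so $\mu_k\eta_k=\widehat A\eta_k=\widehat A^*\eta_k=-\eta_k''-\I V\eta_k$; thus $-\eta_k''=(\mu_k+\I V)\eta_k$ on $(0,1)$, with $\eta_k\in H^2(0,1)$. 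Setting $\Sigma_m:=\sum_k\mu_k^m\eta_k$ for $m\ge 0$, these identities give $-\Sigma_m''=\Sigma_{m+1}+\I V\Sigma_m$. As $\Sigma_0=f$ vanishes a.e.\ on $(a,b)$, induction on $m$ shows $\Sigma_m\equiv 0$ a.e.\ on $(a,b)$ for all $m\ge 0$ (once $\Sigma_m$ vanishes there, so do $\Sigma_m''$ and $V\Sigma_m$). Evaluating $\Sigma_0,\dots,\Sigma_{K-1}$ at a point of $(a,b)$ and inverting the Vandermonde matrix of the distinct $\mu_k$ yields $\eta_k\equiv 0$ on $(a,b)$ for every $k$. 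Finally, $\eta_k$ solves a linear second order equation with $L^\infty$ coefficients and vanishes together with $\eta_k'$ at a point of $(a,b)$, so the one-dimensional unique continuation principle (a Gronwall estimate for the first order system $(\eta_k,\eta_k')'=(\eta_k',-(\mu_k+\I V)\eta_k)$) forces $\eta_k\equiv 0$ on $(0,1)$ for all $k$. Hence $f=0$, contradicting $f\ne 0$. This proves that $\widehat A$ has at most $p$ real eigenvalues counted with multiplicities, and $\dim H_{sa}(\widehat A)\le p$ follows from the first paragraph.

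I expect the step requiring the most care to be the reduction of a real eigenfunction to the form $g_j+w_j$ with $g_j\in\ker V$: it rests on the identification of $\Hsym(\widehat A)$ with $(\mD(\widetilde S)\cap\ker V)\dot+ W$ from Proposition~\ref{thm:widehatAadjoint} and Lemma~\ref{lem:W}, and on the fact that real eigenfunctions actually lie in $\Hsym(\widehat A)$. In contrast to Theorem~\ref{thm:dimleq1}, no complete non-selfadjointness of $\widetilde S$ is needed — that role is taken over by the unique continuation argument, which is exactly where the hypothesis that $E_V$ has an interior point enters. The remaining ingredients, the Vandermonde separation of the eigencomponents on $(a,b)$ and the classical uniqueness theorem for second order ODEs, are routine.
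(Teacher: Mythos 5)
Your proposal is correct and follows essentially the same route as the paper's proof: real eigenfunctions lie in $\Hsym(\widehat A)$, the decomposition $\Hsym(\widehat A)=(\mD(\widetilde S)\cap\ker V)\dot+W$ with $\dim W=p$ forces a nonzero combination of $p+1$ independent eigenfunctions into $\ker V$, and then vanishing on an interval of $E_V$ plus repeated differentiation, a Vandermonde inversion, and one-dimensional unique continuation yield a contradiction. Your version is somewhat tidier in that the $\Sigma_m$-recursion and the grouping into eigenspaces $\eta_k$ treat arbitrary $p$ and repeated eigenvalues uniformly, where the paper argues by cases $p=0,1,2$ and splits on coinciding eigenvalues, but the underlying ideas are identical.
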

\begin{proof}
   Since $\dim (\mD(\widehat A)/\mD(S)) \le 2$, the possible values for $p$ are $p=0, 1, 2$.
   
   By assumption on $V$, there exists an interval $(a,b)\subseteq (0,1)$ such that $V(x)>0$ for all $x\in (a,b)$.
   If $\phi$ is an eigenfunction of $\widehat A$ with a real eigenvalue $\lambda$, then $\phi\in H_{sa}(\widehat A)$ and therefore 
   $-\phi'' -\I V\phi= \widehat A^* \phi = \widehat A \phi = \lambda\phi$.
   Therefore, by the uniqueness of solutions of the differential equation $-f''-\I V f = \lambda f$, we have the following implication for such $\phi$:
   \begin{equation}
       \label{eq:NullAufIntervall}
       \phi(x) = 0 \quad x\in (a,b)
       \qquad\implies\qquad
       \phi(x) = 0 \quad x\in (0,1).
   \end{equation}
   
   If $p=0$, then $\Hsym(\widehat A) = \mD(\widetilde S)\cap \ker V$.
   If $\widehat A$ had a real eigenvalue $\lambda$ with eigenfunction $\phi$, then $\phi\in H_{sa}(\widehat A)\cap\mD(\widehat A) \subseteq \Hsym(\widehat A) \subseteq \ker V$, hence $\phi(x) = 0$ for all $x\in(a,b)$. 
   Therefore $\phi=0$ by \eqref{eq:NullAufIntervall}.

   The case $p=1$ is analogous to the case $p=2$, but a bit simpler, so it is omitted here.
   
   Now let us assume that $p=2$. 
   By contradiction, assume that $\widehat A$ has a set of linearly independent eigenfunctions $\phi_1,\, \phi_2,\,  \phi_{3}$ with real eigenvalues $\lambda_1,\, \lambda_2,\, \lambda_{3}$.
    Now, since $p=2$, there exist $\alpha_1,\alpha_2,\alpha_3$ not all equal to zero such that $\eta:=\alpha_1\phi_1+\alpha_2\phi_2+\alpha_3\phi_3\in\mD(\widetilde{S})\cap\ker(V)$. As above, it follows that 
   $-\phi_j''-\I V \phi_j = \lambda_j \phi_j$, $j=1,2,3$.
   Note that at most two eigenvalues can be equal because $S$ has no eigenvalues and $\widehat A$ is a two-dimensional extension of $S$.
   Let us first assume that $\lambda_1=\lambda_2\neq \lambda_3$.
   Then we set $\widetilde\phi = \alpha_1\phi_1 + \alpha_2\phi_2$.
   Note that $\widetilde\phi$ is an eigenfunction of $\widehat A$ with the real eigenvalue $\lambda_1$.
   Since $\eta\in \ker V$, we have $\eta(x)=\widetilde \phi(x) + \alpha_3\phi_3(x)  = 0$ for all $x\in (a,b)$ and moreover $-\eta'' = \widetilde{S}\eta = \widehat A \eta = \lambda_1\widetilde \phi + \lambda_3 \alpha_3\phi_3$.
   Therefore, 
   \begin{equation}
       0 =  \lambda_1 \eta(x) - \eta''(x)
       =  (\lambda_1 - \lambda_3)\alpha_3\phi_3(x), 
       \qquad
       x\in (a,b).
   \end{equation}
   If $\alpha_3 = 0$, then $\widetilde \phi(x) = 0$ for all $x\in(a,b)$, hence in $(0,1)$ by \eqref{eq:NullAufIntervall}. 
   But then $\phi_1$ and $\phi_2$ are linearly dependent, in contradiction to our assumption.
   If on the other hand $\alpha_3\neq 0$, then 
   $\phi_3(x) = 0$ on $(a,b)$, hence on $(0,1)$ by \eqref{eq:NullAufIntervall}. 

   If the eigenvalues  $\lambda_1, \lambda_2, \lambda_3$ are pairwise distinct, then similar as above, we obtain that
   \begin{align*}
       \begin{aligned}
          0 &= \eta(x) 
          = \alpha_1\phi_1(x) + \alpha_2\phi_2(x) + \alpha_3\phi_3(x), \\
          \quad\text{and}\quad
          0 &= -\eta''(x) = \widehat A^*\eta(x) 
          = \alpha_1\lambda_1 \phi_1(x) + \alpha_2 \lambda_2 \phi_2(x) + \alpha_3\lambda_3 \phi_3 (x)
          \\
          \quad\text{and}\quad
          0 &= \eta^{(4)}(x) = \left(\widehat A^*\right)^2\eta(x)
          = \alpha_1\lambda_1^2 \phi_1(x) + \alpha_2 \lambda_2^2 \phi_2(x) + \alpha_3\lambda_3^2 \phi_3 (x).
       \end{aligned}
       \qquad x\in(a,b).
   \end{align*}
   Since the $\lambda_j$ are pairwise different and the Vandermonde matrix $(\lambda_j^k)_{k,j}$ is invertible, 
   it follows that $\alpha_j\phi_j(x) = 0$ for all $x\in(a,b)$ and $j=1,2,3$.
   Since not all $\alpha_j$ can vanish, there exists at least one $j_0$ such that $\phi_{j_0}(x) = 0$ on $(a,b)$, hence on $(0,1)$ contradicting the fact that the $\phi_j$ are eigenfunctions of $\widehat A$.
\end{proof}
\bigskip
\bigskip

In the next sections we will discuss in detail some of the dissipative extensions of $S$ and of $A=S+\I V$ which illustrate the different situations that can arise for complete non-selfadjointness depending on the chosen boundary conditions and the value of $\rk(\im BC^*)$.

\subsection{At most one-dimensional dissipative extensions: $\rk(\im (BC^*) ) \le 1$} 
\label{subsec:5:2}

We will discuss in detail the case \ref{item:caseIIS} from Theorem~\ref{thm:MaxDisRestS}.
The cases \ref{item:caseIIIS} \ref{item:caseIVS} can be analysed similarly.

Let $\widehat S$ be the operator from case~\ref{item:caseIIS} in Theorem~\ref{thm:MaxDisRestS}.
If we set $b:= c_{22}$ and $c:= c_{12}$, then 
$B= \begin{pmatrix}
   1 & -\overline b^{-1} \\ 0 & 0      
\end{pmatrix}
$
and 
$C= \begin{pmatrix}
   0 & c \\ 1 & b
\end{pmatrix}.
$
Recall that $b\neq 0$ and $\im(\overline{b} c) \ge 0$.

The symmetric part of $\widetilde S$ is
\begin{align*}
   \widetilde S:\qquad \mD(\widetilde S) = \Hsym(\widehat S),\quad \widetilde S f = S^* = -f''.
\end{align*}

Clearly,  
$\widetilde S \subseteq \widehat S \subseteq \widetilde S^*$ and 
$\widetilde S \subseteq \widehat S^* \subseteq \widetilde S^*$.
It is easy to check that 
\begin{align*}
   \mD(\widetilde S) &= 
   \left\{ f\in H^2(0,1) : \
   \begin{aligned}
   f(1) - \overline{b} f(0) &= \overline{b} c f'(1) \\
   f(1) - \overline{b} f(0) &= b\overline{c} f'(1) \\
   f'(0) - b f'(1) &= 0
   \end{aligned}
   \right\}
   = 
   \left\{ f\in H^2(0,1) : \
   \begin{aligned}
   f(1) - \overline{b} f(0) &= 0 \\
   f'(0) = f'(1) &= 0
   \end{aligned}
   \right\}, 
   \\[1ex]
   \mD(\widetilde S^*) &= 
   \left\{ f\in H^2(0,1) : \ f'(0) - b f'(1) = 0 \right\},
   \\[1ex]
   \mD(\widehat S) &= 
   \left\{ f\in H^2(0,1) : \
   \begin{aligned}
   f(1) - \overline{b} f(0) &= \overline{b} c f'(1) \\
   f'(0) - b f'(1) &= 0
   \end{aligned}
   \right\}
   =
   \left\{ f\in \mD(\widetilde S^*) : \
   f(1) - \overline{b} f(0) = \overline{b} c f'(1)
   \right\}, 
   \\[1ex]
   \mD(\widehat S^*) &= 
   \left\{ f\in H^2(0,1) : \
   \begin{aligned}
   f(1) - \overline{b} f(0) &= b\overline{c} f'(1) \\
   f'(0) - b f'(1) &= 0
   \end{aligned}
   \right\}
   =
   \left\{ f\in \mD(\widetilde S^*) : \
   f(1) - \overline{b} f(0) = b\overline{c} f'(1) 
   \right\},
   \\[1ex]
\end{align*}
If $\im \overline{b}c = 0$, then $\widehat S$ is selfadjoint and $\Hsym(\widehat S) = \mD(\widehat S)$.

\noindent
If $\im \overline{b} c> 0$, then $\Hsym$ is a one-dimensional extension of the domain $\mD(S)$ of the minimal operator.

Next we want to find a boundary triple for $\widetilde S^*$.

\begin{lemma}
   Assume $\im c\overline{b} > 0$.
   Then $(\wGamma_0, \wGamma_1, \C)$ is a boundary triple for $\widetilde S^*$ where 
   \begin{align}
      \label{eq:wGammaBC}
      \begin{aligned}
	 \wGamma_0 &:\mD(\widetilde S^*)\to \C,\ \wGamma_0 f = \overline{b} f(0) - f(1), \\
	 \wGamma_1 &:\mD(\widetilde S^*)\to \C,\ \wGamma_1 f = f'(1).
      \end{aligned}
   \end{align}
   In particular, all dissipative extensions of $\widetilde S$ are given by 
   \begin{equation*}
      \widetilde S_\alpha: \qquad
      \mD(\widetilde S_\alpha) =
      \left\{ f\in \mD(S^*) : \wGamma_0 f = \alpha \wGamma_1 \right\},
      \quad \widetilde S_\alpha f = S^* f = -f''
   \end{equation*}
   for $\im \alpha \ge 0$.
   The extension $\widetilde S_\alpha$ is selfadjoint if and only if $\alpha \in \R\cup \{\infty\}$, with the usual convention that $\alpha = \infty$ corresponds to the condition $\wGamma_1 f = 0$.

   The extension $\widehat S$ corresponds to $\alpha = \overline{b} c$, while 
   the extension $\widehat S^*$ corresponds to $\alpha = b\overline{c}$.
\end{lemma}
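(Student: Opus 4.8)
The plan is to verify the three defining properties of a boundary triple for $\widetilde S^*$ directly, and then to read off the classification of dissipative extensions and the identification of $\widehat S$, $\widehat S^*$ from the scalar ($\ell=1$) case of the framework of Section~\ref{sec:dissipativeoperators}. Since $\widetilde S$ is a one-dimensional symmetric restriction of $S^*$ with equal defect indices $\eta_\pm(\widetilde S)=1$, the space $\C$ has the correct dimension, so it remains to check: the abstract Green's identity, surjectivity of $(\wGamma_0,\wGamma_1)\colon\mD(\widetilde S^*)\to\C^2$, and $\ker(\wGamma_0,\wGamma_1)=\mD(\widetilde S)$.

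For Green's identity I would start from the integration-by-parts formula already recorded for the interval,
\[
   \scalar{f}{S^*g} - \scalar{S^*f}{g}
   = \overline{f(0)}\,g'(0) - \overline{f(1)}\,g'(1) + \overline{f'(1)}\,g(1) - \overline{f'(0)}\,g(0),
\]
and specialize to $f,g\in\mD(\widetilde S^*)$, for which $f'(0)=b\,f'(1)$ and $g'(0)=b\,g'(1)$. Substituting these and using $b\,\overline{f(0)}-\overline{f(1)}=\overline{\,\overline b f(0)-f(1)\,}=\overline{\wGamma_0 f}$, the right-hand side regroups as $\overline{\wGamma_0 f}\,g'(1)-\overline{f'(1)}\,\bigl(\overline b g(0)-g(1)\bigr)=\scalar{\wGamma_0 f}{\wGamma_1 g}-\scalar{\wGamma_1 f}{\wGamma_0 g}$, which is the required identity. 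For surjectivity, given $(p,q)\in\C^2$ I would take the Hermite-interpolating cubic $f$ with $f'(1)=q$, $f'(0)=bq$ (so that $f\in\mD(\widetilde S^*)$), $f(1)=0$ and $f(0)=\overline b^{-1}p$ (admissible since $b\neq0$); then $\wGamma_1 f=q$ and $\wGamma_0 f=p$. Finally, if $f\in\mD(\widetilde S^*)$ satisfies $\wGamma_0 f=\wGamma_1 f=0$, then $f'(1)=0$, hence $f'(0)=b\,f'(1)=0$ and $\overline b f(0)=f(1)$; comparing with the explicit description of $\mD(\widetilde S)$ obtained in this subsection, this is exactly $\mD(\widetilde S)$. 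Thus $(\wGamma_0,\wGamma_1,\C)$ is a boundary triple for $\widetilde S^*$.

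The ``in particular'' statements are then consequences of the general theory. By the scalar case of Theorem~\ref{thm:MaxDissCritBT} applied to $\widetilde S$ with $V=0$, every maximally dissipative extension of $\widetilde S$ has the form $\widetilde S_{\wB,\wC}$ with scalars $\wB,\wC$, $(\wB,\wC)\neq(0,0)$, $\im(\wB\wC^*)\ge0$; since two such pairs define the same operator exactly when proportional, the extensions are parametrized by the single quantity $\alpha$ read off from $\wB,\wC$ (with $\alpha=\infty$, i.e.\ $\wGamma_1 f=0$, corresponding to $\wB=0$), and $\widetilde S_\alpha$ is selfadjoint iff $\im(\wB\wC^*)=0$, i.e.\ $\alpha\in\R\cup\{\infty\}$. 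To identify $\widehat S$ and $\widehat S^*$ it then suffices to rewrite their boundary conditions, already computed in this subsection, in terms of $\wGamma_0,\wGamma_1$: the condition $f(1)-\overline b f(0)=\overline b c\,f'(1)$ defining $\mD(\widehat S)$ takes the form $\wGamma_0 f=\alpha\wGamma_1 f$, and reading off $\alpha$ yields $\widehat S=\widetilde S_{\overline b c}$; similarly $f(1)-\overline b f(0)=b\overline c\,f'(1)$ yields $\widehat S^*=\widetilde S_{b\overline c}$, consistent with $b\overline c=\overline{\overline b c}$ since $\widehat S^*$ must be the adjoint of $\widehat S$.

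The step I expect to require the most care is the bookkeeping of signs: keeping track of exactly which boundary terms survive after imposing $f'(0)=b f'(1)$, and matching the direction of the inequality $\im(\wB\wC^*)\ge0$ with the half-plane $\im\alpha\ge0$ in which the parameter is forced to lie, so that the dissipativity criterion and the parameter values assigned to $\widehat S$ and $\widehat S^*$ come out with the correct signs. The rest is routine verification.
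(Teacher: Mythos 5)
Your verification that $(\wGamma_0,\wGamma_1,\C)$ is a boundary triple for $\widetilde S^*$ is correct and follows the same route as the paper: the paper's proof consists exactly of the Green's identity computation after substituting $f'(0)=bf'(1)$ and $g'(0)=bg'(1)$, dismisses surjectivity as clear, and omits the kernel check; you supply the latter two correctly (the Hermite-cubic construction and the comparison with the explicit description of $\mD(\widetilde S)$), so for the part the paper actually proves your argument matches and is, if anything, more complete.

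The gap is in the ``in particular'' statements, and it sits precisely at the step you flag as requiring care but then do not carry out. Writing $\wGamma_0 f=\alpha\wGamma_1 f$ as $\wB\wGamma_0 f=\wC\wGamma_1 f$ with $\wB=1$, $\wC=\alpha$, Theorem~\ref{thm:MaxDissCritBT} gives dissipativity iff $\im(\wB\wC^*)=\im(\overline\alpha)=-\im\alpha\ge0$, i.e.\ $\im\alpha\le0$, not $\im\alpha\ge0$. Likewise, since $\wGamma_0 f=\overline b f(0)-f(1)=-\bigl(f(1)-\overline b f(0)\bigr)$, the boundary condition $f(1)-\overline b f(0)=\overline b c\,f'(1)$ defining $\mD(\widehat S)$ reads $\wGamma_0 f=-\overline b c\,\wGamma_1 f$, so ``reading off $\alpha$'' yields $-\overline b c$ rather than $\overline b c$. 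A direct check confirms this is the consistent sign: for $u\in\mD(\widehat S)$ with $(u(0),u(1),u'(0),u'(1))=(0,c\overline b,b,1)$ one has $\im\scalar{u}{S^*u}=\im(c\overline b)>0$, while $\wGamma_0 u=-c\overline b$ and $\wGamma_1 u=1$. Your two asserted identifications (the half-plane $\im\alpha\ge0$ and $\alpha=\overline b c$ for $\widehat S$) are thus each off by a sign relative to the parametrization $\wGamma_0 f=\alpha\wGamma_1 f$ you adopt; they are consistent with each other and with the lemma as printed, but they do not follow from the computation you set up. To close this you must either modify the parametrization (or the sign of one of $\wGamma_0,\wGamma_1$) or arrive at $\im\alpha\le0$ and $\alpha=-\overline b c$; as written, the ``careful bookkeeping'' you defer is exactly where the claimed signs fail. (The paper's own proof does not address the ``in particular'' part at all, so there is nothing there to compare against, but your proposal does not establish that part of the statement.)
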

\begin{proof}
   Clearly, $(\wGamma_0,\, \wGamma_1): \mD(\widetilde S^*)\to \C^2$ is surjective. 
   Moreover, for all $f,g\in \mD(\widetilde S)$ we have 
   \begin{align*}
      \scalar{\widetilde S f}{g} - \scalar{f}{\widetilde S^* g}
      &= -\overline{f'(1)}g(1) + \overline{f'(0)}g(0) + \overline{f(1)}g'(1) - \overline{f(0)}g'(0)
      \\
      &= -\overline{f'(1)}g(1) + \overline{b f'(1)}g(0) + \overline{ f(1) }g'(1) - \overline{f(0)} bg'(1)
      \\
      &= 
      \overline{f'(1)} \big[ \overline{b} g(0) - g(1) \big]
      -
      \big[ \overline{ \overline{b} f(0) - f(1) }\big] g'(1)
      = \overline{\wGamma_1 f}\wGamma_0 g - \overline{\wGamma_0 f}\wGamma_1 g.
      \qedhere
   \end{align*}
\end{proof}
\bigskip

In conclusion, we can write the domain of $\widehat S$ as
\begin{align}
   \label{eq:mDwidehatS}
   \mD(\widehat S)
   &= \{ f\in \mD(\widetilde S^*) : \wGamma_1 f = \overline b c \wGamma_1 f \}
   = \mD(\widetilde S) \dot + \mU
\end{align}
with 
$\mU = \{0\}$ if $\im(c\overline b) = 0$,
and
$\mU = \gen\{ u \} \subseteq \mD(\widetilde S^*)//\mD(\widetilde S^*)$ with $\dim\mU = 1$, where we $u$ is such that $(u(0), u(1), u'(0), u'(1)) = (0, c\overline b, b, 1)$.
\smallskip

The question of complete non-selfadjointness of $\widetilde S$ is settled in the next lemma.
\begin{lemma}
   \label{lem:widetildeScnsa}
   \begin{enumerate}[label={\upshape(\roman*)}]

      \item 
      If $\im \overline b c =0$, then $\widetilde S$ is selfadjoint, hence $H_{sa}(\widetilde S) = L_2(0,1)$.

      \item 
      If $\im \overline b c > 0$ and $b = \pm 1$, then $\widetilde S$ is not selfadjoint and not cnsa.

      \item 
      If $\im \overline b c > 0$ and $b \neq \pm 1$, then $\widetilde S$ cnsa.

   \end{enumerate}
\end{lemma}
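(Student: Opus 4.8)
\medskip

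The plan is to handle the three parts in turn, relying entirely on the explicit boundary conditions for $\mD(\widetilde S)$ and $\mD(\widetilde S^*)$ displayed above. Part (i) is immediate: $\im\overline b c =0$ forces $\rk(\im(BC^*))=0$, so Theorem~\ref{thm:SymSpaceBT} gives $\Hsym(\widehat S)=\mD(\widehat S)$ and $\widehat S$ is selfadjoint; hence $\widetilde S = S^*|_{\Hsym(\widehat S)}=\widehat S$ is selfadjoint and $H_{sa}(\widetilde S)=L_2(0,1)$. When $\im\overline b c>0$ we instead have $\rk(\im(BC^*))=1$, so by Theorem~\ref{thm:SymSpaceBT} $\dim(\mD(\widetilde S^*)/\mD(\widetilde S))=2$, that is, $\widetilde S$ is a closed symmetric operator ($\Hsym(\widehat S)=\mD(\widehat S)\cap\mD(\widehat S^*)$ is closed in the graph norm of $S^*$) with defect indices $(1,1)$; having nonzero defect indices it is not selfadjoint. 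This settles the ``not selfadjoint'' assertions in both (ii) and (iii).

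For the ``not cnsa'' part of (ii), assume $b=\pm1$. I would read off from $\mD(\widetilde S)=\{f\in H^2(0,1): f(1)=\overline b f(0),\ f'(0)=f'(1)=0\}$ that $\mD(\widetilde S)\subseteq\mD(S_{p})$ when $b=1$ and $\mD(\widetilde S)\subseteq\mD(S_{ap})$ when $b=-1$, where $S_{p}$, $S_{ap}$ are the periodic and antiperiodic Laplacians of Lemma~\ref{lem:antiperiodic}; indeed, the condition $f'(0)=0$ together with $f'(0)=f'(1)$ (resp. $f'(0)=-f'(1)$) forces $f'(1)=0$, so $\mD(\widetilde S)$ is exactly $\mD(S_{p})$ (resp. $\mD(S_{ap})$) intersected with $\{f'(0)=0\}$, hence a one-dimensional restriction of it. Since $\widetilde S$ and $S_{p/ap}$ both act as $f\mapsto -f''$, the operator $\widetilde S$ is a one-dimensional symmetric restriction of $S_{p}$ (resp. $S_{ap}$), and the last sentence of Lemma~\ref{lem:antiperiodic} states that no such restriction can be cnsa.

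For (iii), with $\im\overline b c>0$ and $b\neq\pm1$, I would invoke the eigenvalue criterion of Proposition~\ref{prop:BHdS348}. The Neumann realization $\widetilde S_\infty$ (the extension with $\wGamma_1 f=f'(1)=0$, i.e. with domain $\{f\in H^2(0,1): f'(0)=f'(1)=0\}$) is a selfadjoint extension of $S$, hence by Lemma~\ref{lem:onlyEigValues}(i) its spectrum is purely discrete, so $\sigma(\widetilde S_\infty)=\sigma_p(\widetilde S_\infty)$. Proposition~\ref{prop:BHdS348} then reduces the claim to showing $\sigma_p(\widetilde S)=\emptyset$. If $\widetilde S f=\lambda f$ with $0\neq f\in\mD(\widetilde S)$, then $-f''=\lambda f$ on $(0,1)$ with $f'(0)=f'(1)=0$; solving the ODE and using $f'(0)=0$ kills the sine component, and $f'(1)=0$ then forces $\lambda=(n\pi)^2$ and $f=A\cos(n\pi x)$ for some integer $n\ge0$ and $A\neq0$ (the case $\lambda=0$, giving $f$ constant, is the case $n=0$). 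The remaining boundary condition $f(1)=\overline b f(0)$ becomes $(-1)^n=\overline b$, which is impossible since $b\neq\pm1$. Hence $\sigma_p(\widetilde S)=\emptyset$ and $\widetilde S$ is cnsa.

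No single step here is a genuine obstacle: the argument rests on the explicit domains already computed, and the ODE computation in (iii) is routine. The only points deserving a little care are the defect-index bookkeeping for $\widetilde S$ (needed so that Lemma~\ref{lem:antiperiodic}, via Lemma~\ref{lem:doubleEV}, and Proposition~\ref{prop:BHdS348} apply) and the verification that $\mD(\widetilde S)$ is genuinely a codimension-one restriction of $\mD(S_{p})$ or $\mD(S_{ap})$ in case (ii).
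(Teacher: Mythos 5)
Your proposal is correct, and for parts (i) and (ii) it takes essentially the same route as the paper: part (i) is the observation that $\im(\overline b c)=0$ makes $\rk(\im(BC^*))=0$ so that $\Hsym(\widehat S)=\mD(\widehat S)$ (the paper simply calls this ``clear''), and part (ii) is exactly the paper's argument that $\widetilde S$ is a one-dimensional symmetric restriction of the periodic (resp.\ antiperiodic) Laplacian of Lemma~\ref{lem:antiperiodic}, whose double eigenvalues rule out complete non-selfadjointness via Lemma~\ref{lem:doubleEV}. Your bookkeeping that $\dim(\Hsym(\widehat S)/\mD(S))=\eta-\ell=1$, so that $\widetilde S$ has defect indices $(1,1)$, is the right way to make the appeal to Lemma~\ref{lem:doubleEV} legitimate.

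The interesting difference is part (iii): the paper's printed proof stops after the second claim and never actually argues case (iii), so your argument is genuinely additional content rather than a restatement. Your route --- note that the Neumann Laplacian is a selfadjoint extension of $\widetilde S$ with purely discrete spectrum, invoke Proposition~\ref{prop:BHdS348} to reduce complete non-selfadjointness to $\sigma_p(\widetilde S)=\emptyset$, and then solve $-f''=\lambda f$ with $f'(0)=f'(1)=0$ explicitly to see that any eigenfunction is $A\cos(n\pi x)$, whose remaining boundary condition $f(1)=\overline b f(0)$ forces $\overline b=(-1)^n$ --- is sound (eigenvalues of a symmetric operator are automatically real, and the closedness and defect-index hypotheses of Proposition~\ref{prop:BHdS348} are met, as you note). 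This is a clean, self-contained proof of the one assertion the paper leaves unproved, and it is consistent with the spirit of the surrounding results (compare Lemma~\ref{lem:onlyEigValues} and Proposition~\ref{prop:CritForCNSA}). No gaps.
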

\begin{proof}
   The first claim is clear. 
   For the second claim, we observe that either the periodic or the antiperiodic Laplacian from Lemma~\ref{lem:antiperiodic} is a selfadjoint extension of $\widetilde S$, hence it is not cnsa.
   More precisely, $\overline{\gen}\{ \cos(2n\pi \cdot) : n\in\N_0\} \subseteq H_{sa}(\widetilde S)$ if $b=1$ and 
   $\overline{\gen}\{ \sin(2n\pi \cdot) : n\in\N\} \subseteq H_{sa}(\widetilde S)$ if $b=-1$.
\end{proof}
\bigskip

Now let us return to the study of the dissipative extensions $\widehat A$ of $S$ which have domain $\mD(\widehat A) = \mD(\widehat S)$.
This means that they are extensions of $\widetilde S + \I V$ with the symmetric operator $\widetilde S$.
We already know how all such extensions look by Theorem~\ref{thm:MaxDisRestA}\ref{item:caseIIA}.
However here we want to use the boundary triple \eqref{eq:wGammaBC}.
\smallskip

Recall that the space $\mU$ in \eqref{eq:mDwidehatS} has $\dim\mU = 0$ if and only if $\alpha = \im(\overline{b}c) = 0$.
Otherwise
\begin{align*}
   \mD(\widehat A) 
   = \left\{ f\in \mD(\widetilde S^*) : 
   \wGamma_0 g = \overline{b} c \wGamma_1 g
   \right\}
   = \mD(\widetilde S) \dot + \gen\{u\}
\end{align*}
with $u\in\mD(\widetilde S^*)$ such that $(u(0), u(1), u'(0), u'(1)) = (0, c\overline{b}, b, 1)$.
In this case, $\widetilde S$ is symmetric but not selfadjoint.

Every $f\in \mD(\widehat S)$ can be written as 
$f = f_0 + f'(1)u = f_0 = (\wGamma_1 f)u$.
We also know that $Af = (S^* + \I V)f + f'(1)k$ for some $k\in\range V$ because $\dim( \mD(\widehat A) / \mD(\widetilde S) ) \ge 1$.

By \cite[Theorem 6.3]{nonproper2018}, a necessary and sufficient condition on $k$ for $\widehat A$ to be selfadjoint is that $\im \scalar{\widetilde S^* u}{u} \ge \frac{1}{4}  \| V^{-1/2} k \|^2$.
Since 
\begin{align*}
   \im \scalar{\widetilde S^* u}{u}  
   = \frac{1}{2\I } \big( \scalar{\widetilde S^* u}{u} - \scalar{u}{\widetilde S^* u} \big)
   = \frac{1}{2\I } \big( \overline{\wGamma_1 u} \wGamma_0 u - \overline{\wGamma_0 u} \wGamma_1 u \big)
   = \im( \overline{b} c ) |\wGamma_1 u |^2 
   = \im( \overline{b} c ).
\end{align*}
Therefore the condition on $k$ is 
\begin{equation*}
   \im \overline{b} c \ge \frac{1}{4}  \| V^{-1/2} k \|^2.
\end{equation*}
According to Theorem~\ref{thm:widehatAadjoint}
the adjoint operator of $\widehat A$ and its symmetric subspace are 
\begin{align*}
   \widehat A^*:\qquad
   \widehat A^*f = (S^* - \I V)f,
   \qquad
   \begin{aligned}[t]
      \mD(\widehat A^*) 
      &= \left\{ f\in \mD(\widetilde S^*) : 
      \wGamma_0 f = b\overline{c}\, \wGamma_1 f - \scalar{k}{f}
      \right\}
    \end{aligned}
\end{align*}
and
\begin{align*}
   \Hsym(\widehat A) 
   &= \left\{ f\in \mD(\widehat A) : 
   \Big( \im(\overline{b} c)  - \frac{1}{4} \|V^{-1/2}k \|^2 \Big) \wGamma_1 f = 0,\
   2\I V f = (\wGamma_1 f) k
   \right\}.
\end{align*}

In order to study selfadjoint subspaces of $\widehat A$
we have to distinguish between the \define{non-critical case} when $\im \overline{b}c > \frac{1}{4} \| V^{-1/2}k\|^2$
and the \define{critical case} when  $\im \overline{b}c = \frac{1}{4} \| V^{-1/2}k\|^2$.
\smallskip

In the non-critical case, the dissipativity arising purely from the boundary conditions (i.e., from the subspace $\mU$), is stronger than the effect resulting from the non-local action of $\widetilde A$ on $\mU$.
On the other hand, in the critical case, the dissipativity from the boundary condition balances exactly the effect resulting from the non-local action of $\widetilde A$ on $\mU$.

\begin{lemma}
   \label{lem:WBC}

   \begin{enumerate}[label={\upshape(\roman*)}]

      \item
      If $\im \overline{b}c > \frac{1}{4} \| V^{-1/2}k\|^2$, then $\Hsym(\widehat A)\subseteq \ker V$, in particular
      \begin{equation*}
	 \Hsym(\widehat A) = \mD(\widetilde S) \cap \ker V
      \end{equation*}
      and $\widehat A f = \widetilde S f$ for every $f\in \Hsym(\widehat A)$.

      \item
      If $\im \overline{b}c = \frac{1}{4} \| V^{-1/2}k\|^2$, then 
      \begin{equation*}
	 \Hsym(\widehat A) = ( \mD(\widetilde S) \cap \ker V ) \dot + W
      \end{equation*}
      where $W$ has dimension at most $1$ and $W\cap \mD(\widetilde S) = W \cap \ker V = \{0\}$.

   \end{enumerate}

\end{lemma}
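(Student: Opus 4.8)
The plan is to read both statements directly off the explicit description of $\Hsym(\widehat A)$ obtained above from Proposition~\ref{thm:widehatAadjoint}: namely that $\Hsym(\widehat A)$ consists of the $f\in\mD(\widehat A)$ with $\big(\im(\overline b c)-\tfrac14\|V^{-1/2}k\|^2\big)\wGamma_1 f = 0$ and $2\I Vf = (\wGamma_1 f)k$, combined with the relation $\wGamma_0 f = \overline b c\,\wGamma_1 f$ defining $\mD(\widehat A)$. Conceptually this is the one-dimensional instance of Lemma~\ref{lem:W} specialised to the boundary triple \eqref{eq:wGammaBC}, but a direct argument is short. Throughout I use the standing hypothesis $\im\overline b c>0$ of this subsection; in the critical case it forces $k\neq 0$, since then $\tfrac14\|V^{-1/2}k\|^2 = \im\overline b c>0$ gives $V^{-1/2}k\neq 0$.

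For (i), assume $\im\overline b c>\tfrac14\|V^{-1/2}k\|^2$, so that the scalar $\im(\overline b c)-\tfrac14\|V^{-1/2}k\|^2$ is strictly positive. Then for $f\in\Hsym(\widehat A)$ the first relation forces $\wGamma_1 f = 0$; feeding this into $2\I Vf = (\wGamma_1 f)k$ gives $Vf = 0$, and the boundary relation then gives $\wGamma_0 f = 0$, so $f\in\mD(\widetilde S)$. This shows $\Hsym(\widehat A)\subseteq\mD(\widetilde S)\cap\ker V$. For the reverse inclusion, if $f\in\mD(\widetilde S)\cap\ker V$ then $\wGamma_0 f = \wGamma_1 f = 0$ and $\scalar{k}{f}=0$ (as $k\in\range V^{1/2}\subseteq(\ker V)^\perp$), so $f\in\mD(\widehat A^*)$, and $\widehat A f = (S^*+\I V)f+(\wGamma_1 f)k = S^* f = \widehat A^* f$; hence $f\in\Hsym(\widehat A)$ and moreover $\widehat A f = \widetilde S f$, which also gives the asserted action formula.

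For (ii), assume $\im\overline b c = \tfrac14\|V^{-1/2}k\|^2$; then the first relation is vacuous and $\Hsym(\widehat A) = \{f\in\mD(\widehat A): 2\I Vf = (\wGamma_1 f)k\}$. I would now regard $\wGamma_1$ restricted to $\Hsym(\widehat A)$ as a linear functional with values in the one-dimensional space $\C$. Exactly as in (i), its kernel equals $\mD(\widetilde S)\cap\ker V$; since its range is at most one-dimensional, the quotient $\Hsym(\widehat A)/(\mD(\widetilde S)\cap\ker V)$ has dimension at most $1$, so a choice of complement $W$ gives $\Hsym(\widehat A) = (\mD(\widetilde S)\cap\ker V)\dot + W$ with $\dim W\le 1$. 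Finally, if $0\neq w\in W$ then $w$ is not in that kernel, so $\wGamma_1 w\neq 0$; this rules out $w\in\mD(\widetilde S)$, and since $k\neq 0$ it gives $Vw = \tfrac1{2\I}(\wGamma_1 w)k\neq 0$, so $w\notin\ker V$. Hence $W\cap\mD(\widetilde S) = W\cap\ker V = \{0\}$.

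The only genuinely delicate point --- the expected obstacle --- is this last step: triviality of $W\cap\ker V$ in the critical case rests on $k\neq 0$, which is precisely where the subsection-wide assumption $\im\overline b c>0$ enters. (If $\im\overline b c = 0$ one is in the selfadjoint situation $\widetilde S = \widehat S$, and the statement degenerates.)
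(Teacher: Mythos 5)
Your proof is correct and follows essentially the same route as the paper: the paper disposes of this lemma by citing the general Lemma~\ref{lem:W}, whose proof likewise reads the decomposition off the description of $\Hsym(\widehat A)$ from Proposition~\ref{thm:widehatAadjoint} and bounds $\dim W$ by the kernel of $\im(\wC^*\wB^{*-1})-\tfrac14 M_{\mathtt K}$ (here the scalar $\im(\overline bc)-\tfrac14\|V^{-1/2}k\|^2$). Your explicit verification that $W\cap\ker V=\{0\}$ hinges on $k\neq 0$ in the critical case is a welcome unpacking of the step the general lemma handles via the linear independence of $k_1,\dots,k_\ell$.
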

\begin{proof}
This is  Lemma~\ref{lem:W}.
\end{proof}

\begin{proposition}
   \label{prop:keineAhnungBC}
   \begin{enumerate}[label={\upshape(\roman*)}]

      \item
      If $G\subseteq \Hsym(\widehat A)\cap \ker V$ is a selfadjoint subspace for $\widehat A$, then $G\subseteq H_{sa}(\widetilde S)$.

      \item
      If $\im \overline{b}c > \frac{1}{4} \| V^{-1/2}k\|^2$, then 
      $H_{sa}(\widehat A)\subseteq H_{sa}(\widetilde S)$.

   \end{enumerate}
\end{proposition}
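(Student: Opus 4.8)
The plan is to recognize Proposition~\ref{prop:keineAhnungBC} as the case $\ell=1$ of the abstract Proposition~\ref{prop:keineAhnung}: here $\widetilde S$ is the symmetric part of $\widehat S$, the boundary triple is $(\wGamma_0,\wGamma_1,\C)$ of \eqref{eq:wGammaBC}, the matrix $M_{\mathtt K}$ reduces to the scalar $\|V^{-1/2}k\|^2$, and the non-critical condition $\im(\overline b c)>\tfrac14\|V^{-1/2}k\|^2$ of part~(ii) is exactly $\rk\big(\im(\wC^*\wB^{*-1})-\tfrac14 M_{\mathtt K}\big)=\ell$. Thus the statement is ``This is Proposition~\ref{prop:keineAhnung}.'' For convenience I would nevertheless spell out the short direct argument, which parallels the proof of Proposition~\ref{prop:resolventf}.

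For part~(i): let $G\subseteq\Hsym(\widehat A)\cap\ker V$ be a selfadjoint subspace of $\widehat A$. By \eqref{eq:ASymspaceVBT} we have $\Hsym(\widehat A)\cap\ker V=\mD(\widetilde S)\cap\ker V$, so $G\subseteq\mD(\widetilde S)\cap\ker V$; since $\widetilde S+\I V\subseteq\widehat A$ and $V$ vanishes on $G\cap\mD(\widehat A)$, the operators $\widehat A$ and $\widetilde S$ coincide on $G\cap\mD(\widehat A)$. As $G$ reduces $\widehat A$, the decomposition $\mD(\widehat A)=(\mD(\widehat A)\cap G)\oplus(\mD(\widehat A)\cap G^\perp)$ restricts to $\mD(\widetilde S)$ and $G$ is $\widetilde S$-invariant. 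The one remaining point is $\widetilde S$-invariance of $G^\perp$: for $f\in G^\perp\cap\mD(\widetilde S)$ one has $\widetilde S f=\widehat A f-\I V f-(\wGamma_1 f)k$, where $\widehat A f\in G^\perp$ because $G$ reduces $\widehat A$, while $Vf$ and $k$ lie in $\range(V^{1/2})\subseteq(\ker V)^\perp\subseteq G^\perp$ since $G\subseteq\ker V$. Hence $\widetilde S f\in G^\perp$, so $G$ reduces $\widetilde S$ and $\widetilde S|_G=\widehat A|_G$ is selfadjoint; thus $G\subseteq H_{sa}(\widetilde S)$.

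For part~(ii): in the non-critical case Lemma~\ref{lem:WBC}(i) sharpens \eqref{eq:ASymspaceVBT} to $\Hsym(\widehat A)=\mD(\widetilde S)\cap\ker V$. Put $G:=H_{sa}(\widehat A)$. Then $G\cap\mD(\widehat A)\subseteq\Hsym(\widehat A)\subseteq\ker V$, and since $G$ is a reducing subspace of $\widehat A$, $G\cap\mD(\widehat A)$ is dense in $G$; as $\ker V$ is closed, $G\subseteq\ker V$. The computation from part~(i) now applies to this $G$ — it used only $G\cap\mD(\widehat A)\subseteq\mD(\widetilde S)\cap\ker V$ together with $G\subseteq\ker V$ — and yields $H_{sa}(\widehat A)\subseteq H_{sa}(\widetilde S)$.

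I expect the only non-routine point to be this last step: one cannot invoke part~(i) verbatim with $G=H_{sa}(\widehat A)$, because $\Hsym(\widehat A)\cap\ker V$ is not closed and $H_{sa}(\widehat A)$ need not lie inside it; one must observe that the argument for~(i) really only requires $G\cap\mD(\widehat A)\subseteq\mD(\widetilde S)\cap\ker V$ and $G\subseteq\ker V$. As a more heavy-handed alternative — available here because $\widehat A$ has compact resolvent by Lemma~\ref{lem:onlyEigValues} — one may instead write $H_{sa}(\widehat A)$ as the closed linear span of its eigenfunctions with real eigenvalues, apply~(i) to each one-dimensional eigenspace (which is a selfadjoint subspace of $\widehat A$ contained in $\Hsym(\widehat A)\cap\ker V$), and pass to the closed span.
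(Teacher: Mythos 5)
Your proposal is correct and follows exactly the paper's route: the paper's entire proof of Proposition~\ref{prop:keineAhnungBC} is the single line ``This is Proposition~\ref{prop:keineAhnung}'', and your spelled-out argument reproduces the proof of that abstract proposition in the scalar case $\ell=1$. Your additional observation in part~(ii) --- that one should pass from $H_{sa}(\widehat A)\cap\mD(\widehat A)\subseteq\Hsym(\widehat A)\subseteq\ker V$ to $H_{sa}(\widehat A)\subseteq\ker V$ via density and closedness of $\ker V$ before rerunning the computation of part~(i) --- is a legitimate refinement of a point the paper treats somewhat loosely, but it does not change the approach.
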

\begin{proof}
This is Proposition~\ref{prop:keineAhnung}.
\end{proof}

\begin{proposition}
   \label{prop:resolventfBC}
   Assume that there exists $f\in H_{sa}(\widehat A)$ such that 
   $(\widehat A - \lambda)^{-1}f\in \mD(\widetilde S)\cap \ker V$ 
   for some $\lambda\in\rho(\widehat A)$
   and that 
   \begin{equation}
   \label{eq:resolventfBC}
      \widetilde S h\in \ker V
      \quad\text{for all }\quad
      h\in \mD(\widetilde S)\cap \ker V.
   \end{equation}
   Then $f\in H_{sa}(\widetilde S)$.
\end{proposition}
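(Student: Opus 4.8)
The plan is to deduce this directly from the general Proposition~\ref{prop:resolventf}. Indeed, in the situation of Section~\ref{subsec:5:2} the operator $\widetilde S$ is precisely the symmetric part of $\widehat S$ and $\widehat A$ is a maximally dissipative extension of $\widetilde S + \I V$; hypothesis \eqref{eq:resolventfBC} is verbatim the assumption $\widetilde S h\in\ker V$ for all $h\in\mD(\widetilde S)\cap\ker V$ of that proposition, and the remaining hypothesis on $f$ is also verbatim. Hence the conclusion $f\in H_{sa}(\widetilde S)$ is immediate. So the proof can be as short as ``This is Proposition~\ref{prop:resolventf}.''

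For the reader's convenience I would also indicate how the argument runs in this concrete one-dimensional setting ($\ell = 1$). By Theorem~\ref{thm:BHdS} it suffices to show $f\perp\ker(\widetilde S^* - \mu)$ for every $\mu\in\C\setminus\R$ with $|\mu|$ large. Put $g := (\widehat A-\lambda)^{-1}f$. Since $H_{sa}(\widehat A)$ reduces $\widehat A$, the resolvent identity gives $(\widehat A-\mu)^{-1}g\in H_{sa}(\widehat A)\cap\mD(\widehat A)\subseteq\Hsym(\widehat A)$ for $\mu\in\C_-\setminus\{\lambda\}$. By Lemma~\ref{lem:WBC} we may write $(\widehat A-\mu)^{-1}g = h+w$ with $h\in\mD(\widetilde S)\cap\ker V$ and $w\in W$, where $\dim W\le 1$ and $W\cap\ker V=\{0\}$. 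Applying $\widehat A-\mu$ yields
\[
   g = \widetilde S h - \mu h + \widetilde S^* w + \I V w + \mL w - \mu w ,
\]
and since $g$, $h$ and (by \eqref{eq:resolventfBC}) $\widetilde S h$ all lie in $\ker V$, it follows that $\widetilde S^* w + \I V w + \mL w - \mu w\in\ker V$. Pairing with $Vw$ and estimating gives
\[
   |\mu|\,\|V^{1/2}w\|^2 \le \|Vw\|\big(\|\widetilde S^* w\|+\|\mL w\|\big) + \|Vw\|^2 .
\]
Because $W$ is finite-dimensional with $W\cap\ker V=\{0\}$, there are constants $c>0$, $c'<\infty$ with $\|V^{1/2}w\|^2\ge c\|w\|^2$ and $\|Vw\|(\|\widetilde S^*w\|+\|\mL w\|)+\|Vw\|^2\le c'\|w\|^2$ on $W$; this forces $w=0$ as soon as $|\mu|>c'/c$. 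Then $(\widehat A-\mu)^{-1}g\in\mD(\widetilde S)\cap\ker V$, and since $\widehat A$ and $\widetilde S$ agree there, for $\eta_\mu\in\ker(\widetilde S^*-\overline\mu)$ one computes
\[
   \scalar{\eta_\mu}{f}
   = \scalar{\eta_\mu}{(\widehat A-\overline\mu)(\widehat A-\mu)^{-1}f}
   = \scalar{(\widetilde S^*-\mu)\eta_\mu}{(\widehat A-\mu)^{-1}f} = 0 .
\]
The case $\mu\in\C_+$ is identical with $\widehat A^*$ in place of $\widehat A$, using $\overline\mu\in\rho(\widehat A^*)$.

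The only genuinely delicate point is the uniform lower bound $\|V^{1/2}w\|\gtrsim\|w\|$ on $W$, which guarantees that $w$ is killed for large $|\mu|$; here it is essentially trivial because $\dim W\le 1$ (if $W=\gen\{w_0\}$ then $W\cap\ker V=\{0\}$ forces $Vw_0\neq 0$, hence $\|V^{1/2}w_0\|>0$), whereas in the general Proposition~\ref{prop:resolventf} it is exactly the place where finite-dimensionality of $W$ is used.
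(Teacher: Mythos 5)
Your proposal is correct and coincides with the paper's proof, which is literally the single line ``This is Proposition~\ref{prop:resolventf}.'' Your expanded sketch faithfully reproduces the argument of that general proposition (resolvent identity, the decomposition $h+w$ from Lemma~\ref{lem:WBC}, the coercivity estimate forcing $w=0$ for large $|\mu|$, and orthogonality to $\ker(\widetilde S^*-\overline\mu)$ via Theorem~\ref{thm:BHdS}), so nothing further is needed.
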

\begin{proof}
This is Proposition~\ref{prop:resolventf}.
\end{proof}
Note that the condition \eqref{eq:resolventfBC} is satisfied if for instance $V$ is a multiplication operator, see Lemma~\ref{lem:FNWLemma311}.
\smallskip

If in addition to the hypotheses in Proposition~\ref{prop:resolventfBC} we assume that $\widetilde S$ is cnsa, then we can conclude that the selfadjoint subspace of $\widehat A$ is at most one-dimensional.
(It cannot have a subspace contained in $\mD(\widetilde S)\cap\ker V$ because that would lead to a selfadjoint subspace of $\widetilde S$.)
Recall that criteria for $\widetilde S$ being cnsa are given in Lemma~\ref{lem:widetildeScnsa}.

\begin{theorem}
   \label{thm:dimleq1BC}
   Assume that $\widetilde S h\in \ker V$ for all $h\in \mD(\widetilde S)\cap \ker V$.
   In addition to the hypotheses of Proposition~\ref{prop:resolventfBC}, assume that $\widetilde S$ is cnsa. 
   Let $W$ as in Lemma~\ref{lem:W}.
   \begin{enumerate}[label={\upshape(\roman*)}]

      \item If $W = \{0 \}$, then $H_{sa}(\widehat A) = \{0\}$.
      \item If $\dim W = 1$, then $\dim( H_{sa}(\widehat A) ) \le 1$.

   \end{enumerate}
\end{theorem}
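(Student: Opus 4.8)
The plan is to derive Theorem~\ref{thm:dimleq1BC} directly from the general result Theorem~\ref{thm:dimleq1} applied to the present situation. The point is that everything has already been arranged: in this subsection $\widetilde S$ is the symmetric part of $\widehat S$ in the sense of Definition~\ref{def:symmetricpart}, the boundary triple $(\wGamma_0,\wGamma_1,\C)$ from \eqref{eq:wGammaBC} plays the role of $(\wGamma_0,\wGamma_1,\C^\ell)$ with $\ell = 1$, the matrices $\wB,\wC$ reduce to the scalars $1$ and $\overline{b}c$ (so that $\wC^*\wB^{*-1} = \overline{\overline b c} = b\overline c$ and, since these are scalars, $\im(\wC^*\wB^{*-1}) = \im(b\overline c) = \im(\overline b c)$), and $M_{\mathtt K}$ reduces to the scalar $\|V^{-1/2}k\|^2$. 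Thus $\nu = \ell - \rk(\im(\wC^*\wB^{*-1}) - \tfrac14 M_{\mathtt K}) = 1 - \rk(\im(\overline b c) - \tfrac14\|V^{-1/2}k\|^2)$, which is $0$ in the non-critical case and $1$ in the critical case, matching the two alternatives of Lemma~\ref{lem:WBC}.

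First I would record that the standing hypotheses of Theorem~\ref{thm:dimleq1} are in force: the assumption $\widetilde S h \in \ker V$ for all $h \in \mD(\widetilde S)\cap\ker V$ is hypothesis \eqref{eq:resolventfBC}, which is explicitly assumed here (and automatic for multiplication operators by Lemma~\ref{lem:FNWLemma311}), and the complete non-selfadjointness of $\widetilde S$ is assumed as well. Second, I would invoke Theorem~\ref{thm:dimleq1}\ref{item:a}: if $W = \{0\}$ then $H_{sa}(\widehat A) = \{0\}$. Third, for the case $\dim W = 1$, I would invoke Theorem~\ref{thm:dimleq1}\ref{item:b} with $\mu = 1$, giving $\dim(H_{sa}(\widehat A)) \le 1$. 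That is the entire argument; both items of the theorem follow verbatim.

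I do not anticipate a genuine obstacle, since this theorem is deliberately stated as the one-dimensional specialization of the abstract Theorem~\ref{thm:dimleq1}. The only thing requiring a line of care is the bookkeeping that identifies the scalar quantities $\wB,\wC,k$ of this subsection with the matrix/operator data $\wB,\wC,\mathtt K$ of Section~\ref{sec:cnsa}: namely that the $W$ of Lemma~\ref{lem:WBC} is literally the $W$ of Lemma~\ref{lem:W} for this boundary triple (which is already asserted in the proof of Lemma~\ref{lem:WBC}), and that $\widetilde S$ here is the symmetric part in the sense required by Theorem~\ref{thm:dimleq1}. Once that identification is made explicit, the proof is a one-sentence reduction.

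\begin{proof}
   This is the specialization of Theorem~\ref{thm:dimleq1} to the present setting. Indeed, $\widetilde S$ is the symmetric part of $\widehat S$ as in Definition~\ref{def:symmetricpart}, and, writing $\widehat A$ as a maximally dissipative extension of $\widetilde S + \I V$ via the boundary triple $(\wGamma_0,\wGamma_1,\C)$ from \eqref{eq:wGammaBC}, the scalar boundary data are $\wB = 1$, $\wC = \overline{b}c$ and $\mathtt K = k$, so that $M_{\mathtt K} = \|V^{-1/2}k\|^2$ and $\nu = 1 - \rk(\im(\overline{b}c) - \tfrac14\|V^{-1/2}k\|^2)$. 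The space $W$ of Lemma~\ref{lem:WBC} coincides with the space $W$ of Lemma~\ref{lem:W} for this triple. By hypothesis, $\widetilde S h \in \ker V$ for all $h\in\mD(\widetilde S)\cap\ker V$ and $\widetilde S$ is cnsa, so the hypotheses of Theorem~\ref{thm:dimleq1} are satisfied.

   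If $W = \{0\}$, then Theorem~\ref{thm:dimleq1}\ref{item:a} gives $H_{sa}(\widehat A) = \{0\}$.

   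If $\dim W = 1$, then Theorem~\ref{thm:dimleq1}\ref{item:b} with $\mu = 1$ gives $\dim(H_{sa}(\widehat A)) \le 1$.
\end{proof}
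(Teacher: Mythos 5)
Your proof is correct and takes exactly the paper's route: the paper's own proof of this theorem is the one-line reduction ``This is Theorem~\ref{thm:dimleq1}'', and your write-up merely makes the identification of the scalar data ($\wB=1$, $\wC=\overline{b}c$, $\mathtt K=k$, $\ell=1$, $W$ of Lemma~\ref{lem:WBC} $=$ $W$ of Lemma~\ref{lem:W}) explicit. The only blemish is the parenthetical claim that $\im(\wC^*\wB^{*-1})=\im(b\overline{c})=\im(\overline{b}c)$ --- the last two are negatives of each other, not equal --- but this side remark does not enter the argument, which only uses the coincidence of the two spaces $W$.
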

\begin{proof}
This is Theorem~\ref{thm:dimleq1}.
\end{proof}

The next theorem sums up the criteria for the existence of a selfadjoint subspace of $\widetilde A$ if $\widetilde S$ is cnsa under the hypotheses of Proposition~\ref{prop:resolventf}.
Recall that $H_{sa}(\widehat A)$ is at most one-dimensional.

\begin{theorem}
   \label{thm:existenceHsaII}
   Assume that $\widetilde S$ is cnsa and that 
   $\widetilde S h\in \ker V$ for all $h\in \mD(\widetilde S)\cap \ker V$.
   The latter is true if $V$ is a multiplication operator.
   Let $W$ as in Lemma~\ref{lem:W}.
   \begin{enumerate}[label={\upshape(\roman*)}]

      \item 
      If $W = \{0\}$, then $H_{sa}(\widehat A) = \{0\}$.

      \item 
      If $\dim W = 1$, then $H_{sa}(\widehat A) = \gen\{ \phi \}$ where
      \begin{enumerate}[label={\upshape(\alph*)}]
	 \item\label{item:a:CaseII}  $\phi\in\mD(\widehat A)\setminus \mD(\widetilde S )$,
	 \item\label{item:b:CaseII}  $\widehat A \phi \in \gen\{\phi\}$,
	 \item\label{item:c:CaseII}  $V\phi = \frac{1}{2} (\wGamma_1 \phi) k$
      \end{enumerate}
      if such a $\phi$ exists.
      Otherwise $H_{sa}(\widehat A) = \{ 0 \}$.
   \end{enumerate}

\end{theorem}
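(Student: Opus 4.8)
The plan is to obtain this statement as the one-dimensional ($\ell = 1$) instance of the abstract Theorems~\ref{thm:dimleq1} and~\ref{thm:existenceHsa}, transported through the boundary triple $(\wGamma_0, \wGamma_1, \C)$ of the preceding lemma. First I would note that the hypothesis that $\widetilde S$ is cnsa already puts us, by Lemma~\ref{lem:widetildeScnsa}, in the case $\im(\overline b c) > 0$ and $b \neq \pm 1$ (if $\im(\overline b c) = 0$ then $\widetilde S$ is selfadjoint; if $b = \pm 1$ then $\widetilde S$ is not cnsa). Hence $\rk(\im(BC^*)) = \ell = 1$, the triple $(\wGamma_0, \wGamma_1, \C)$ is a boundary triple for $\widetilde S^*$, with respect to it $\widehat S$ is the maximally dissipative extension $\widetilde S_{\overline b c}$, and $\widehat A$ has the form $\widehat A f = (S^* + \I V) f + (\wGamma_1 f) k$ for some $k \in \range(V^{1/2})$; in particular the matrix $M_{\mathtt K}$ of \eqref{eq:MmathttK} degenerates to the scalar $\|V^{-1/2} k\|^2$.

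Next I would verify that the two standing assumptions here---that $\widetilde S$ is cnsa and that $\widetilde S h \in \ker V$ for all $h \in \mD(\widetilde S) \cap \ker V$---are exactly the hypotheses under which Theorems~\ref{thm:dimleq1} and~\ref{thm:existenceHsa} are proved, the latter being automatic for multiplication operators by Lemma~\ref{lem:FNWLemma311}. Then Lemma~\ref{lem:W}, i.e.\ Lemma~\ref{lem:WBC} in the present notation, gives $\dim W \le \nu = 1 - \rk\bigl(\im(\wC^* \wB^{*-1}) - \tfrac14 M_{\mathtt K}\bigr) \le 1$, which yields precisely the dichotomy $\dim W \in \{0, 1\}$ of the statement.

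It then suffices to read off the two cases. If $W = \{0\}$, Theorem~\ref{thm:dimleq1}(i) gives $H_{sa}(\widehat A) = \{0\}$. If $\dim W = 1$, Theorem~\ref{thm:dimleq1}(ii) gives $\dim H_{sa}(\widehat A) \le 1$, while Theorem~\ref{thm:existenceHsa} identifies $H_{sa}(\widehat A)$ as the span of the vectors $\phi$ satisfying conditions \ref{item:a}--\ref{item:c} there; with $\mathtt K = (k)$ these specialise to \ref{item:a:CaseII}--\ref{item:c:CaseII}. Since the span is at most one-dimensional, it equals $\{0\}$ when no such $\phi$ exists and $\gen\{\phi\}$ for any single nonzero such $\phi$ otherwise.

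No genuinely hard step arises; the only thing requiring care is the bookkeeping of the reduction---checking that the one-dimensional triple of the preceding lemma really realises $\widehat A$ as a maximally dissipative extension of $\widetilde S + \I V$ in exactly the form used in Section~\ref{sec:cnsa}, so that $\mathtt K$, $M_{\mathtt K}$ and $\wC^* \wB^{*-1}$ specialise correctly and conditions \ref{item:a}--\ref{item:c} of Theorem~\ref{thm:existenceHsa} transcribe into \ref{item:a:CaseII}--\ref{item:c:CaseII}. Once that is done, the proof is an immediate application of the $\ell = 1$ case of the abstract results.
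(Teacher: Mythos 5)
Your proposal is correct and follows the paper's own (implicit) argument: the paper presents this theorem as the $\ell=1$ specialization of Theorems~\ref{thm:dimleq1} and~\ref{thm:existenceHsa}, transported through the boundary triple $(\wGamma_0,\wGamma_1,\C)$, exactly as you do --- just as the neighbouring results in that subsection are proved by reference to their abstract counterparts. (The only mismatch, the factor $\tfrac{1}{2}$ versus $\tfrac{1}{2\I}$ in condition \ref{item:c:CaseII}, is a typo in the paper's statement of the specialized theorem, not a gap in your reduction.)
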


We want to give some more concrete conditions for the hypotheses of Proposition~\ref{prop:resolventf} to hold, which the lead to for the (non-)existence of a selfadjoint subspace of $\widehat A$.

\begin{theorem}
   \label{thm:5:15}
   Let $V$ be a multiplication operator with a non-trivial non-negative function $V(\cdot)\in L_2(0,1)$.
   If at least one of the conditions 
   \begin{itemize}
      \item $b\neq \pm 1$,
      \item $E_V$ has an interior point,
   \end{itemize}
   is satisfied, then $\dim (H_{sa}(\widehat A)) = 1$ if and only if there exists $\phi$ satisfying 
   \ref{item:a}, \ref{item:b}, \ref{item:c}. 
   In this case $H_{sa}(\widehat A) = \gen\{\phi\}$.
   Otherwise $\dim (H_{sa}(\widehat A)) = 0$.
\end{theorem}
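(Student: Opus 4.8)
The plan is to obtain Theorem~\ref{thm:5:15} by assembling the structural description of Section~\ref{subsec:5:2} with the general results of Section~\ref{sec:cnsa}, exploiting that the two bullet hypotheses each force $\dim H_{sa}(\widehat A)\le 1$, but by different mechanisms. Since $V$ is a multiplication operator, Lemma~\ref{lem:FNWLemma311} guarantees that $\widetilde S h\in\ker V$ for every $h\in\mD(\widetilde S)\cap\ker V$, so the standing hypothesis of Proposition~\ref{prop:resolventfBC} is in force. First I would dispose of the degenerate subcase $\im(\overline b c)=0$: then $\widehat S=\widetilde S$ is selfadjoint, $\mU=\{0\}$, $k=0$ and $\widehat A=\widehat S+\I V$ with $\mD(\widehat A)=\mD(\widehat S)$; here $\Hsym(\widehat A)=\mD(\widehat S)\cap\ker V$, so $H_{sa}(\widehat A)$ is spanned by (real-analytic) eigenfunctions of $\widehat S$ lying in $\ker V$, and such a function would vanish on the positive-measure set $E_V$, hence identically, giving $H_{sa}(\widehat A)=\{0\}$; since condition~\ref{item:a} requires $\phi\in\mD(\widehat A)\setminus\mD(\widetilde S)=\emptyset$, no admissible $\phi$ exists and the statement holds trivially. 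From now on $\im(\overline b c)>0$, so $\widetilde S\subsetneq\widehat S$ and the boundary triple $(\wGamma_0,\wGamma_1,\C)$ of Section~\ref{subsec:5:2} applies with $\ell=1$, whence $\dim W\le 1$.

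Next, if $b\neq\pm 1$, then $\widetilde S$ is cnsa by Lemma~\ref{lem:widetildeScnsa}, and the claim is exactly Theorem~\ref{thm:existenceHsaII}, so nothing more is needed in this case. The substance of the theorem is therefore the case in which $E_V$ has an interior point but $\widetilde S$ need not be cnsa; here I would argue directly. By Lemma~\ref{lem:onlyEigValues}, $\widehat A$ has discrete spectrum, so Proposition~\ref{prop:CritForCNSA} gives $H_{sa}(\widehat A)=\overline{\gen}\{\ker(\widehat A-\lambda):\lambda\in\R\}$; since $H_{sa}(\widehat A)$ reduces $\widehat A$, the restriction $\widehat A|_{H_{sa}(\widehat A)}$ is selfadjoint with compact resolvent, so $\dim H_{sa}(\widehat A)$ equals the number of real eigenvalues of $\widehat A$ counted with multiplicity; by Theorem~\ref{thm:EV} this is at most $\dim W\le 1$. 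If $\dim W=0$ we are done with $H_{sa}(\widehat A)=\{0\}$, so assume $\dim W=1$.

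Suppose $\dim H_{sa}(\widehat A)=1$, say $H_{sa}(\widehat A)=\gen\{\phi\}$ with $\phi\neq 0$ an eigenfunction; then $\widehat A\phi=\mu\phi$ with $\mu\in\R$, which is~\ref{item:b}, and since $\phi\in\Hsym(\widehat A)$ the description of $\Hsym(\widehat A)$ in Section~\ref{subsec:5:2} yields~\ref{item:c}. For~\ref{item:a}, if $\phi\in\mD(\widetilde S)$ then, using the splitting $\Hsym(\widehat A)=(\mD(\widetilde S)\cap\ker V)\dot+ W$ with $W\cap\mD(\widetilde S)=\{0\}$ from Lemma~\ref{lem:WBC}, one gets $\phi\in\mD(\widetilde S)\cap\ker V$, hence $\phi$ vanishes on the interior interval $(a,b)\subseteq E_V$; since $-\phi''-\I V\phi=\widehat A^*\phi=\widehat A\phi=\mu\phi$, uniqueness of solutions of this ODE (as in~\eqref{eq:NullAufIntervall}) forces $\phi\equiv 0$, contradicting $\phi\neq 0$, so $\phi\notin\mD(\widetilde S)$, which is~\ref{item:a}. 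Conversely, if $\phi$ satisfies~\ref{item:a}--\ref{item:c}, then~\ref{item:c} together with $\phi\in\mD(\widehat A)$ places $\phi$ in $\Hsym(\widehat A)$, so $\widehat A\phi=\widehat A^*\phi$; with~\ref{item:b} this reads $\widehat A\phi=\widehat A^*\phi=\mu\phi$ for some $\mu\in\C$, and pairing with $\phi$ forces $\mu\in\R$; hence $\gen\{\phi\}$ is invariant under $\widehat A$ and under $\widehat A^*$, so it reduces $\widehat A$, and $\widehat A|_{\gen\{\phi\}}=\mu I$ is selfadjoint, whence $\gen\{\phi\}\subseteq H_{sa}(\widehat A)$; the bound $\dim H_{sa}(\widehat A)\le 1$ then gives $H_{sa}(\widehat A)=\gen\{\phi\}$. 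Combining the two implications yields the stated equivalence, and when no such $\phi$ exists the bound forces $\dim H_{sa}(\widehat A)=0$.

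The step I expect to be the main obstacle is keeping the two hypotheses on equal footing in the proof of~\ref{item:a} and of the bound $\dim H_{sa}(\widehat A)\le 1$: under the interior-point hypothesis one cannot appeal to Theorems~\ref{thm:dimleq1BC} and~\ref{thm:existenceHsaII}, since $\widetilde S$ may fail to be cnsa, and must instead route the argument through the eigenvalue count of Theorem~\ref{thm:EV} together with the unique-continuation property~\eqref{eq:NullAufIntervall}; a secondary annoyance is matching the exact normalisation in condition~\ref{item:c} with the description of $\Hsym(\widehat A)$, which I would settle by a direct comparison of the formulas for $\widehat A$, $\widehat A^*$ and $\Hsym(\widehat A)$ recorded in Section~\ref{subsec:5:2}.
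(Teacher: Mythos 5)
Your proposal is correct and follows essentially the same route as the paper: the first bullet is handled via Lemma~\ref{lem:widetildeScnsa} and Theorem~\ref{thm:existenceHsaII}, and the second via the eigenvalue count of Theorem~\ref{thm:EV} combined with Proposition~\ref{prop:KMNRealPointSpec} (your Proposition~\ref{prop:CritForCNSA}). You supply rather more detail than the paper does — in particular the treatment of the degenerate case $\im(\overline b c)=0$ and the explicit verification of the equivalence with conditions \ref{item:a}--\ref{item:c}, which the paper leaves implicit.
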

It should be noted that under the condition (ii),  $\widetilde S$ is \emph{not} assumed to be cnsa. 
In fact, if $b=\pm 1$, then $\dim(H_{sa}(\widetilde S)) = \infty$.
\begin{proof}
   The condition $b\neq \pm 1$ implies that $\widetilde S$ is cnsa, see Lemma~\ref{lem:widetildeScnsa}.
   Hence the claim follows from Theorem~\ref{thm:existenceHsaII}.
   If $E_V$ has an interior point, then $\widehat A$ has at most one real simple eigenvalue by Theorem~\ref{thm:EV}.
   Therefore, the claim follows from Proposition~\ref{prop:KMNRealPointSpec}.
\end{proof}

\subsection{Two-dimensional dissipative extensions} 
\label{subsec:5:3}

Assume that $\rk (\im(BC^*) ) = 2$.
This is possible only if both $B$ and $C$ are invertible. 
The invertibility of both $B$ and $C$ implies that $\Gamma_0$ and  $\Gamma_1$ restricted to $\mD(\widehat S)$ are surjective.
This follows from the fact 
$(B|-C) = B(\id|-B^{-1}C) = C(C^{-1}B|-\id)$.
\smallskip

From \eqref{OLD:eq:AdjointBC} and \eqref{eq:SymSpaceBC} we obtain 
\begin{align*}
   \mD(\widehat S) 
   &=  \left\{ f\in H^2(0,1) : \
   B\Gamma_0 f - C\Gamma_1 f = 0
   \right\},
   \\[1ex]
   \mD(\widehat S^*) 
   &=  \left\{ f\in H^2(0,1) : \
   C^*{}^{-1}\Gamma_0 f = B^*{}^{-1} \Gamma_1 f
   \right\}.
   \\
   \Hsym(\widehat S) 
   &= \mD(\widehat S) \cap \mD(\widehat S^*)
   = \mD(S).
\end{align*}
Since $\Hsym(\widehat S) = \mD(S)$ there is no symmetric operator $\widetilde S$ with $S\subsetneq \widehat S$.
So here $\widetilde S = S$.
\smallskip

Now let us return to the study of the dissipative extensions $\widehat A$ of $S$ which have domain $\mD(\widehat A) = \mD(\widehat S)$ as in Theorem~\ref{thm:MaxDisRestA}.
This means that they are extensions of $S + \I V$ with the minimal symmetric operator $S$.
\smallskip

Observe that 
\begin{equation*}
   \mD(\widehat S) = \mD(\widetilde S) \dot + \gen\{ u, v\}
\end{equation*}
where $u, v\in \mD(\widehat S)$ are linearly independent over $\mD(\widetilde S)$.
We may choose for instance $u, v$ such that 
\begin{equation}
   \Gamma_1 u = \begin{pmatrix} 1 \\ 0
   \end{pmatrix},
   \qquad
   \Gamma_1 v = \begin{pmatrix} 0 \\ 1
   \end{pmatrix}.
\end{equation}

Hence very $f\in \mD(\widehat S)$ can be written as 
$f = f_0 + f(0)u  + f(1) v$ with $f_0\in\mD(\widetilde S)$.
We also know that $Af = (S^* + \I V)f + f(0)k_1 + f(1) k_2$ for some $k_1, k_2\in\range V^{1/2}$ because $\dim( \mD(\widehat A) / \mD(\widetilde S) ) = 2$.
This corresponds to the map, see Theorem~\ref{thm:ChristophMaxDiss},
\begin{equation*}
   \mL: \mU \to\range (V^{1/2}),\qquad
   \mL(\alpha u + \beta v) = \alpha k_1 + \beta k_2.
\end{equation*}

From Theorem~\ref{thm:MaxDissCritABT} we obtain that $\widehat A$ is maximally dissipative if and only if 
\begin{equation}
   \label{eq:DissCondV}
   \im( C^*B^*{}^{-1} )
   \ge 
   \frac{1}{4} 
   M_K
\end{equation}
where
\begin{equation}
   \label{eq:defMK}
   M_K :=
   \begin{pmatrix} 
   \|V^{-1/2} k_1 \|^2 & \scalar{V^{-1/2}k_1}{V^{-1/2}k_2} 
   \\[2ex]
   \scalar{V^{-1/2}k_2}{V^{-1/2}k_1} &\|V^{-1/2} k_1 \|^2 
    \end{pmatrix}.
\end{equation}
\medskip

According to Theorem~\ref{thm:widehatAadjoint}
the adjoint operator of $\widehat A$ and its symmetric subspace are 
\begin{align*}
   \widehat A^*:\qquad
   \widehat A^*f = (S^* - \I V)f,
   \qquad
   \begin{aligned}[t]
      \mD(\widehat A^*) 
      &= 
      \left\{ f\in \mD(\widetilde S^*) : 
      \Gamma_0 f = C^*B^*{}^{-1}\Gamma_1 f -
      \begin{pmatrix} \scalar{k_1}{f} \\ \scalar{k_2}{f}
      \end{pmatrix}
      = 0
      \right\}
  \end{aligned}
\end{align*}
and
\begin{align}
   \label{eq:ASymSpaceBC}
   \Hsym(\widehat A)
   &= \left\{ f\in \mD(\widehat A) : 
   \Big( \im( C^* B^*{}^{-1})  - \frac{1}{4} M_K\Big) \Gamma_1 f = 0,\
   2\I V f = (\Gamma_1 f)^t 
   \begin{pmatrix} u \\ v
   \end{pmatrix}
   \right\}.
\end{align}

Recall from Lemma~\ref{lem:W} that the dimension of $\rk( \im(C^*B^{*-1})-\frac{1}{4}M_{\mathtt K})$ is an upper bound for the dimension of $W$ where
\begin{equation*}
   \Hsym(\widehat A) = ( \mD(\widetilde S) \cap \ker V ) \dot + W
\end{equation*}
$W\cap \mD(\widetilde S) = W \cap \ker V = \{0\}$.
Explicitly, we obtain the following.
Note that this is analogous to Lemma~\ref{lem:WBC} where $\overline{b}c$ plays the role of $C^*B^{*-1}$ and $\frac{1}{4}\|V^{-1/2}k\|^2$  plays the role of $M_{\mathtt K}$.
In Section~\ref{subsec:5:2} we only have 
the critical case $\im (\overline{b}c) = \frac{1}{4}\|V^{-1/2}k\|^2$
and the non-critical case $\im(\overline{b}c > \frac{1}{4}\|V^{-1/2}k\|^2$,
while in our higher-dimensional setting here we also have the intermediate case  $\rk( \im (C^*B^*{}^{-1}) - \frac{1}{4} M_K ) = 1$.

\begin{lemma}
   \label{lem:WCaseI}
   \begin{enumerate}[label={\upshape(\roman*)}]
      \item
      If $\im(C^*B^*{}^{-1}) > \frac{1}{4} M_K$, or equivalently, if
      $\dim(\ker( \im C^*B^*{}^{-1}) - \frac{1}{4} M_K ) = 0$, then 
      $\Hsym(\widehat A)\subseteq \ker V$.
      In particular
	 $\Hsym(\widehat A) = \mD(\widetilde S) \cap \ker V$
      and $\widehat A f = \widetilde S f$ for every $f\in \Hsym(\widehat A)$.

      \item
      If $\rk( \im (C^*B^*{}^{-1}) - \frac{1}{4} M_K ) = 1$, or equivalently, if
      $\dim(\ker( \im C^*B^{}*^{-1}) - \frac{1}{4} M_K ) = 1$, then 
      $\dim W \le 1$.

      \item
      If $\im (C^*B^{*-1}) = \frac{1}{4} M_K $, 
      or equivalently, if
      $\dim(\ker( \im C^*B^{*-1}) - \frac{1}{4} M_K ) = 2$, then 
      $\dim W \le 2$.
   \end{enumerate}
\end{lemma}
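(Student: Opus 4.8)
The plan is to recognize the statement as the specialization of Lemma~\ref{lem:W} to the present two-dimensional setting. First I would record the dictionary between the abstract situation of Section~\ref{sec:cnsa} and the one here: the discussion preceding the lemma already shows that $\Hsym(\widehat S) = \mD(S)$, so $\widetilde S = S$; the defect index is $\ell = \eta_+(S) = 2$; one may take $\wGamma_j = \Gamma_j$ with $(\Gamma_0,\Gamma_1,\C^2)$ the boundary triple from \eqref{eq:BoundaryTripleSstar}; both $B$ and $C$ are invertible by assumption; and with the choice $u_1 = u$, $u_2 = v$, $k_1 = \mL u$, $k_2 = \mL v$ the matrix $M_{\mathtt K}$ of \eqref{eq:MmathttK} is exactly the matrix $M_K$ of \eqref{eq:defMK}. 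With this dictionary in place, Lemma~\ref{lem:W} yields directly that $\Hsym(\widehat A) = (\mD(\widetilde S)\cap\ker V) \dot + W$ with $W\cap\mD(\widetilde S) = W\cap\ker V = \{0\}$ and $\dim W \le \nu := 2 - \rk\bigl(\im(C^*B^{*-1}) - \frac{1}{4}M_K\bigr)$.

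Next I would dispatch the ``equivalently'' clauses by elementary linear algebra. Put $N := \im(C^*B^{*-1}) - \frac{1}{4}M_K$; this is a Hermitian $2\times 2$ matrix, and by the dissipativity criterion \eqref{eq:DissCondV} it satisfies $N \ge 0$. The rank--nullity identity $\rk N + \dim\ker N = 2$ gives, for $r \in \{0,1,2\}$, the equivalence $\rk N = r$ if and only if $\dim\ker N = 2 - r$, which is exactly the ``equivalently'' statement in each of (ii) and (iii) (the cases $r=1$ and $r=0$). For (i) one uses in addition that, for a non-negative Hermitian matrix, positive definiteness $N > 0$ is equivalent to $\ker N = \{0\}$, hence to $\rk N = 2$.

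Then I would read off the three cases from $\dim W \le \nu = 2 - \rk N$. In case (i), $\rk N = 2$, so $\nu = 0$ and $W = \{0\}$; hence $\Hsym(\widehat A) = \mD(\widetilde S)\cap\ker V = \mD(S)\cap\ker V \subseteq \ker V$, and for $f$ in this set the projection $P$ of $\mD(\widehat A)$ onto $\mU$ along $\mD(S)$ vanishes on $f$ (since $f \in \mD(S)$) while $Vf = 0$, so $\widehat A f = (S^* + \I V)f + \mL P f = S^* f = S f = \widetilde S f$. In case (ii), $\rk N = 1$, so $\nu = 1$ and $\dim W \le 1$. In case (iii), $\rk N = 0$, i.e. $\im(C^*B^{*-1}) = \frac{1}{4}M_K$, so $\nu = 2$ and $\dim W \le 2$.

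I do not expect a genuine obstacle here: the lemma is a direct corollary of Lemma~\ref{lem:W}, and the only points requiring care are the verification of the dictionary above --- in particular that $\widetilde S = S$ and $\ell = 2$ in this section and that $M_{\mathtt K}$ coincides with $M_K$ --- together with the elementary positivity and rank--nullity facts used for the ``equivalently'' clauses.
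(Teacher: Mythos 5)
Your proposal is correct and follows exactly the route the paper intends: the paper states this lemma without a separate proof, presenting it as the specialization of Lemma~\ref{lem:W} to the case $\widetilde S = S$, $\ell = 2$, $M_{\mathtt K} = M_K$, which is precisely your dictionary. Your additional verifications (the rank--nullity equivalences, $N>0 \iff \ker N=\{0\}$ for the Hermitian $N\ge 0$ guaranteed by \eqref{eq:DissCondV}, and the computation showing $\widehat A f = \widetilde S f$ on $\mD(S)\cap\ker V$) correctly fill in the details the paper leaves implicit.
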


In analogy to Proposition~\ref{prop:keineAhnungBC} we obtain the following.
\begin{proposition}
   \label{prop:keineAhnungCaseI}
   \begin{enumerate}[label={\upshape(\roman*)}]

      \item
      If $G\subseteq \Hsym(\widehat A)\cap \ker V$ is a selfadjoint subspace for $\widehat A$, then $G\subseteq H_{sa}(\widetilde S)$.

      \item
      If $\im (C^* B^{*-1}) > \frac{1}{4} M_K$, then 
      $H_{sa}(\widehat A)\subseteq H_{sa}(\widetilde S)$.

   \end{enumerate}
\end{proposition}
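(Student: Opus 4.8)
The plan is to deduce both statements directly from Proposition~\ref{prop:keineAhnung} by specialising the abstract framework of Section~\ref{sec:cnsa} to the present two-dimensional situation. Here the symmetric part $\widetilde S$ coincides with the minimal operator $S$, since $\Hsym(\widehat S) = \mD(\widehat S)\cap\mD(\widehat S^*) = \mD(S)$ as recorded at the beginning of this subsection, and the relevant boundary triple $(\wGamma_0,\wGamma_1,\C^\ell)$ is simply $(\Gamma_0,\Gamma_1,\C^2)$ from \eqref{eq:BoundaryTripleSstar}, so that $\ell = 2$, $\wB = B$, $\wC = C$, and $M_{\mathtt K} = M_K$ from \eqref{eq:defMK}. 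With this dictionary, part~(i) is verbatim Proposition~\ref{prop:keineAhnung}(i). For part~(ii) I would first note that maximal dissipativity of $\widehat A$ gives $\im(C^*B^{*-1}) - \tfrac14 M_K \ge 0$ by \eqref{eq:DissCondV}, so the strict inequality $\im(C^*B^{*-1}) > \tfrac14 M_K$ is equivalent to $\rk\big(\im(C^*B^{*-1}) - \tfrac14 M_K\big) = 2 = \ell$; Proposition~\ref{prop:keineAhnung}(ii) then applies directly.

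For the reader's convenience I would also recall the underlying mechanism. For~(i): if $G\subseteq\Hsym(\widehat A)\cap\ker V$ is a selfadjoint subspace of $\widehat A$, then by \eqref{eq:ASymspaceVBT} one has $G\cap\mD(\widehat A)\subseteq\Hsym(\widehat A)\cap\ker V = \mD(\widetilde S)\cap\ker V = \mD(S)\cap\ker V$, so $\widehat A f = Sf$ for $f\in G\cap\mD(\widehat A)$ and $G$ is $S$-invariant. The reducing decomposition $\mD(\widehat A) = (G\cap\mD(\widehat A))\oplus(G^\perp\cap\mD(\widehat A))$ restricts to $\mD(S)$, and for $f\in G^\perp\cap\mD(S)$ one has $Sf = \widehat A f - \I V f - \mL P f \in G^\perp$, since $\widehat A f\in G^\perp$ (as $G$ reduces $\widehat A$) while $\I V f$, $k_1$, $k_2$ --- and hence $\mL P f$ --- lie in $\range(V^{1/2})\subseteq(\ker V)^\perp\subseteq G^\perp$, the last inclusion because $G\subseteq\ker V$. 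Thus $G$ reduces $S$ and $S|_G = \widehat A|_G$ is selfadjoint, i.e. $G\subseteq H_{sa}(S) = H_{sa}(\widetilde S)$. For~(ii): the hypothesis $\im(C^*B^{*-1}) > \tfrac14 M_K$ makes $\ker\big(\im(C^*B^{*-1}) - \tfrac14 M_K\big)$ trivial, so \eqref{eq:ASymSpaceBC} forces $\Gamma_1 f = 0$ and hence $Vf = 0$ for every $f\in\Hsym(\widehat A)$; therefore $\Hsym(\widehat A) = \Hsym(\widehat A)\cap\ker V$, every selfadjoint subspace of $\widehat A$ is contained in it, and part~(i) yields $H_{sa}(\widehat A)\subseteq H_{sa}(\widetilde S)$.

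There is no substantial obstacle here beyond bookkeeping: the two points to get right are the identification $\Hsym(\widehat A)\cap\ker V = \mD(\widetilde S)\cap\ker V$ with $\widetilde S = S$ (which is exactly \eqref{eq:ASymspaceVBT} together with $\Hsym(\widehat S) = \mD(S)$), and the translation of the strict matrix inequality $\im(C^*B^{*-1}) > \tfrac14 M_K$ into the rank condition $\rk(\im(C^*B^{*-1}) - \tfrac14 M_K) = \ell$ that is needed to invoke Proposition~\ref{prop:keineAhnung}(ii).
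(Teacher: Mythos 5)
Your proposal is correct and follows essentially the same route as the paper: part (i) is reduced to (indeed, re-proved exactly as) Proposition~\ref{prop:keineAhnung}(i) with $\widetilde S = S$, and part (ii) uses the observation that the strict inequality $\im(C^*B^{*-1}) > \tfrac14 M_K$ forces $\Hsym(\widehat A) = \Hsym(\widehat A)\cap\ker V$, after which (i) applies. The only cosmetic difference is that you pass through the rank condition $\rk\bigl(\im(C^*B^{*-1})-\tfrac14 M_K\bigr)=\ell$ to invoke the abstract Proposition~\ref{prop:keineAhnung}(ii), whereas the paper simply repeats the two-line kernel argument in place; both are the same mechanism.
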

\begin{proof}
   \begin{enumerate}[label={\upshape(\roman*)}]
      \item
      The proof is the same as in Proposition~\ref{prop:keineAhnung}.
      
      \item
      If $\im C^* B^*{}^{-1} > \frac{1}{4} M_K$, then $\Hsym(\widehat A) \cap \ker V = \Hsym(\widehat A)$.
      Since every selfadjoint subspace of $\widehat A$ is contained in $\Hsym(\widehat A)$, it follows that $H_{sa}(\widehat A) \subseteq H_{sa}(\widetilde S)$.
      \qedhere
   \end{enumerate}

\end{proof}

\begin{proposition}
   \label{prop:resolventfCaseI}
   Assume that there exists $f\in H_{sa}(\widehat A)$ such that 
   $(\widehat A - \lambda)^{-1}f\in \mD(\widetilde S)\cap \ker V$ 
   for some $\lambda\in\rho(\widehat A)$
   and that 
   $\widetilde S h\in \ker V$ for all $h\in \mD(\widetilde S)\cap \ker V$.
   Then $f\in H_{sa}(\widetilde S)$.
\end{proposition}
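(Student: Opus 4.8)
The plan is to obtain this statement as a direct instance of the general Proposition~\ref{prop:resolventf}. First I would record the structural simplification specific to this section: since $\rk(\im(BC^*)) = 2$ forces $\Hsym(\widehat S) = \mD(S)$, the symmetric part $\widetilde S$ coincides with the minimal Laplacian $S$, and the lower-dimensional boundary triple $(\wGamma_0, \wGamma_1, \C^\ell)$ of the general setup is here just the original triple $(\Gamma_0, \Gamma_1, \C^2)$ with $\ell = 2$. Thus $\widehat A$ is a maximally dissipative extension of $\widetilde S + \I V = S + \I V$ with $\widetilde S$ closed and symmetric, which is precisely the framework of Proposition~\ref{prop:resolventf}.

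Next I would check that the two hypotheses of Proposition~\ref{prop:resolventf} are in force: the requirement $\widetilde S h\in\ker V$ for every $h\in\mD(\widetilde S)\cap\ker V$ is assumed here verbatim (with $\widetilde S = S$), and the existence of $f\in H_{sa}(\widehat A)$ with $(\widehat A-\lambda)^{-1}f\in\mD(\widetilde S)\cap\ker V$ for some $\lambda\in\rho(\widehat A)$ is part of the present statement. The proof of Proposition~\ref{prop:resolventf} relies only on Lemma~\ref{lem:W} (here $\Hsym(\widehat A) = (\mD(\widetilde S)\cap\ker V)\dot+ W$ with $\dim W\le 2$ and $W\cap\ker V = \{0\}$), the resolvent identity, the finite-dimensionality of $W$, and Theorem~\ref{thm:BHdS}; none of these uses anything about $\widetilde S$ beyond its being a closed symmetric operator, so the argument transfers without change. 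Hence the conclusion $f\in H_{sa}(\widetilde S) = H_{sa}(S)$ follows.

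The argument therefore carries no genuine obstacle; the only point requiring attention is the bookkeeping identification $\widetilde S = S$, together with the observation that Proposition~\ref{prop:resolventf} does \emph{not} presuppose that $\widetilde S$ is completely non-selfadjoint (that hypothesis enters only later, in Theorem~\ref{thm:dimleq1}). In short, I would simply remark that this is Proposition~\ref{prop:resolventf} with $\widetilde S = S$.
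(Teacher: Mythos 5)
Your proposal is correct and matches the paper exactly: the paper's own proof of Proposition~\ref{prop:resolventfCaseI} consists of the single line ``This is Proposition~\ref{prop:resolventf}.'' Your additional bookkeeping (identifying $\widetilde S = S$ and $\ell = 2$ in this two-dimensional setting, and noting that complete non-selfadjointness of $\widetilde S$ is not needed here) is accurate and only makes the reduction more explicit.
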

\begin{proof}
   This is proposition~\ref{prop:resolventf}.
\end{proof}

Since our base operator $\widetilde S = S$ is cnsa, we can conclude that the selfadjoint subspace of $\widehat A$ is at most two-dimensional if $\ker V$ is invariant under $S$.
Recall that this is the case if for instance $V$ is a multiplication operator, see Lemma~\ref{lem:FNWLemma311}.

\begin{theorem}
   \label{OLD:thm:dimleq1}
   Assume that $\widetilde S h\in \ker V$ for all $h\in \mD(\widetilde S)\cap \ker V$ and let $W$ as in Lemma~\ref{lem:WCaseI}.
   Then $\dim( H_{sa}(\widehat A) ) \le \dim W $.
\end{theorem}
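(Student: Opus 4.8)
The plan is to obtain the statement as a direct consequence of the abstract Theorem~\ref{thm:dimleq1}. As recorded at the beginning of Section~\ref{subsec:5:3}, in the situation $\rk(\im(BC^*)) = 2$ we have $\Hsym(\widehat S) = \mD(S)$, so the symmetric part $\widetilde S$ of $\widehat S$ is nothing but the minimal Laplacian $S$ itself. Theorem~\ref{thm:dimleq1} requires two hypotheses on $\widetilde S$: that $\ker V$ be invariant under $\widetilde S$, i.e. $\widetilde S h\in\ker V$ for all $h\in\mD(\widetilde S)\cap\ker V$, and that $\widetilde S$ be completely non-selfadjoint. The first is precisely the standing assumption of the theorem (and, as recalled in Lemma~\ref{lem:FNWLemma311}, it holds automatically when $V$ is a multiplication operator). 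So the only point to check is that $\widetilde S = S$ is cnsa.

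First I would note that $S$ has no eigenvalues: a solution of $-f'' = \lambda f$ with $f\in\mD(S) = H_0^2(0,1)$ satisfies $f(0) = f'(0) = 0$, hence $f\equiv 0$ by uniqueness of solutions of the initial value problem. On the other hand, the Dirichlet Laplacian $S_D = S^*|_{\{f(0)=f(1)=0\}}$ is a selfadjoint extension of $S$ with purely discrete spectrum, so $\sigma(S_D) = \sigma_p(S_D)$. Hence Proposition~\ref{prop:BHdS348} applies and yields that $S$ is cnsa. Equivalently, one could argue via Lemma~\ref{lem:onlyEigValues} together with Proposition~\ref{prop:KMNRealPointSpec}, or via the characterisation \eqref{eq:HsaBHdS}, using that the functions $e^{\I\sqrt\lambda\,\cdot}$, $\lambda\in\C\setminus\R$, span a dense subspace of $L_2(0,1)$.

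With both hypotheses verified, Theorem~\ref{thm:dimleq1} gives $H_{sa}(\widehat A) = \{0\}$ when $W = \{0\}$ and $\dim H_{sa}(\widehat A) \le \dim W$ in general, where $W$ is the space from Lemma~\ref{lem:W} attached to the boundary triple $(\Gamma_0,\Gamma_1,\C^2)$. It then only remains to observe that this $W$ is the same subspace as the one appearing in Lemma~\ref{lem:WCaseI}: both are realised as a complement of $\mD(\widetilde S)\cap\ker V$ inside $\Hsym(\widehat A)$, cut out by $\ker\big(\im(C^*B^{*-1}) - \tfrac{1}{4} M_K\big)$ via \eqref{eq:ASymSpaceBC}. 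This finishes the proof.

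There is essentially no obstacle beyond bookkeeping: all the analytic content sits in Section~\ref{sec:cnsa} (Lemma~\ref{lem:W}, Proposition~\ref{prop:resolventf}, Theorem~\ref{thm:dimleq1}). The one point one has to be careful about is that the relevant ``base'' symmetric operator really is the minimal $S$ -- so that its complete non-selfadjointness is the classical fact recalled above, rather than something needing a separate argument as in Section~\ref{subsec:5:2}, where $\widetilde S$ may be a genuine proper extension of $S$ and need not be cnsa.
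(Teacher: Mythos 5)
Your proposal is correct and follows essentially the same route as the paper: the paper's proof of Theorem~\ref{OLD:thm:dimleq1} simply repeats, almost verbatim, the resolvent/linear-dependence argument of the abstract Theorem~\ref{thm:dimleq1} (via Proposition~\ref{prop:resolventf}), taking the complete non-selfadjointness of $\widetilde S = S$ for granted in the sentence preceding the theorem, whereas you cite Theorem~\ref{thm:dimleq1} directly and supply the (correct) justification that $S$ is cnsa via $\sigma_p(S)=\emptyset$ and Proposition~\ref{prop:BHdS348}. The only caveat is your parenthetical alternative via Lemma~\ref{lem:onlyEigValues} and Proposition~\ref{prop:KMNRealPointSpec}, which as stated apply to \emph{maximally} dissipative operators rather than to the symmetric $S$ itself; but this aside is not needed for your main argument.
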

\begin{proof}
   If $f\in H_{sa}(\widehat A)$, then $(\widehat A - \lambda)^{-1}f \in H_{sa}(\widehat A)\subseteq \Hsym(\widehat A)$ for every $\lambda\in\C_-$.
   If $W = \{0\}$, then $\Hsym(\widehat A) = \mD(\widetilde S)\cap \ker V$, hence the claim follows from Proposition~\ref{prop:keineAhnung}.

   Now assume $\dim W = \ell$ and let $f_1,\dots, f_\ell\in H_{sa}(\widehat A) \subseteq \big( \mD(\widetilde S) \cap \ker V \big) \dot + W$ and let $\lambda\in\rho \widehat A)$.
   Then $(\widehat A - \lambda)^{-1}f_1,\, \dots,\, (\widehat A - \lambda)^{-1}f_\ell$ belong to
   $( \mD(\widetilde S) \cap \ker V \big) \dot + W$.
   Hence, there exists a linear combination $h = c_1 f_1 + \cdots + c_\ell f_\ell$ such that 
   $(\widehat A - \lambda)^{-1}h \in \mD(\widetilde S) \cap \ker V \big)$.
   Proposition~\ref{prop:resolventf} shows that $h\in H_{sa}(\widetilde S) = \{0\}$.
   Therefore $h=0$ and $f_1, \dots, f_\ell$ are linearly dependent.
\end{proof}

The next theorem gives a criterion for the existence of a non-trivial selfadjoint subspace of $\widetilde A$ in the critical case.
Recall that a necessary condition is that $\dim W = 1$.

\begin{theorem}
   \label{thm:existenceHsaCaseI}
   Assume that $\widetilde S h\in \ker V$ for all $h\in \mD(\widetilde S)\cap \ker V$ and that $\im(C^* B^{*-1}) = \frac{1}{4} M_{\mathtt K}$.
   Let $W$ as in Lemma~\ref{lem:W}.
   If $\dim W \ge 1$, then $H_{sa}(\widehat A) = \gen\{ \phi : \phi \text{ satisfies  \ref{item:a:CaseI}, \ref{item:b:CaseI}, \ref{item:c:CaseI}}\}$ where
   \begin{enumerate}[label={\upshape(\alph*)}]
      \item\label{item:a:CaseI}
      $\phi\in\mD(\widehat A)\setminus \mD(\widetilde S )$,
      
      \item\label{item:b:CaseI}
      $\widehat A \phi \in \gen\{\phi\}$,
      
      \item\label{item:c:CaseI}
      $V\phi = \frac{1}{2\I} (\Gamma_1 \phi)^t \mathtt K$
   \end{enumerate}
   and $ \dim(H_{sa}(\widehat A)\le 2$.

\end{theorem}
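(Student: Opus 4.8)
The plan is to derive the statement from the abstract machinery of Section~\ref{sec:cnsa}, after pinning down what $\widetilde S$ and the auxiliary boundary triple are in the present case. Since we are in the two‑dimensional regime $\rk(\im(BC^*))=2$, the computation at the start of Section~\ref{subsec:5:3} already shows $\Hsym(\widehat S)=\mD(S)$, so the symmetric part of $\widehat S$ is $\widetilde S=S$, the minimal Laplacian on $(0,1)$, and the boundary triple for $\widetilde S^*=S^*$ is the original triple $(\Gamma_0,\Gamma_1,\C^2)$ from \eqref{eq:BoundaryTripleSstar} (this is why $\Gamma_1$, not $\wGamma_1$, appears in condition~\ref{item:c:CaseI}). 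The first thing to record is that $\widetilde S=S$ is cnsa: the Dirichlet Laplacian is a selfadjoint extension with purely discrete spectrum by Lemma~\ref{lem:onlyEigValues}, and $\sigma_p(S)=\emptyset$ because any solution of $-f''=\lambda f$ with $f(0)=f(1)=f'(0)=f'(1)=0$ vanishes identically, so Proposition~\ref{prop:BHdS348} gives $H_{sa}(\widetilde S)=\{0\}$. Together with the standing hypothesis $\widetilde S(\mD(\widetilde S)\cap\ker V)\subseteq\ker V$ (automatic for multiplication operators by Lemma~\ref{lem:FNWLemma311}), all hypotheses of Theorem~\ref{thm:existenceHsa} and Theorem~\ref{OLD:thm:dimleq1} are then in force.

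For the dimension bound I would note that, since $\im(C^*B^{*-1})=\tfrac14 M_{\mathtt K}$, the kernel $\ker(\im(C^*B^{*-1})-\tfrac14 M_{\mathtt K})$ is all of $\C^2$, so Lemma~\ref{lem:WCaseI}(iii) gives $\dim W\le 2$; combining this with Theorem~\ref{OLD:thm:dimleq1} (whose proof runs through Proposition~\ref{prop:resolventfCaseI}, hence needs exactly the invariance hypothesis above) yields $\dim(H_{sa}(\widehat A))\le\dim W\le 2$. In particular $\widehat A|_{H_{sa}(\widehat A)}$ is a selfadjoint operator on a space of dimension at most two, so it has a basis of eigenvectors $\psi_1,\dots,\psi_p$.

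For the characterisation itself I would verify both inclusions, following the argument in the proof of Theorem~\ref{thm:existenceHsa}. For ``$\subseteq$'': each nonzero $\psi_j$ lies in $\Hsym(\widehat A)$, hence satisfies \ref{item:c:CaseI} by the description \eqref{eq:ASymSpaceBC} of the symmetric subspace, satisfies \ref{item:b:CaseI} as an eigenvector, and must satisfy \ref{item:a:CaseI}: otherwise $\psi_j\in\mD(\widetilde S)=\mD(S)$ forces $\Gamma_1\psi_j=0$ and then $V\psi_j=0$ via \ref{item:c:CaseI}, so $\gen\{\psi_j\}$ is a selfadjoint subspace inside $\Hsym(\widehat A)\cap\ker V$, whence $\psi_j\in H_{sa}(\widetilde S)=\{0\}$ by Proposition~\ref{prop:keineAhnungCaseI}(i), a contradiction. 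For ``$\supseteq$'': if $\phi$ satisfies \ref{item:a:CaseI}--\ref{item:c:CaseI}, then \ref{item:c:CaseI} is precisely the $V$‑relation defining $\Hsym(\widehat A)$ in \eqref{eq:ASymSpaceBC}, while the accompanying spectral relation $\big(\im(C^*B^{*-1})-\tfrac14 M_{\mathtt K}\big)\Gamma_1\phi=0$ holds vacuously in the critical case; hence $\phi\in\Hsym(\widehat A)$ and $\widehat A^*\phi=\widehat A\phi$. Writing $\widehat A\phi=c\phi$ from \ref{item:b:CaseI} and using that $\im\scalar{\phi}{\widehat A\phi}=\scalar{\Gamma_1\phi}{\big(\im(C^*B^{*-1})-\tfrac14 M_{\mathtt K}\big)\Gamma_1\phi}=0$ in the critical case (Theorem~\ref{thm:MaxDissCritABT}), we get $c\in\R$; therefore $\gen\{\phi\}$ reduces $\widehat A$ and $\widehat A^*$, with $\widehat A$ acting on it as multiplication by $c\in\R$, so $\gen\{\phi\}$ is a selfadjoint subspace and $\phi\in H_{sa}(\widehat A)$. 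The bulk of the work is already done by the abstract results; I do not expect a substantial obstacle here, only the bookkeeping of correctly identifying $\widetilde S=S$ and its boundary triple and of checking that $S$ is cnsa, so that Theorems~\ref{thm:existenceHsa} and~\ref{OLD:thm:dimleq1} apply verbatim.
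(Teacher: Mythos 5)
Your proposal is correct and follows essentially the same route as the paper, which proves this theorem by the same eigenbasis argument as the abstract Theorem~\ref{thm:existenceHsa} (using $\widetilde S=S$ cnsa and $\dim W\le 2$ from Lemma~\ref{lem:WCaseI} together with Theorem~\ref{OLD:thm:dimleq1} for the dimension bound). You actually supply more detail than the paper does, in particular a genuine verification of the inclusion $\supseteq$ (that a $\phi$ satisfying \ref{item:a:CaseI}--\ref{item:c:CaseI} lies in $\Hsym(\widehat A)$ and spans a selfadjoint reducing subspace with real eigenvalue), which the paper merely asserts.
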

\begin{proof}
   We first recall that $H_{sa}(\widehat A)$ is at most two-dimensional, 
   $H_{sa}(\widehat A)= \gen\{ \psi_1, \dots, \psi_p \} \subseteq \Hsym(\widehat A)$ 
   where $\psi_1, \dots, \psi_p$ is a basis of eigenvectors or $\widehat A|_{H_{sa}(\widehat A)}$.
   Note that $0 \leq p \leq 2$.
   Clearly, each $\psi_j$ must satisfy \ref{item:b} and \ref{item:c}, and, if $\psi_j \neq 0$ it must also satisfy \ref{item:a}, otherwise $\psi\in H_{sa}(\widetilde S) = \{0\}$.

   On the other hand, if $\phi$ satisfies \ref{item:a}, \ref{item:b} and \ref{item:c}, then it belongs to $H_{sa}(\widehat A)$, hence $H_{sa}(\widehat A) = \gen\{\phi\}$ for any such $\phi$.
\end{proof}


\subsection{Application to quantum graphs: Star graph} 
Our general set up is flexible enough to be applicable also to quantum graphs.
As a very simple example consider a finite star graph.

\begin{figure}[h] 
   \begin{center}
      \begin{tikzpicture}[transform shape, baseline=(D)]
	 \coordinate (O) at (0,0);
	 \coordinate (A) at (40:2);
	 \coordinate (B) at (120:2);
	 \coordinate (C) at (180:2);
	 \coordinate (D) at (220:2);
	 \coordinate (E) at (330:2);
	 \coordinate (F) at (310:2);
	 \foreach \x in {A,B,C,D, E} \draw (O) -- (\x);
	 \foreach \x in {A,B,C,D,E,O}  \draw[fill] (\x) circle (0.05cm);
	 \node at (A) [above=.1] {$f_1(x_1) = 0$};
	 \node at (B) [above=.1] {$f_2(x_2) = 0$};
	 \node at (C) [left=.1] {$f_3(x_3) = 0$};
	 \node at (D) [below=.1] {$f_4(x_4) = 0$};
	 \node at (E) [below] {$f_5(x_5) = 0$};
	 \node at (O) [below=.1] {$0$};
      \end{tikzpicture}
   \end{center}
   \caption{A star graph $G$ with 5 edges, 5 outer vertices and one central vertex (identified with zero). We assume Dirichlet boundary conditions a the outer vertices and continuity at the central vertex.}
\end{figure}
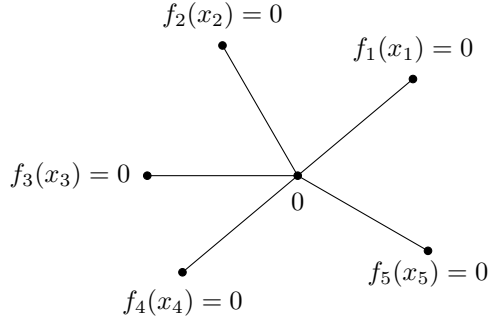

We consider a star graph $G$ consisting of $n$ edges $e_j = (0, x_n)$ which are joined at the vertex $x_0 = 0$.
The relevant Hilbert space is $H = \bigoplus_{j=1}^n L_2(0, x_n)$.
We write $f = (f_1, \dots, f_n)$ where $f_j$ is the restriction of $f$ to the edge $e_j$.
Let us define the subspaces
\begin{align}
    H^2(G) &= \bigoplus_{j=1}^n H^2(0, x_j), \\
    H_D(G) &= \{ f\in H^2(G) : f_j(x_j) = 0, f_j(0+) = f_1(0+)\, j=1,\dots, n \}.
\end{align}
So functions in $H_D(G)$ satisfy Dirichlet boundary conditions at the exterior vertices and are continuous at the central vertex.
The latter allows us to write $f(0)$.
We define the closed symmetric operator
\begin{align}
    S: \qquad
    \mD(S) = \Big\{f\in H_D(G) \ :\  \sum_{j=1}^n f_j'(0) = 0,\ f(0)= 0 \Big\},
    \qquad Sf = -f''.
\end{align}
It is easy to check that its adjoint is
\begin{align}
    S^*: \qquad
    \mD(S^*) = H_D(G),
    \qquad S^*f = -f''
\end{align}
and that $\eta_\pm(S^*)= 1$.
A boundary triplet $(\Gamma_0, \Gamma_1, \C)$ for $S^*$ is given by 
\begin{align}
    \Gamma_0 f = f(0),
    \qquad
    \Gamma_1 f = -\sum_{j=1}^n f_j'(0).
\end{align}
Hence all maximally dissipative extensions of $S$ are given by 
\begin{align}
    S_{b,c}: \qquad
    \mD(S_{b,c}) = \Big\{f\in H_D(G) \ :\  b f(0) =  -c\sum_{j=1}^n f_j'(0) \Big\},
    \qquad Sf = -f''.
\end{align}
with $b\overline c \ge 0$.
The extension is selfadjoint if and only if $b\overline c \in\R$.
In particular we have the Dirichlet extension $S_D$ for $c=0$.
Clearly, the spectrum of $S_D$ is discrete; it is the union of the spectra of the Dirichlet Laplacians $S_{j, D}$ on the edges $e_j$.
Therefore, the spectrum of $S_D$ is simple if and only if the length of the edges are incommensurable.

Let $V\ge 0$ be a bounded multiplication operator and let $\widehat A$ be a maximally dissipative extension of $A= S+ \I V$.
If the edges are incommensurable then $S$ is completely non-selfadjont and therefore $\dim( H_{sa}(\widehat A) ) \le 1$.

If  $V$ is a bounded multiplication such that on each edge $e_j$ there is a non-trivial interval $(a_j, b_j)$ such that $V(x)> 0$ for every $x\in (a_j,b_j)$, then, similar to Theorem~\ref{thm:5:15}, it follows that $\dim H_{sa}(\widehat A) \le 1$. 


\appendix

\section{The imaginary part of a $2\times 2$ matrix}

For convenience of the reader, we collect some easy facts about positivity of $2\times 2$ matrices that are used frequently in Section~\ref{sec:dissipativeoperators}.

\begin{lemma}
   \begin{enumerate}[label={\upshape(\roman*)}]
      \item 
      Let $\alpha, \delta \in\R$, $\gamma\in \C$.
      The eigenvalues of the symmetric matrix
      $M = 
      \begin{pmatrix}
	 \alpha & \gamma \\ \gamma^* & \delta
      \end{pmatrix}$
      are
      \begin{equation*}
	 \lambda_{\pm} = \frac{1}{2} (\alpha + \delta) \pm \frac{1}{2} \sqrt{ (\alpha-\delta)^2 - 4|\gamma|^2}
      \end{equation*}
      and
      \begin{equation*}
	 \lambda_{\pm} \ge 0
	 \qquad\iff\qquad
	 \alpha+\delta \ge 0
	 \quad\text{and}\quad
	 \alpha\delta \ge |\gamma|^2.
      \end{equation*}

      \item 
      The imaginary part of
      $A = \begin{pmatrix}
	 a & b \\ c & d
      \end{pmatrix}$
      is
      \begin{equation}
        \im A = \frac{1}{2\I } (A-A^*)
        =
        \begin{pmatrix}
	 \im   a & \frac{1}{2\I} (b-\overline c) \\ \frac{1}{2\I} (c-\overline b) & \im d
        \end{pmatrix}
        =
        \im
        \begin{pmatrix}
	 a &   b-\overline c \\  c-\overline b & d
        \end{pmatrix}.
      \end{equation}
      
      \item 
      $\im A \ge 0$
      \parbox[t]{.7\textwidth}{%
      \mbox{}$\quad\iff\quad
      \im(a+c) \ge 0
      \quad\text{and}\quad
      \im(a)\im(d) \ge \frac{1}{4} |b-c|^2$
      \\[1ex]
      \mbox{}$\quad\iff\quad
      \im a\ge 0,\ \im d\ge 0
      \quad\text{and}\quad
      \im(a)\im(d) \ge \frac{1}{4} |b-c|^2$.
      }
   \end{enumerate}
\end{lemma}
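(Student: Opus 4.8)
The plan is to establish the three items in sequence, each reducing to elementary linear algebra. For item (i), I would write the characteristic polynomial of $M = \begin{pmatrix} \alpha & \gamma \\ \gamma^* & \delta \end{pmatrix}$, namely $\lambda^2 - (\alpha+\delta)\lambda + (\alpha\delta - |\gamma|^2) = 0$, and solve it directly by the quadratic formula to obtain $\lambda_\pm = \tfrac12(\alpha+\delta) \pm \tfrac12\sqrt{(\alpha-\delta)^2 - 4|\gamma|^2}$, using that $(\alpha+\delta)^2 - 4(\alpha\delta - |\gamma|^2) = (\alpha-\delta)^2 + 4|\gamma|^2 \ge 0$, so both eigenvalues are real (as they must be, $M$ being Hermitian). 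For the equivalence $\lambda_\pm \ge 0 \iff \alpha+\delta \ge 0$ and $\alpha\delta \ge |\gamma|^2$: the forward direction is immediate since $\lambda_+ + \lambda_- = \alpha+\delta$ and $\lambda_+\lambda_- = \alpha\delta - |\gamma|^2$; the reverse direction follows because $\lambda_+ \ge 0$ always holds once $\alpha+\delta \ge 0$ and the product being non-negative forces $\lambda_-$ to have the same sign as $\lambda_+$.

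Item (ii) is a direct computation: from $\im A = \tfrac{1}{2\I}(A - A^*)$ with $A^* = \begin{pmatrix} \overline a & \overline c \\ \overline b & \overline d \end{pmatrix}$ one reads off the entries $\tfrac{1}{2\I}(a - \overline a) = \im a$, $\tfrac{1}{2\I}(b - \overline c)$, $\tfrac{1}{2\I}(c - \overline b)$, $\tfrac{1}{2\I}(d - \overline d) = \im d$, and the last equality in the displayed chain just records that $\im A$ depends on $b,c$ only through $b - \overline c$ (and $c - \overline b = -\overline{(b-\overline c)}$), so replacing the off-diagonal entries by $b - \overline c$ and $c - \overline b$ and then taking $\im$ componentwise gives the same matrix. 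I would note that $\overline{\tfrac{1}{2\I}(c-\overline b)} = \tfrac{1}{2\I}(b - \overline c)$, confirming $\im A$ is Hermitian.

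Item (iii) combines (i) and (ii). Apply item (i) to $M = \im A$ with $\alpha = \im a$, $\delta = \im d$, and $\gamma = \tfrac{1}{2\I}(b - \overline c)$, so that $|\gamma|^2 = \tfrac14 |b - \overline c|^2$. But now I should double-check the paper's stated form: the claim has $\tfrac14|b-c|^2$, not $\tfrac14|b-\overline c|^2$, and the first alternative reads $\im(a+c)\ge 0$ rather than $\im(a+d)\ge 0$. The resolution — and the one genuinely delicate point — is that the inequality $\im A \ge 0$ is invariant under the substitution that sends $A$ to the matrix with off-diagonal entries $b - \overline c$ and $c - \overline b$ (by item (ii)); writing $b' := b - \overline c$ and $c' := c - \overline b = -\overline{b'}$, one has $\im(c') = \im(-\overline{b'}) = \im(b') $, hence $\im(a + c') = \im a + \im b' = \im a + \im(b - \overline c) = \im a + \im b + \im c$; meanwhile $|b' - c'|^2 = |b - \overline c + \overline b - c|^2 = |(b - c) + (\overline b - \overline c)|^2 = (2\re(b-c))^2 = 4(\re(b-c))^2$, which does not match either. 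I would therefore reexamine: most likely the intended reading is that $\tfrac14|b-c|^2$ is shorthand with the convention that enters through $|b - \overline c|^2$ after the reduction in (ii), or the $c$ in $\im(a+c)$ is a typo for $d$; I would state item (iii) cleanly as: $\im A \ge 0 \iff \im a \ge 0,\ \im d \ge 0,\ \text{and}\ \im(a)\im(d) \ge \tfrac14|b - \overline c|^2$, apply item (i), and remark that since only $b - \overline c$ matters one may replace $A$ by $\bigl(\begin{smallmatrix} a & b-\overline c \\ c - \overline b & d\end{smallmatrix}\bigr)$ to recover the form printed in the statement. The main obstacle is thus not the mathematics but pinning down exactly which normalization of the off-diagonal entries the two printed variants intend; once that is fixed, everything follows from the quadratic formula and the trace/determinant identities.
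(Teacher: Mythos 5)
The paper itself gives no proof of this appendix lemma (it is presented as a collection of ``easy facts''), so there is nothing to compare your argument against; your route --- characteristic polynomial plus the trace/determinant criterion for (i), direct entrywise computation for (ii), and applying (i) to the Hermitian matrix $\im A$ for (iii) --- is the standard and surely intended one. Your diagnosis of item (iii) is correct and worth keeping: applying (i) to $\im A$ with $\gamma=\frac{1}{2\I}(b-\overline c)$ gives the conditions $\im(a+d)\ge 0$ and $\im(a)\im(d)\ge\frac14|b-\overline c|^2$, and the printed $\im(a+c)$ and $\frac14|b-c|^2$ are genuinely wrong, not just a different normalization: for $A=\bigl(\begin{smallmatrix}\I/2 & \I\\ \I & \I/2\end{smallmatrix}\bigr)$ one has $\im A=\bigl(\begin{smallmatrix}1/2&1\\1&1/2\end{smallmatrix}\bigr)$, which has the negative eigenvalue $-1/2$, yet $\im a,\im d\ge 0$ and $\im(a)\im(d)=\tfrac14\ge 0=\tfrac14|b-c|^2$. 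So your decision to prove the corrected statement with $|b-\overline c|^2$ is the right call, and you need not agonize over reconciling the two printed variants --- they cannot be reconciled.

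Two slips remain in your own writeup. First, in item (i) you assert that the quadratic formula yields $\lambda_\pm=\frac12(\alpha+\delta)\pm\frac12\sqrt{(\alpha-\delta)^2-4|\gamma|^2}$ while in the same sentence computing the discriminant as $(\alpha+\delta)^2-4(\alpha\delta-|\gamma|^2)=(\alpha-\delta)^2+4|\gamma|^2$; these contradict each other. The correct eigenvalues have $+4|\gamma|^2$ under the root (the printed $-4|\gamma|^2$ is another typo in the paper: for $\alpha=\delta=0$, $\gamma=1$ it would give non-real ``eigenvalues'' $\pm\I$ for a Hermitian matrix whose actual eigenvalues are $\pm1$). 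You should flag and correct this exactly as you did for (iii); note that the trace/determinant equivalence, which is all that is used downstream, is unaffected. Second, your reading of the last equality in (ii) is too charitable: if $\im$ denotes the matrix operation $\frac{1}{2\I}(M-M^*)$, then the $(1,2)$ entry of $\im\bigl(\begin{smallmatrix}a & b-\overline c\\ c-\overline b & d\end{smallmatrix}\bigr)$ is $\frac{1}{2\I}\bigl((b-\overline c)-\overline{(c-\overline b)}\bigr)=\frac{1}{\I}(b-\overline c)$, twice the corresponding entry of $\im A$; so that equality is also false as printed (though the two matrices are simultaneously positive semidefinite, which is all the paper ever uses). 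Your proof should either drop that third expression or replace it by the correct matrix $\frac{1}{2\I}\bigl(\begin{smallmatrix}a-\overline a & b-\overline c\\ c-\overline b & d-\overline d\end{smallmatrix}\bigr)$, rather than assert that ``taking $\im$ componentwise gives the same matrix.''
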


\addcontentsline{toc}{section}{Bibliography}
\bibliography{biblioFPW2025}
\bibliographystyle{alpha}

\end{document}